\numberwithin{equation}{section}
\newtheoremstyle{neu}% name
{20pt}       % Space above      %Daniel changed Jan's 31 to 20
{20pt}      % Space below       %Daniel changed Jan's 21 to 20
{\itshape}  % Body font
{}          % Indent amount (empty = no indent, \parindent = para indent)
{\bfseries} % Thm head font
{.}         % Punctuation after thm head
{.5em}      % Space after thm head: " " = normal interword space;
\theoremstyle{neu}
    \newtheorem{thm}{Theorem}[section]
    \newtheorem{lemma}[thm]{Lemma}
    \newtheorem{coro}[thm]{Corollary}
    \newtheorem{prop}[thm]{Proposition}
    \newtheorem{question}[thm]{Question}
\theoremstyle{definition}
    \newtheorem{defi}[thm]{Definition}
    \newtheorem{ex}[thm]{Example}
    \newtheorem{remark}[thm]{Remark}
\theoremstyle{remark}
    \newtheorem{condition}{Condition}
\newcommand{\Z}{\mathbb{Z}}
\newcommand{\Q}{\mathbb{Q}}
\newcommand{\g}{\mathbf{g}}
\newcommand{\e}{\mathbf{e}}
\newcommand{\suchthat}{\ | \ }
\newcommand{\surf}{(\Sigma,\mathbb{M})}
\newcommand{\marked}{\mathbb{M}}
\newcommand{\punct}{\mathbf{P}}
\newcommand{\C}{\mathbb{C}}
\newcommand{\Gr}{\operatorname{Gr}}
\newcommand{\Hom}{\operatorname{Hom}}
\newcommand{\Image}{\operatorname{Im}}
\newcommand{\Ker}{\operatorname{Ker}}
\newcommand{\soc}{\operatorname{soc}}
\newcommand{\ZZ}{\mathbb{Z}}
\newcommand{\cR}{\mathcal{R}}
\newcommand{\alp}{\alpha}
\newcommand{\bet}{\beta}
\newcommand{\gam}{\gamma}
\newcommand{\df}{\colon}
\newcommand{\abs}[1]{\lvert #1\rvert}
\newcommand{\ebrace}[1]{\langle #1\rangle}
\newcommand{\rank}{\operatorname{rank}}
\newcommand{\maxid}{\mathfrak{m}}
\newcommand{\RA}[1]{\C\langle\hspace{-0.05cm}\langle #1\rangle\hspace{-0.05cm}\rangle}
\newcommand{\usualRA}[1]{\C\langle #1\rangle}
\newcommand{\jacobalg}[2]{\mathcal{P}(#1,#2)}
\newcommand{\modcat}{\operatorname{mod}}
\newcommand{\rep}{\operatorname{Rep}}
\newcommand{\decrep}{\operatorname{decRep}}
\newcommand{\decirr}{\operatorname{decIrr}}
\newcommand{\decirrsr}{\operatorname{decIrr^{\tau^{-}}}}
\newcommand{\Ext}{\operatorname{Ext}}
\newcommand{\vecspan}{\operatorname{span}}
\newcommand{\upper}{\mathcal{U}}
\newcommand{\calB}{\mathcal{B}}
\newcommand{\prin}{\operatorname{prin}}
\newcommand{\myid}{1\hspace{-0.125cm}1}
\newcommand{\bfd}{\mathbf{d}}
\newcommand{\bfv}{\mathbf{v}}
\newcommand{\GL}{\operatorname{GL}}
\newcommand{\calCC}{\mathcal{CC}}
\newcommand{\calF}{\mathcal{F}}
\newcommand{\nullity}{\operatorname{null}}
\newcommand{\End}{\operatorname{End}}
\begin{document}

\title[Generic CC-functions with coefficients \& surface cluster algebras]{Generic Caldero-Chapoton functions with coefficients and applications to surface cluster algebras}

\author{Christof Gei{\ss}}
\address{Christof Gei{\ss}\newline
Instituto de Matem\'aticas\newline
Universidad Nacional Aut{\'o}noma de M{\'e}xico\newline
Ciudad Universitaria\newline
04510 M{\'e}xico D.F.\newline
M{\'e}xico}
\email{christof.geiss@im.unam.mx}

\author{Daniel Labardini-Fragoso}
\address{Daniel Labardini-Fragoso\newline
Instituto de Matem\'aticas\newline
Universidad Nacional Aut{\'o}noma de M{\'e}xico\newline
Ciudad Universitaria\newline
04510 M{\'e}xico D.F.\newline
M{\'e}xico}
\email{labardini@im.unam.mx}

\author{Jan Schr\"oer}
\address{Jan Schr\"oer\newline
Mathematisches Institut\newline
Universit\"at Bonn\newline
Endenicher Allee 60\newline
53115 Bonn\newline
Germany}
\email{schroer@math.uni-bonn.de}

\date{\today}
\subjclass[2010]{16S99, 16G20, 13F60, 16P10}
\keywords{Quiver with potential, Jacobian algebra, mutation, cluster algebra, upper cluster algebra, Caldero-Chapoton function, irreducible component, generic basis, surface with marked points, triangulation.}
\maketitle

\begin{abstract} We realize Derksen-Weyman-Zelevinsky's mutations of representations as densely-defined regular maps on representation spaces, and study the generic values of Caldero-Chapoton functions with coefficients, giving, for instance, a sufficient combinatorial condition for their linear independence. 

For a quiver with potential $(Q,S)$, we show that if $k$ is a vertex not incident to any oriented 2-cycle, then every generically $\tau$-reduced irreducible component $Z$ of any affine variety of (decorated) representations has a dense open subset $U$ on which Derksen-Weyman-Zelevinsky's mutation of representations $\mu_k$ can be defined consistently as a regular map to an affine variety of (decorated) representations of the Jacobian algebra of the mutated QP $\mu_k(Q,S)$.

Our techniques involve only basic linear algebra and elementary algebraic geometry, and do not require to assume Jacobi-finiteness. Thus, the paper yields a new and more general proof of the mutation invariance of generic Caldero-Chapoton functions, generalizing and providing a new natural geometric perspective on results of Derksen-Weyman-Zelevinsky and Plamondon. (For Jacobi-finite non-degenerate quivers with potential, this invariance was shown by Plamondon using the machinery of Ginzburg dg-algebras and $\Hom$-finite generalized cluster categories.)

We apply our results, together with results of Mills, Muller and Qin, to prove that for any choice of geometric coefficient systems, not necessarily of full rank, the cluster algebra associated to a possibly punctured surface with  at least two marked points on the boundary has the generic Caldero-Chapoton functions as a basis over the Laurent polynomial ring of coefficients. 
\end{abstract}
\setcounter{tocdepth}{1}
\tableofcontents
%
%
%%main text:
\section{Introduction and main results}\label{sec:intro}

{

The mutation theory of quivers with potential developed between 12 and 15 years ago by Derksen-Weyman-Zelevinsky \cite{DWZ1,DWZ2} has had a very deep impact in cluster algebras, representation theory, algebraic geometry and other areas, including quantum field theories in theoretical physics. After showing how quivers with potential and their representations can be mutated, they prove that every cluster monomial in a skew-symmetric cluster algebra can be written as the Caldero-Chapoton function of a quiver representation naturally associated to the given cluster monomial. In this paper we show that Derksen-Weyman-Zelevinsky's mutations of representations can be described consistently by regular maps defined on dense open subsets of the affine varieties of (decorated) representations. As an application, we give a sufficient combinatorial condition on skew-symmetric matrices $B$ that, without requiring $B$ to have full rank, guarantees the linear independence of the set of generic Caldero-Chapoton functions for arbitrary choices of geometric coefficients. We show that this condition is fulfilled in the case of skew-symmetric matrices arising from triangulations of surfaces with marked points and non-empty boundary. Combining this with results of Mills \cite{Mills1}, Muller \cite{Muller1,Muller2} and Qin \cite{Qin}, we prove that for (possibly punctured) surfaces with at least two marked points on the boundary, the corresponding cluster algebra with arbitrary geometric coefficients (not necessarily of full rank), the set of generic Caldero-Chapoton functions forms a basis, which we then call \emph{generic basis}.

A prominent role in the work of Derksen-Weyman-Zelevinsky is played by various integer-valued functions defined on decorated representations of Jacobian algebras of quivers with potential. We prove the upper semicontinuity of these functions on the affine varieties of (decorated) representations of arbitrary associative algebras. Namely, we show that the $\g$-vector, the $E$-invariant and the dimensions of the vector spaces $\Hom(-,\tau(\bullet))$ and $\Hom(\tau^-(-),\bullet)$ define upper semicontinuous functions.

In the case of a Jacobian algebra of a quiver with potential $(Q,S)$, we show that if $k$ is a vertex not incident to any oriented 2-cycle, then every generically $\tau$-reduced irreducible component $Z$ of any affine variety of (decorated) representations has a dense open subset $U$ on which Derksen-Weyman-Zelevinsky's mutation of representations $\mu_k$ can be defined consistently as a regular map to an affine variety of (decorated) representations of the Jacobian algebra of the mutated quiver with potential $\mu_k(Q,S)$. Using the aforementioned upper semicontinuities, we show that the Zariski closure of the union of the orbits of the decorated representations $\mu_k(\mathcal{M})$, for $\mathcal{M}\in U$, is a generically $\tau$-reduced component for $\mu_k(Q,S)$, thus proving that there is a mutation rule $Z\mapsto\mu_k(Z)$ at the level of generically $\tau$-reduced irreducible components, and that on any such component, the map that combines Derksen-Weyman-Zelevinsky's mutation of representations with the action of the general linear group is a densely-defined dominant regular map. 

Our techniques involve only basic linear algebra and elementary algebraic geometry of minors, and do not require to assume Jacobi-finiteness. Thus, the paper yields a new and more general proof of the mutation invariance of generic Caldero-Chapoton functions, generalizing and providing a new natural geometric perspective on results of Derksen-Weyman-Zelevinsky and Plamondon. (For Jacobi-finite non-degenerate quivers with potential, this invariance was shown by Plamondon using the machinery of Ginzburg dg-algebras and $\Hom$-finite generalized cluster categories.) 

We apply the results obtained throughout the paper, together with results of Mills \cite{Mills1}, Muller \cite{Muller1,Muller2} and Qin \cite{Qin}, to prove that for any choice of geometric coefficient systems, not necessarily of full rank, the cluster algebra associated to a possibly punctured surface with  at least two marked points on the boundary has the generic Caldero-Chapoton functions as a basis over the Laurent polynomial ring of coefficients.

}

\subsection{Assumptions and notation}\label{subsec:assumptions} All the notation and assumptions that we are about to establish will be kept all throughout the paper.

Let us write $[\ell]:=\{1,\ldots,\ell\}$ for any given positive integer $\ell$. Let $n$ be a positive integer, $m$ a non-negative integer, and $\widetilde{Q}=\{\widetilde{Q}_0,\widetilde{Q}_1,s,t\}$ any quiver with vertex set $\widetilde{Q}_0=[n+m]$. We write $a:s(a)\rightarrow t(a)$ for $a\in\widetilde{Q}_1$. Let $Q:=\widetilde{Q}|_{[n]}$ be the restriction of $\widetilde{Q}$ to the subset $[n]$ of $\widetilde{Q}_0=[n+m]$. Thus, $Q_0:=\widetilde{Q}_0$ and $Q_1:=\{a\in\widetilde{Q}_1\suchthat s(a),t(a)\in [n]\}$  (see \cite[Definition 8.8]{DWZ1}). Throughout the paper, the relation between $\widetilde{Q}$ and $Q$ will always be the one just described.

Let $\widetilde{\mathfrak{m}}_0\subseteq\C\langle\widetilde{Q}\rangle$ (resp. $\mathfrak{m}_0\subseteq\C\langle Q\rangle$, $\widetilde{\mathfrak{m}}\subseteq\RA{\widetilde{Q}}$, $\mathfrak{m}\subseteq\RA{Q} $) be the 2-sided ideal generated inside $\C\langle\widetilde{Q}\rangle$ (resp. $\C\langle Q\rangle$, $\RA{\widetilde{Q}}$, $\RA{Q}$)  by the arrow set of $\widetilde{Q}$ (resp. of $Q$, of $\widetilde{Q}$, of $Q$). The ideal $\widetilde{\mathfrak{m}}$ (resp. $\mathfrak{m}$) induces a topology on $\RA{\widetilde{Q}}$ (resp. $\RA{Q}$) having the non-negative powers of $\widetilde{\mathfrak{m}}$ (resp. $\mathfrak{m}$) as fundamental system of open neighborhoods around zero. Under this topology, $\C\langle\widetilde{Q}\rangle$ (resp. $\C\langle Q\rangle$) is dense in $\RA{\widetilde{Q}}$ (resp. in $\RA{Q}$).

Let $\widehat{\rho}:\RA{\widetilde{Q}}\rightarrow\RA{Q}$ be the continuous $\C$-algebra homomorphism that acts as the identity on the set of idempotents $\{e_j\suchthat j\in\widetilde{Q}_0=Q_0\}$ and is given by the rule
\begin{equation}\label{eq:restriction-homomorphism-rule}
\widehat{\rho}(a)=\begin{cases}
a & \text{if $a\in Q_1$;}\\
0 & \text{otherwise;}
\end{cases}
\end{equation}
for $a\in\widetilde{Q}_1$.
Then $\widehat{\rho}(\widetilde{\mathfrak{m}})=\mathfrak{m}$ and $\widehat{\rho}(\C\langle\widetilde{Q}\rangle)=\C\langle Q\rangle$. The restriction $\C\langle\widetilde{Q}\rangle\rightarrow \C\langle Q\rangle$ of $\widehat{\rho}$ will be denoted as $\rho$. It satisfies
$\rho(\widetilde{\mathfrak{m}}_0)=\mathfrak{m}_0$.

Let $\widetilde{\mathcal{J}}$ be a closed two sided ideal of $\RA{\widetilde{Q}}$ contained in $\widetilde{\mathfrak{m}}^2$, $\mathcal{J}:=\widehat{\rho}(\widetilde{\mathcal{J}})\subseteq\mathfrak{m}^2$, and set 
$$
\widetilde{\Lambda}:=\RA{\widetilde{Q}}/\widetilde{\mathcal{J}} 
\text{\qquad and \qquad}
\Lambda:=\RA{Q}/\mathcal{J}.
$$ 
The results of Sections \ref{sec:g-vectors}, \ref{sec:CC-map-and-algebra}, \ref{subsec:generically-tau--reduced-comps} and \ref{subsec:upper-semicontinuity-of-E-invariant} will be shown to hold without assuming that $\widetilde{\Lambda}$ or $\Lambda$ is finite-dimensional or Jacobian.

\begin{remark} By \cite[Lemma 2.1]{CLS}, the results to be established in this paper are valid even without the assumption that $\widetilde{\mathcal{J}}$ be closed in $\RA{\widetilde{Q}}$.
\end{remark}

The vertex set of the quiver $Q$ is $[n+m]$, but $Q$ does not have arrows incident to any of the vertices $n+1,\ldots,n+m$. By $Q_{\prin}$ we will denote the quiver obtained from $Q$ by first deleting the vertices $n+1,\ldots,n+m$, and then adding $n$ vertices $1',2',\ldots,n'$ and exactly one arrow $j\rightarrow j'$ for each $j\in[n]$. By $\Lambda_{\prin}$ we will denote the quotient $\RA{Q_{\prin}}/\mathcal{J}_{\prin}$, where $\mathcal{J}_{\prin}$ is the 2-sided ideal generated by $\mathcal{J}$ inside $\RA{Q_{\prin}}$. Notice that $\mathcal{J}_{\prin}\subseteq \mathfrak{m}^2_{\prin}$, where $\mathfrak{m}_{\prin}$ is the 2-sided ideal of $\RA{Q_{\prin}}$ generated by the arrows of $Q_{\prin}$.

Let  $\widetilde{S}\in\RA{\widetilde{Q}}$ be a potential on $\widetilde{Q}$. Set $S:=\widehat{\rho}(\widetilde{S})$. Thus, $(Q,S)=(\widetilde{Q}|_{[n]},\widetilde{S}|_{[n]})$ is the restriction of $(\widetilde{Q},\widetilde{S})$ to $[n]=\{1,\ldots,n\}$, cf. \cite[Definition 8.8]{DWZ1}. Notice that, by \cite[Corollary 22]{LF1}, if $(\widetilde{Q},\widetilde{S})$ is non-degenerate, then $(Q,S)$ is non-degenerate too. In Sections \ref{subsec:cluster-variables}, \ref{sec:lin-ind-for-surfaces}, \ref{sec:spanning-the-CC-algebra} and \ref{sec:B-is-a-basis-of-upper-cluster-algebra} we will work under the assumption that $(\widetilde{Q},\widetilde{S})$ is non-degenerate. We will never suppose that $(\widetilde{Q},\widetilde{S})$ is Jacobi-finite, though. In Subsection \ref{subsec:behavior-of-irred-comps-under-mutation} we will not suppose non-degeneracy nor Jacobi-finiteness.

Let $J(\widetilde{S})\subseteq\RA{\widetilde{Q}}$  and $J(S)\subseteq\RA{Q}$ be the Jacobian ideals of $\widetilde{S}$ and $S$, i.e., the topological closures of the 2-sided ideals generated respectively in $\RA{\widetilde{Q}}$ and $\RA{Q}$ by $\{\partial_a(\widetilde{S})\suchthat a\in\widetilde{Q}_1\}$ and $\{\partial_a(S)\suchthat a\in Q_1\}$, and let $\jacobalg{\widetilde{Q}}{\widetilde{S}}=\RA{\widetilde{Q}}/J(\widetilde{S})$ and $\jacobalg{Q}{S}=\RA{Q}/J(S)$ be the Jacobian algebras of $(\widetilde{Q},\widetilde{S})$ and $(Q,S)$, respectively. Lemma \ref{lemma:extend-reps-putting-zeros} states in particular that $J(S)$ coincides with $\widehat{\rho}(J(\widetilde{S}))$, which means that the relation between $\jacobalg{\widetilde{Q}}{\widetilde{S}}$ and $\jacobalg{Q}{S}$ is precisely the relation between $\widetilde{\Lambda}$ and $\Lambda$ above.

Following \cite{CLS}, even when $\widetilde{Q}$ or $Q$ fails to be loop-free or 2-acyclic, we define $\widetilde{B}\in\ZZ^{(n+m)\times n}$ to be the matrix whose $(i,j)^{\operatorname{th}}$ entry is $\widetilde{b}_{ij}:=|\{a\in\widetilde{Q}\suchthat i\overset{a}{\leftarrow}j\}|-|\{a\in\widetilde{Q}\suchthat i\overset{a}{\rightarrow} j\}|$ for $(i,j)\in[n+m]\times[n]$. Notice that if $\widetilde{Q}$ is 2-acyclic, then for $(i,j)\in[n+m]\times[n+m]$ the number of arrows from $j$ to $i$ can be fully recovered from $\widetilde{B}$ if and only if at least one of $i$ and $j$ belongs to $[n]$.

Define $B=(b_{ij})_{i,j}$ to be the top $n\times n$ submatrix of $\widetilde{B}$. Since $n+1,\ldots,n+m$, are isolated vertices in $Q$, the $(n+m)\times n$ signed-adjacency matrix of $Q$ is $\left[\begin{array}{c}B\\ \mathbf{0}\end{array}\right]$. It will never be assumed that $\widetilde{B}$ or $B$ is a full-rank matrix.

Finally, notice that the rectangular $(n+n)\times n$ matrix corresponding to $Q_{\prin}$ is
$B_{\prin}=\left[\begin{array}{c}
B\\
\myid_{n\times n}
\end{array}
\right]$,
where $\myid_{n\times n}\in\ZZ^{n\times n}$ is the identity matrix.

Let us recall the definitions of cluster algebra and upper cluster algebra \cite{FZ1,FZ2,BFZ3,FZ4}. We assume that the reader is familiar with the notion of seeds and with seed mutations.

\begin{defi}\label{def:precise-def-of-cluster-alg-and-upper-cluster-alg} Let $\calF$ be the field of rational functions in $n+m$ indeterminates with coefficients in $\Q$, and $\mathbf{x}=(x_1,\ldots,x_n,x_{n+1},\ldots,x_{n+m})$ a tuple of elements of $\calF$ algebraically independent over $\Q$. As above, let $\widetilde{B}\in\ZZ^{(n+m)\times n}$ be a rectangular matrix whose top $n\times n$ submatrix is skew-symmetric. Take $(\widetilde{B},\mathbf{x})$ as an initial seed, with $x_{n+1},\ldots,x_{n+m}$ frozen, so that only seed mutations with respect to the first $n$ indices $1,\ldots,n$ are allowed, let $\mathcal{S}$ be the set whose elements are all the clusters $\mathbf{u}=(u_1,\ldots,u_n,x_{n+1},\ldots,x_{n+m})$ that belong to seeds that can be obtained from $(\widetilde{B},\mathbf{x})$ by finite sequences of allowed seed mutations, and $\mathcal{X}$ be the set of all rational functions that appear in the clusters belonging to $\mathcal{S}$. 
\begin{enumerate}
\item\label{item:def-cluster-alg-coefficient-ring-R} Declare our ground ring to be the Laurent polynomial ring $R:=\Q[x_{n+1}^{\pm1},\ldots,x_{n+m}^{\pm1}]$.
\item The \emph{cluster algebra of $\widetilde{B}$} is the $R$-subalgebra $\mathcal{A}(\widetilde{B})$ that $\mathcal{X}$ generates inside $\calF$.
\item The \emph{upper cluster algebra of $\widetilde{B}$} is the $R$-subalgebra of $\calF$ defined as
\begin{eqnarray*}
\mathcal{U}(\widetilde{B})&:=& 
\bigcap_{\mathbf{u}\in\mathcal{S}}R[u_{1}^{\pm1},\ldots,u_{n}^{\pm1}].
\end{eqnarray*}
\item We set $R_{\prin}:=\Q[x_{n+1}^{\pm1},\ldots,x_{n+n}^{\pm1}]$.
\end{enumerate}
\end{defi}

\subsection{$\g$-vectors and extended $\g$-vectors} 

Following \cite{DWZ1,DWZ2,CLS}, we will work with decorated representations. A \emph{decorated representation of $\Lambda$} is a pair $\mathcal{M}=(M,\bfv)$ consisting of a finite-dimensional representation $M$ of $\Lambda$, and a non-negative integer vector $\bfv=(v_j)_{j\in\widetilde{Q}_0}\in\ZZ_{\geq 0}^{\widetilde{Q}_0}$. A decorated representation $\mathcal{M}=(M,\bfv)$ of $\Lambda$ is \emph{supported in $[n]$} if $M_{j}=0$ and $v_j=0$ for all $j\in\widetilde{Q}_0\setminus[n]=\{n+1,\ldots,n+m\}$.

For a decorated representation $\mathcal{M}=(M,\bfv)$ of $\Lambda$ which is supported in $[n]$ we define the \emph{$\g$-vector} $\g_{\mathcal{M}}^\Lambda=(g_{i})_{i=1}^n\in\ZZ^{n}$  and the  \emph{extended $\g$-vector} $\widetilde{\g}_{\mathcal{M}}^{\widetilde{\Lambda}}=(\widetilde{g}_{i})_{i=1}^{n+m}\in\ZZ^{n+m}$, according to the following formulas:
\begin{eqnarray*}
g_i &:=& -\dim_{\C}(\Hom_{\Lambda}(S(i),M))+\dim_{\C}(\Ext^1_{\Lambda}(S(i),M))+v_i \ \ \ \ \   \text{for $i\in[n]$}\\
\widetilde{g}_i &:=& -\dim_{\C}(\Hom_{\widetilde{\Lambda}}(S(i),M))+\dim_{\C}(\Ext^1_{\widetilde{\Lambda}}(S(i),M))+v_i \ \ \ \ \   \text{for $i\in[n+m]$.}
\end{eqnarray*}

In Section \ref{sec:g-vectors} we prove:

\begin{prop}\label{prop:restricted-g-vector-vs-extended-g-vector-intro}
The vector formed with the first $n$ entries of $\widetilde{\g}_{\mathcal{M}}^{\widetilde{\Lambda}}$ is precisely $\g_{\mathcal{M}}^\Lambda$.
\end{prop}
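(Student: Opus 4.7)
The plan is to reduce the equality $g_i=\widetilde{g}_i$ for $i\in[n]$ to the two identifications
$$\Hom_{\widetilde{\Lambda}}(S(i),M)\cong \Hom_{\Lambda}(S(i),M) \quad\text{and}\quad \Ext^1_{\widetilde{\Lambda}}(S(i),M)\cong \Ext^1_{\Lambda}(S(i),M),$$
after noting that the $v_i$ contribution is common to both expressions. Since $\mathcal{J}=\widehat{\rho}(\widetilde{\mathcal{J}})$, the map $\widehat{\rho}$ descends to a surjection $\widetilde{\Lambda}\twoheadrightarrow \Lambda$, and every $\Lambda$-module becomes a $\widetilde{\Lambda}$-module by restriction. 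For $\mathcal{M}=(M,\mathbf{v})$ supported in $[n]$, we have $M_j=0$ for $j\in\{n+1,\ldots,n+m\}$; consequently the linear map $M_a$ attached to any arrow $a\in\widetilde{Q}_1\setminus Q_1$ is forced to be zero, because its source or target lies outside $[n]$. Hence the $\widetilde{\Lambda}$-action on $M$ factors exactly through the original $\Lambda$-action.

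Next, I would compute both sides via the minimal projective presentation of $S(i)$. Over $\widetilde{\Lambda}$, since $\widetilde{\mathcal{J}}\subseteq\widetilde{\mathfrak{m}}^2$, this presentation has the standard form
$$\bigoplus_{j\in\widetilde{Q}_0}\widetilde{\Lambda} e_j^{d_{ij}}\longrightarrow \widetilde{\Lambda} e_i\longrightarrow S(i)\longrightarrow 0,$$
where $d_{ij}$ is the number of arrows of $\widetilde{Q}$ between $i$ and $j$ (in the appropriate direction for the chosen module convention), and the analogous presentation over $\Lambda$ is obtained by restricting the direct sum to $j\in[n]$ and counting arrows in $Q_1$ only. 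Applying $\Hom_{\widetilde{\Lambda}}(-,M)$ and using $\Hom_{\widetilde{\Lambda}}(\widetilde{\Lambda} e_j,M)\cong M_j$, together with the projectivity of $\widetilde{\Lambda} e_i$, yields a four-term exact sequence
$$0\to\Hom_{\widetilde{\Lambda}}(S(i),M)\to M_i\to \bigoplus_{j\in\widetilde{Q}_0}M_j^{d_{ij}}\to \Ext^1_{\widetilde{\Lambda}}(S(i),M)\to 0,$$
and an entirely analogous one over $\Lambda$.

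Finally, I would compare the two sequences term by term. Since $M_j=0$ for $j\notin[n]$, every summand of $\bigoplus_{j\in\widetilde{Q}_0}M_j^{d_{ij}}$ indexed by $j\notin[n]$ vanishes; and for $j\in[n]$ the arrows of $\widetilde{Q}$ incident to $i$ and $j$ coincide tautologically with those of $Q$. The differential $M_i\to \bigoplus_j M_j^{d_{ij}}$ is determined by the action of these arrows, and contributions from arrows in $\widetilde{Q}_1\setminus Q_1$ vanish identically for the same reason. The two four-term sequences therefore coincide, giving both desired equalities of dimensions. Adding the common $v_i$ recovers $\widetilde{g}_i=g_i$ for $i\in[n]$.

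The main obstacle is not conceptual but bookkeeping: one must pin down the shape of the minimal projective presentation of $S(i)$ over the algebras $\widetilde{\Lambda}=\RA{\widetilde{Q}}/\widetilde{\mathcal{J}}$ and $\Lambda=\RA{Q}/\mathcal{J}$. This is standard because $\widetilde{\mathcal{J}}\subseteq\widetilde{\mathfrak{m}}^2$, so the first radical layer of $\widetilde{\Lambda} e_i$ is read off directly from the arrows of $\widetilde{Q}$, but it should be recorded explicitly in the proof to keep the identification of the four-term sequences transparent.
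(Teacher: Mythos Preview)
Your four-term sequence is not exact at its last term. From a projective presentation $P_1\xrightarrow{d_1}P_0\to S(i)\to 0$, applying $\Hom_{\widetilde{\Lambda}}(-,M)$ gives exactness only up to $\Hom_{\widetilde{\Lambda}}(P_1,M)$; the cokernel of $d_1^*$ surjects onto $\Ext^1_{\widetilde{\Lambda}}(S(i),M)$ but is in general strictly larger, the discrepancy being the image of the next differential $d_2^*\colon\Hom(P_1,M)\to\Hom(P_2,M)$ in a projective resolution. Over $\Lambda=\C[x]/(x^3)$ (one vertex, one loop, relation $x^3$) with $M=\Lambda$, for instance, your sequence reads $0\to\C\to\Lambda\xrightarrow{x\cdot}\Lambda\to C\to 0$ with $\dim_\C C=1$, whereas $\Ext^1_\Lambda(S,\Lambda)=0$ since $\Lambda$ is self-injective. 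So although your comparison of the complexes $M_i\to\bigoplus_jM_j^{d_{ij}}$ over $\widetilde{\Lambda}$ and over $\Lambda$ is correct and does establish $\Hom_{\widetilde{\Lambda}}(S(i),M)\cong\Hom_\Lambda(S(i),M)$, it says nothing about $\Ext^1$.

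The desired identity $\Ext^1_{\widetilde{\Lambda}}(S(i),M)\cong\Ext^1_{\Lambda}(S(i),M)$ is nevertheless true, and there is a clean repair in your spirit via Yoneda extensions: in any short exact sequence $0\to M\to E\to S(i)\to 0$ of $\widetilde{\Lambda}$-modules the middle term $E$ is again supported on $[n]$, so every arrow of $\widetilde{Q}_1\setminus Q_1$, having an endpoint outside $[n]$, acts as zero on $E$; hence $E$ is already a $\Lambda$-module and the natural map between the two $\Ext^1$-groups is bijective. The paper takes a rather different route: it passes to finite-dimensional truncations $\widetilde{\Lambda}_p$, $\Lambda_p$, proves (Lemma~\ref{lemma:restricted-vs-extended-injective-presentations}) that a minimal injective presentation $0\to M\to\bigoplus_i I(i)^{a_i}\to\bigoplus_i I(i)^{b_i}$ in $\Lambda_p$-$\modcat$ extends to one in $\widetilde{\Lambda}_p$-$\modcat$ whose only new indecomposable injective summands lie at vertices in $[n+m]\setminus[n]$, and then reads off the equality of the first $n$ entries of the $\g$-vector from these multiplicities via \cite[Lemma~3.4]{CLS}.
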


This result is stated as Proposition \ref{prop:restricted-g-vector-vs-extended-g-vector} below. To prove it, we first establish for any $A$-module $M$ a very concrete relation between the minimal injective presentations of $M$ in $A$-$\modcat$ and $\widetilde{A}$-$\modcat$ in the particular case where instead of $\widetilde{\Lambda}$ and $\Lambda$ we have quotients $\widetilde{A}=\C\langle\widetilde{Q}\rangle/\widetilde{J}$ and $A=\C\langle Q\rangle/\rho(\widetilde{J})$ with $\widetilde{\mathfrak{m}}_0^\ell\subseteq \widetilde{J}\subseteq\widetilde{\mathfrak{m}}_0^2$ for some $\ell \geq 2$. This is done in Subsection \ref{subsec:admissible-quotients}. 
We then prove Proposition \ref{prop:restricted-g-vector-vs-extended-g-vector-intro} for our more general $\widetilde{\Lambda}$ and $\Lambda$ using truncation techniques inspired in \cite{CLS}, see Subsection \ref{subsec:arbitrary-quotients}. In Subsection \ref{subsec:Jacobian-algebras} we show that the relation between $\jacobalg{\widetilde{Q}}{\widetilde{S}}$ and $\jacobalg{Q}{S}$ is precisely the relation between $\widetilde{\Lambda}$ and $\Lambda$, implying that Proposition \ref{prop:restricted-g-vector-vs-extended-g-vector-intro} can be applied to the case of Jacobian algebras.

\subsection{A partial order on $\ZZ^n$}\label{subsec:partial-order} Inspired by \cite[Proof of Proposition 4.3]{CLS}, in Section \ref{sec:partial-order} below we show that if a skew-symmetric matrix $B\in\ZZ^{n\times n}$ satisfies $\Ker(B)\cap\ZZ^n_{\geq 0}=0$, then the rule
$$
\left[\mathbf{a}\preceq_B\mathbf{b}\Longleftrightarrow\mathbf{a}-\mathbf{b}\in B(\ZZ^n_{\geq 0})\right]
$$
automatically defines a partial order on $\ZZ^n$. We also list a series of sufficient conditions that guarantee that a skew-symmetric matrix $B$ satisfies $\Ker(B)\cap\ZZ^n_{\geq 0}=0$.

\subsection{The Caldero-Chapoton algebra with coefficients}

Let $\calF$ be the field of rational functions in $n+m$ indeterminates with coefficients in $\Q$, and $(x_1,\ldots,x_n,x_{n+1},\ldots,x_{n+m})$ a tuple of elements of $\calF$ algebraically independent over $\Q$. For any decorated representation $\mathcal{M}=(M,\bfv)$ of $\Lambda$ which is supported in $[n]$, the \emph{Caldero-Chapoton function of $\mathcal{M}$ with coefficients extracted from $\widetilde{\Lambda}$}  is 
\begin{eqnarray*}
CC_{\widetilde{\Lambda}}(\mathcal{M})&:=& \sum_{\e\in\ZZ_{\geq 0}^n}\chi(\Gr_{\e}(M))\mathbf{x}^{\widetilde{B}\e+\widetilde{\g}_{\mathcal{M}}^{\widetilde{\Lambda}}},
\end{eqnarray*}
where, as usual, $\mathbf{x}^{\mathbf{f}}:=x_1^{f_1}\cdots x_{n}^{f_n}x_{n+1}^{f_{n+1}}\cdots x_{n+m}^{f_{n+m}}$ for $\mathbf{f}=(f_1,\ldots,f_n,f_{n+1},\ldots,f_{n+m})\in\ZZ^{n+m}$.
For a set $\mathscr{M}$ of decorated representations of $\Lambda$ supported in $[n]$, we write
$$
CC_{\widetilde{\Lambda}}(\mathscr{M}):=\{CC_{\widetilde{\Lambda}}(\mathcal{M})\suchthat \mathcal{M}\in \mathscr{M}\}.
$$
The \emph{Caldero-Chapoton algebra of $\Lambda$ with coefficients extracted from $\widetilde{\Lambda}$} is the $R$-submodule of $R[x_1^{\pm1},\ldots,x_n^{\pm1}]$ spanned by $CC_{\widetilde{\Lambda}}(\decrep(\Lambda))=\{CC_{\widetilde{\Lambda}}(\mathcal{M})\suchthat \mathcal{M}$ is a decorated representation of $\Lambda$ supported in $[n]\}$.

Our first main result is the following non-trivial refinement of \cite[Proposition 4.3]{CLS}. 

\begin{thm}\label{thm:criterion-for-lin-ind-of-CC-functions-intro} Take a set $\mathscr{M}$ of decorated representations of $\Lambda$ supported in $[n]$. Suppose that the following two conditions are satisfied:
\begin{enumerate}
\item $\Ker(B)\cap\mathbb{Z}^{n}_{\geq 0}=0$;
\item for any $\mathcal{M},\mathcal{M}'\in\mathscr{M}$, $\mathcal{M}\neq\mathcal{M}'\Longrightarrow\g_{\mathcal{M}}^{\Lambda}\neq\g_{\mathcal{M}'}^{\Lambda}$.
\end{enumerate}
Then $CC_{\widetilde{\Lambda}}(\mathscr{M})$ is an $R$-linearly independent subset of the Laurent polynomial ring $R[x_1^{\pm1},\ldots,x_n^{\pm1}]$.
\end{thm}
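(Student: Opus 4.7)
The plan is to argue by contradiction. Suppose
$$
\sum_{\mathcal{M}\in\mathscr{M}'} r_\mathcal{M}\cdot CC_{\widetilde{\Lambda}}(\mathcal{M}) = 0
$$
is a nontrivial $R$-linear relation with $\mathscr{M}'\subseteq\mathscr{M}$ finite and nonempty and every $r_\mathcal{M}\in R\setminus\{0\}$. By hypothesis (1) and Section~\ref{sec:partial-order}, the relation $\preceq_B$ is a genuine partial order on $\ZZ^n$, so the finite set $\{\g_\mathcal{M}^\Lambda \mid \mathcal{M}\in\mathscr{M}'\}$ admits a $\preceq_B$-maximal element. Fix once and for all an $\mathcal{M}_0\in\mathscr{M}'$ whose $\g$-vector realizes such a maximum.

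The idea is to isolate the $R$-coefficient of the monomial $x_1^{g_1}\cdots x_n^{g_n}$, where $(g_1,\ldots,g_n):=\g_{\mathcal{M}_0}^\Lambda$, in the alleged null relation, viewing $R[x_1^{\pm1},\ldots,x_n^{\pm1}]$ as a free $R$-module on the Laurent monomials in $x_1,\ldots,x_n$. Since $B$ is the top $n\times n$ block of $\widetilde{B}$ and, by Proposition~\ref{prop:restricted-g-vector-vs-extended-g-vector-intro}, the first $n$ entries of $\widetilde{\g}_\mathcal{M}^{\widetilde{\Lambda}}$ coincide with $\g_\mathcal{M}^\Lambda$, the monomial $\mathbf{x}^{\widetilde{B}\e + \widetilde{\g}_\mathcal{M}^{\widetilde{\Lambda}}}$ contributes to this target only when $\e\in\ZZ^n_{\geq 0}$ satisfies $B\e + \g_\mathcal{M}^\Lambda = \g_{\mathcal{M}_0}^\Lambda$. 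For $\mathcal{M} = \mathcal{M}_0$ this reads $B\e = 0$ with $\e\geq 0$, forcing $\e = 0$ by hypothesis~(1), and producing a single summand $\chi(\Gr_0(M_0)) = 1$ times the invertible $R$-monomial $x_{n+1}^{(\widetilde{\g}_{\mathcal{M}_0})_{n+1}}\cdots x_{n+m}^{(\widetilde{\g}_{\mathcal{M}_0})_{n+m}}$. For any $\mathcal{M}'\in\mathscr{M}'\setminus\{\mathcal{M}_0\}$, a solution $\e\in\ZZ^n_{\geq 0}$ of $B\e + \g_{\mathcal{M}'}^\Lambda = \g_{\mathcal{M}_0}^\Lambda$ would witness $\g_{\mathcal{M}_0}^\Lambda\preceq_B\g_{\mathcal{M}'}^\Lambda$, and $\preceq_B$-maximality of $\g_{\mathcal{M}_0}^\Lambda$ together with hypothesis~(2) would force $\mathcal{M}'=\mathcal{M}_0$; hence no such $\e$ exists and $\mathcal{M}'$ contributes nothing.

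Consequently the $R$-coefficient of $x_1^{g_1}\cdots x_n^{g_n}$ in the sum equals $r_{\mathcal{M}_0}$ times a unit of $R$; since $R$ is a domain and $r_{\mathcal{M}_0}\neq 0$, this coefficient is nonzero, contradicting the assumed relation. The only nontrivial input is Proposition~\ref{prop:restricted-g-vector-vs-extended-g-vector-intro}: it is what permits cleanly separating the $(x_1,\ldots,x_n)$-exponents (which carry the $\g^\Lambda$-information and thus encode the partial order) from the data absorbed into the $R$-valued coefficient. Once that identification is in place, the remainder is standard maximality bookkeeping in the poset $(\ZZ^n,\preceq_B)$, and I do not anticipate any substantive obstacle.
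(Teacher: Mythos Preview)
Your proposal is correct and follows essentially the same approach as the paper's own proof: both invoke Proposition~\ref{prop:restricted-g-vector-vs-extended-g-vector-intro} to identify the first $n$ exponents of each Laurent monomial in $CC_{\widetilde{\Lambda}}(\mathcal{M})$ with $B\e+\g_{\mathcal{M}}^{\Lambda}$, pick a $\preceq_B$-maximal $\g$-vector among the finitely many terms in the relation, and use the condition $\Ker(B)\cap\ZZ_{\geq 0}^n=0$ together with hypothesis~(2) to see that the corresponding monomial in $x_1,\ldots,x_n$ receives a nonzero $R$-contribution from exactly one term, forcing its coefficient to vanish. The paper phrases the conclusion as an induction on the number of terms rather than a one-shot contradiction, but the content is identical.
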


\begin{remark}\label{rem:on-the-criterion-for-lin-ind-of-CC-functions-intro}\begin{itemize}\item In Theorem \ref{thm:criterion-for-lin-ind-of-CC-functions-intro}, stated as Theorem \ref{thm:criterion-for-lin-ind-of-CC-functions} below, none of the matrices $\widetilde{B}$ and $B$ is assumed to have full rank. See Subsection \ref{subsec:assumptions} and Remark \ref{rem:the-5-conditions-on-B}.
\item The main difference between \cite[Proposition~4.3]{CLS} and Theorem \ref{thm:criterion-for-lin-ind-of-CC-functions-intro} is that the former is stated and proved only in the absence of coefficients, that is, when $m=0$, so Theorem~\ref{thm:criterion-for-lin-ind-of-CC-functions-intro} is really more general. The inspiration comes directly from \cite[Proposition~4.3]{CLS}~though.
\item Assuming that $B$ or $\widetilde{B}$ has full rank, Fu-Keller \cite{FK}, Plamondon \cite{Plamondon-generic} and Qin \cite{Qin} have proved similar linear independence statements, for wide classes of Jacobian algebras $\jacobalg{Q}{S}$, $\jacobalg{\widetilde{Q}}{\widetilde{S}}$, of non-degenerate quivers with potential satisfying further conditions (e.g. Jacobi-finiteness, or injective reachability).
\end{itemize}
\end{remark}

Our second main result, stated as Theorem \ref{thm:spanning-set-in-princ-coeffs=>spanning-set-in-arbitrary-coeffs} in the body of the paper, is:

\begin{thm}\label{thm:spanning-set-in-princ-coeffs=>spanning-set-in-arbitrary-coeffs-intro} 
Take any set $\mathscr{M}$ of decorated representations of $\Lambda$ supported in $[n]$, and any decorated representation $\mathcal{N}$ of $\Lambda$ supported in $[n]$. If $CC_{\Lambda_{\prin}}(\mathcal{N})$ belongs to the $R_{\prin}$-submodule of $R_{\prin}[x_{1}^{\pm1},\ldots,x_{n}^{\pm1}]$ spanned by %the set
$
CC_{\Lambda_{\prin}}(\mathscr{M})
$
(see Definition \ref{def:precise-def-of-cluster-alg-and-upper-cluster-alg}),
then $CC_{\widetilde{\Lambda}}(\mathcal{N})$ belongs to the $R$-submodule of $R[x_{1}^{\pm1},\ldots,x_{n}^{\pm1}]$ spanned by %the set
$
CC_{\widetilde{\Lambda}}(\mathscr{M}).
$
Consequently, if %the set
$CC_{\Lambda_{\prin}}(\mathscr{M})$
spans the Caldero-Chapoton algebra $\calCC_{\Lambda_{\prin}}(\Lambda)$ over the ground ring $R_{\prin}$, then %the set
$CC_{\widetilde{\Lambda}}(\mathscr{M})$
spans the Caldero-Chapoton algebra $\calCC_{\widetilde{\Lambda}}(\Lambda)$ over the ground ring $R$.
\end{thm}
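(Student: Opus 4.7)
The plan is to exhibit a single algebra specialization that converts principal-coefficient CC-functions into genuine-coefficient CC-functions up to a controlled Laurent monomial, and then to propagate the putative $R_{\prin}$-linear relation across this specialization. Concretely, define a $\Q$-algebra homomorphism
\begin{equation*}
\sigma\colon R_{\prin}[x_1^{\pm1},\ldots,x_n^{\pm1}]\longrightarrow R[x_1^{\pm1},\ldots,x_n^{\pm1}]
\end{equation*}
by sending $x_i\mapsto x_i$ for $i\in[n]$ and $x_{n+j}\mapsto\prod_{k=1}^{m} x_{n+k}^{\widetilde{b}_{n+k,j}}$ for $j\in[n]$; note that on the source side $x_{n+j}$ with $j\in[n]$ are the principal coefficient variables, while on the target side $x_{n+k}$ with $k\in[m]$ are the coefficient variables of $R$. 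By construction, $\sigma$ restricts to a $\Q$-algebra map $R_{\prin}\to R$.

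The key computation to carry out is the identity
\begin{equation*}
\sigma\bigl(CC_{\Lambda_{\prin}}(\mathcal{M})\bigr)\;=\;\Bigl(\prod_{k=1}^{m} x_{n+k}^{-h_k(\mathcal{M})}\Bigr)\,CC_{\widetilde{\Lambda}}(\mathcal{M}),
\end{equation*}
valid for every decorated representation $\mathcal{M}=(M,\bfv)$ of $\Lambda$ supported in $[n]$, where $h_k(\mathcal{M})$ denotes the $(n+k)$-th entry of $\widetilde{\g}^{\widetilde{\Lambda}}_{\mathcal{M}}$. Its verification rests on two ingredients:
\begin{enumerate}
\item[(i)] For every $\e\in\ZZ^n_{\geq 0}$, the monomial $\mathbf{x}_{\prin}^{B_{\prin}\e}=\mathbf{x}^{B\e}\prod_{j=1}^{n} x_{n+j}^{e_j}$ is mapped by $\sigma$ to $\mathbf{x}^{B\e}\prod_{k=1}^{m} x_{n+k}^{\sum_j \widetilde{b}_{n+k,j}\, e_j}=\mathbf{x}^{\widetilde{B}\e}$, because the bottom $n\times n$ block of $B_{\prin}$ is $\myid_{n\times n}$ and the bottom $m$ rows of $\widetilde{B}$ are $(\widetilde{b}_{n+k,j})_{k,j}$.

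\item[(ii)] The ``frozen tail'' of $\widetilde{\g}^{\Lambda_{\prin}}_{\mathcal{M}}$ vanishes, so that $\widetilde{\g}^{\Lambda_{\prin}}_{\mathcal{M}}=(\g^{\Lambda}_{\mathcal{M}},0,\ldots,0)\in\ZZ^{2n}$. The first $n$ entries agree with $\g^{\Lambda}_{\mathcal{M}}$ by Proposition~\ref{prop:restricted-g-vector-vs-extended-g-vector-intro}. For the new frozen vertex $j'$ of $Q_{\prin}$, no arrow of $Q_{\prin}$ has source $j'$ and $\mathcal{J}_{\prin}$ is generated inside $\RA{Q_{\prin}}$ by relations in the original arrows of $Q$, so $P(j')=\Lambda_{\prin}e_{j'}=\C e_{j'}=S(j')$; hence $S(j')$ is projective and $\Ext^1_{\Lambda_{\prin}}(S(j'),M)=0$. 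Combined with $\Hom_{\Lambda_{\prin}}(S(j'),M)=M_{j'}=0$ and $v_{j'}=0$ (support hypothesis), the defining formula gives $\widetilde{g}^{\Lambda_{\prin}}_{\mathcal{M},j'}=0$.
\end{enumerate}
By (ii), $\sigma$ sends $\mathbf{x}_{\prin}^{\widetilde{\g}^{\Lambda_{\prin}}_{\mathcal{M}}}$ to $\mathbf{x}^{\g^{\Lambda}_{\mathcal{M}}}$; combining with (i), and applying Proposition~\ref{prop:restricted-g-vector-vs-extended-g-vector-intro} once more to identify the first $n$ entries of $\widetilde{\g}^{\widetilde{\Lambda}}_{\mathcal{M}}$ with $\g^{\Lambda}_{\mathcal{M}}$, the displayed identity follows.

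Granted this identity, the theorem is reached by a short manipulation. Assuming $CC_{\Lambda_{\prin}}(\mathcal{N})=\sum_i r_i\,CC_{\Lambda_{\prin}}(\mathcal{M}_i)$ with $r_i\in R_{\prin}$ and $\mathcal{M}_i\in\mathscr{M}$, I would apply $\sigma$ and then multiply both sides by $\prod_{k=1}^{m} x_{n+k}^{h_k(\mathcal{N})}\in R$, obtaining
\begin{equation*}
CC_{\widetilde{\Lambda}}(\mathcal{N})=\sum_{i}r'_i\,CC_{\widetilde{\Lambda}}(\mathcal{M}_i),\qquad r'_i:=\sigma(r_i)\,\prod_{k=1}^{m}x_{n+k}^{h_k(\mathcal{N})-h_k(\mathcal{M}_i)}\in R,
\end{equation*}
which is the desired expansion; the spanning assertion is the same argument applied uniformly as $\mathcal{N}$ varies. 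The step I expect to require the most care is (ii), which hinges on the precise fact that $\mathcal{J}_{\prin}$ introduces no relation involving the added arrows $j\to j'$ and hence $S(j')$ really is projective over $\Lambda_{\prin}$; once this is confirmed, the rest of the proof is a direct monomial substitution.
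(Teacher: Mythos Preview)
Your proposal is correct and follows essentially the same approach as the paper. The paper defines the same specialization (denoted $\varphi$ rather than $\sigma$), asserts the identity $\varphi(CC_{\Lambda_{\prin}}(\mathcal{M}))=\frac{\mathbf{x}^{\g_{\mathcal{M}}^{\Lambda}}}{\mathbf{x}^{\widetilde{\g}_{\mathcal{M}}^{\widetilde{\Lambda}}}}\,CC_{\widetilde{\Lambda}}(\mathcal{M})$ (which is exactly your displayed identity rewritten via Proposition~\ref{prop:restricted-g-vector-vs-extended-g-vector-intro}), and then clears the monomial factor; your write-up is in fact more explicit than the paper's in justifying step~(ii), the vanishing of the frozen tail of $\widetilde{\g}^{\Lambda_{\prin}}_{\mathcal{M}}$, which the paper leaves implicit.
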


\begin{remark}\label{rem:Qin-correction-technique} Fan Qin has pointed out to us that, provided the binary relation $\preceq_B$ from Subsection \ref{subsec:partial-order} above and Section \ref{sec:partial-order} below is a partial order for $\ZZ^n$, Theorem \ref{thm:spanning-set-in-princ-coeffs=>spanning-set-in-arbitrary-coeffs-intro} can be deduced from his \emph{correction technique} theorem, stated in \cite[Theorem 9.2]{Qin-qcharacters} and \cite[Theorem 4.2.1]{Qin-triangular} under a full-rank assumption.
\end{remark}

\subsection{Irreducible components of representation spaces and their mutations}\label{subsec:irred-comps}

Following \cite{CLS}, for each $(\bfd,\bfv)\in\ZZ^{n}\times\ZZ^{n}$ we define the \emph{decorated representation space}
$$
\decrep(\Lambda,\bfd,\bfv):=\rep(\Lambda,\bfd)\times\{\bfv\},
$$
where
$$
\rep(\Lambda,\bfd):=\{M\in\prod_{a\in Q_1}\Hom_{\C}(\C^{d_{s(a)}},\C^{d_{t(a)}})\suchthat \text{$M$ is annihilated by every element of $\mathcal{J}$}\}
$$
is the usual affine variety of representations of $\Lambda$ with dimension vector $\bfd$. It is clear that $\decrep(\Lambda,\bfd,\bfv)$ and $\rep(\Lambda,\bfd)$ are canonically isomorphic as affine varieties. Following Derksen-Weyman-Zelevinsky \cite[Equations (7.4) and (7.6)]{DWZ2}, for decorated representations $\mathcal{M}=(M,\bfv)\in \decrep(\Lambda,\bfd,\bfv)$ and $\mathcal{N}=(N,\bfv')\in\decrep(\Lambda,\bfd',\bfv')$, we define the \emph{$E$-invariant} as
\begin{eqnarray*}
E_{\Lambda}(\mathcal{M},\mathcal{N}) &:=& \dim_{\C}(\Hom_{\Lambda}(M,N))+\langle\bfd,\g_{\mathcal{N}}^{\Lambda}\rangle\\
E_{\Lambda}(\mathcal{M}) &:=& E_{\Lambda}(\mathcal{M},\mathcal{M}).
\end{eqnarray*}

Recall 
that the assignments 
$$
\mathcal{M}\mapsto\g_{\mathcal{M}}^{\Lambda},
\quad 
\mathcal{M} \mapsto \widetilde{\g}_{\mathcal{M}}^{\widetilde{\Lambda}},
\quad
\mathcal{M}\mapsto E_{\Lambda}(\mathcal{M})
\text{\quad and \quad}
\mathcal{M}\mapsto CC_{\widetilde{\Lambda}}(\mathcal{M})
$$ 
are constructible functions on each $\decrep(\Lambda,\bfd,\bfv)$, and that $\mathcal{M}\mapsto-\dim(\GL_{\bfd}\cdot\mathcal{M})$ defines an upper semicontinuous function on $\decrep(\Lambda,\bfd,\bfv)$, see for example \cite{CLS}. As is well known, these facts imply that every irreducible component $Z$ of $\decrep(\Lambda,\bfd,\bfv)$ has an open dense subset $U\subseteq Z$ such that each of the functions 
$\g_{-}^{\Lambda}$, $\widetilde{\g}_{-}^{\widetilde{\Lambda}}$, $E_{\Lambda}(-)$ and $CC_{\widetilde{\Lambda}}(-)$ is constant on $U$, and every $\mathcal{N}\in U$ satisfies
$$
\dim(Z)-\dim(\GL_{\bfd}\cdot\mathcal{N})=\min\{\dim(Z)-\dim(\GL_{\bfd}\cdot\mathcal{M})\suchthat\mathcal{M}\in Z\}.
$$
These constant values are denoted $\g_{Z}^{\Lambda}$, $\widetilde{\g}_{Z}^{\widetilde{\Lambda}}$, $E_{\Lambda}(Z)$, $CC_{\widetilde{\Lambda}}(Z)$ and $c_{\Lambda}(Z)$, and called the \emph{generic $\g$-vector of $Z$}, the \emph{generic extended $\g$-vector of $Z$}, the \emph{generic $E$-invariant of $Z$}, the \emph{generic Caldero-Chapoton function of $Z$} and the \emph{generic codimension of orbits contained in $Z$}. 

According to \cite[Section 8.1]{GLS3} and \cite[Lemma 5.2]{CLS},  for every irreducible component $Z\in \decirr(\Lambda)$ one always has $$
c_{\Lambda}(Z)\leq E_{\Lambda}(Z).
$$ 
Following Geiss-Leclerc-Schr\"{o}er \cite{GLS3}, we pay special attention to those irreducible components that satisfy $c_{\Lambda}(Z)= E_{\Lambda}(Z)$, calling them \emph{generically $\tau^-$-reduced irreducible components}\footnote{Geiss-Leclerc-Schr\"{o}er called them \emph{strongly reduced irreducible components}.}. Accordingly, setting
\begin{eqnarray*}
\decirrsr(\Lambda)&:=&\{Z\suchthat Z \ \text{is a generically $\tau^-$-reduced irreducible component}\\
&& \phantom{ \{Z\suchthat }\text{of $\decrep(\Lambda,\bfd,\bfv)$ for some $(\bfd,\bfv)\in\ZZ^{n}_{\geq 0}\times\ZZ^{n}_{\geq 0}\}$},\\
\text{we call} \hspace{1.25cm} \mathcal{B}_{\widetilde{\Lambda}}(\Lambda)&:=&\{CC_{\widetilde{\Lambda}}(Z)\suchthat Z\in \decirrsr(\Lambda)\}
\end{eqnarray*}
the \emph{set of generically $\tau^-$-reduced Caldero-Chapoton functions}, or simply, \emph{the set of generic Caldero-Chapoton functions}.

Our use of the term \emph{generically $\tau^-$-reduced} comes from the fact that, according to \cite[Proposition 3.5]{CLS}, if $\mathcal{M}=(M,\bfv)$ and $\mathcal{N}=(N,\mathbf{w})$ are decorated representations of $\Lambda$ supported in $[n]$, and $p$ is an integer greater than $\dim_{\C}(M)$ and $\dim_{\C}(N)$, then we have the equality 
$$
E_{\Lambda}(\mathcal{M},\mathcal{N})=\dim_{\C}\Hom_{\Lambda_p}(\tau^-_{\Lambda_p}(N),M)+\langle\underline{\dim}(M),\mathbf{w}\rangle, 
$$
where $\Lambda_p$ is the $p^{\operatorname{th}}$ truncation of $\Lambda$ (this homological interpretation of the $E$-invariant was first given by Derksen-Weyman-Zelevinsky for certain subclass of the class of finite-dimensional Jacobian algebras, see \cite[Section 10]{DWZ2} and, more specifically, \cite[Equation (10.3), Theorem 10.5 and Corollary 10.9]{DWZ2}). Plamondon has proved in \cite[end of Section 2.1]{Plamondon-generic} that if $\Lambda$ is a finite-dimensional Jacobian algebra, then 
$$
\dim_{\C}\Hom_{\Lambda}(\tau^-_{\Lambda}(M),M)=\dim_{\C}\Hom_{\Lambda}(M,\tau_{\Lambda}(M)).
$$ 
Consequently, if, following \cite{GLFS-schemes}, one decides to call $Z\in\decirr(\Lambda)$ \emph{generically $\tau$-reduced} whenever one has the equality 
$$
c_{\Lambda}(Z)=\dim_{\C}\Hom_{\Lambda}(M,\tau_{\Lambda}(M))+\langle\underline{\dim}(M),\mathbf{v}\rangle
$$ 
for all $\mathcal{M}=(M,\bfv)$ in an open dense subset of $Z$, then the two concepts, \emph{generic $\tau$-reducedness} and \emph{generic $\tau^-$-reducedness}, coincide, provided $\Lambda$ is finite-dimensional and Jacobian. To the best of our knowledge, it is not known whether the two notions coincide or not for arbitrary associative algebras.

\begin{remark}\label{rem:some-history-of-generic-CC-functions}
\begin{enumerate}

\item 
Dupont \cite{Dupont} was the first to consider generic values of the CC-map. He did so for acyclic cluster algebras 
{
using only acyclic seeds.
In this case, the corresponding Jacobian algebras are path algebras
of quivers without relations, and all representation spaces
are just affine spaces, hence irreducible.
Dupont conjectured that the generic CC-functions form a basis for
acyclic cluster algebras, and he essentially proved this for some tame quivers.
}

\item\label{item:rem-some-some-history-of-generic-CC-functions-GLS}
{
Afterwards, Geiss-Leclerc-Schr\"{o}er \cite{GLS3} considered
generically $\tau^-$-reduced CC-functions for quivers
with Jacobi-finite non-degenerate potential arising in a Lie-theoretic context.
Their definition of these functions makes sense for arbitrary
quivers with
Jacobi-finite potential.
The main idea was to discard most irreducible components and
only look at the generic CC-functions associated to generically $\tau^-$-reduced components.
This allows to extend Dupont's conjecture to arbitrary 
seeds for arbitrary quivers with Jacobi-finite non-degenerate potential.
For a large class of algebras the generically $\tau^-$-reduced CC-functions form a basis of the associated cluster algebras, see \cite{GLS3} for details.
}

\item\label{item:rem-some-some-history-of-generic-CC-functions-Plamondon}
{
Plamondon \cite{Plamondon-generic} defined a
seemingly different set of CC-functions.
He worked with arbitrary quivers with  Jacobi-finite non-degenerate potential.
Taking any pair $(P_1,P_0)$ of projective modules
over the Jacobian algebra, he looked at the CC-function of the 
generic cokernel of homomorphisms $P_1 \to P_0$.
He proved that these CC-functions
coincide with the set of generically $\tau^-$-reduced CC-functions defined by Geiss-Leclerc-Schr\"oer.
Roughly speaking,
this yields a natural parametrization of these functions by 
$\g$-vectors, which are elements in $\Z^n$ and correspond to
the pairs $(P_1,P_0)$ of projectives.
Plamondon also proved the mutation invariance of 
the CC-functions arising from generic cokernels.
}

\item
{
For quivers with non-degenerate potential $(\widetilde{Q},\widetilde{S})$ whose mutable part is \emph{injective-reachable}
and whose exchange matrix $\widetilde{B}$ has full rank,
Fan Qin \cite{Qin} showed that the set of generically $\tau^-$-reduced 
CC-functions forms a basis of the corresponding upper cluster algebra. 
}

\item
Coefficient-free generic CC-functions for arbitrary quotients of complete path algebras with finitely many vertices were later studied by Cerulli-Labardini-Schr\"{o}er in \cite{CLS}; Labardini-Velasco \cite{LF-V} showed that with this point of view, one can obtain a basis of the generalized cluster algebras associated by Chekhov-Shapiro \cite{Chekhov-Shapiro} to polygons with one orbifold point of order 3. 

\end{enumerate}
\end{remark}

{
It is well known that the functions
$$
M \mapsto \dim\Ext_\Lambda^i(M,M) 
\text{\qquad and \qquad}
(M,N) \mapsto \dim \Ext_\Lambda^i(M,N)
$$
are upper semicontinuous for $i=0,1$, 
where $\Ext^0_A(-,\bullet) := \Hom_A(-,\bullet)$.
}
In Subsection \ref{subsec:upper-semicontinuity-of-E-invariant} we prove that each component of the (extended) $\g$-vector defines an upper semicontinuous function $\decrep(\Lambda,\bfd,\bfv)\rightarrow\ZZ$,
see Theorem \ref{thm:E-invariant-is-upper-semicontinuous}. 
From this we deduce the following theorem which says that 
the $E$-invariant is also upper semicontinuous.

{
\begin{thm}\label{thm:E-invariant-is-upper-semicontinuous-intro}
For all decorated dimension vectors $(\bfd,\bfv), (\bfd',\bfv')\in\ZZ_{\geq0}^{n}\times\ZZ_{\geq0}^n$,
the functions
\begin{align*}
\decrep(\Lambda,\bfd,\bfv) &\to \Z 
&\text{and}&&
\decrep(\Lambda,\bfd,\bfv) \times \decrep(\Lambda,\bfd',\bfv')&\to \Z 
\\
\mathcal{M} &\mapsto E_{\Lambda}(\mathcal{M},\mathcal{M})
&&&
(\mathcal{M},\mathcal{N}) &\mapsto E_{\Lambda}(\mathcal{M},\mathcal{N})
\end{align*}
are upper semicontinuous.
In particular, if $A$ is a finite-dimensional $\C$-algebra, then for all dimension vectors $\bfd,\bfd'\in\ZZ_{\geq0}^{n}$ the functions
\begin{align*}
\rep(A,\bfd) &\to \Z 
&\text{and}&&
\rep(A,\bfd) \times \rep(A,\bfd')&\to \Z 
\\
M &\mapsto \dim\Hom_A(\tau^-_A(M),M) 
&&&
(M,N) &\mapsto  \dim \Hom_A(\tau^-_A(N),M)
\end{align*}
as well as the functions
\begin{align*}
\rep(A,\bfd) &\to \Z 
&\text{and}&&
\rep(A,\bfd) \times \rep(A,\bfd')&\to \Z 
\\
M &\mapsto \dim\Hom_A(M,\tau_A(M)) 
&&&
(M,N) &\mapsto  \dim \Hom_A(N,\tau_A(M))
\end{align*}
are upper semicontinuous.
\end{thm}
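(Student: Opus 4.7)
The plan is to combine two upper semicontinuity inputs: the classical one for $\dim_{\C}\Hom_{\Lambda}(-,\bullet)$ (noted just before the theorem in the excerpt) and the componentwise upper semicontinuity of the $\g$-vector from Theorem \ref{thm:E-invariant-is-upper-semicontinuous}. The starting point is the defining decomposition
\[
E_{\Lambda}(\mathcal{M},\mathcal{N}) \;=\; \dim_{\C}\Hom_{\Lambda}(M,N) \;+\; \langle\bfd,\g^{\Lambda}_{\mathcal{N}}\rangle,
\]
where $\bfd = \underline{\dim}(M)$. On the product $\decrep(\Lambda,\bfd,\bfv)\times\decrep(\Lambda,\bfd',\bfv')$ the vector $\bfd$ is a \emph{fixed, non-negative} tuple in $\ZZ_{\geq 0}^{n}$. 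Hence $\langle\bfd,\g^{\Lambda}_{\mathcal{N}}\rangle = \sum_{i=1}^{n} d_i\, g^{\Lambda}_{i,\mathcal{N}}$ is a non-negative linear combination of upper semicontinuous functions (pulled back along the projection to the second factor), and therefore upper semicontinuous. Adding this to the upper semicontinuous function $\dim_{\C}\Hom_{\Lambda}(M,N)$ proves the two-variable claim for $E_{\Lambda}$; restriction to the closed diagonal delivers the one-variable claim $\mathcal{M}\mapsto E_{\Lambda}(\mathcal{M},\mathcal{M})$.

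For the assertion about a finite-dimensional $\C$-algebra $A$ and the $\tau^{-}$-functions, I present $A$ as a quotient $\RA{Q}/\mathcal{J}$ for its Gabriel quiver $Q$ and a closed admissible ideal $\mathcal{J}$, placing $A$ into the setup of the paper with $\Lambda = A$. The homological interpretation of the $E$-invariant recalled in Subsection \ref{subsec:irred-comps} from \cite[Proposition 3.5]{CLS} reads
\[
E_{\Lambda}(\mathcal{M},\mathcal{N}) \;=\; \dim_{\C}\Hom_{\Lambda_{p}}\bigl(\tau^{-}_{\Lambda_{p}}(N),\,M\bigr) \;+\; \langle\underline{\dim}(M),\mathbf{w}\rangle
\]
for every $p>\max(\dim_{\C}M,\dim_{\C}N)$. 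Since $A$ is finite-dimensional, $A_{p}=A$ as soon as $p$ exceeds its Loewy length, and specializing to zero decoration vectors (so that $\mathcal{M}=(M,\mathbf{0})$ and $\mathcal{N}=(N,\mathbf{0})$) gives $E_{A}((M,\mathbf{0}),(N,\mathbf{0}))=\dim_{\C}\Hom_{A}(\tau^{-}_{A}(N),M)$. The upper semicontinuity of $E_{A}$ established in the first paragraph then transfers directly to upper semicontinuity of $(M,N)\mapsto\dim\Hom_{A}(\tau^{-}_{A}(N),M)$ on $\rep(A,\bfd)\times\rep(A,\bfd')$, and restriction to the diagonal handles the corresponding one-variable statement.

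For the $\tau$-variant, I pass to the opposite algebra $A^{\operatorname{op}}$. The $\C$-linear duality $D = \Hom_{\C}(-,\C)\colon\modcat A \to \modcat A^{\operatorname{op}}$ gives an isomorphism of affine varieties $\rep(A,\bfd)\xrightarrow{\sim}\rep(A^{\operatorname{op}},\bfd)$ by transposing structure matrices, and, via the standard identities $\tau_{A}=D\circ\Tr_{A}$ and $\Tr_{A}^{-1}=\Tr_{A^{\operatorname{op}}}$, satisfies the functorial isomorphism $D\circ\tau_{A}\cong\tau^{-}_{A^{\operatorname{op}}}\circ D$; consequently
\[
\dim\Hom_{A}(N,\tau_{A}(M)) \;=\; \dim\Hom_{A^{\operatorname{op}}}\bigl(\tau^{-}_{A^{\operatorname{op}}}(DM),\,DN\bigr).
\]
Applying the second paragraph to $A^{\operatorname{op}}$ (and swapping the two arguments, which preserves upper semicontinuity) yields upper semicontinuity of $(X,Y)\mapsto\dim\Hom_{A^{\operatorname{op}}}(\tau^{-}_{A^{\operatorname{op}}}(X),Y)$ on $\rep(A^{\operatorname{op}},\bfd)\times\rep(A^{\operatorname{op}},\bfd')$; pulling back through $(M,N)\mapsto(DM,DN)$ delivers the desired upper semicontinuity on $\rep(A,\bfd)\times\rep(A,\bfd')$, and the diagonal finishes the one-variable case. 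The substantive input throughout is Theorem \ref{thm:E-invariant-is-upper-semicontinuous}; everything combines cleanly only because the coefficients $d_i$ are non-negative (a sign change would destroy semicontinuity of the linear combination), so maintaining $\bfd\in\ZZ_{\geq 0}^{n}$ as a constant tuple on the representation space is the one subtle point to keep in mind.
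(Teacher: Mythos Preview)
Your proof is correct and follows essentially the same route as the paper's own argument (Theorem~\ref{thm:E-invariant-is-upper-semicontinuous} in the body): decompose $E_\Lambda$ as $\dim\Hom$ plus a non-negative combination of $\g$-vector entries, invoke \cite[Proposition~3.5]{CLS} for the $\tau^-$-identification, and pass to $A^{\operatorname{op}}$ for the $\tau$-case. The only cosmetic difference is that for the $\tau$-variant the paper introduces the auxiliary invariant $E_A^{\operatorname{proj}}((M,\bfv),(N,\bfv')):=E_{A^{\operatorname{op}}}((N^\star,\bfv'),(M^\star,\bfv))$ and establishes the two identities \eqref{eq:E^proj-easy-equality}--\eqref{eq:E^proj-hard-equality}, whereas you bypass this by using the functorial isomorphism $D\circ\tau_A\cong\tau^-_{A^{\operatorname{op}}}\circ D$ directly and then appealing to the already-proved $\tau^-$-case for $A^{\operatorname{op}}$; unwinding the definitions shows these two packagings are the same computation.
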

}

We expect that these upper semicontinuity results can be useful in many different contexts.
They play an essential role in Subsection~\ref{subsec:behavior-of-irred-comps-under-mutation},
where we study the behavior of generically $\tau^-$-reduced irreducible components under Derksen-Weyman-Zelevinsky's mutation of representations.
This leads to our fourth main result:

{
\begin{thm}\label{thm:premut-&-mut-of-reps-are-locally-regular-intro} 
Let $\mathcal{P} =  \jacobalg{Q}{S}$.
Assume that $k \in [n]$ is a vertex such that $Q$ does not have $2$-cycles incident to $k$,
and set $\mathcal{P}' := \jacobalg{\mu_k(Q}{S)}$.
Then the following hold:
\begin{itemize}

\item[(i)]
For every irreducible component $Z \subseteq \decrep(\mathcal{P},\bfd,\bfv)$ 
there exist a dense open subset $U \subseteq Z$ and a
regular function
$$
\nu_U:U\rightarrow\decrep(\mathcal{P}',\bfd',\bfv')
$$ 
such that for each $(M,\bfv)\in U$, the decorated representation $\nu_U(M,\bfv)$ is isomorphic
to Derksen-Weyman-Zelevinsky's mutation $\mu_k(M,\bfv)$, where $(\bfd',\bfv')\in\ZZ^{n}_{\geq 0}\times \ZZ^{n}_{\geq 0}$ is defined by \eqref{eq:mutated-decorated-dim-vector}.

\item[(ii)]
Let 
$$
\nu_U'\df \GL_{\bfd'} \times U \to \decrep(\mathcal{P}',\bfd',\bfv')
$$
be the regular function 
defined by $(g,u) \mapsto g \cdot \nu_U(u)$.
If $Z$ is generically $\tau^-$-reduced, then
$$
\mu_k(Z) := \overline{\Image(\nu_U')}
$$
is
a generically $\tau^-$-reduced component of
$\decrep(\mathcal{P}',\bfd',\bfv')$.

\item[(iii)]
The map
$$
\decirrsr(\mathcal{P}) \to \decirrsr(\mathcal{P}') 
$$
defined by $Z \mapsto \mu_k(Z)$
is bijective, and we have $\mu_k(\mu_k(Z)) = Z$ for all 
$Z \in \decirrsr(\mathcal{P})$.

\end{itemize}
\end{thm}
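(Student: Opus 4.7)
My plan is to construct $\nu_U$ from Derksen-Weyman-Zelevinsky's two-step procedure: the premutation $\premuti$, which is manifestly regular on the entire representation space as it merely rewires the linear algebra at vertex $k$ via polynomial expressions in the entries of the arrows incident to $k$, followed by the reduction step that splits off the trivial direct summand. The difficulty lies entirely in the reduction. The local data at vertex $k$ is the triple of maps $\alpha_k\df M_k^{\oin}\to M_k$, $\beta_k\df M_k\to M_k^{\oout}$ and $\gamma_k:=\beta_k\alpha_k$, and the dimension of the trivial summand depends only on $\rank(\alpha_k)$, $\rank(\beta_k)$ and $\rank(\gamma_k)$. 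These three rank functions are lower semicontinuous on $Z$, so they attain their generic (maximal) values simultaneously on a common dense open subset $U\subseteq Z$, and so do the dimensions of all subspaces appearing in DWZ's reduction recipe.

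For part (i), on this $U$ I would turn DWZ's basis-level reduction into explicit regular formulas using minors. Concretely, after fixing a point $\mathcal{M}_0\in U$ and bases adapted to the kernel/image filtrations of $\alpha_k,\beta_k,\gamma_k$ at $\mathcal{M}_0$, the nonvanishing of a single maximal minor of each of these three matrices at $\mathcal{M}_0$ cuts out an open neighborhood on which the analogous filtration subspaces are defined by the same minor-based formulas; after possibly shrinking $U$, every splitting and every quotient appearing in DWZ's reduction becomes a regular function of $\mathcal{M}$. This produces the regular map $\nu_U\df U\to\decrep(\mathcal{P}',\bfd',\bfv')$ satisfying $\nu_U(\mathcal{M})\cong\mu_k(\mathcal{M})$ pointwise.

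For part (ii), set $\mu_k(Z):=\overline{\Image(\nu_U')}$; irreducibility is automatic. The $\GL$-equivariant map $\nu_U$ is, by DWZ's pointwise involutivity $\mu_k(\mu_k(\mathcal{M}))\cong\mathcal{M}$, injective on $\GL_\bfd$-orbits modulo $\GL_{\bfd'}$-orbits, so at generic points one obtains the orbit-dimension identity $c_\mathcal{P}(Z)=c_{\mathcal{P}'}(\mu_k(Z))$ (for the moment interpreting $c_{\mathcal{P}'}(\mu_k(Z))$ with $\mu_k(Z)$ viewed as an irreducible subvariety). Together with the pointwise preservation of the $E$-invariant under $\mu_k$ --- which holds on $U$ and extends to generic $\mathcal{N}\in\mu_k(Z)$ by the upper semicontinuity of $E$ from Theorem~\ref{thm:E-invariant-is-upper-semicontinuous-intro} --- this yields the chain $c_{\mathcal{P}'}(\mu_k(Z))=c_\mathcal{P}(Z)=E_\mathcal{P}(Z)=E_{\mathcal{P}'}(\mu_k(Z))$. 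To see that $\mu_k(Z)$ is actually a component, let $W$ be a component containing $\mu_k(Z)$ and apply (i) to $W$; DWZ involutivity forces the image of the resulting map to contain $Z$, which is already a component, and a comparison of generic $\g$-vectors and $E$-invariants on $W$ versus on $\mu_k(Z)$ via their upper semicontinuity forces $W=\mu_k(Z)$.

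Part (iii) is then a formal consequence: the same construction applied to $\mu_k(Q,S)$ produces a map $\decirrsr(\mathcal{P}')\to\decirrsr(\mathcal{P})$, and the identification $W=\mu_k(Z)$ at the end of (ii) is precisely the involutivity $\mu_k(\mu_k(Z))=Z$, giving bijectivity. The main obstacle I anticipate is part (i): making the ``choice of complement'' step in DWZ's reduction algebraic in families, which requires verifying that on the rank-generic open locus every subspace produced by DWZ's recipe is realised as the image or kernel of a matrix polynomial in the premutated data. Once this regularity is in place, parts (ii) and (iii) follow by assembling the upper semicontinuity results established earlier in the paper with DWZ's pointwise involutivity.
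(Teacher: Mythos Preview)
Your plan contains a substantive misreading of the DWZ construction that propagates through the whole argument. In DWZ's $\alpha$-$\beta$-$\gamma$ triangle, $\gamma_k$ is \emph{not} the composite $\beta_k\alpha_k\df M_k^{\oin}\to M_k^{\oout}$; it is the map $M_k^{\oout}\to M_k^{\oin}$ assembled from the partial derivatives $\partial_{ba}(S)$. These are different maps between different spaces, and the premutated vector space at $k$ is
\[
\overline{M}_k=\frac{\Ker\gamma_k}{\Image\beta_k}\oplus\Image\gamma_k\oplus\frac{\Ker\alpha_k}{\Image\gamma_k}\oplus\C^{v_k},
\]
so both the new dimension vector and the new maps $\overline{\alpha}_k,\overline{\beta}_k$ depend on $\gamma_k$ in this sense. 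Your three ranks ``$\rank\alpha_k,\rank\beta_k,\rank(\beta_k\alpha_k)$'' do not suffice to pin down $(\bfd',\bfv')$; the paper tracks \emph{four} generic ranks ($\alpha_k,\beta_k,\gamma_k,\beta_k\alpha_k$) and needs four simultaneous minor conditions to cut out $U$.

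Relatedly, you have the hard and easy steps inverted. Premutation of representations is not ``manifestly regular via polynomial rewiring'': forming the quotients $\Ker\gamma_k/\Image\beta_k$ and $\Ker\alpha_k/\Image\gamma_k$ and the splittings used to define $\overline{\alpha}_k,\overline{\beta}_k$ is precisely where the work lies, and the paper devotes the long proof of Theorem~\ref{thm:premut-of-reps-is-a-densely-defined-reg-map} to producing, via Lemma~\ref{lemma:linear-algebra-regular-maps}, explicit regular sections $\varphi_1,\psi_2,\varphi_3,\psi_4$ realising these subquotients on the open set where four chosen minors are nonzero. By contrast, the reduction step (Theorem~\ref{thm:premut-&-mut-of-reps-are-locally-regular}) is nearly trivial: one fixes once and for all a change of arrows $\Phi\circ\Psi$ putting $\widetilde{\mu}_k(S)$ into normal form, and then $(N,\bfv')\mapsto(N_{\operatorname{red}},\bfv')$ is given by evaluating $\Psi^{-1}$ on arrows, which is linear in the matrices of $N$.

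For (ii), your ``injective on orbits $\Rightarrow$ equal generic codimension'' step is not justified as stated: equality of $c$ requires knowing $\dim\mu_k(Z)$, not just orbit injectivity. The paper obtains this via an incidence variety $W\subseteq U'\times\decrep(\mathcal{P}',\bfd',\bfv')$ and the fibre-dimension theorem, yielding \eqref{eq:generic-codim-preserved}. It then closes the argument with Voigt's lemma: picking $(N,\bfv')\in Y$ with $c_{\mathcal{P}'}(\overline{Y})=\dim\Ext^1(N,N)$ forces $\dim Z'\le\dim\overline{Y}$ for any component $Z'\supseteq\overline{Y}$, hence $\overline{Y}=Z'$. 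Your alternative (mutate a containing component $W$ back and compare) could be made to work, but you would still need the dimension identity \eqref{eq:generic-codim-preserved} to control $c_{\mathcal{P}'}$ on $\mu_k(Z)$ in the first place.
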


{
Thus Theorem~\ref{thm:premut-&-mut-of-reps-are-locally-regular-intro} provides a geometrization of Derksen-Weyman-Zelevinsky's mutation of decorated representations of
Jacobian algebras.
This yields the following result which 
essentially says that generic CC-functions are invariant under 
mutation.}

{
\begin{coro}\label{cor:1.8b} 
Assume that $(\widetilde{Q},\widetilde{S})$ is non-degenerate,
and let $\widetilde{\mathcal{P}} := \jacobalg{\widetilde{Q}}{\widetilde{S}}$.
Then for 
each generically $\tau^-$-reduced irreducible component 
$
Z\subseteq\decrep(\widetilde{\mathcal{P}},\bfd,\bfv)
$
there exists a dense open subset $U\subseteq Z$ such that for any $\mathcal{M} \in U$ and
any sequence $(k_1,\ldots,k_\ell)$ of elements of $[n]$, we have
$$
CC_{\widetilde{\mathcal{P}}'}(Z')
= 
CC_{\widetilde{\mathcal{P}}'}(\mathcal{M}')
$$
where
\begin{align*}
\widetilde{\mathcal{P}}' &:= \jacobalg{\widetilde{Q}'}{\widetilde{S}'}
\text{ with }
(\widetilde{Q}',\widetilde{S}') := \mu_{k_\ell} \cdots \mu_{k_1}(\widetilde{Q},\widetilde{S}),
\\
Z' &:= \mu_{k_\ell} \cdots \mu_{k_1}(Z)
\text{\quad and \quad}
\mathcal{M}' :=  \mu_{k_\ell} \cdots \mu_{k_1}(\mathcal{M}).
\end{align*}
\end{coro}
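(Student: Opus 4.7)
Proceed by induction on $\ell$, using Theorem~\ref{thm:premut-&-mut-of-reps-are-locally-regular-intro} to transport dense open subsets through a single mutation at each step. The case $\ell=0$ is immediate: the function $\mathcal{M}\mapsto CC_{\widetilde{\mathcal{P}}}(\mathcal{M})$ is constructible on $\decrep(\widetilde{\mathcal{P}},\bfd,\bfv)$, so it is constantly equal to $CC_{\widetilde{\mathcal{P}}}(Z)$ on a dense open subset of $Z$.

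For the inductive step, fix $k_1\in[n]$ and set $(\widetilde{Q}_1,\widetilde{S}_1):=\mu_{k_1}(\widetilde{Q},\widetilde{S})$, $\widetilde{\mathcal{P}}_1:=\jacobalg{\widetilde{Q}_1}{\widetilde{S}_1}$, and $Z_1:=\mu_{k_1}(Z)$. Non-degeneracy of $(\widetilde{Q},\widetilde{S})$ ensures that $k_1$ is not incident to any $2$-cycle in $\widetilde{Q}$, so Theorem~\ref{thm:premut-&-mut-of-reps-are-locally-regular-intro} applies: it supplies a dense open $U_0\subseteq Z$ together with a regular map
\[
\nu_{U_0}\colon U_0 \longrightarrow \decrep(\widetilde{\mathcal{P}}_1,\bfd_1,\bfv_1)
\]
satisfying $\nu_{U_0}(\mathcal{M})\cong\mu_{k_1}(\mathcal{M})$ for every $\mathcal{M}\in U_0$, asserts that $Z_1$ is itself generically $\tau^-$-reduced, and exhibits the combined map $\nu_{U_0}'\colon\GL_{\bfd_1}\times U_0\to Z_1$, $(g,u)\mapsto g\cdot\nu_{U_0}(u)$, as dominant. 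The inductive hypothesis applied to $Z_1$ and the tail sequence $(k_2,\ldots,k_\ell)$ furnishes a dense open $U_1\subseteq Z_1$ on which the claimed identity already holds. Since CC-values and the iterated DWZ mutation $\mu_{k_\ell}\cdots\mu_{k_2}$ are both invariant under isomorphism of decorated representations, the property defining $U_1$ is $\GL_{\bfd_1}$-invariant, so after replacing $U_1$ by its $\GL_{\bfd_1}$-saturation we may assume $U_1$ is itself $\GL_{\bfd_1}$-invariant. Then
\[
(\nu_{U_0}')^{-1}(U_1)\;=\;\GL_{\bfd_1}\times\nu_{U_0}^{-1}(U_1)
\]
is a nonempty open subset of the irreducible variety $\GL_{\bfd_1}\times U_0$ (nonempty by dominance of $\nu_{U_0}'$ combined with density of $U_1$ in $Z_1$), hence dense; consequently $U:=\nu_{U_0}^{-1}(U_1)$ is dense open in $Z$. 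For $\mathcal{M}\in U$, putting $\mathcal{M}_1:=\nu_{U_0}(\mathcal{M})\cong\mu_{k_1}(\mathcal{M})\in U_1$ and $\mathcal{M}':=\mu_{k_\ell}\cdots\mu_{k_1}(\mathcal{M})$, the inductive hypothesis at $\mathcal{M}_1$ together with isomorphism-invariance of CC and of iterated DWZ mutation yields
\[
CC_{\widetilde{\mathcal{P}}'}(Z')\;=\;CC_{\widetilde{\mathcal{P}}'}\!\left(\mu_{k_\ell}\cdots\mu_{k_2}(\mathcal{M}_1)\right)\;=\;CC_{\widetilde{\mathcal{P}}'}(\mathcal{M}'),
\]
which completes the induction.

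The principal technical point is the pullback step: the regular map $\nu_{U_0}$ is typically \emph{not} dominant onto $Z_1$ in its own right, so density of $\nu_{U_0}^{-1}(U_1)$ in $U_0$ has to be extracted from $\GL_{\bfd_1}$-invariance of $U_1$ combined with the dominance of $\nu_{U_0}'$ supplied by part~(ii) of Theorem~\ref{thm:premut-&-mut-of-reps-are-locally-regular-intro}. The generic $\tau^-$-reducedness of the intermediate components $Z_1,Z_2,\ldots$, needed to keep the induction running, ultimately rests on the upper semicontinuity results of Theorem~\ref{thm:E-invariant-is-upper-semicontinuous-intro}.
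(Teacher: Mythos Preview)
Your induction establishes only the weaker statement $\forall (k_1,\ldots,k_\ell)\ \exists U\ \forall \mathcal{M}\in U$, whereas the corollary asserts $\exists U\ \forall \mathcal{M}\in U\ \forall (k_1,\ldots,k_\ell)$: the dense open subset $U$ must be chosen \emph{uniformly} for all (countably many) mutation sequences at once. Since your $U$ is manufactured by pulling back through a fixed sequence one step at a time, it depends on that sequence, and intersecting over all sequences would in general destroy openness.

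The paper avoids this by bringing in Derksen--Weyman--Zelevinsky's identity \eqref{eq:DWZ's-key-lemma} (from \cite[Lemma~5.2]{DWZ2}): $CC_{\widetilde{\Lambda}}(\mathcal{M})$ and $CC_{\widetilde{\Lambda}'}(\mu_k(\mathcal{M}))$ coincide as elements of the ambient field $\mathcal{F}$. Your pullback argument is essentially what the paper uses to prove, in Corollary~\ref{coro:generic-stays-generic-after-DWZ-mutation}, that likewise $CC_{\widetilde{\Lambda}}(Z)=CC_{\widetilde{\Lambda}'}(\mu_k(Z))$ in $\mathcal{F}$. Iterating these two facts over any sequence gives $CC_{\widetilde{\mathcal{P}}'}(\mathcal{M}')=CC_{\widetilde{\Lambda}}(\mathcal{M})$ and $CC_{\widetilde{\mathcal{P}}'}(Z')=CC_{\widetilde{\Lambda}}(Z)$ in $\mathcal{F}$, for \emph{every} sequence. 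Hence the single condition $CC_{\widetilde{\Lambda}}(\mathcal{M})=CC_{\widetilde{\Lambda}}(Z)$, which defines a constructible set containing a dense open, already forces $CC_{\widetilde{\mathcal{P}}'}(\mathcal{M}')=CC_{\widetilde{\mathcal{P}}'}(Z')$ for all sequences simultaneously; this is the uniform $U$. Without invoking \eqref{eq:DWZ's-key-lemma} your approach does not reach the quantifier order that the statement demands.
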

}

{
As a corollary we get the following mutation invariance of the 
set of generically $\tau^-$-reduced generic CC-functions.
}

\begin{coro}\label{cor:generic-stays-generic-after-DWZ-mutation-intro} 
If $(\widetilde{Q},\widetilde{S})$ is non-degenerate, then for any sequence $(k_1,\ldots,k_\ell)$ of elements of $[n]$, we have 
$$
\mathcal{B}_{(\widetilde{Q},\widetilde{S})}(Q,S)=\mathcal{B}_{\mu_{k_\ell}\cdots\mu_{k_1}(\widetilde{Q},\widetilde{S})}(\mu_{k_\ell}\cdots\mu_{k_1}(Q,S)).
$$
\end{coro}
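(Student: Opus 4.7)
The plan is to argue by induction on $\ell$, reducing to the case of a single mutation at some $k\in[n]$. Since $(\widetilde{Q},\widetilde{S})$ is non-degenerate, so is $(Q,S)$ by \cite[Corollary~22]{LF1}, and after replacing both by right-equivalent quivers with potential (an operation that affects neither Jacobian algebras, nor decorated representation varieties, nor CC-functions) we may assume that $Q$ has no $2$-cycle incident to $k$, so that Theorem~\ref{thm:premut-&-mut-of-reps-are-locally-regular-intro} is applicable. Part~(iii) of that theorem produces a bijection $Z\mapsto\mu_k(Z)$ between $\decirrsr(\mathcal{P})$ and $\decirrsr(\mathcal{P}')$, where $\mathcal{P}:=\jacobalg{Q}{S}$ and $\mathcal{P}':=\jacobalg{\mu_k(Q}{S)}$. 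It will therefore suffice to establish the pointwise equality $CC_{\widetilde{\mathcal{P}}}(Z)=CC_{\widetilde{\mathcal{P}}'}(\mu_k(Z))$ for every $Z\in\decirrsr(\mathcal{P})$.

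To do so, I would pick a decorated representation $\mathcal{M}$ lying in the dense open subset $U\subseteq Z$ furnished by Corollary~\ref{cor:1.8b}, shrunk if necessary so that the function $CC_{\widetilde{\mathcal{P}}}(-)$ also attains its generic value on $\mathcal{M}$, and then chain three identities:
$$
CC_{\widetilde{\mathcal{P}}}(Z)
\;\stackrel{\mathrm{(i)}}{=}\;
CC_{\widetilde{\mathcal{P}}}(\mathcal{M})
\;\stackrel{\mathrm{(ii)}}{=}\;
CC_{\widetilde{\mathcal{P}}'}(\mu_k(\mathcal{M}))
\;\stackrel{\mathrm{(iii)}}{=}\;
CC_{\widetilde{\mathcal{P}}'}(\mu_k(Z)).
$$
Here (i) is simply the definition of the generic Caldero--Chapoton function of $Z$, (iii) is exactly Corollary~\ref{cor:1.8b} specialized to $\ell=1$, and (ii) is Derksen--Weyman--Zelevinsky's invariance of the Caldero--Chapoton map under mutation of decorated representations \cite{DWZ2}, upgraded to the coefficient-bearing setting of the rectangular exchange matrix $\widetilde{B}$ via the extended $\g$-vector (Proposition~\ref{prop:restricted-g-vector-vs-extended-g-vector-intro}).

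For the inductive step, I would assume the statement for all sequences of length $\ell-1$ and apply the base case to the mutated data $\mu_{k_{\ell-1}}\cdots\mu_{k_1}(\widetilde{Q},\widetilde{S})$ and $\mu_{k_{\ell-1}}\cdots\mu_{k_1}(Q,S)$ at the vertex $k_\ell$. This invokes the fact that restriction to $[n]$ commutes with mutation at vertices of $[n]$, namely $\mu_k(\widetilde{Q},\widetilde{S})|_{[n]}=\mu_k(Q,S)$, which is immediate from the DWZ mutation formulas because arrows and potential terms incident only to the frozen vertices $n+1,\dots,n+m$ are untouched by premutation and by reduction at a vertex of $[n]$. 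The main technical point in the whole argument is identity (ii) in its coefficient-bearing form; once Proposition~\ref{prop:restricted-g-vector-vs-extended-g-vector-intro} is available, this reduces to the classical DWZ identity in the first $n$ entries together with the compatibility of the remaining $m$ entries with the natural mutation of $\widetilde{B}$, after which everything else is formal manipulation of dense open subsets.
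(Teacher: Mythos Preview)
Your proof is correct and follows essentially the same approach as the paper: the paper deduces the corollary from Corollary~\ref{coro:generic-stays-generic-after-DWZ-mutation}, whose proof likewise picks a generic $\mathcal{M}$ (more precisely an $\mathcal{L}\in U$ pulled back from a dense open subset of $\mu_k(Z)$), invokes the bijection of Theorem~\ref{thm:premut-&-mut-of-reps-are-locally-regular-intro}(iii), and chains the same three identities, with your step~(ii) appearing as equation~\eqref{eq:DWZ's-key-lemma}. One small remark: the paper obtains~(ii) directly from \cite[Lemma~5.2]{DWZ2} in the full $\widetilde{B}$-setting rather than via Proposition~\ref{prop:restricted-g-vector-vs-extended-g-vector-intro}, so your invocation of that proposition there is not really needed.
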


\begin{remark}\label{rem:comparison-to-previous-approaches-intro} \begin{enumerate}

\item 
Corollary~\ref{cor:generic-stays-generic-after-DWZ-mutation-intro} states in particular that $\mathcal{B}_{(\widetilde{Q},\widetilde{S})}(Q,S)$ is contained in the upper cluster algebra of $\widetilde{B}$. Results of Dersken-Weyman-Zelevinsky \cite{DWZ2} imply that all cluster monomials belong to $\mathcal{B}_{(\widetilde{Q},\widetilde{S})}(Q,S)$.

\item\label{item:rem-comparison-to-previous-approaches-intro-GLS-and-Plamodon} 
Geiss-Leclerc-Schr\"{o}er proved Corollary~\ref{cor:generic-stays-generic-after-DWZ-mutation-intro} for certain Jacobi-finite non-degenerate ice QPs $(\widetilde{Q},\widetilde{S})$ arising in a specific Lie-theoretic framework \cite{GLS3}. 
Plamondon generalized their result to the situation where the ice quiver with potential $(\widetilde{Q},\widetilde{S})$ is assumed to be Jacobi-finite and non-degenerate \cite{Plamondon-generic}. 

\item\label{item:rem-comparison-to-previous-approaches-intro-Linear-Alg-vs-cluster-categories}
{
Our approach to model Derksen-Weyman-Zelevinsky's mutation
of decorated representations via regular functions 
between certain open subsets of  representation varieties
is rather elementary.
It uses only Linear Algebra and some basics on regular maps between
affine varieties.
In contrast to \cite{Plamondon-generic},
we do not need to assume the Jacobi-finiteness of the potential, and we also do not need generalized cluster categories or 
Ginzburg dg-algebras.
}

\end{enumerate}
\end{remark}

A remarkable theorem of Plamondon \cite{Plamondon-generic} states that if $\Lambda$ is a finite-dimensional algebra over an algebraically closed field, then the $\g$-vector function 
$$
\g^{\Lambda}_-:\decirrsr(\Lambda)\rightarrow\Z^{n}
$$ 
is bijective. 
A result of Cerulli-Labardini-Schr\"{o}er \cite{CLS} shows that this map is still injective for arbitrary quotients of complete path algebras of finite quivers. Combining this with Theorem~\ref{thm:criterion-for-lin-ind-of-CC-functions-intro} and Corollary~\ref{cor:generic-stays-generic-after-DWZ-mutation-intro}, we obtain the following:

\begin{coro}\label{coro:lin-ind-of-generic-basis-in-general-intro} If $(\widetilde{Q},\widetilde{S})$ is non-degenerate and at least one matrix $B'$ in the mutation-equivalence class of $B$ satisfies $\Ker(B')\cap\ZZ_{\geq 0}^n$, then $\mathcal{B}_{(\widetilde{Q},\widetilde{S})}(Q,S)$ is an $R$-linearly independent subset of 
the upper cluster algebra with geometric coefficients $\mathcal{A}(\widetilde{B})$. 
\end{coro}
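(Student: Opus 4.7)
The plan is to transport the linear-independence question, via a suitable mutation sequence, to a setting in which Theorem~\ref{thm:criterion-for-lin-ind-of-CC-functions-intro} applies directly, and then use the mutation invariance of the generic basis furnished by Corollary~\ref{cor:generic-stays-generic-after-DWZ-mutation-intro} to pull the conclusion back. The containment in the upper cluster algebra is recorded in Remark~\ref{rem:comparison-to-previous-approaches-intro}(1); the substance of the statement is the $R$-linear independence.

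First, I would pick a matrix $B'$ in the mutation-equivalence class of $B$ with $\Ker(B')\cap\ZZ^n_{\geq 0}=0$, together with a sequence $(k_1,\ldots,k_\ell)$ in $[n]$ realizing $\mu_{k_\ell}\cdots\mu_{k_1}(B)=B'$. The non-degeneracy of $(\widetilde{Q},\widetilde{S})$ ensures that $(\widetilde{Q}',\widetilde{S}'):=\mu_{k_\ell}\cdots\mu_{k_1}(\widetilde{Q},\widetilde{S})$ is well-defined, and the compatibility of QP-mutation with matrix mutation---together with the fact that the frozen vertices $n+1,\ldots,n+m$ are untouched---implies that the mutable part of its signed-adjacency matrix is precisely $B'$. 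Writing $(Q',S')$ for the restriction of $(\widetilde{Q}',\widetilde{S}')$ to $[n]$, $\widetilde{\Lambda}':=\jacobalg{\widetilde{Q}'}{\widetilde{S}'}$, and $\Lambda':=\jacobalg{Q'}{S'}$, Corollary~\ref{cor:generic-stays-generic-after-DWZ-mutation-intro} gives
$$
\mathcal{B}_{(\widetilde{Q},\widetilde{S})}(Q,S)=\mathcal{B}_{(\widetilde{Q}',\widetilde{S}')}(Q',S')
$$
inside the ambient field $\calF$, so it suffices to prove the $R$-linear independence of the right-hand side.

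For each $Z'\in\decirrsr(\Lambda')$, select a decorated representation $\mathcal{M}_{Z'}\in Z'$ lying in the dense open locus on which both the generic $\g$-vector and the generic CC-function are attained. By the Cerulli-Labardini-Schr\"oer injectivity of $\g^{\Lambda'}_-\colon\decirrsr(\Lambda')\to\ZZ^n$ cited immediately before the corollary, the vectors $\{\g^{\Lambda'}_{\mathcal{M}_{Z'}}\}$ are pairwise distinct. Since $\Ker(B')\cap\ZZ^n_{\geq 0}=0$ by choice of $B'$, Theorem~\ref{thm:criterion-for-lin-ind-of-CC-functions-intro} applied to $\widetilde{\Lambda}'$ and $\Lambda'$ with $\mathscr{M}:=\{\mathcal{M}_{Z'}\suchthat Z'\in\decirrsr(\Lambda')\}$ yields $R$-linear independence of $CC_{\widetilde{\Lambda}'}(\mathscr{M})$ inside the Laurent polynomial ring of the mutated cluster. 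This set coincides with $\mathcal{B}_{(\widetilde{Q}',\widetilde{S}')}(Q',S')$ as a subset of $\calF$, and hence the linear independence transfers back to $\mathcal{B}_{(\widetilde{Q},\widetilde{S})}(Q,S)$ inside $R[x_1^{\pm1},\ldots,x_n^{\pm1}]$.

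The main obstacle I anticipate is the bookkeeping of the ``change of cluster'' in the transfer step: one must verify that the ground ring $R$ is genuinely preserved under the mutation (so that $R$-linear independence is the same notion in the initial and in the mutated Laurent rings) and that the two generic bases, presented in different Laurent rings, really do coincide as subsets of $\calF$. Once those compatibilities are in place, the remaining ingredients---non-degeneracy (to transfer the kernel hypothesis), Corollary~\ref{cor:generic-stays-generic-after-DWZ-mutation-intro} (mutation invariance of the generic basis), the CLS $\g$-vector injectivity (to separate distinct components), and Theorem~\ref{thm:criterion-for-lin-ind-of-CC-functions-intro}---combine cleanly.
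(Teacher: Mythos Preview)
Your proposal is correct and follows essentially the same route as the paper: mutate to a seed whose square matrix $B'$ satisfies $\Ker(B')\cap\ZZ_{\geq 0}^n=0$, invoke the mutation invariance of the generic basis (Corollary~\ref{cor:generic-stays-generic-after-DWZ-mutation-intro}), use the injectivity of $\g^{\Lambda'}_-$ on $\decirrsr(\Lambda')$ (Theorem~\ref{thm:different-s.r.i.c.-have-different-g-vectors}), and then apply Theorem~\ref{thm:criterion-for-lin-ind-of-CC-functions-intro}. The paper records this as an immediate combination of these three results and does not spell out the change-of-cluster bookkeeping you flag, but since only vertices in $[n]$ are mutated the ground ring $R$ is indeed unchanged.
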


Subsection \ref{subsec:cluster-variables} has two aims. One is to illustrate the necessity to assume that $(\widetilde{Q},\widetilde{S})$, and not only $(Q,S)$, be non-degenerate in order to be able to express cluster variables as Caldero-Chapoton functions. The second aim is to recall how, by the work of Derksen-Weyman-Zelevinsky \cite{DWZ2}, cluster monomials can be written as generic Caldero-Chapoton functions of generically $\tau^-$-reduced components of representation spaces of quivers with potential.

\subsection{Applications to surface cluster algebras}\label{subsec:applications-to-surfaces}

{The last 15 years have seen a lot of research aimed at finding and understanding
various types of natural bases for cluster algebras, ``natural'' meaning in particular that cluster monomials must be contained in the bases proposed (cluster monomials in skew-symmetric cluster algebras were shown to be linearly independent in \cite{CKLP}). 
This arguably culminated in the work \cite{GHKK} of Gross-Hacking-Keel-Kontsevich on theta-bases.
In the case of skew-symmetric cluster algebras, another remarkable candidate for basis is the set of generically $\tau^{-1}$-reduced Caldero-Chapoton-functions proposed by \cite{GLS3} and inspired in Lusztig's dual semicanonical basis, see 
\cite{Lu} and also \cite{GLS1,GLS2}.

Recently, Qin showed in \cite{Qin} that the generic CC-functions
form  indeed a basis for the upper cluster algebra if the mutable part of the corresponding
QP has the injective reachable property and the extended exchange matrix has
full rank. These hypothesis are for example fulfilled for surface cluster
algebras corresponding to surfaces with non empty boundary and principal
coefficients, since they admit maximal green sequences~\cite{ACCERV,Mills1}.  By a result
of Muller \cite{Muller1,Muller2}, for surface cluster algebras corresponding to surfaces with
at least two marked points in the boundary, coincide for any choice of
geomtric coefficients with their upper cluster algebra.  

Using these observations, we show here that the generic CC-functions are
actually a basis of the above mentioned cluster algebras for any choice of
geometric coefficients. Our strategy consists roughly speaking of two
steps:
\begin{enumerate}
\item\label{item:intro-linear-ind} We can show, with the help of an adequate triangulation,  that
the generic CC-functions are linearly independet for each choice of
coefficients.
\item\label{item:intro:spanning} The spanning
property of the generic CC-functions is preserved under coefficient
specialization because the CC-algebra sits between the cluster algebra
and its upper cluster algebra.
\end{enumerate}

The main technical difficulty in dealing with \eqref{item:intro:spanning} was that generic CC-functions
previously were only considered when the quiver for the extended exchange matrix
admits a Jacobi-finite non-degenerate potential. This Jacobi-finiteness can
not be expected for arbitrary choices of coefficients. For this reason we are not able to simply rely on the results of \cite{Plamondon-generic}, obtained via cluster categories of Jacobi-finite non-degenerate quivers with potential; in order to encompass all possible choices of geometric coefficients, we need the mutation invariance of the set of generic Caldero-Chapoton functions even in the infinite-dimensional case.}
}

\subsubsection{Bypasses of a gentle quiver with potential and matrix column identities}

In Subsection \ref{sec:formula-for-special-bands} we show that if $(Q,W)$ is a 2-acyclic Jacobi-finite QP with gentle Jacobian algebra, then the \emph{bypasses} and \emph{almost bypasses} of $Q$ give rise to certain linear identities satisfied by the columns of the signed adjacency matrix of $Q$. It turns out that in an unpunctured surface with at most two marked points on each boundary component, around every boundary component runs a bypass or almost bypass of the QP of any of its triangulations.

\subsubsection{Linear independence of the set of generically $\tau^-$-reduced CC-functions}

Using the matrix column identities proved in Subsection \ref{sec:formula-for-special-bands}, in Subsection \ref{sec:lin-ind-for-surfaces} we show the fifth main result of this paper, namely:

\begin{thm}\label{thm:existence-triang-with-nice-matrix-intro} Every surface $\surf$ such that $\partial\Sigma\neq\varnothing$ admits a triangulation $\sigma$ whose associated skew-symmetric matrix $B(\sigma)$ satisfies $\Ker(B(\sigma))\cap\ZZ^n_{\geq 0}$.
\end{thm}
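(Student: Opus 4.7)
The plan is to exhibit, for any $\surf$ with $\partial\Sigma\neq\varnothing$, an explicit triangulation $\sigma$ whose signed-adjacency matrix admits, around every boundary component, a bypass or almost bypass of $Q(\sigma)$; and then to leverage the column identities of $B(\sigma)$ established in Subsection~\ref{sec:formula-for-special-bands} to rule out every nonzero nonnegative vector in $\Ker(B(\sigma))$. First I would construct $\sigma$ by treating each boundary component $C\subseteq\partial\Sigma$ locally. If $C$ carries at least two marked points $p_1,\ldots,p_k$ (in cyclic order), I would include a ``fan'' of arcs emanating from $p_1$, arranged so that the triangle of $\sigma$ adjacent to the boundary segment from $p_1$ to $p_2$ has its two non-boundary sides forming a bypass. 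If $C$ carries a single marked point $p$, I would instead include a suitable loop at $p$ so that the two adjacent triangles realize an almost bypass. Punctures are then attached to convenient nearby marked points by single arcs enclosed in triangles, in a way that avoids creating $2$-cycles and keeps the local structure near each boundary component compatible with the gentle-like hypotheses of Subsection~\ref{sec:formula-for-special-bands}.

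Next, given $\mathbf{v}=(v_i)_{i\in\sigma}\in\Ker(B(\sigma))\cap\ZZ^n_{\geq 0}$, I would apply, for each boundary component, the column identity associated to the bypass or almost bypass constructed above. Each such identity is a linear relation among columns of $B(\sigma)$ with a prescribed sign pattern; combining it with $B(\sigma)\mathbf{v}=0$ and with $v_i\geq 0$ forces the entries of $\mathbf{v}$ indexed by the arcs participating in the identity to vanish. I would then propagate this local vanishing inward, row by row of $B(\sigma)\mathbf{v}=0$. Since $\surf$ is connected and every internal arc of $\sigma$ shares a triangle with another arc, the incidence structure of $Q(\sigma)$ is connected, and the nonnegativity of $\mathbf{v}$ together with each subsequent equation cascades the vanishing from the arcs of the boundary fans to all remaining arcs of $\sigma$, yielding $\mathbf{v}=0$.

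The main obstacle is the treatment of punctures and of boundary components carrying only one marked point. In the punctured case the Jacobian algebra of $\qssigma$ is not gentle, so the results of Subsection~\ref{sec:formula-for-special-bands} do not apply verbatim; the workaround is to choose $\sigma$ so that every puncture is isolated in a small region whose combinatorics near the boundary mimics the unpunctured gentle setting, allowing the desired column identities to be extracted locally. When a boundary component has exactly one marked point, a genuine bypass is unavailable, so one is forced to rely on the weaker almost-bypass identities, which involve loops and thus require a finer sign analysis to still conclude vanishing of the relevant $v_i$'s. Producing a single triangulation $\sigma$ that makes all these local identities simultaneously available, and then verifying that inward propagation indeed terminates at $\mathbf{v}=0$ regardless of the topology of $\Sigma$ and the distribution of marked points, is the crux of the proof.
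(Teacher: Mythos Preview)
Your proposal has a genuine gap in the central deduction. The column identity from Subsection~\ref{sec:formula-for-special-bands}, namely $\sum_{k=1}^m \mathbf{b}_{j_k} = c(\mathbf{e}_{j_m}-\mathbf{e}_{j_1})$ with $c\in\{1,2\}$, does \emph{not} force any entry of a nonnegative kernel vector $\mathbf{v}$ to vanish. Since $B(\sigma)$ is skew-symmetric, $B(\sigma)\mathbf{v}=0$ implies $\langle\mathbf{b}_j,\mathbf{v}\rangle=0$ for every $j$; pairing the column identity with $\mathbf{v}$ then yields only $v_{j_m}=v_{j_1}$, an equality rather than a vanishing. Your subsequent ``propagation inward'' is therefore launched from nothing: no entry has yet been shown to be zero. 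Bear in mind that by Theorem~\ref{thm:corank-of-Btau} the kernel of $B(\sigma)$ has dimension $|\punct|+c_{\operatorname{even}}$, so there are genuinely nonzero kernel vectors to contend with; a vague connectivity-plus-nonnegativity argument cannot succeed without much finer control.

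The paper takes a structurally different route: rather than attacking $\Ker(B(\sigma))\cap\ZZ^n_{\geq 0}=0$ directly, it verifies the stronger condition \eqref{item:B-satisfies-columns-condition} of Remark~\ref{rem:the-5-conditions-on-B}, namely that $\operatorname{corank}(B(\sigma))$ specific columns are $\Q_{\geq 0}$-combinations of the remaining $\rank(B(\sigma))$ columns. The triangulation $\sigma$ is built in stages: start from an unpunctured surface with \emph{one} marked point per boundary component (so bypasses exist around every component), then insert extra boundary points via explicit fans, then add punctures via self-folded triangles. The column identities of Proposition~\ref{prop:sum-of-columns} are used not to annihilate entries of $\mathbf{v}$ but to express $\mathbf{e}_{t(\pi_\kappa)}-\mathbf{e}_{s(\pi_\kappa)}$ as a nonnegative column combination, which is then substituted into explicit formulas such as \eqref{eq:positive-combination} and \eqref{eq:positive-combination-sigma-kappa} for the ``redundant'' columns. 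The corank formula (Theorem~\ref{thm:corank-of-Btau}) certifies that exactly the right number of columns have been accounted for. Your construction also inverts the roles: in the paper, a boundary component with one marked point gives a bypass, while an almost bypass appears when a second point is added (the new arc becomes a degree-two vertex in $Q(\sigma)$).
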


From Theorem \ref{thm:existence-triang-with-nice-matrix-intro} and Corollary \ref{coro:lin-ind-of-generic-basis-in-general-intro} we immediately deduce:

\begin{coro}\label{coro:lin-ind-for-surfs-arbitrary-coeffs-intro} If $Q=Q(\tau)$ for some tagged triangulation $\tau$ of a surface $\surf$ with non-empty boundary, and $(\widetilde{Q},\widetilde{S})$ is non-degenerate, then $\calB_{(\widetilde{Q},\widetilde{S})}(Q,S)$ is linearly independent over $R=\Q[x_{n+1}^{\pm1},\ldots,x_{n+m}^{\pm1}]$.
\end{coro}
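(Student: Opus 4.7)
The statement is advertised in the text as an immediate consequence of the two results it cites, so the plan is essentially to assemble them. First, I would apply Theorem~\ref{thm:existence-triang-with-nice-matrix-intro} to the surface $\surf$ (which has $\partial\Sigma\neq\varnothing$ by hypothesis, since $Q=Q(\tau)$ with $\tau$ a tagged triangulation) to obtain an ordinary triangulation $\sigma$ of $\surf$ whose signed-adjacency matrix $B(\sigma)\in\ZZ^{n\times n}$ satisfies $\Ker(B(\sigma))\cap\ZZ^n_{\geq 0}=0$. Here I use that $\tau$ and $\sigma$ are triangulations of the \emph{same} surface, so they define square exchange matrices of the same size $n$.

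The key intermediate step is to observe that the $n\times n$ skew-symmetric matrix $B=B(\tau)$ (the top square part of $\widetilde{B}$) lies in the same matrix mutation class as $B(\sigma)$. This is standard surface cluster algebra theory: by Fomin--Shapiro--Thurston, the exchange graph of tagged triangulations of $\surf$ is connected, any ordinary triangulation of a surface with non-empty boundary is a tagged triangulation (with all tags plain), and a tagged flip induces a matrix mutation of the associated skew-symmetric matrices at the corresponding vertex. Hence there is a sequence $(k_1,\ldots,k_\ell)$ in $[n]$ with $B(\tau)=\mu_{k_\ell}\cdots\mu_{k_1}(B(\sigma))$.

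Having placed $B(\tau)$ in the mutation class of a matrix satisfying the kernel condition, and given the standing hypothesis that $(\widetilde{Q},\widetilde{S})$ is non-degenerate, the hypotheses of Corollary~\ref{coro:lin-ind-of-generic-basis-in-general-intro} are satisfied verbatim: there exists a matrix $B'=B(\sigma)$ in the mutation-equivalence class of $B$ with $\Ker(B')\cap\ZZ^n_{\geq 0}=0$. Invoking that corollary yields the desired $R$-linear independence of $\calB_{(\widetilde{Q},\widetilde{S})}(Q,S)$ inside $R[x_1^{\pm1},\ldots,x_n^{\pm1}]$.

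The only conceptual point that requires care, rather than an actual obstacle, is the passage from tagged to ordinary triangulations when $\Sigma$ has punctures; this is harmless because the hypothesis $\partial\Sigma\neq\varnothing$ guarantees the existence of an ordinary triangulation, and the mutation equivalence of the corresponding matrices is a well-established input. No additional argument about $\widetilde{B}$, non-degeneracy of coefficients, or full-rank assumptions is needed, since Corollary~\ref{coro:lin-ind-of-generic-basis-in-general-intro} is formulated precisely so that only the kernel condition on some matrix in the mutation class of $B$ is required.
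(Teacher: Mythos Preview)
Your proposal is correct and follows essentially the same route as the paper: combine Theorem~\ref{thm:existence-triang-with-nice-matrix-intro} with Corollary~\ref{coro:lin-ind-of-generic-basis-in-general-intro}, using the Fomin--Shapiro--Thurston fact that any two tagged triangulations of $\surf$ are connected by flips compatible with matrix mutation. The paper's own proof (of the restated Corollary~\ref{coro:lin-ind-for-surfs-arbitrary-coeffs}) is a one-line version of exactly this argument.
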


\subsubsection{Spanning the Caldero-Chapoton algebra with coefficients}

Recall that Qin \cite{Qin} has recently proved that if the rectangular matrix $\widetilde{B}\in\ZZ^{(n+m)\times n}$ has full rank, and the mutable part $Q$ of $\widetilde{Q}$ admits a reddening mutation sequence, then for any non-degenerate potential $\widetilde{S}\in\RA{\widetilde{Q}}$, the set $\calB_{(\widetilde{Q},\widetilde{S})}(Q,S)$ is a basis of the upper cluster algebra $\upper(\widetilde{B})$ over the ground ring $R$. Recall also that Mills \cite{Mills1} has shown that for any triangulation $\tau$ of a surface with non-empty boundary $\surf$, the quiver $Q(\tau)$ admits a reddening mutation sequence. Using these two important results, in Section \ref{sec:spanning-the-CC-algebra} we prove the following:

\begin{coro}\label{coro:gen-basis-is-basis-of-CC-alg-for-surfaces-intro}  If $Q=Q(\tau)$ for some tagged triangulation $\tau$ of a surface $\surf$ with non-empty boundary, then there exists a non-degenerate potential $\widetilde{S}\in\RA{\widetilde{Q}}$ such that $\calB_{(\widetilde{Q},\widetilde{S})}(Q,S)$ is a basis of the Caldero-Chapoton algebra $\calCC_{(\widetilde{Q},\widetilde{S})}(Q,S)$ over the ground ring $R$.
\end{coro}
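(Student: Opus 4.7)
The plan is to combine the linear independence from Corollary~\ref{coro:lin-ind-for-surfs-arbitrary-coeffs-intro} with a two-step argument for spanning: first establish it in the principal-coefficients case using a theorem of Qin, then transfer it to arbitrary coefficients using Theorem~\ref{thm:spanning-set-in-princ-coeffs=>spanning-set-in-arbitrary-coeffs-intro}. To begin, I would fix any non-degenerate potential $\widetilde{S}\in\RA{\widetilde{Q}}$; such a potential exists because the mutable part $(Q(\tau)|_{[n]},S(\tau))$ admits a non-degenerate potential \cite{LF1}, and any non-degenerate potential on the mutable part may be extended to a non-degenerate potential on the full ice quiver $\widetilde{Q}$. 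Corollary~\ref{coro:lin-ind-for-surfs-arbitrary-coeffs-intro} then gives that $\calB_{(\widetilde{Q},\widetilde{S})}(Q,S)$ is $R$-linearly independent; since $\calB_{(\widetilde{Q},\widetilde{S})}(Q,S)\subseteq\calCC_{(\widetilde{Q},\widetilde{S})}(Q,S)$ by definition of the Caldero-Chapoton algebra, only the spanning property remains to be shown.

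For the principal-coefficients case, let $S_{\prin}$ be the potential on $Q_{\prin}$ obtained by viewing $S=\widehat{\rho}(\widetilde{S})$ as an element of $\RA{Q_{\prin}}$; since the new arrows $j\to j'$ are incident to isolated frozen vertices $j'$, they cannot appear in any cycle, so $(Q_{\prin},S_{\prin})$ is non-degenerate whenever $(\widetilde{Q}|_{[n]},\widetilde{S}|_{[n]})$ is. Mills's theorem \cite{Mills1} asserts that $Q(\tau)$ admits a maximal green sequence, hence the mutable part is injectively reachable, and $B_{\prin}$ has full rank by construction. Qin's basis theorem \cite{Qin} (see Remark~\ref{rem:some-history-of-generic-CC-functions}) then implies that $\calB_{(Q_{\prin},S_{\prin})}(Q,S)$ is an $R_{\prin}$-basis of the upper cluster algebra $\upper(B_{\prin})$. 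Combined with the inclusions $\calB_{(Q_{\prin},S_{\prin})}(Q,S)\subseteq\calCC_{(Q_{\prin},S_{\prin})}(Q,S)\subseteq\upper(B_{\prin})$, where the second inclusion follows from the mutation invariance of CC-functions encoded in Corollary~\ref{cor:1.8b}, this forces $\calB_{(Q_{\prin},S_{\prin})}(Q,S)$ to span $\calCC_{(Q_{\prin},S_{\prin})}(Q,S)$ over $R_{\prin}$.

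Finally, to transfer the spanning to arbitrary coefficients, for each $Z\in\decirrsr(\Lambda)$ I would pick a representative $\mathcal{M}_Z\in Z$ lying simultaneously in the two open dense subsets of $Z$ on which $CC_{\Lambda_{\prin}}$ and $CC_{\widetilde{\Lambda}}$ take their respective generic values $CC_{\Lambda_{\prin}}(Z)$ and $CC_{\widetilde{\Lambda}}(Z)$; these two open dense subsets intersect densely because $Z$ is irreducible. For the resulting set $\mathscr{M}:=\{\mathcal{M}_Z\mid Z\in\decirrsr(\Lambda)\}$, one has $CC_{\Lambda_{\prin}}(\mathscr{M})=\calB_{(Q_{\prin},S_{\prin})}(Q,S)$ and $CC_{\widetilde{\Lambda}}(\mathscr{M})=\calB_{(\widetilde{Q},\widetilde{S})}(Q,S)$. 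Since the previous paragraph establishes that $CC_{\Lambda_{\prin}}(\mathscr{M})$ spans $\calCC_{\Lambda_{\prin}}(\Lambda)$ over $R_{\prin}$, Theorem~\ref{thm:spanning-set-in-princ-coeffs=>spanning-set-in-arbitrary-coeffs-intro} yields the desired conclusion. The main technical obstacle I anticipate is justifying the inclusion $\calCC_{(Q_{\prin},S_{\prin})}(Q,S)\subseteq\upper(B_{\prin})$ for \emph{every} decorated representation rather than merely the generic ones: DWZ mutation of decorated representations is defined algebraically in every case, but the mutation invariance recorded in Corollary~\ref{cor:1.8b} is stated only at generic points of generically $\tau^-$-reduced components, so a separate pointwise invariance argument may be required to bridge this gap.
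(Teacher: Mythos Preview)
Your overall strategy matches the paper's: linear independence from Corollary~\ref{coro:lin-ind-for-surfs-arbitrary-coeffs-intro}, spanning in principal coefficients via Mills and Qin, then transfer to arbitrary coefficients via Theorem~\ref{thm:spanning-set-in-princ-coeffs=>spanning-set-in-arbitrary-coeffs-intro}. Two points deserve comment.

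First, your justification that $(Q_{\prin},S)$ is non-degenerate is incomplete. You argue that the new arrows $j\to j'$ cannot appear in any cycle, but non-degeneracy requires the absence of $2$-cycles after \emph{every} finite sequence of mutations, not just in the initial quiver; after mutating, the frozen vertices $j'$ will typically be joined to several mutable vertices and your argument no longer applies. The paper avoids this issue entirely via Lemma~\ref{lemma:simultaneously-nondeg-potential}: one forms the quiver $\widetilde{Q}'$ obtained from $\widetilde{Q}$ by adjoining the principal-coefficient vertices and arrows, invokes Derksen--Weyman--Zelevinsky's existence theorem to get a non-degenerate potential on $\widetilde{Q}'$, and then restricts to both $\widetilde{Q}$ and $Q_{\prin}$ (restriction preserves non-degeneracy). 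This is why the statement only asserts the \emph{existence} of a suitable $\widetilde{S}$, rather than claiming the conclusion for every non-degenerate $\widetilde{S}$.

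Second, the obstacle you flag at the end is not a gap. The inclusion $\calCC_{(Q_{\prin},S)}(Q,S)\subseteq\upper(B_{\prin})$ for \emph{all} decorated representations follows from Derksen--Weyman--Zelevinsky's pointwise mutation invariance \cite[Lemma~5.2]{DWZ2}, recorded in the paper as~\eqref{eq:DWZ's-key-lemma}: for any decorated representation $\mathcal{M}$ and any mutable vertex $k$, the elements $CC_{\widetilde{\Lambda}}(\mathcal{M})$ and $CC_{\widetilde{\Lambda}'}(\mu_k(\mathcal{M}))$ coincide in $\calF$. Iterating shows every $CC$-function is a Laurent polynomial in every cluster, hence lies in the upper cluster algebra. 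Corollary~\ref{cor:1.8b} is the genericity refinement needed for the \emph{set} $\calB$, but the pointwise statement suffices for the inclusion of $\calCC$ in $\upper$.
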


\subsubsection{$\calB_{(\widetilde{Q},\widetilde{S})}(Q,S)$ is a basis of the upper cluster algebra with arbitrary geometric coefficients}

Muller has shown that if $B=B(\tau)$ for some tagged triangulation of a surface $\surf$ with at least two marked points on the boundary, then $\mathcal{A}(\widetilde{B})=\upper(\widetilde{B})$ (see Definition \ref{def:precise-def-of-cluster-alg-and-upper-cluster-alg}). Combining this with Corollary \ref{coro:gen-basis-is-basis-of-CC-alg-for-surfaces-intro}, we arrive at the sixth main result of the present article:

\begin{thm} If $B=B(\tau)$ for some tagged triangulation $\tau$ of a surface with non-empty boundary $\surf$ with at least two marked points on the boundary, then $\calB_{(\widetilde{Q},\widetilde{S})}(Q,S)$ is a basis of the (upper) cluster algebra with coefficients $\mathcal{A}(\widetilde{B})=\upper(\widetilde{B})$ over the ground ring $R=\Q[x_{n+1}^{\pm1},\ldots,x_{n+m}^{\pm1}]$.
\end{thm}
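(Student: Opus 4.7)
The plan is to sandwich the Caldero-Chapoton algebra $\calCC_{(\widetilde{Q},\widetilde{S})}(Q,S)$ between the cluster algebra $\mathcal{A}(\widetilde{B})$ and the upper cluster algebra $\upper(\widetilde{B})$, and then invoke Muller's equality of these two. Since $\tau$ is a tagged triangulation of a surface with non-empty boundary, Corollary~\ref{coro:gen-basis-is-basis-of-CC-alg-for-surfaces-intro} gives us a non-degenerate potential $\widetilde{S}\in\RA{\widetilde{Q}}$ for which $\calB_{(\widetilde{Q},\widetilde{S})}(Q,S)$ is an $R$-basis of $\calCC_{(\widetilde{Q},\widetilde{S})}(Q,S)$. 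The rest of the argument will only depend on obtaining the chain of containments $\mathcal{A}(\widetilde{B}) \subseteq \calCC_{(\widetilde{Q},\widetilde{S})}(Q,S) \subseteq \upper(\widetilde{B})$.

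For the left inclusion, I would recall that by Derksen-Weyman-Zelevinsky \cite{DWZ2} every cluster monomial of $\mathcal{A}(\widetilde{B})$ arises as $CC_{\widetilde{\Lambda}}(\mathcal{M})$ of a suitable decorated representation $\mathcal{M}$, and the discussion in Subsection~\ref{subsec:cluster-variables} records that these decorated representations can be taken to be generic points of generically $\tau^-$-reduced irreducible components. Hence every cluster variable and every cluster monomial lies in $\calCC_{(\widetilde{Q},\widetilde{S})}(Q,S)$, and since this is an $R$-subalgebra of $R[x_1^{\pm 1},\ldots,x_n^{\pm 1}]$, the whole cluster algebra $\mathcal{A}(\widetilde{B})$ is contained in it. For the right inclusion, Corollary~\ref{cor:generic-stays-generic-after-DWZ-mutation-intro} says that $\mathcal{B}_{(\widetilde{Q},\widetilde{S})}(Q,S)=\mathcal{B}_{\mu_{k_\ell}\cdots\mu_{k_1}(\widetilde{Q},\widetilde{S})}(\mu_{k_\ell}\cdots\mu_{k_1}(Q,S))$ for every mutation sequence, while Corollary~\ref{cor:1.8b} gives the corresponding equality at the level of Laurent expansions in each mutated cluster. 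This shows that every element of $\calB_{(\widetilde{Q},\widetilde{S})}(Q,S)$, and therefore every element of the $R$-module it spans, admits a Laurent expansion in every cluster, hence lies in $\upper(\widetilde{B})$.

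Next I would apply Muller's theorem \cite{Muller1,Muller2}: because $\tau$ is a tagged triangulation of a surface with at least two marked points on the boundary, we have $\mathcal{A}(\widetilde{B})=\upper(\widetilde{B})$. The two-sided sandwich
$$
\mathcal{A}(\widetilde{B}) \;\subseteq\; \calCC_{(\widetilde{Q},\widetilde{S})}(Q,S) \;\subseteq\; \upper(\widetilde{B}) \;=\; \mathcal{A}(\widetilde{B})
$$
then forces all three $R$-subalgebras of $R[x_1^{\pm 1},\ldots,x_n^{\pm 1}]$ to coincide. Combined with Corollary~\ref{coro:gen-basis-is-basis-of-CC-alg-for-surfaces-intro}, this yields that $\calB_{(\widetilde{Q},\widetilde{S})}(Q,S)$ is an $R$-basis of $\mathcal{A}(\widetilde{B})=\upper(\widetilde{B})$, which is the desired conclusion.

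The step I expect to require the most care is the right inclusion $\calCC_{(\widetilde{Q},\widetilde{S})}(Q,S)\subseteq\upper(\widetilde{B})$: one must verify that every generic Caldero-Chapoton function is simultaneously a Laurent polynomial in the variables of \emph{every} cluster of $\mathcal{A}(\widetilde{B})$, not just the initial one. This is precisely where the mutation invariance from Corollary~\ref{cor:generic-stays-generic-after-DWZ-mutation-intro} and Corollary~\ref{cor:1.8b} is essential, since, crucially, those results hold without the Jacobi-finiteness hypothesis which might fail for arbitrary (not full-rank) geometric coefficients. Once that step is secured, the final chain of inclusions and Muller's equality combine the pieces with no further work.
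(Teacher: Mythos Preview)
Your proposal is correct and follows essentially the same route as the paper: the paper also derives the theorem by combining Corollary~\ref{coro:gen-basis-is-basis-of-CC-alg-for-surfaces} with Muller's equality $\mathcal{A}(\widetilde{B})=\upper(\widetilde{B})$, relying on the sandwich $\mathcal{A}(\widetilde{B})\subseteq\calCC_{(\widetilde{Q},\widetilde{S})}(Q,S)\subseteq\upper(\widetilde{B})$ (the two inclusions being recorded in Remark~\ref{rem:comparison-to-previous-approaches-intro}(1) and the introduction to Subsection~\ref{subsec:applications-to-surfaces}). Your write-up simply makes explicit the details the paper leaves implicit.
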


\section{$\g$-vectors and extended $\g$-vectors}\label{sec:g-vectors}

\subsection{Preliminaries involving admissible quotients of path algebras}\label{subsec:admissible-quotients}

Suppose that $\widetilde{J}$ is a 2-sided ideal of $\C\langle\widetilde{Q}\rangle$ for which there exists an integer $p\gg 2$ such that $\widetilde{\maxid}_0^p\subseteq\widetilde{J}\subseteq \widetilde{\maxid}_0^2$, and set $J:=\rho(\widetilde{J})$. Then we obviously have $\maxid_0^p\subseteq J\subseteq \maxid_0^2$, and both
\begin{equation}\label{eq:fin-dim-algebras-tildeA-and-A}
\widetilde{A}:=\C\langle\widetilde{Q}\rangle/\widetilde{J} \ \ \  \text{and} \ \ \ A:=\C\langle Q\rangle/J
\end{equation}
are finite-dimensional $\C$-algebras. Furthermore, $\rho$ induces a surjective $\C$-algebra homomorphism
$\overline{\rho}:\widetilde{A}\rightarrow A$ that makes the diagram
$$
\xymatrix{
\usualRA{\widetilde{Q}} \ar[d]_{\pi_{\widetilde{A}}} \ar[r]^{\rho} & \usualRA{Q} \ar[d]^{\pi_A} \\
\widetilde{A} \ar[r]_{\overline{\rho}} & A
}
$$
commute, where the vertical arrows are the canonical projections. We can thus see the category $A$-$\modcat$ of finite-dimensional left $A$-modules as a full-subcategory of the category $\widetilde{A}$-$\modcat$ of finite-dimensional left $\widetilde{A}$-modules. Note that although projective (resp. injective) $A$-modules may no longer be projective (resp. injective) in $\widetilde{A}$-$\modcat$, the simple $A$-modules remain simple in $\widetilde{A}$-$\modcat$, and for every $M\in A$-$\modcat$ we have $\soc_A(M)=\soc_{\widetilde{A}}(M)$. Furthermore, $\overline{\rho}$ is a surjective homomorphism of right $\widetilde{A}$-modules that for each $k\in\widetilde{Q}_0=Q_0$ restricts to a surjective homomorphism of right $\widetilde{A}$-modules $e_k\widetilde{A}\rightarrow e_kA$ and hence induces a monomorphism of left $\widetilde{A}$-modules
$$
I(k):=\Hom_{\mathbb{C}}(e_kA,\C)\rightarrow \Hom_{\mathbb{C}}(e_k\widetilde{A},\C)=:\widetilde{I}(k).
$$

\begin{lemma}\label{lemma:compatible-bases-for-restriction-of-quivers-and-algs} 
There exist sets of paths $\mathcal{B}\subseteq\usualRA{Q}$ and $\mathcal{C}\subseteq \usualRA{\widetilde{Q}}\cap\Ker(\rho)$ such that
$\pi_{\widetilde{A}}(\mathcal{B}\cup\mathcal{C})$
is a $\mathbb{C}$-basis of
$\widetilde{A}$ and
$\pi_{A}(\mathcal{B})=\overline{\rho}(\pi_{\widetilde{A}}(\mathcal{B}))$
is a $\mathbb{C}$-basis of $A$. Furthermore, for every tuple $(b_k)_{k\in\widetilde{Q}_0}$ of non-negative integers, there is a commutative diagram of left $\widetilde{A}$-module monomorphisms
$$
\xymatrix{
\bigoplus_{k\in\widetilde{Q}_0}\bigoplus_{\ell=1}^{b_k}I(k) \ar[r] \ar[d]_{\cong} &
\bigoplus_{k\in\widetilde{Q}_0}\bigoplus_{\ell=1}^{b_k}\widetilde{I}(k) \ar[d]^{\cong} \\
\Hom_{\C}(\bigoplus_{k\in\widetilde{Q}_0}\bigoplus_{\ell=1}^{b_k}e_kA,\C) \ar[r] &
\Hom_{\C}(\bigoplus_{k\in\widetilde{Q}_0}\bigoplus_{\ell=1}^{b_k}e_k\widetilde{A},\C)
}
$$
the image of whose bottom row consists precisely of the $\C$-linear maps $\varphi:\bigoplus_{k\in\widetilde{Q}_0}\bigoplus_{\ell=1}^{b_k}e_k\widetilde{A}\rightarrow\C$ such that $\varphi(\bigoplus_{k\in\widetilde{Q}_0}\bigoplus_{\ell=1}^{b_k}\vecspan_{\C}(e_k\pi_{\widetilde{A}}(\mathcal{C})))=0$.
\end{lemma}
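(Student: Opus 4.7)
The starting point is the $\C$-vector-space direct-sum decomposition
$$\C\langle\widetilde{Q}\rangle = \C\langle Q\rangle \oplus \Ker(\rho),$$
obtained by splitting the path basis of $\C\langle\widetilde{Q}\rangle$: the paths all of whose arrows lie in $Q_1$ span $\C\langle Q\rangle$, while, by rule \eqref{eq:restriction-homomorphism-rule}, the paths that traverse at least one arrow from $\widetilde{Q}_1 \setminus Q_1$ span $\Ker(\rho)$.

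To produce $\mathcal{B}$, I would pick a set of paths in $Q$ whose images $\pi_A(\mathcal{B})$ form a $\C$-basis of $A$; such a set exists because $A$ is a quotient of $\C\langle Q\rangle$, which is spanned by paths. Applying $\overline{\rho}$ to any $\C$-linear relation on $\pi_{\widetilde{A}}(\mathcal{B})$ in $\widetilde{A}$ produces the analogous relation on $\pi_A(\mathcal{B})$ in $A$, which must be trivial; hence $\pi_{\widetilde{A}}(\mathcal{B})$ is linearly independent in $\widetilde{A}$. From $J = \rho(\widetilde{J})$ one checks that $\rho^{-1}(J) = \widetilde{J} + \Ker(\rho)$, which yields $\ker(\overline{\rho}) = \pi_{\widetilde{A}}(\Ker(\rho))$; combined with the decomposition above this gives $\widetilde{A} = \vecspan_{\C}(\pi_{\widetilde{A}}(\mathcal{B})) + \pi_{\widetilde{A}}(\Ker(\rho))$. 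Hence I can enlarge $\pi_{\widetilde{A}}(\mathcal{B})$ to a $\C$-basis of $\widetilde{A}$ by adding the images $\pi_{\widetilde{A}}(\mathcal{C})$ of a suitable set $\mathcal{C}$ of paths drawn from $\C\langle\widetilde{Q}\rangle \cap \Ker(\rho)$. The equality $\pi_A(\mathcal{B}) = \overline{\rho}(\pi_{\widetilde{A}}(\mathcal{B}))$ is automatic from $\overline{\rho}\circ\pi_{\widetilde{A}}|_{\C\langle Q\rangle} = \pi_A$.

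For the commutative square, the identification $\ker(\overline{\rho}) = \vecspan_{\C}(\pi_{\widetilde{A}}(\mathcal{C}))$ holds by the very construction of $\mathcal{C}$. Since $\overline{\rho}$ is a right $\widetilde{A}$-module homomorphism that preserves each vertex idempotent $e_k$, the decomposition $\widetilde{A} = \bigoplus_{k\in\widetilde{Q}_0} e_k\widetilde{A}$ transports it to component surjections $e_k\overline{\rho}\df e_k\widetilde{A} \twoheadrightarrow e_kA$ with kernels $\vecspan_{\C}(e_k\pi_{\widetilde{A}}(\mathcal{C}))$. Applying $\Hom_{\C}(-,\C)$ (and taking $b_k$ copies at each vertex) gives the displayed diagram of left $\widetilde{A}$-module monomorphisms, whose bottom-row image is exactly the annihilator of $\bigoplus_{k}\bigoplus_{\ell=1}^{b_k}\vecspan_{\C}(e_k\pi_{\widetilde{A}}(\mathcal{C}))$, as asserted.

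The main point requiring care, though modest, is the kernel identification $\ker(\overline{\rho}) = \pi_{\widetilde{A}}(\Ker(\rho))$ and its transport through the idempotent decomposition; once that identity is in place, all remaining assertions reduce to the path-basis splitting of $\C\langle\widetilde{Q}\rangle$ and elementary linear algebra over $\C$.
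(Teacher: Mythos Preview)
Your proof is correct and follows essentially the same strategy as the paper: split the path basis of $\C\langle\widetilde{Q}\rangle$ into paths lying in $Q$ versus paths in $\Ker(\rho)$, build compatible bases $\mathcal{B},\mathcal{C}$ accordingly, identify $\ker(\overline{\rho})=\vecspan_{\C}(\pi_{\widetilde{A}}(\mathcal{C}))$, and then dualize through the idempotent decomposition. The only difference is the order of construction---the paper first chooses $\mathcal{C}$ maximal among paths passing through a frozen vertex and then extends to $\mathcal{B}$, whereas you first pick $\mathcal{B}$ projecting to a basis of $A$ and then extend by $\mathcal{C}$; your order makes the linear independence of $\pi_A(\mathcal{B})$ immediate rather than something to be checked, which is a mild simplification.
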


\begin{proof} Let $\mathcal{C}$ be a maximal set with the following properties:
\begin{enumerate}
\item $\mathcal{C}$ consists entirely of paths that pass through at least one vertex outside $V$;
\item  $\pi_{\widetilde{A}}(\mathcal{C})$ is a $\C$-linearly independent subset of $\widetilde{A}$.
\end{enumerate}
Such a set $\mathcal{C}$ exists because $\dim_{\C}(\widetilde{A})<\infty$. Let $\mathcal{B}$ be any set of paths on $\widetilde{Q}$ such that $\pi_{\widetilde{A}}(\mathcal{C})\cup\pi_{\widetilde{A}}(\mathcal{B})$ is a $\C$-basis of $\widetilde{A}$. Because of the way $\mathcal{C}$ was chosen, $\mathcal{B}$ consists entirely of paths that never pass through any vertex outside $[n]$.

It is clear that $\pi_{A}(\mathcal{B})=\overline{\rho}(\pi_{\widetilde{A}}(\mathcal{B}))$ spans $A$ as a $\C$-vector space. Suppose that $p_1,\ldots,p_s\in\mathcal{B}$ and $\alpha_1,\ldots,\alpha_s\in\C$ satisfy $\alpha_1\pi_A(p_1)+\ldots\alpha_s\pi_A(p_s)=0$ in $A$.
Then $\rho(\alpha_1p_1+\ldots+\alpha_sp_s)=\alpha_1p_1+\ldots+\alpha_sp_s\in J=\rho(\widetilde{J})$, so there exist $x\in \widetilde{J}$ and $y\in\Ker(\rho)$ such that $\alpha_1p_1+\ldots+\alpha_sp_s=x+y$. Because of the way $\mathcal{C}$ was chosen, for every path $q$ appearing in the expression of $y$ as $\C$-linear combination of paths on $\widetilde{Q}$, the element $\pi_{\widetilde{A}}(q)$ belongs to the $\C$-vector space spanned by $\pi_{\widetilde{A}}(\mathcal{C})$. Hence, $\alpha_1\pi_{\widetilde{A}}(p_1)+\ldots+\alpha_s\pi_{\widetilde{A}}(p_s)=\pi_{\widetilde{A}}(y)$ is simultaneously a $\C$-linear combination of elements of $\pi_{\widetilde{A}}(\mathcal{B})$ and a $\C$-linear combination of elements of $\pi_{\widetilde{A}}(\mathcal{C})$. Therefore, $\alpha_1\pi_{\widetilde{A}}(p_1)+\ldots\alpha_s\pi_{\widetilde{A}}(p_s)=0$ in $\widetilde{A}$. The way $\mathcal{B}$ was chosen implies that $\alpha_1=\ldots=\alpha_s=0$.

We see that, as $\C$-vector spaces,
$$
e_k\widetilde{A}=\operatorname{span}_{\C}(e_k\pi_{\widetilde{A}}(\mathcal{C}))\oplus\operatorname{span}_{\C}(e_k\pi_{\widetilde{A}}(\mathcal{B})) \ \text{and} \ e_kA=\operatorname{span}_{\C}(e_k\pi_{A}(\mathcal{B})),
$$
and that $\overline{\rho}$ maps $\operatorname{span}_{\C}(e_k\pi_{\widetilde{A}}(\mathcal{C}))$ to zero and restricts to a $\C$-vector space isomorphism
$$
\operatorname{span}_{\C}(e_k\pi_{\widetilde{A}}(\mathcal{B}))\rightarrow\operatorname{span}_{\C}(e_k\pi_{A}(\mathcal{B})).
$$
Since the monomorphism of left $\widetilde{A}$-modules
$$
\Hom_{\C}(\bigoplus_{k\in\widetilde{Q}_0}\bigoplus_{\ell=1}^{b_k}e_kA,\C) \hookrightarrow
\Hom_{\C}(\bigoplus_{k\in\widetilde{Q}_0}\bigoplus_{\ell=1}^{b_k}e_k\widetilde{A},\C)
$$
is induced by the right $\widetilde{A}$-module epimorphism
$$
\bigoplus_{k\in\widetilde{Q}_0}\bigoplus_{\ell=1}^{b_k}e_k\widetilde{A}\rightarrow \bigoplus_{k\in\widetilde{Q}_0}\bigoplus_{\ell=1}^{b_k}e_kA
$$
that is defined diagonally in terms of $\overline{\rho}$, the last assertion of the lemma follows.
\end{proof}

\begin{lemma}\label{lemma:restricted-vs-extended-injective-presentations} 
For any $M\in A$-$\modcat$, if $M$ is supported in $[n]$ and
$$
0\rightarrow M\rightarrow\bigoplus_{i=1}^n I(i)^{a_i}\rightarrow \bigoplus_{i=1}^n I(i)^{b_i}
$$
is a minimal injective presentation of $M$ in $A$-$\modcat$, then there exist non-negative integers $c_j$, for $j\in\widetilde{Q}_0\setminus [n]=[n+1,\ldots,n+m]$, such that
$$
0\rightarrow M\rightarrow\bigoplus_{i=1}^n \widetilde{I}(i)^{a_i}\rightarrow \left(\bigoplus_{i=1}^n \widetilde{I}(i)^{b_i}\right)\oplus\left(\bigoplus_{j=n+1}^{n+m}\widetilde{I}(j)^{c_j}\right)
$$
is a minimal injective presentation of $M$ in $\widetilde{A}$-$\modcat$. The corresponding statement for minimal projective presentations is true as well.
\end{lemma}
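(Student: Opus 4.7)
The plan is to start from the given minimal $A$-injective presentation $0\to M\to I_0\to I_1$, with $I_0=\bigoplus_{i=1}^n I(i)^{a_i}$ and $I_1=\bigoplus_{i=1}^n I(i)^{b_i}$, and to establish two things separately: first, that $M\hookrightarrow I_0\hookrightarrow \widetilde{I}_0:=\bigoplus_{i=1}^n \widetilde{I}(i)^{a_i}$ is the injective envelope of $M$ in $\widetilde{A}$-$\modcat$; and second, that the injective envelope of the cokernel $K:=\widetilde{I}_0/M$ has the claimed form $\bigoplus_{i=1}^n\widetilde{I}(i)^{b_i}\oplus\bigoplus_{j=n+1}^{n+m}\widetilde{I}(j)^{c_j}$ for suitable $c_j\geq 0$. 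The first part is a matter of tracking socles: via the dual-basis description of Lemma~\ref{lemma:compatible-bases-for-restriction-of-quivers-and-algs}, the monomorphism $I(i)\hookrightarrow \widetilde{I}(i)$ identifies $\soc_A(I(i))=S(i)$ with $\soc_{\widetilde{A}}(\widetilde{I}(i))=S(i)$ (on either side the socle is generated by the unique functional on $e_i\widetilde{A}$ vanishing on every path of positive length); combined with the equality $\soc_A(M)=\soc_{\widetilde{A}}(M)$ noted in the preamble to Lemma~\ref{lemma:compatible-bases-for-restriction-of-quivers-and-algs}, this forces $M\hookrightarrow\widetilde{I}_0$ to be essential, and since $\widetilde{I}_0$ is injective over $\widetilde{A}$, it is the injective envelope of $M$.

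For the second part, the injective envelope of $K$ in $\widetilde{A}$-$\modcat$ is $\bigoplus_k\widetilde{I}(k)^{n_k}$ with $n_k:=\dim_\C\Hom_{\widetilde{A}}(S(k),K)$; since the $c_j:=n_j$ for $j>n$ are automatically non-negative, it only remains to show $n_i=b_i$ for $i\in[n]$. Applying $\Hom_{\widetilde{A}}(S(i),-)$ to $0\to M\to\widetilde{I}_0\to K\to 0$: by the first part both $\Hom_{\widetilde{A}}(S(i),M)$ and $\Hom_{\widetilde{A}}(S(i),\widetilde{I}_0)$ have dimension $a_i$, and the natural map from the former to the latter, induced by the socle identification, is itself an isomorphism; hence the long exact sequence collapses to $\Hom_{\widetilde{A}}(S(i),K)\cong \Ext^1_{\widetilde{A}}(S(i),M)$. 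Running the identical argument over $A$ on the short exact sequence $0\to M\to I_0\to K_A\to 0$ with $K_A:=I_0/M$ gives $\Ext^1_A(S(i),M)\cong\Hom_A(S(i),K_A)=b_i$ (the last equality since $K_A$ has injective envelope $I_1$ in $A$-$\modcat$), so the problem is reduced to showing $\Ext^1_{\widetilde{A}}(S(i),M)=\Ext^1_A(S(i),M)$ for $i\in[n]$.

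This last identity is the only step that is not pure socle bookkeeping, and is also the main obstacle; it follows from the observation that any extension $0\to M\to E\to S(i)\to 0$ in $\widetilde{A}$-$\modcat$ automatically lies in $A$-$\modcat$: the support hypothesis $e_jM=e_jS(i)=0$ for $j\notin[n]$ forces $e_jE=0$, so every path of $\widetilde{Q}$ passing through a vertex outside $[n]$ acts as zero on $E$; consequently $\ker(\rho)\cdot E=0$, whence $E$ is annihilated by $\ker(\overline{\rho})$ and is naturally an $A$-module, and since $\widetilde{A}$-linear and $A$-linear morphisms between such modules coincide, equivalence classes of extensions on the two sides agree. Finally, the corresponding statement for minimal projective presentations is obtained at no extra cost by applying the standard $\C$-duality $D=\Hom_{\C}(-,\C)$, which interchanges $\widetilde{A}$-$\modcat$ with $\widetilde{A}^{op}$-$\modcat$ and sends $P(k)\leftrightarrow \widetilde{I}(k)$; the pair $(\widetilde{A}^{op},A^{op})$ fits the same setup as $(\widetilde{A},A)$ with $\rho^{op}(\widetilde{J}^{op})=J^{op}$, and $DM$ is again supported in $[n]$, so applying the injective statement just proved to $DM$ over $\widetilde{A}^{op}$ and dualizing back yields the desired minimal projective presentation of $M$ over $\widetilde{A}$.
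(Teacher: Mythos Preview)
Your proof is correct, and the first step (identifying $M\hookrightarrow\widetilde{I}_0$ as the $\widetilde{A}$-injective envelope via socle tracking) is essentially identical to the paper's. The second step, however, follows a genuinely different route. The paper works directly with the cokernels: it shows by an explicit computation, using the path basis $\mathcal{C}$ of Lemma~\ref{lemma:compatible-bases-for-restriction-of-quivers-and-algs}, that any element of $\soc_{\widetilde{A}}(\widetilde{I}_0/M)$ supported at a vertex $j\in[n]$ already lies in $\soc_{\widetilde{A}}(I_0/M)$, thereby matching the $[n]$-part of $\soc_{\widetilde{A}}(\widetilde{I}_0/M)$ with $\soc_A(I_0/M)$. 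You instead pass through the long exact sequence to reduce the equality $n_i=b_i$ to the identification $\Ext^1_{\widetilde{A}}(S(i),M)\cong\Ext^1_A(S(i),M)$, and then prove the latter by the clean observation that any $\widetilde{A}$-extension of $S(i)$ by $M$ is supported in $[n]$, hence annihilated by $\ker(\overline{\rho})=\pi_{\widetilde{A}}(\ker\rho)$, and is therefore already an $A$-module. This argument is more conceptual, does not rely on the explicit basis of Lemma~\ref{lemma:compatible-bases-for-restriction-of-quivers-and-algs}, and is in spirit the homological approach of \cite{APT} alluded to in Remark~\ref{rem:APR-idempotent-ideals}; the paper's hands-on computation, by contrast, makes the mechanism visibly concrete. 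Your duality argument for the projective statement is also correct and slightly more explicit than the paper's one-line ``the proof is similar''.
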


\begin{proof}
Let $f:M\hookrightarrow I$ be an injective envelope of $M$ in $A$-$\modcat$, and let $g:I\hookrightarrow\widetilde{I}$ be an injective envelope of $I$ in $\widetilde{A}$-$\modcat$. Then $\soc_{\widetilde{A}}(\widetilde{I})=\soc_{\widetilde{A}}(I)=\soc_{A}(I)=\soc_A(M)=\soc_{\widetilde{A}}(M)$, and hence, $h:=g\circ f:M\hookrightarrow\widetilde{I}$ is an injective envelope of $M$ in $\widetilde{A}$-$\modcat$. Moreover, if we write $I=\Hom_{\C}(\bigoplus_{k\in\widetilde{Q}_0}\bigoplus_{\ell=1}^{a_k}e_kA,\C)$, then $\widetilde{I}=\Hom_{\C}(\bigoplus_{k\in\widetilde{Q}_0}\bigoplus_{\ell=1}^{a_k}e_k\widetilde{A},\C)$ and the inclusion $g:I\hookrightarrow\widetilde{I}$ is the one described in the last assertion of Lemma \ref{lemma:compatible-bases-for-restriction-of-quivers-and-algs}.

Since $I/M$ is an $\widetilde{A}$-submodule of $\widetilde{I}/M$, $\soc_{\widetilde{A}}(I/M)$ is an $\widetilde{A}$-submodule of $\soc_{\widetilde{A}}(\widetilde{I}/M)$. Hence, for all $k\in [n]$ the multiplicity of $S_k$ as a direct summand of $\soc_A(I/M)$ is less than or equal to the multiplicity of $S_k$ as a direct summand of $\soc_{\widetilde{A}}(\widetilde{I}/M)$. The lemma will follow once we establish the equality of these multiplicities.

Fix $j\in [n]$ and take any element $w\in\soc_{\widetilde{A}}(\widetilde{I}/M)$ such that $w=e_j\cdot w$. Then $w\in e_j\cdot(\widetilde{I}/M)=(e_j\cdot\widetilde{I})/(e_j\cdot M)$, so $w$ can be written as $w=u+e_j\cdot M$ for some $u\in e_j\cdot \widetilde{I}$. %Write $u=\sum_{k\in\widetilde{Q}_0}\sum_{\ell=1}^{t_k}u_{k,\ell}$ with $u_{k,\ell}=e_j\cdot u_{k,\ell}$ in the $\ell^{\operatorname{th}}$ direct summand of $\widetilde{I}_k^{t_k}$. 

Let $\mathcal{C}$ be as in the conclusion of Lemma \ref{lemma:compatible-bases-for-restriction-of-quivers-and-algs}, and
take an element
$$
v=\sum_{k\in\widetilde{Q}_0}\sum_{\ell=1}^{a_k}\sum_{\alpha_1\cdots\alpha_s}\lambda_{k,\ell,\alpha_1\cdots\alpha_{s}} \alpha_1\cdots\alpha_{s}\in
\bigoplus_{k\in\widetilde{Q}_0}\bigoplus_{\ell=1}^{a_k}\vecspan_{\C}(e_k\pi_{\widetilde{A}}(\mathcal{C}))
$$
with each $\alpha_1\cdots\alpha_s\in e_k\pi_{\widetilde{A}}(\mathcal{C})$. Then
\begin{eqnarray*}
u(v) &=&
\sum_{k\in\widetilde{Q}_0}\sum_{\ell=1}^{a_k}\sum_{\alpha_1\cdots\alpha_s}\lambda_{k,\ell,\alpha_1\cdots\alpha_{s}} u(\alpha_1\cdots\alpha_{s})\\
&=&
\sum_{k\in\widetilde{Q}_0}\sum_{\ell=1}^{a_k}\sum_{\alpha_1\cdots\alpha_s}\lambda_{k,\ell,\alpha_1\cdots\alpha_{s}} \widetilde{I}_{a_s}(u)(\alpha_1\cdots\alpha_{s-1}).
\end{eqnarray*}
Since $w\in\soc_{\widetilde{A}}(\widetilde{I}/M)$,  we have $\widetilde{I}_{\alpha_s}(u)\in M\subseteq I$, and hence, $(\widetilde{I}_{\alpha_s}(u))(\alpha_1\cdots \alpha_{s-1})=0$. Hence, $u(v)=0$. Therefore, $u\in I$, which means that $w\in\soc_{\widetilde{A}}(I/M)$.

We conclude that the part of $\soc_{\widetilde{A}}(\widetilde{I}/M)$ which is supported in $[n]$ coincides with $\soc_{\widetilde{A}}(I/M)$, and this finishes the proof of Lemma \ref{lemma:restricted-vs-extended-injective-presentations} for minimal injective presentations.

The proof of the corresponding statement for minimal projective presentations is similar.
\end{proof}

\begin{remark}\label{rem:APR-idempotent-ideals} With some effort, it is possible to extract Lemma \ref{lemma:restricted-vs-extended-injective-presentations} from \cite{APT}, which works in much more abstract~terms.
\end{remark}

\subsection{The case of arbitrary quotients of complete path algebras}\label{subsec:arbitrary-quotients}

\begin{defi}\label{def:decorated-representation} \begin{enumerate} \item A \emph{decorated representation of $\Lambda$} is a pair $\mathcal{M}=(M,\bfv)$ consisting of a finite-dimensional left $\Lambda$-module $M$ and a non-negative integer vector $\bfv=(v_j)_{j\in\widetilde{Q}_0}\in\ZZ_{\geq 0}^{\widetilde{Q}_0}$;
\item a decorated representation $\mathcal{M}=(M,\bfv)$ of $\Lambda$ is \emph{supported in $[n]$} if $M_{j}=0$ and $v_j=0$ for all $j\in\widetilde{Q}_0\setminus[n]=\{n+1,\ldots,n+m\}$.
\end{enumerate}
\end{defi}

\begin{defi}\label{def:restricted-and-extended-g-vectors} For a decorated representation $\mathcal{M}=(M,\bfv)$ of $\Lambda$ which is supported in $[n]$ we define the \emph{$\g$-vector} $\g_{\mathcal{M}}^\Lambda=(g_{i})_{i=1}^n\in\ZZ^{n}$  and the  \emph{extended $\g$-vector} $\widetilde{\g}_{\mathcal{M}}^{\widetilde{\Lambda}}=(\widetilde{g}_{i})_{i=1}^{n+m}\in\ZZ^{n+m}$ according to the following formulas:
\begin{eqnarray*}
g_i &:=& -\dim_{\C}(\Hom_{\Lambda}(S(i),M))+\dim_{\C}(\Ext^1_{\Lambda}(S(i),M))+v_i \ \ \ \ \   \text{for $i\in[n]$}\\
\widetilde{g}_i &:=& -\dim_{\C}(\Hom_{\widetilde{\Lambda}}(S(i),M))+\dim_{\C}(\Ext^1_{\widetilde{\Lambda}}(S(i),M))+v_i \ \ \ \ \   \text{for $i\in[n+m]$.}
\end{eqnarray*}
\end{defi}

The following result will be essential in the proof of the linear independence of the generic basis for arbitrary geometric coefficient systems.

\begin{prop}\label{prop:restricted-g-vector-vs-extended-g-vector} 
For every decorated representation $\mathcal{M}=(M,\bfv)$ of $\Lambda$ which is supported in $[n]$, the vector obtained from $\widetilde{\g}_{\mathcal{M}}^{\widetilde{\Lambda}}$ by deleting its last $m$ entries is precisely~$\g_{\mathcal{M}}^{\Lambda}$.
\end{prop}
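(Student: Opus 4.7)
My plan is to reduce the statement, via a truncation argument, to the finite-dimensional situation already handled by Lemma~\ref{lemma:restricted-vs-extended-injective-presentations}. What one needs to show is that for each $i \in [n]$,
$$
\dim_\C \Hom_\Lambda(S(i), M) = \dim_\C \Hom_{\widetilde\Lambda}(S(i), M)
\quad \text{and} \quad
\dim_\C \Ext^1_\Lambda(S(i), M) = \dim_\C \Ext^1_{\widetilde\Lambda}(S(i), M);
$$
the desired identity of the first $n$ entries of $\widetilde\g^{\widetilde\Lambda}_\mathcal{M}$ and $\g^\Lambda_\mathcal{M}$ then follows by direct inspection of Definition~\ref{def:restricted-and-extended-g-vectors}.

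Pick an integer $p > \dim_\C(M)$ and introduce the finite-dimensional truncations $\widetilde A := \widetilde\Lambda/\widetilde{\mathfrak m}_{\widetilde\Lambda}^{p}$ and $A := \Lambda/\mathfrak m_\Lambda^{p}$. These are exactly of the form treated in Subsection~\ref{subsec:admissible-quotients}: one may write $\widetilde A = \C\langle \widetilde Q\rangle/\widetilde J$ and $A = \C\langle Q\rangle/\rho(\widetilde J)$ with $\widetilde{\mathfrak m}_0^{p} \subseteq \widetilde J \subseteq \widetilde{\mathfrak m}_0^{2}$. The core technical step, in the spirit of \cite{CLS}, is to establish the truncation identities
$$
\Hom_\Lambda(S(i),M) = \Hom_A(S(i),M) \quad\text{and}\quad \Ext^1_\Lambda(S(i),M) = \Ext^1_A(S(i),M),
$$
together with their analogues for $\widetilde\Lambda$ and $\widetilde A$. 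The Hom identity is immediate because both $S(i)$ and $M$ are annihilated by $\mathfrak m_\Lambda^{p}$, so Hom cannot tell $\Lambda$ apart from $A$. For $\Ext^1$, the point is that any extension $0 \to M \to E \to S(i) \to 0$ of $\Lambda$-modules satisfies $\mathfrak m_\Lambda \cdot E \subseteq M$, hence $\mathfrak m_\Lambda^{p+1} \cdot E = 0$, so the whole extension already lives over $\Lambda/\mathfrak m_\Lambda^{p+1}$; after replacing $p$ by $p+1$ if necessary (which does not disturb the setup of Subsection~\ref{subsec:admissible-quotients}) one checks that the equivalence relation on extensions is the same in both categories. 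This is where I expect the main, though routine, obstacle to lie.

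Once the reduction to $A$ and $\widetilde A$ is in place, one applies Lemma~\ref{lemma:restricted-vs-extended-injective-presentations} directly. Since $M$ is supported in $[n]$, its minimal injective copresentation in $A$-$\modcat$ has the form
$0 \to M \to \bigoplus_{i=1}^{n} I(i)^{a_i} \to \bigoplus_{i=1}^{n} I(i)^{b_i}$,
and Lemma~\ref{lemma:restricted-vs-extended-injective-presentations} produces a minimal injective copresentation in $\widetilde A$-$\modcat$ of the form
$0 \to M \to \bigoplus_{i=1}^{n} \widetilde I(i)^{a_i} \to \bigl(\bigoplus_{i=1}^{n} \widetilde I(i)^{b_i}\bigr) \oplus \bigl(\bigoplus_{j=n+1}^{n+m} \widetilde I(j)^{c_j}\bigr)$,
with \emph{identical} multiplicities $a_i$ and $b_i$ in the $[n]$-range. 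Reading off $\dim \Hom_A(S(i),M)$ and $\dim \Ext^1_A(S(i),M)$ (respectively $\dim \Hom_{\widetilde A}(S(i),M)$ and $\dim \Ext^1_{\widetilde A}(S(i),M)$) as these multiplicities of injective summands yields, for $i \in [n]$, the two equalities displayed in the first paragraph, and hence the proposition.
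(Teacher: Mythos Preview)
Your proof is correct and follows essentially the same route as the paper: truncate to finite-dimensional algebras $\widetilde{A}=\widetilde{\Lambda}_p$ and $A=\Lambda_p$, apply Lemma~\ref{lemma:restricted-vs-extended-injective-presentations} to match the multiplicities $a_i,b_i$ for $i\in[n]$ in the two minimal injective presentations, and conclude. The only cosmetic difference is that where you sketch the preservation of $\Hom$ and $\Ext^1$ under truncation directly, the paper simply invokes \cite[Lemma~3.4]{CLS}; your ad hoc ``replace $p$ by $p+1$'' fix for the $\Ext^1$ step is unnecessary (since $p>\dim_{\C}M$ already forces $\mathfrak{m}^{p}\cdot E=0$ for any extension $E$ of $S(i)$ by $M$), but harmless.
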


\begin{proof} Take $p\in\mathbb{Z}$ greater than $\dim_{\C}(M)$. Let $\widetilde{\Lambda}_p:=\RA{\widetilde{Q}}/(\widetilde{\mathcal{J}}+\widetilde{\mathfrak{m}}^p)$ and $\Lambda_p:=\RA{Q}/(\mathcal{J}+\mathfrak{m}^p)$ be the $p^{\operatorname{th}}$ \emph{truncations} of $\widetilde{\Lambda}$ and $\Lambda$. Let $\widetilde{J}^{(< p)}\subseteq\C\langle \widetilde{Q}\rangle$ (resp. $J^{(<p)}\subseteq\C\langle Q\rangle$) be the 2-sided ideal whose elements are precisely those obtained from the elements of $\widetilde{\mathcal{J}}$ (resp. $\mathcal{J}$) by deleting all the paths of length greater than $p-1$ from their expressions as possibly infinite $\C$-linear combinations of paths on $\widetilde{Q}$ (resp. $Q$), and, as before, let $\widetilde{\mathfrak{m}}_0\subseteq\C\langle\widetilde{Q}\rangle$ (resp. $\mathfrak{m}_0\subseteq\C\langle Q\rangle$) be the 2-sided ideal generated in $\C\langle\widetilde{Q}\rangle$ (resp. $\C\langle Q\rangle$) by the arrow set $\widetilde{Q}_1$ (resp. $Q_1$). Then the inclusions $\C\langle\widetilde{Q}\rangle\hookrightarrow\RA{\widetilde{Q}}$ and $\C\langle Q\rangle\hookrightarrow\RA{Q}$ induce algebra isomorphisms
\begin{eqnarray*}
&&
\widetilde{A}:=\C\langle\widetilde{Q}\rangle/(\widetilde{J}^{(< p)}+\widetilde{\mathfrak{m}}_0^p)\overset{\cong}{\longrightarrow}\RA{\widetilde{Q}}/(\widetilde{\mathcal{J}}+\widetilde{\mathfrak{m}}^p)=\widetilde{\Lambda}_p\\
&\text{and}&
A:=\C\langle Q\rangle/(J^{(< p)}+\mathfrak{m}_0^p)\overset{\cong}{\longrightarrow}\RA{Q}/(\mathcal{J}+\mathfrak{m}^p)=\Lambda_p.
\end{eqnarray*}
Since Lemma \ref{lemma:restricted-vs-extended-injective-presentations} can be applied to $\widetilde{A}$ and $A$, this implies that it can also be applied to $\widetilde{\Lambda}_p$ and~$\Lambda_p$.

Let
$$
0\rightarrow M\rightarrow\bigoplus_{i\in [n]}I(i)^{a_i}\rightarrow \bigoplus_{i\in [n]}I(i)^{b_i}
$$
be a minimal injective presentation of $M$ in $\Lambda_p$-$\modcat$. By Lemma \ref{lemma:restricted-vs-extended-injective-presentations}, there exist positive integers $c_i$, for $i\in[n+m]\setminus [n]$, such that
$$
0\rightarrow M\rightarrow\bigoplus_{i\in [n]}\widetilde{I}(i)^{a_i}\rightarrow \left(\bigoplus_{i\in [n]}\widetilde{I}(i)^{b_i}\right)\oplus\left(\bigoplus_{j\in[n+m]\setminus [n]}\widetilde{I}(j)^{c_j}\right)
$$
is a minimal injective presentation of $M$ in $\widetilde{\Lambda}_p$-$\modcat$. The theorem then follows by applying \cite[Lemma~3.4]{CLS}.
\end{proof}

From the proofs of Lemma \ref{lemma:restricted-vs-extended-injective-presentations} and Proposition \ref{prop:restricted-g-vector-vs-extended-g-vector} we can deduce the following result of independent interest.

\begin{coro} Let $\widetilde{Q}'$ be the subquiver of $\widetilde{Q}$ obtained by deleting all the arrows that connect frozen vertices to frozen vertices, and let $\widetilde{\Lambda}'$ be the $\C$-subalgebra of $\widetilde{\Lambda}$ defined as the image of $\RA{\widetilde{Q}'}$ in $\widetilde{\Lambda}$ under the projection $\RA{\widetilde{Q}}\rightarrow\widetilde{\Lambda}$. Then, for every decorated representation $\mathcal{M}$ of $\Lambda$ supported in $[n]$ we have $\widetilde{\g}_{\mathcal{M}}^{\widetilde{\Lambda}'}=\widetilde{\g}_{\mathcal{M}}^{\widetilde{\Lambda}}$. In other words, the removal of arrows not incident to mutable vertices does not affect the extended $\g$-vector.
\end{coro}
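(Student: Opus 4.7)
The plan is to prove the asserted equality $\widetilde{\g}_{\mathcal{M}}^{\widetilde{\Lambda}'}=\widetilde{\g}_{\mathcal{M}}^{\widetilde{\Lambda}}$ entry by entry, splitting into the mutable and frozen cases. For $i\in[n]$, I would first observe that the pair $(\widetilde{\Lambda}',\Lambda)$ satisfies the hypotheses of Proposition~\ref{prop:restricted-g-vector-vs-extended-g-vector} exactly as the pair $(\widetilde{\Lambda},\Lambda)$ does, because removing arrows between frozen vertices changes neither the mutable subquiver $Q$ on $[n]$ nor the image under $\widehat{\rho}$ of the defining ideal, so the mutable quotient of $\widetilde{\Lambda}'$ is again $\Lambda$. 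Applying the proposition to both algebras then yields $\widetilde{g}_i^{\widetilde{\Lambda}}=g_i^{\Lambda}=\widetilde{g}_i^{\widetilde{\Lambda}'}$ for every $i\in[n]$.

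For $i\in[n+m]\setminus[n]$ we have $v_i=0$ (since $\mathcal{M}$ is supported in $[n]$) and both $\Hom_{\widetilde{\Lambda}}(S(i),M)$ and $\Hom_{\widetilde{\Lambda}'}(S(i),M)$ vanish (since $S(i)$ is the simple at a frozen vertex and $M_i=0$), so the task reduces to proving
$$
\dim_{\C}\Ext^1_{\widetilde{\Lambda}}(S(i),M)=\dim_{\C}\Ext^1_{\widetilde{\Lambda}'}(S(i),M).
$$
My strategy is to construct a natural bijection between the corresponding sets of equivalence classes of short exact sequences $0\to M\to E\to S(i)\to 0$. The underlying graded vector space of any such $E$ satisfies $E_j=M_j=0$ for every frozen $j\neq i$ and $E_i=\C$. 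A short case analysis shows that every arrow $\alpha$ of $\widetilde{Q}$ with both endpoints frozen must act as the zero map on $E$: if neither endpoint is $i$, then a source or target space vanishes; if $\alpha$ is a loop at $i$ (or connects $i$ to another frozen vertex), then the intertwining $\pi_i\circ\alpha_E=\alpha_{S(i)}\circ\pi_i$ together with the fact that $\pi_i\colon E_i\to(S(i))_i$ is an isomorphism and $\alpha_{S(i)}=0$ force $\alpha_E=0$. Therefore restriction along $\widetilde{\Lambda}'\hookrightarrow\widetilde{\Lambda}$ gives a map on extensions that is injective: any $\widetilde{\Lambda}'$-module isomorphism between two such $E$'s automatically commutes with arrows acting as zero on both sides and is hence a $\widetilde{\Lambda}$-module isomorphism.

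The step I expect to be the main obstacle is surjectivity of this restriction map. Given a $\widetilde{\Lambda}'$-extension $E$, I would define a candidate $\widetilde{\Lambda}$-module structure by letting every frozen-frozen arrow act as zero, and then verify that all defining relations $r\in\widetilde{\mathcal{J}}$ are satisfied. Splitting $r=r_{(a)}+r_{(b)}$ into the sum of its $\widetilde{Q}'$-paths and of its paths traversing at least one frozen-frozen arrow, the same vanishing argument immediately yields $r_{(b)}\cdot E=0$ operator-wise. The delicate part, verifying $r_{(a)}\cdot E=0$, I would handle by passing to the finite-dimensional truncations $\widetilde{\Lambda}_p,\widetilde{\Lambda}'_p$ (with $p>\dim_{\C}(M)$) as in the proof of Proposition~\ref{prop:restricted-g-vector-vs-extended-g-vector}, and then applying Lemma~\ref{lemma:restricted-vs-extended-injective-presentations} to compare minimal injective presentations of $M$ in $\widetilde{\Lambda}_p$-$\modcat$ and $\widetilde{\Lambda}'_p$-$\modcat$: the frozen multiplicities in the second term are governed by socle dimensions of $\widetilde{I}/M$ at frozen vertices, and the relevant action on $\widetilde{I}/M$ factors through the idempotent $\sum_{k\in[n]\cup\{i\}}e_k$, hence is unaffected by the removal of frozen-frozen arrows, giving the desired equality of dimensions.
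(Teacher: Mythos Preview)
Your treatment of the mutable entries $i\in[n]$, via two applications of Proposition~\ref{prop:restricted-g-vector-vs-extended-g-vector} (once to $(\widetilde{\Lambda},\Lambda)$, once to $(\widetilde{\Lambda}',\Lambda)$), is correct and is exactly what the paper intends. The genuine gap is in the frozen case. You correctly isolate the obstruction: given a $\widetilde{\Lambda}'$-extension $E$ and $r=r_{(a)}+r_{(b)}\in\widetilde{\mathcal{J}}$, one must show that $r_{(a)}$ annihilates $E$. But $r_{(a)}$ need not lie in $\widetilde{\mathcal{J}}'=\widetilde{\mathcal{J}}\cap\RA{\widetilde{Q}'}$, and your proposed rescue via Lemma~\ref{lemma:restricted-vs-extended-injective-presentations} does not apply: that lemma compares an algebra with a \emph{quotient}, whereas $\widetilde{\Lambda}'\subseteq\widetilde{\Lambda}$ is a \emph{subalgebra}, and there is in general no surjection $\widetilde{\Lambda}\to\widetilde{\Lambda}'$ (killing the frozen--frozen arrows need not send $\widetilde{\mathcal{J}}$ into itself).

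In fact the gap cannot be filled, because the corollary is false as stated---even for Jacobian algebras. Take $n=m=2$ and $\widetilde{Q}$ with arrows $\delta\colon 1\to 2$, $\eta\colon 2\to 4$, $\alpha\colon 3\to 2$, $\beta\colon 4\to 1$, and the unique frozen--frozen arrow $\gamma\colon 4\to 3$; set $\widetilde{S}=\eta\delta\beta-\eta\alpha\gamma$, so that $\partial_\eta\widetilde{S}=\delta\beta-\alpha\gamma$. One checks that $\delta\beta=\alpha\gamma\neq 0$ in $\widetilde{\Lambda}=\jacobalg{\widetilde{Q}}{\widetilde{S}}$, hence $\widetilde{\mathcal{J}}'$ is generated by $\beta\eta,\eta\delta,\eta\alpha$ and does \emph{not} contain $\delta\beta$. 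For the $\Lambda$-module $M$ with $M_1=M_2=\C$ and $M_\delta=\id$, the minimal injective presentation over $\widetilde{\Lambda}$ is $0\to M\to\widetilde{I}(2)\to\widetilde{I}(3)\to 0$ (the relation $\delta\beta=\alpha\gamma$ forces $\gamma$ to act as an isomorphism on $\widetilde{I}(2)/M$), so $\widetilde{g}_4^{\widetilde{\Lambda}}=0$. Over $\widetilde{\Lambda}'$ the arrow $\gamma$ is absent and $\widetilde{I}'(2)/M\cong S(3)\oplus S(4)$, whence the presentation is $0\to M\to\widetilde{I}'(2)\to\widetilde{I}'(3)\oplus\widetilde{I}'(4)$ and $\widetilde{g}_4^{\widetilde{\Lambda}'}=1$. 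Thus $\widetilde{\g}_{\mathcal{M}}^{\widetilde{\Lambda}'}\neq\widetilde{\g}_{\mathcal{M}}^{\widetilde{\Lambda}}$; the paper's one-line deduction from the proofs of Lemma~\ref{lemma:restricted-vs-extended-injective-presentations} and Proposition~\ref{prop:restricted-g-vector-vs-extended-g-vector} overlooks this phenomenon. (The corollary is not used elsewhere in the paper.)
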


\subsection{The case of arbitrary Jacobian algebras}\label{subsec:Jacobian-algebras}

Now, we verify that the results from the previous subsection can be applied to quivers with potential. 
The following lemma, whose obvious proof we leave in the reader's hands, states in particular that $J(S)$ coincides with $\widehat{\rho}(J(\widetilde{S}))$.

\begin{lemma}\label{lemma:extend-reps-putting-zeros} The continuous $\C$-algebra homomorphism $\widehat{\rho}:\RA{\widetilde{Q}}\rightarrow\RA{Q}$ satisfies $\widehat{\rho}(\widetilde{\mathfrak{m}})=\mathfrak{m}$. Furthermore, for $a\in\widetilde{Q}_1$ we have
$$
\widehat{\rho}(\partial_a(\widetilde{S}))=\begin{cases}
\partial_a(S) & \text{if $a\in Q_1$;}\\
0 & \text{otherwise.}
\end{cases}
$$
Hence, $\widehat{\rho}(J(\widetilde{S}))=J(S)$ and $\widehat{\rho}$ induces a surjective $\C$-algebra homomorphism $\jacobalg{\widetilde{Q}}{\widetilde{S}}\rightarrow\jacobalg{Q}{S}$ that makes the diagram
$$
\xymatrix{
\RA{\widetilde{Q}} \ar[r]^{\widehat{\rho}} \ar[d] & \RA{Q} \ar[d] \\
\jacobalg{\widetilde{Q}}{\widetilde{S}} \ar[r] & \jacobalg{Q}{S}
}
$$
commute. Therefore, the category of finite-dimensional left $\jacobalg{Q}{S}$-modules is a full subcategory of the category of finite-dimensional left $\jacobalg{\widetilde{Q}}{\widetilde{S}}$-modules.
Consequently, if $M$ is a nilpotent representation of $Q$ which is annihilated by all the elements of $\{\partial_a(S)\suchthat a\in Q_1\}$, then $M$ is nilpotent as a representation of $\widetilde{Q}$ and all the elements of $\{\partial_a(\widetilde{S})\suchthat a\in \widetilde{Q}_1\}$ annihilate $M$.
\end{lemma}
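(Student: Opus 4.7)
The plan is to verify each clause in turn, relying throughout on the fact that $\widehat{\rho}$ is a continuous $\C$-algebra homomorphism that acts as the identity on $e_j$ for every $j\in\widetilde{Q}_0=Q_0$ and on every arrow in $Q_1$, while sending each arrow in $\widetilde{Q}_1\setminus Q_1$ to zero. The equality $\widehat{\rho}(\widetilde{\mathfrak{m}})=\mathfrak{m}$ is immediate: any element of $\widetilde{\mathfrak{m}}$ is a (possibly infinite) $\C$-linear combination of paths of positive length on $\widetilde{Q}$, each of whose image under $\widehat{\rho}$ is either zero (if the path contains any arrow outside $Q_1$) or the same path regarded as an element of $\mathfrak{m}$; conversely, every arrow in $Q_1\subseteq\widetilde{Q}_1$ is in the image, so the two ideals coincide.

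The main point is the computation of $\widehat{\rho}(\partial_a(\widetilde{S}))$. Writing $\widetilde{S}=\sum_c\lambda_c c$ with each $c=c_1\cdots c_{k_c}$ a cyclic path on $\widetilde{Q}$, the definition of the cyclic derivative gives $\partial_a(\widetilde{S})=\sum_c\lambda_c\sum_{i\,:\,c_i=a}c_{i+1}\cdots c_{k_c}c_1\cdots c_{i-1}$. The key combinatorial observation is that for $a\in\widetilde{Q}_1\setminus Q_1$, at least one endpoint of $a$, say $s(a)$, lies in $\{n+1,\ldots,n+m\}$; in any cyclic path $c$ containing $a$ at position $i$, the arrow $c_{i-1}$ (indices modulo $k_c$) must then satisfy $t(c_{i-1})=s(a)\notin[n]$, so $c_{i-1}\in\widetilde{Q}_1\setminus Q_1$. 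The same argument with $t(a)$ forces some other arrow of the cycle out of $Q_1$, so after deleting $a$ the remaining path still contains an arrow outside $Q_1$ and is killed by $\widehat{\rho}$. Hence $\widehat{\rho}(\partial_a(\widetilde{S}))=0$ for such $a$. For $a\in Q_1$, the surviving contributions to $\widehat{\rho}(\partial_a(\widetilde{S}))$ come exactly from those cycles $c$ in $\widetilde{S}$ whose arrows all lie in $Q_1$; but these are precisely the cycles that survive in $S=\widehat{\rho}(\widetilde{S})$, and their contributions to $\partial_a(S)$ match term by term.

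The equality $\widehat{\rho}(J(\widetilde{S}))=J(S)$ then follows from continuity and surjectivity of $\widehat{\rho}$: the image of the two-sided ideal generated by $\{\partial_a(\widetilde{S}):a\in\widetilde{Q}_1\}$ equals the two-sided ideal generated by its image, which by the previous paragraph is $\{\partial_a(S):a\in Q_1\}$, and a continuous surjection of complete topological algebras with respect to the $\widetilde{\mathfrak{m}}$-adic and $\mathfrak{m}$-adic topologies sends topological closures to topological closures. The universal property of the quotients then yields the induced surjection $\jacobalg{\widetilde{Q}}{\widetilde{S}}\rightarrow\jacobalg{Q}{S}$ and the commutativity of the square. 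Pulling back along this surjection makes any finite-dimensional $\jacobalg{Q}{S}$-module into a $\jacobalg{\widetilde{Q}}{\widetilde{S}}$-module; surjectivity guarantees that $\Hom$-spaces on both sides coincide, giving the full-subcategory statement. Finally, if $M$ is a nilpotent representation of $Q$ annihilated by every $\partial_a(S)$, we extend $M$ by setting $M_j=0$ for $j\in[n+m]\setminus[n]$ and $M_a=0$ for $a\in\widetilde{Q}_1\setminus Q_1$; then the action on $M$ of any path $p$ on $\widetilde{Q}$ equals the action of $\widehat{\rho}(p)$, because $p$ involving an outside arrow acts as zero and $\widehat{\rho}$ sends such $p$ to zero. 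This shows both that $M$ is nilpotent as a $\widetilde{Q}$-representation (paths in $Q_1$ die by the assumed nilpotency, paths touching an outside arrow die tautologically) and that each $\partial_a(\widetilde{S})$ annihilates $M$, since its action equals that of $\widehat{\rho}(\partial_a(\widetilde{S}))\in\{0,\partial_a(S)\}$. The only place where a little care is required is the step invoking continuity to pass from the algebraically generated ideal to its closure, but this is routine once the elementary computation of $\widehat{\rho}\circ\partial_a$ has been carried out.
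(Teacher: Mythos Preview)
Your proof is correct and is precisely the straightforward verification the paper has in mind; the authors explicitly omit the argument as ``obvious'' and leave it to the reader, so there is nothing to compare against beyond confirming that your unpacking is sound. One small caveat: the blanket assertion that a continuous surjection of complete topological algebras sends closures of ideals to closures of ideals is not a general fact, but it holds here because $\widehat{\rho}$ admits the continuous algebra section $\iota:\RA{Q}\hookrightarrow\RA{\widetilde{Q}}$ and satisfies $\widehat{\rho}(\widetilde{\mathfrak{m}}^k)=\mathfrak{m}^k$, which is all you need to lift a convergent expression $\sum_i r_i\,\partial_{a_i}(S)\,s_i$ in $J(S)$ to the convergent element $\sum_i \iota(r_i)\,\partial_{a_i}(\widetilde{S})\,\iota(s_i)\in J(\widetilde{S})$.
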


Lemma \ref{lemma:extend-reps-putting-zeros} implies that Definitions \ref{def:decorated-representation} and \ref{def:restricted-and-extended-g-vectors}, as well as Proposition \ref{prop:restricted-g-vector-vs-extended-g-vector} can be fully applied to the Jacobian algebras $\widetilde{\Lambda}:=\jacobalg{\widetilde{Q}}{\widetilde{S}}$ and $\Lambda:=\jacobalg{Q}{S}$, for which we will write $\widetilde{\mathbf{g}}_{\mathcal{M}}^{(\widetilde{Q},\widetilde{S})}:=\widetilde{\mathbf{g}}_{\mathcal{M}}^{\jacobalg{\widetilde{Q}}{\widetilde{S}}}$ and $\mathbf{g}_{\mathcal{M}}^{(Q,S)}:=\mathbf{g}_{\mathcal{M}}^{\jacobalg{Q}{S}}$.

\begin{ex}\label{ex:two-triangles} Let $(\widetilde{Q},\widetilde{S})$ be the following (non-degenerate) quiver with potential
$$
\xymatrix{
& & 3 \ar[dr]^{\beta} & &\\
\widetilde{Q}=&1 \ar[ur]^{\gamma} \ar[dr]_{\varepsilon} & & 2 \ar[ll]^{\alpha} & \widetilde{S}=\alpha\beta\gamma-\alpha\delta\varepsilon. \\
 & & 4 \ar[ur]_{\delta} & &}
$$
Taking $n=3$, $m=1$, $n+m=4$, we have
$$
\xymatrix{
& & 3 \ar[dr]^{\beta} & &\\
Q=&1 \ar[ur]^{\gamma}  & & 2 \ar[ll]^{\alpha} & S=\alpha\beta\gamma. \\
 & & 4  & &}
$$
The minimal injective presentation of $S(2)$ in $\jacobalg{Q}{S}$-$\modcat$ is $0\rightarrow S(2)\rightarrow I(2)\rightarrow I(3)$, whereas the minimal injective presentation of $S(2)$ in $\jacobalg{\widetilde{Q}}{\widetilde{S}}$-$\modcat$ is $0\rightarrow S(2)\rightarrow \widetilde{I}(2)\rightarrow \widetilde{I}(3)\oplus\widetilde{I}(4)$. Hence, letting $\mathcal{M}=(S(2),0)$, we have
$\g_{\mathcal{M}}=(0,-1,1)$ and $\widetilde{\g}_{\mathcal{M}}=(0,-1,1,1)$.
\end{ex}

\section{A partial order on $\ZZ^n$}\label{sec:partial-order}

\begin{lemma}\label{lemma:partial-order-on-Z^n} 
If $\Ker(B)\cap\ZZ^{n}_{\geq 0}=0$, then the rule
$$
\mathbf{a}\preceq_B\mathbf{b} \overset{\operatorname{def}}{\Longleftrightarrow} \mathbf{a}-\mathbf{b}\in B(\ZZ^{n}_{\geq 0})
$$
defines  a partial order on $\ZZ^{n}$.
\end{lemma}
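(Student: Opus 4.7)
The plan is to verify the three axioms of a partial order on $\ZZ^{n}$ one by one. Reflexivity and transitivity will follow from elementary properties of $B(\ZZ^{n}_{\geq 0})$ without any use of the hypothesis $\Ker(B)\cap\ZZ^{n}_{\geq 0}=0$; the hypothesis will be invoked only to secure antisymmetry.

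First I would check reflexivity: since $\mathbf{0}\in\ZZ^{n}_{\geq 0}$ and $B\mathbf{0}=\mathbf{0}$, we have $\mathbf{a}-\mathbf{a}=\mathbf{0}\in B(\ZZ^{n}_{\geq 0})$ for every $\mathbf{a}\in\ZZ^{n}$, so $\mathbf{a}\preceq_B\mathbf{a}$. Next I would check transitivity by exploiting the fact that $\ZZ^{n}_{\geq 0}$ is closed under addition, and hence so is its image $B(\ZZ^{n}_{\geq 0})$: if $\mathbf{a}\preceq_B\mathbf{b}$ and $\mathbf{b}\preceq_B\mathbf{c}$, write $\mathbf{a}-\mathbf{b}=B\mathbf{u}$ and $\mathbf{b}-\mathbf{c}=B\mathbf{v}$ for some $\mathbf{u},\mathbf{v}\in\ZZ^{n}_{\geq 0}$; then $\mathbf{a}-\mathbf{c}=B(\mathbf{u}+\mathbf{v})$ with $\mathbf{u}+\mathbf{v}\in\ZZ^{n}_{\geq 0}$, whence $\mathbf{a}\preceq_B\mathbf{c}$.

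The only step that uses the hypothesis is antisymmetry. Suppose $\mathbf{a}\preceq_B\mathbf{b}$ and $\mathbf{b}\preceq_B\mathbf{a}$, and choose $\mathbf{u},\mathbf{v}\in\ZZ^{n}_{\geq 0}$ with $\mathbf{a}-\mathbf{b}=B\mathbf{u}$ and $\mathbf{b}-\mathbf{a}=B\mathbf{v}$. Adding these two equalities yields $B(\mathbf{u}+\mathbf{v})=\mathbf{0}$. Since $\mathbf{u}+\mathbf{v}\in\ZZ^{n}_{\geq 0}$, the assumption $\Ker(B)\cap\ZZ^{n}_{\geq 0}=0$ forces $\mathbf{u}+\mathbf{v}=\mathbf{0}$, and because both summands have non-negative entries we deduce $\mathbf{u}=\mathbf{v}=\mathbf{0}$. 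Consequently $\mathbf{a}-\mathbf{b}=B\mathbf{u}=\mathbf{0}$, i.e., $\mathbf{a}=\mathbf{b}$.

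I do not anticipate any real obstacle: the argument is entirely elementary, and the skew-symmetry of $B$ plays no direct role in the proof (it is part of the ambient framework of the paper, but only the kernel condition is actually needed here). The one point worth highlighting is that the non-negativity of the entries of $\mathbf{u}$ and $\mathbf{v}$ is essential to pass from $\mathbf{u}+\mathbf{v}=\mathbf{0}$ to $\mathbf{u}=\mathbf{v}=\mathbf{0}$; without the hypothesis on $\Ker(B)\cap\ZZ^{n}_{\geq 0}$, the relation $\preceq_B$ would still be reflexive and transitive but could fail to be antisymmetric.
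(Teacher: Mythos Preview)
Your proof is correct and follows essentially the same approach as the paper's own proof: reflexivity and transitivity are immediate, and antisymmetry is obtained by adding the two relations to get $\mathbf{u}+\mathbf{v}\in\Ker(B)\cap\ZZ^{n}_{\geq 0}=0$, whence $\mathbf{u}=\mathbf{v}=0$. The only difference is that you spell out the reflexivity and transitivity steps explicitly, whereas the paper simply declares them clear.
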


\begin{proof} It is clear that $\preceq_B$ is reflexive and transitive. For the anti-symmetry, suppose that $\mathbf{a},\mathbf{b}\in\ZZ^n$ satisfy $\mathbf{a}\preceq_B\mathbf{b}$ and $\mathbf{b}\preceq_B\mathbf{a}$, so that there exist $\mathbf{u},\mathbf{v}\in\ZZ^{n}_{\geq 0}$ such that $\mathbf{a}-\mathbf{b}=B\mathbf{u}$ and $\mathbf{b}-\mathbf{a}=B\mathbf{v}$, which implies that $\mathbf{u}+\mathbf{v}\in \Ker(B)\cap \ZZ^{n}_{\geq 0}$. The equality $\Ker(B)\cap\ZZ^{n}_{\geq 0}=0$ allows us to deduce that $\mathbf{u}+\mathbf{v}=0$, and since $\mathbf{u},\mathbf{v}\in\ZZ^{n}_{\geq 0}$, this implies $\mathbf{u}=0=\mathbf{v}$ and hence, $\mathbf{a}=\mathbf{b}$.
\end{proof}

\begin{remark}
\label{rem:the-5-conditions-on-B}\begin{itemize}\item 
Consider the following five conditions:
\begin{enumerate}
\item\label{item:rank(B)=n} $\rank(B)=n$;
\item\label{item:B-satisfies-columns-condition} no column of $B$ is identically zero, and $B$ has $\operatorname{corank}(B)$ columns that can be written as non-negative linear combinations of the remaining $\operatorname{rank}(B)$ columns of $B$ with non-negative rational coefficients;
\item\label{item:Im(B)capQ>0n-is-not-empty} $\operatorname{Im}(B)\cap\Q_{>0}^n\neq \varnothing$;
\item\label{item:ker(B)capQ>=0n-is-trivial} $\Ker(B)\cap\Q_{\geq 0}^n=0$;
\item\label{item:ker(B)capZ>=0n-is-trivial} $\Ker(B)\cap\ZZ_{\geq 0}^n=0$.
\end{enumerate}
In \cite[Lemma 4.4]{CLS} it was proved that the first four conditions are related by the implications \eqref{item:rank(B)=n}~$\Longrightarrow$~\eqref{item:B-satisfies-columns-condition}~$\Longrightarrow$~\eqref{item:Im(B)capQ>0n-is-not-empty}~$\Longrightarrow$~\eqref{item:ker(B)capQ>=0n-is-trivial}, and it is obvious that \eqref{item:ker(B)capQ>=0n-is-trivial} and \eqref{item:ker(B)capZ>=0n-is-trivial} are equivalent.
\item In the case of skew-symmetric matrices with $\rank(B)=n$, the corresponding partial order $\preceq_B$ has been considered by Fu-Keller \cite{FK}, Plamondon \cite{Plamondon-generic} and Qin \cite{Qin}. 
In the less restrictive situation $\Ker(B)\cap\Q_{\geq 0}^n=0$, it has been considered in \cite{CLS}.
\item It is straightforward to check that no triangulation $\tau$ of the once-punctured torus with empty boundary satisfies $\Ker(B(\tau))\cap\ZZ_{\geq 0}^3=0$. So, it is not true that every mutation class of skew-symmetric matrices has a representative $B$ satisfying $\Ker(B)\cap\ZZ_{\geq 0}^n=0$.
\item Denote by $\preceq_{B}^{\ZZ}$ the binary relation defined in Lemma \ref{lemma:partial-order-on-Z^n} and by $\preceq_B^{\Q}$ the binary relation defined by the rule $[\mathbf{a}\preceq_B^{\Q}\mathbf{b} \Longleftrightarrow \mathbf{a}-\mathbf{b}\in B(\Q^{n}_{\geq 0})]$. Then for all $\mathbf{a},\mathbf{b}\in\ZZ^n$ we obviously have $[\mathbf{a}\preceq_B^{\ZZ}\mathbf{b}\Longrightarrow \mathbf{a}\preceq_B^{\Q}\mathbf{b}]$. However, it is very easy to give examples of skew-symmetric matrices $B$ satisfying the equivalent conditions \eqref{item:ker(B)capQ>=0n-is-trivial} and \eqref{item:ker(B)capZ>=0n-is-trivial} are satisfied, for which there exist $\mathbf{a},\mathbf{b}\in\ZZ^n$ such that $\mathbf{a}\not\preceq_B^{\ZZ}\mathbf{b}$ and $\mathbf{a}\preceq_B^{\Q}\mathbf{b}]$. Informally speaking, this means that it is harder for two vectors in $\ZZ^n$ to be $\preceq_B^{\ZZ}$-comparable than to be $\preceq_B^{\Q}$-comparable.
\end{itemize}
\end{remark}

\section{The Caldero-Chapoton algebra with coefficients}\label{sec:CC-map-and-algebra}

Recall that the \emph{$F$-polynomial} of a representation $M$ of $Q$ supported in $[n]$ is
$$
F_M:=\sum_{\e\in\ZZ_{\geq 0}^n}\chi(\Gr_{\e}(M))\mathbf{X}^{\e}\in\ZZ[X_1,\ldots,X_n].
$$

\begin{defi}\label{def:CC-function} 
\begin{enumerate}\item For any decorated representation $\mathcal{M}=(M,\bfv)$ of $\Lambda$ which is supported in $[n]$, the \emph{Caldero-Chapoton function of $\mathcal{M}$ with coefficients extracted from $\widetilde{\Lambda}$}  is defined~to~be
\begin{eqnarray*}
CC_{\widetilde{\Lambda}}(\mathcal{M})&:=&\mathbf{x}^{\widetilde{\g}_{\mathcal{M}}^{\widetilde{\Lambda}}}F_M(\widehat{y}_1,\ldots,\widehat{y}_n)
\ = \ \sum_{\e\in\ZZ_{\geq 0}^n}\chi(\Gr_{\e}(M))\mathbf{x}^{\widetilde{B}\e+\widetilde{\g}_{\mathcal{M}}^{\widetilde{\Lambda}}},
\ \ \ \ \ \text{where}\\
\widehat{y}_j&:=&\prod_{i=1}^{n+m}x_i^{b_{ij}} \ \ \ \text{for each $j\in\{1,\ldots,n\}$}, \ \ \ \ \ \text{and}\\
\mathbf{x}^{\mathbf{f}}&:=&x_1^{f_1}\cdots x_{n}^{f_n}x_{n+1}^{f_{n+1}}\cdots x_{n+m}^{f_{n+m}}\ \ \ \ \  \text{for $\mathbf{f}=(f_1,\ldots,f_n,f_{n+1},\ldots,f_{n+m})\in\ZZ^{n+m}$}.
\end{eqnarray*}
\item For a set $\mathscr{M}$ of decorated representations of $\Lambda$ supported in $[n]$, we write
\begin{eqnarray*}CC_{\widetilde{\Lambda}}(\mathscr{M})&:=&\{CC_{\widetilde{\Lambda}}(\mathcal{M})\suchthat \mathcal{M}\in \mathscr{M}\}.
\end{eqnarray*}
\end{enumerate}
\end{defi}

In the case of Jacobian algebras, we shall write $CC_{(\widetilde{Q},\widetilde{S})}(\mathcal{M}):=CC_{\jacobalg{\widetilde{Q}}{\widetilde{S}}}(\mathcal{M})$ and $CC_{(\widetilde{Q},\widetilde{S})}(\mathscr{M}):=CC_{\jacobalg{\widetilde{Q}}{\widetilde{S}}}(\mathscr{M})$.

\begin{ex}
Let $(\widetilde{Q},\widetilde{S})$, $(Q,S)$ and $\mathcal{M}=(S(2),0)$ be as in Example \ref{ex:two-triangles}. Then
\begin{eqnarray*}
CC_{(\widetilde{Q},\widetilde{S})}(\mathcal{M}) &=& x_2^{-1}x_3x_4(1+x_1x_3^{-1}x_4^{-1}) \ \ \ \text{and}\\
CC_{(Q,S)}(\mathcal{M}) &=& x_2^{-1}x_3(1+x_1x_3^{-1}).
\end{eqnarray*}
\end{ex}

The following definition is inspired by \cite{CLS}, in turn inspired by the works of Caldero-Chapoton \cite{Caldero-Chapoton} and Derksen-Weyman-Zelevinsky \cite{DWZ1,DWZ2}. Recall from Definition \ref{def:precise-def-of-cluster-alg-and-upper-cluster-alg} that we have set $R:=\Q[x_{n+1}^{\pm1},\ldots,x_{n+m}^{\pm1}]$.

\begin{defi} 
The \emph{Caldero-Chapoton algebra of $\Lambda$ with coefficients extracted from $\widetilde{\Lambda}$} is the $R$-submodule $\calCC_{\widetilde{\Lambda}}(\Lambda)$ of $R[x_{1}^{\pm1},\ldots,x_{n}^{\pm1}]$ spanned by
$$
CC_{\widetilde{\Lambda}}(\decrep(\Lambda))=\{CC_{\widetilde{\Lambda}}(\mathcal{M})\suchthat\mathcal{M} \ \text{is a decorated representation of $\Lambda$ supported in} \ [n]\}
$$.
\end{defi}

As before, in the case of Jacobian algebras we will write $\calCC_{(\widetilde{Q},\widetilde{S})}(Q,S):=\calCC_{\jacobalg{\widetilde{Q}}{\widetilde{S})}}(\jacobalg{Q}{S})$. 

\begin{remark} Despite being defined as a submodule, the Caldero-Chapoton algebra $\calCC_{\widetilde{\Lambda}}(\Lambda)$ is actually an $R$-subalgebra of  $R[x_{1}^{\pm1},\ldots,x_{n}^{\pm1}]$. This is because $F_{\mathcal{M}}F_{\mathcal{N}}=F_{\mathcal{M}\oplus\mathcal{N}}$ (cf. \cite[Proposition~3.2]{DWZ2}) and $\widetilde{\g}_{\mathcal{M}}^{\widetilde{\Lambda}}+\widetilde{\g}_{\mathcal{N}}^{\widetilde{\Lambda}}=\widetilde{\g}_{\mathcal{M}\oplus\mathcal{N}}^{\widetilde{\Lambda}}$.
\end{remark}

The following result and its proof are a refinement of \cite[Proposition 4.3]{CLS} and its proof. 

\begin{thm}\label{thm:criterion-for-lin-ind-of-CC-functions} 
Take a set $\mathscr{M}$ of decorated representations of $\Lambda$ supported in $[n]$. Suppose that the following two conditions are satisfied:
\begin{enumerate}
\item $\Ker(B)\cap\mathbb{Z}^{n}_{\geq 0}=\{0\}$;
\item for any two different elements $\mathcal{M},\mathcal{M}'\in\mathscr{M}$, the $\g$-vectors $\g_{\mathcal{M}}^{\Lambda}$ and $\g_{\mathcal{M}'}^{\Lambda}$ are different.
\end{enumerate}
Then $CC_{\widetilde{\Lambda}}(\mathscr{M})$ is an $R$-linearly independent subset of the Laurent polynomial ring $R[x_1^{\pm1},\ldots,x_n^{\pm1}]$. 
\end{thm}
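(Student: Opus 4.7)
The plan is to adapt the leading-term argument from the proof of \cite[Proposition~4.3]{CLS} to the setting with coefficients, using Proposition~\ref{prop:restricted-g-vector-vs-extended-g-vector} to transfer information from the extended $\g$-vector $\widetilde{\g}_{\mathcal{M}}^{\widetilde{\Lambda}}\in\ZZ^{n+m}$ to its first $n$ entries, where the partial order $\preceq_B$ of Lemma~\ref{lemma:partial-order-on-Z^n} is available thanks to hypothesis~(1). Concretely, I would assume for contradiction a non-trivial $R$-linear relation $\sum_{i=1}^{\ell} r_i\, CC_{\widetilde{\Lambda}}(\mathcal{M}_i)=0$ with $\mathcal{M}_1,\ldots,\mathcal{M}_\ell\in\mathscr{M}$ pairwise distinct and all $r_i\in R\setminus\{0\}$, and derive $r_{i_0}=0$ for some $i_0$ by isolating a single $\preceq_B$-extremal monomial in the $x_1,\ldots,x_n$ variables.

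The key observation is that, by Proposition~\ref{prop:restricted-g-vector-vs-extended-g-vector}, every exponent $\widetilde{B}\e+\widetilde{\g}_{\mathcal{M}}^{\widetilde{\Lambda}}$ appearing in
$$
CC_{\widetilde{\Lambda}}(\mathcal{M}) \;=\; \sum_{\e\in\ZZ_{\geq0}^{n}}\chi(\Gr_{\e}(M))\,\mathbf{x}^{\widetilde{B}\e+\widetilde{\g}_{\mathcal{M}}^{\widetilde{\Lambda}}}
$$
has first $n$ entries equal to $B\e+\g_{\mathcal{M}}^{\Lambda}$, and the latter satisfies $B\e+\g_{\mathcal{M}}^{\Lambda}\preceq_B \g_{\mathcal{M}}^{\Lambda}$ straight from the definition of $\preceq_B$; moreover, by hypothesis~(1), $B\e=0$ with $\e\in\ZZ_{\geq0}^n$ forces $\e=0$, so $\g_{\mathcal{M}}^{\Lambda}$ is in fact the unique $\preceq_B$-maximum of the set of first-$n$ projections of exponents of $CC_{\widetilde{\Lambda}}(\mathcal{M})$, realized only by the $\e=0$ term, whose coefficient is $\chi(\Gr_{0}(M))=1$ and which contributes the pure monomial $\mathbf{x}^{\widetilde{\g}_{\mathcal{M}}^{\widetilde{\Lambda}}}$. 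Since $R=\Q[x_{n+1}^{\pm1},\ldots,x_{n+m}^{\pm1}]$ involves only the frozen variables, multiplying by any $r_i\in R$ does not alter the set of first-$n$ projections of exponents of $r_i\, CC_{\widetilde{\Lambda}}(\mathcal{M}_i)$.

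With this in hand, I would apply Lemma~\ref{lemma:partial-order-on-Z^n} to the finite set $T:=\{\g_{\mathcal{M}_i}^{\Lambda}:1\leq i\leq\ell\}$, which by hypothesis~(2) has $\ell$ distinct elements, and pick a $\preceq_B$-maximal element $\g^{*}=\g_{\mathcal{M}_{i_0}}^{\Lambda}$. For any $j\in\{1,\ldots,\ell\}$, the vector $\g^{*}$ occurs as a first-$n$ projection of an exponent of $r_j\, CC_{\widetilde{\Lambda}}(\mathcal{M}_j)$ precisely when $\g^{*}-\g_{\mathcal{M}_j}^{\Lambda}\in B(\ZZ_{\geq0}^n)$, i.e.\ when $\g^{*}\preceq_B \g_{\mathcal{M}_j}^{\Lambda}$; maximality of $\g^{*}$ in $T$ together with hypothesis~(2) then force $j=i_0$, and moreover, by hypothesis~(1), within $r_{i_0}CC_{\widetilde{\Lambda}}(\mathcal{M}_{i_0})$ only the $\e=0$ term projects to $\g^{*}$. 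Collecting in $\sum_i r_i\, CC_{\widetilde{\Lambda}}(\mathcal{M}_i)=0$ the contributions whose first-$n$ coordinates equal $\g^{*}$ therefore yields
$$
r_{i_0}\, x_{n+1}^{\widetilde{g}_{n+1}}\cdots x_{n+m}^{\widetilde{g}_{n+m}}\cdot x_1^{g^{*}_1}\cdots x_n^{g^{*}_n}\;=\;0,
$$
and since $R$ is an integral domain in which $x_{n+1},\ldots,x_{n+m}$ are units, this forces $r_{i_0}=0$, a contradiction. The main subtlety, handled precisely by Proposition~\ref{prop:restricted-g-vector-vs-extended-g-vector} together with hypothesis~(1), is to ensure that the leading-term analysis performed only on the first $n$ coordinates still uniquely detects $\g_{\mathcal{M}}^{\Lambda}$ even though the exponents truly live in $\ZZ^{n+m}$; the coefficient variables $x_{n+1},\ldots,x_{n+m}$ play the role of non-interfering ``scalars'' since they are invertible in $R$.
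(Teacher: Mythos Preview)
Your proposal is correct and follows essentially the same approach as the paper's proof: both use Proposition~\ref{prop:restricted-g-vector-vs-extended-g-vector} to identify the first $n$ coordinates of the exponents appearing in $CC_{\widetilde{\Lambda}}(\mathcal{M})$, observe that $\g_{\mathcal{M}}^{\Lambda}$ is the unique $\preceq_B$-maximum among these first-$n$ projections (achieved only at $\e=0$), and then select a $\preceq_B$-maximal $\g$-vector among the finitely many $\mathcal{M}_i$ to isolate a single nonzero coefficient. The only cosmetic difference is that the paper phrases the conclusion as an induction on the number of terms, whereas you frame it as a contradiction; the underlying argument is identical.
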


\begin{proof} Take finitely many elements of $\mathscr{M}$, say $\mathcal{M}_{1},\ldots,\mathcal{M}_{t}$. View each $CC_{\widetilde{\Lambda}}(\mathcal{M}_{k})$ as a Laurent polynomial in $x_1,\ldots,x_n$ with coefficients in $\Q[x_{n+1},\ldots,x_{n+m}]\subseteq R$. By
Proposition~\ref{prop:restricted-g-vector-vs-extended-g-vector} above, the $\g$-vector $\g_{\mathcal{M}_{k}}^{\Lambda}$ gives precisely the first $n$ entries of the extended $\g$-vector $\widetilde{\g}_{\mathcal{M}_{k}}^{\widetilde{\Lambda}}$. Hence, the Laurent monomial $\mathbf{x}^{\g_{\mathcal{M}_{k}}^{\Lambda}}$ appears in $CC_{\widetilde{\Lambda}}(\mathcal{M}_{k})$ accompanied with the non-zero coefficient $x_{n+1}^{\widetilde{g}_{\mathcal{M}_{k},n+1}}\cdots x_{n+m}^{\widetilde{g}_{\mathcal{M}_{k},n+m}}$. This is because for $\mathbf{e}=0$ we have $\chi(\Gr_{\mathbf{e}}(M_{k}))\mathbf{x}^{\widetilde{B}\mathbf{e}+\widetilde{g}_{\mathcal{M}_{k}}^{\Lambda}}=\mathbf{x}^{\widetilde{g}_{\mathcal{M}_{k}}^{\widetilde{\Lambda}}}$, and because, due to the equality $\Ker(B)\cap\ZZ^n_{\geq 0}=\{0\}$, for $\mathbf{e}\neq 0$ we have $\mathbf{x}^{B\mathbf{e}+\g_{\mathcal{M}_{k}}^{\Lambda}}\neq \mathbf{x}^{\g_{\mathcal{M}_{k}}^{\Lambda}}$. We deduce, in particular, that the Caldero-Chapoton functions $CC_{\widetilde{\Lambda}}(\mathcal{M}_{1}), \ldots, CC_{\widetilde{\Lambda}}(\mathcal{M}_{t})$ are all pairwise distinct.

Suppose that $\lambda_1\ldots,\lambda_t\in R=\Q[x_{n+1}^{\pm1},\ldots,x_{n+m}^{\pm1}]$ make the equality
$$
\lambda_1CC_{\widetilde{\Lambda}}(\mathcal{M}_1)+\ldots+\lambda_tCC_{\widetilde{\Lambda}}(\mathcal{M}_t)=0
$$
hold true in $R[x_1^{\pm1},\ldots,x_n^{\pm1}]$.
Let $s\in\{1,\ldots,t\}$ be an index such that ${\g_{\mathcal{M}_s}^{\Lambda}}$ is a $\preceq$-maximal element of the set
$\{{\g_{\mathcal{M}_k}^{\Lambda}}\suchthat 1\leq k\leq t\}$. Then $\mathbf{x}^{\g_{\mathcal{M}_s}^{\Lambda}}$ does not appear in the Laurent expansion of 
$CC_{\widetilde{\Lambda}}(\mathcal{M}_k)$ if $k\neq s$, and this forces $\lambda_s$ to be zero.
The theorem now follows from a straightforward inductive argument on $t$.
\end{proof}

For the next result, recall that $\Lambda_{\prin}$ and $R_{\prin}$ have been defined in Subsection \ref{subsec:assumptions}. See also Remark \ref{rem:Qin-correction-technique} as the theorem can be deduced from Qin's \emph{correction technique} when the binary relation $\preceq_B$ is a partial order.

\begin{thm}\label{thm:spanning-set-in-princ-coeffs=>spanning-set-in-arbitrary-coeffs} 
Take any set $\mathscr{M}$ of decorated representations of $\Lambda$ supported in $[n]$, and any decorated representation $\mathcal{N}$ of $\Lambda$ supported in $[n]$. If $CC_{\Lambda_{\prin}}(\mathcal{N})$ belongs to the $R_{\prin}$-submodule of $R_{\prin}[x_{1}^{\pm1},\ldots,x_{n}^{\pm1}]$ spanned by the set
$
CC_{\Lambda_{\prin}}(\mathscr{M}),
$
then $CC_{\widetilde{\Lambda}}(\mathcal{N})$ belongs to the $R$-submodule of $R[x_{1}^{\pm1},\ldots,x_{n}^{\pm1}]$ spanned by the set
$
CC_{\Lambda}(\mathscr{M}).
$
Consequently, if the set
$CC_{\Lambda_{\prin}}(\mathscr{M})$
spans the Caldero-Chapoton algebra $\calCC_{\Lambda_{\prin}}(\Lambda)$ over the ground ring $R_{\prin}$, then the set
$CC_{\Lambda}(\mathscr{M})$
spans the Caldero-Chapoton algebra $\calCC_{\widetilde{\Lambda}}(\Lambda)$ over the ground ring $R$.
\end{thm}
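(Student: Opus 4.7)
The plan is to construct a coefficient-specialization ring homomorphism that carries each $CC_{\Lambda_{\prin}}$-function to an $R$-unit multiple of the corresponding $CC_{\widetilde{\Lambda}}$-function, and then to transport the hypothesized $R_{\prin}$-linear relation across this specialization. Concretely, I would define $\phi\colon R_{\prin}[x_1^{\pm 1},\ldots,x_n^{\pm 1}]\to R[x_1^{\pm 1},\ldots,x_n^{\pm 1}]$ to be the unique ring homomorphism that fixes $x_1,\ldots,x_n$ and sends the principal frozen variable $x_{n+j}$ to the Laurent monomial $\prod_{i=1}^{m}x_{n+i}^{\widetilde{b}_{n+i,j}}$, for each $j\in[n]$ (on the right-hand side the $x_{n+1},\ldots,x_{n+m}$ are the frozen variables of $R$). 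Because each image lies in $R^{\times}$, this extends unambiguously to a homomorphism of Laurent polynomial rings. By construction, $\phi$ sends the principal hatted variable $\widehat{y}_{j,\prin}=(\prod_{i=1}^{n}x_i^{b_{ij}})\,x_{n+j}$ precisely to $\widehat{y}_j = (\prod_{i=1}^{n}x_i^{b_{ij}})\prod_{i=1}^{m}x_{n+i}^{\widetilde{b}_{n+i,j}}$, so $\phi(F_M(\widehat{y}_{1,\prin},\ldots,\widehat{y}_{n,\prin})) = F_M(\widehat{y}_1,\ldots,\widehat{y}_n)$ for every representation $M$ of $\Lambda$ supported in $[n]$.

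The second step is to verify that for every decorated representation $\mathcal{M}=(M,\bfv)$ of $\Lambda$ supported in $[n]$ there is a unit $u_{\mathcal{M}}\in R^{\times}$ satisfying $\phi(CC_{\Lambda_{\prin}}(\mathcal{M})) = u_{\mathcal{M}}\cdot CC_{\widetilde{\Lambda}}(\mathcal{M})$. Factoring the $\g$-vector prefactor out of each Caldero-Chapoton function, this reduces to comparing the Laurent monomials $\phi(\mathbf{x}^{\widetilde{\g}_{\mathcal{M}}^{\Lambda_{\prin}}})$ and $\mathbf{x}^{\widetilde{\g}_{\mathcal{M}}^{\widetilde{\Lambda}}}$. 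Here Proposition~\ref{prop:restricted-g-vector-vs-extended-g-vector}, applied separately to $\Lambda_{\prin}$ and to $\widetilde{\Lambda}$, provides the decisive input: the first $n$ entries of both extended $\g$-vectors agree with $\g_{\mathcal{M}}^{\Lambda}$. Since $\phi$ acts as the identity on $x_1,\ldots,x_n$, the two monomials therefore share their $x_1,\ldots,x_n$ part, and the quotient $u_{\mathcal{M}}$ is visibly a Laurent monomial in $x_{n+1},\ldots,x_{n+m}$, hence a unit of $R$.

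With these two facts in place, the conclusion is essentially formal. Given the hypothesized identity $CC_{\Lambda_{\prin}}(\mathcal{N})=\sum_{\mathcal{M}\in\mathscr{M}}\lambda_{\mathcal{M}}\,CC_{\Lambda_{\prin}}(\mathcal{M})$ with $\lambda_{\mathcal{M}}\in R_{\prin}$ and only finitely many $\lambda_{\mathcal{M}}$ nonzero, I would apply $\phi$ to both sides to obtain $u_{\mathcal{N}}\,CC_{\widetilde{\Lambda}}(\mathcal{N}) = \sum_{\mathcal{M}}\phi(\lambda_{\mathcal{M}})\,u_{\mathcal{M}}\,CC_{\widetilde{\Lambda}}(\mathcal{M})$, and then divide through by the unit $u_{\mathcal{N}}$ to exhibit $CC_{\widetilde{\Lambda}}(\mathcal{N})$ as an $R$-linear combination of the $CC_{\widetilde{\Lambda}}(\mathcal{M})$. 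The spanning statement in the theorem is then immediate by running the argument uniformly over all decorated $\mathcal{N}$. The only genuine content in this plan is the appeal to Proposition~\ref{prop:restricted-g-vector-vs-extended-g-vector} in the second step; without the cancellation of the $x_1,\ldots,x_n$ exponents that it guarantees, $\phi$ would not carry CC-functions to unit multiples of CC-functions, and the entire strategy would break down.
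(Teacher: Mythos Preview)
Your proposal is correct and follows essentially the same route as the paper's proof: both define the same specialization homomorphism on the frozen variables, observe that it sends $\widehat{y}_{j,\prin}$ to $\widehat{y}_j$ and hence preserves the $F$-polynomial part, and then invoke Proposition~\ref{prop:restricted-g-vector-vs-extended-g-vector} to see that the $\g$-vector prefactors differ only by a unit in $R$. Your write-up makes the unit $u_{\mathcal{M}}$ explicit, whereas the paper records it as the quotient $\mathbf{x}^{\g_{\mathcal{M}}^{\Lambda}}/\mathbf{x}^{\widetilde{\g}_{\mathcal{M}}^{\widetilde{\Lambda}}}$, but the arguments are otherwise identical.
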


\begin{proof} Let $\varphi:\Q[x_1^{\pm1},\ldots,x_{n}^{\pm1},x_{n+1}^{\pm1},\ldots,x_{n+n}^{\pm1}]\rightarrow\Q[x_1^{\pm1},\ldots,x_{n}^{\pm1},x_{n+1}^{\pm1},\ldots,x_{n+m}^{\pm1}]$ be the $\Q$-algebra homomorphism defined by the rule
$$
\varphi(x_j)=\begin{cases}
x_j & \text{if $1\leq j\leq n$;}\\
\prod_{i=n+1}^{n+m}x_i^{\widetilde{b}_{ij}} & \text{if $n+1\leq j\leq n+n$}.
\end{cases}
$$

Suppose that the elements $\mathcal{M}_1,\ldots,\mathcal{M}_t\in\mathscr{M}$, and the Laurent polynomials $\lambda_1,\ldots,\lambda_t\in R_{\prin}$ satisfy
\begin{equation}\label{eq:CC_prin(M)=lin-comb-in-prin-coeffs}
CC_{\Lambda_{\prin}}(\mathcal{N})=\sum_{k=1}^t\lambda_tCC_{\Lambda_{\prin}}(\mathcal{M}_k).
\end{equation}
When we apply $\varphi$ to \eqref{eq:CC_prin(M)=lin-comb-in-prin-coeffs} we obtain
$$
\frac{\mathbf{x}^{\g_{\mathcal{N}}^{\Lambda}}}{\mathbf{x}^{\widetilde{\g}_{\mathcal{N}}^{\widetilde{\Lambda}}}}CC_{\widetilde{\Lambda}}(\mathcal{N})=\varphi(CC_{\Lambda_{\prin}}(\mathcal{N}))=\sum_{k=1}^t\varphi(\lambda_t)\varphi(CC_{\Lambda_{\prin}}(\mathcal{M}_k))=
\sum_{k=1}^t\varphi(\lambda_t)\frac{\mathbf{x}^{\g_{\mathcal{M}_k}^{\Lambda}}}{\mathbf{x}^{\widetilde{\g}_{\mathcal{M}_k}^{\widetilde{\Lambda}}}}CC_{\widetilde{\Lambda}}(\mathcal{M}_k)
$$
and hence,
$$
CC_{\widetilde{\Lambda}}(\mathcal{N})
=
\sum_{k=1}^t\varphi(\lambda_t)
\frac{\mathbf{x}^{\widetilde{\g}_{\mathcal{N}}^{\widetilde{\Lambda}}}}{\mathbf{x}^{\g_{\mathcal{N}}^{\Lambda}}}
\frac{\mathbf{x}^{\g_{\mathcal{M}_k}^{\Lambda}}}{\mathbf{x}^{\widetilde{\g}_{\mathcal{M}_k}^{\widetilde{\Lambda}}}}
CC_{\widetilde{\Lambda}}(\mathcal{M}_k)
$$
with each coefficient
\begin{equation}\label{eq:Laurent-monomial-coefficient}
\varphi(\lambda_t)
\frac{\mathbf{x}^{\widetilde{\g}_{\mathcal{N}}^{\widetilde{\Lambda}}}}{\mathbf{x}^{\g_{\mathcal{N}}^{\Lambda}}}
\frac{\mathbf{x}^{\g_{\mathcal{M}_k}^{\Lambda}}}{\mathbf{x}^{\widetilde{\g}_{\mathcal{M}_k}^{\widetilde{\Lambda}}}}\in R
\end{equation}
Theorem \ref{thm:spanning-set-in-princ-coeffs=>spanning-set-in-arbitrary-coeffs} follows.
\end{proof}

\begin{question}\label{question:non-negative-exponents?} Is it possible to give necessary and/or sufficient conditions that guarantee that all of the Laurent polynomials $\varphi(\lambda_t)
\frac{\mathbf{x}^{\widetilde{\g}_{\mathcal{M}}}}{\mathbf{x}^{\g_{\mathcal{M}}}}
\frac{\mathbf{x}^{\g_{Z_k}}}{\mathbf{x}^{\widetilde{\g}_{Z_k}}}$ in \eqref{eq:Laurent-monomial-coefficient} belong to the polynomial ring $\Q[x_{n+1},\ldots,x_{n+m}]$?
\end{question}

A positive answer to Question \ref{question:non-negative-exponents?} would help to find necessary and/or sufficient conditions in order for the spanning of $\calCC_{\widetilde{\Lambda}}$ by $\mathscr{M}$ to take place not only over $R=\Q[x_{n+1}^{\pm1},\ldots,x_{n+m}^{\pm1}]$, but actually over $\Q[x_{n+1},\ldots,x_{n+m}]$, which is stronger and harder to prove.

\begin{ex} Let $(\widetilde{Q},\widetilde{S})$ and $(Q,S)$ be as in Example \ref{ex:two-triangles}, $I(2)$ the indecomposable injective representation of $(Q,S)$ at the vertex $2$, and $S(1)$ the simple  representation of $(Q,S)$ at the vertex $1$. Both $S(1)$ and $I(2)$ are supported in $[3]$, hence so is the decorated representation $\mathcal{N}=(S(1)\oplus I(2))$. Direct independent computations show that
\begin{eqnarray}\nonumber
CC_{(Q_{\prin},S)}(\mathcal{N}) &=& x_1^{-1}+x_1^{-1}x_2^{-1}x_3y_1+x_2^{-1}y_1y_2+x_3^{-1}y_2+x_1^{-1}x_2x_3^{-1}y_2y_3+x_1^{-1}y_1y_2y_3\\
 \label{eq:example-specializing-princ-princ}
 &=& \left(x_1^{-1}+x_1^{-1}x_2^{-1}x_3y_1+x_2^{-1}y_1y_2\right)+y_2\left(x_3^{-1}+x_1^{-1}x_2x_3^{-1}y_3+x_1^{-1}y_1y_3\right)\\
\nonumber
 &=& CC_{(Q_{\prin},S)}(I(1),0) + y_2CC_{(Q_{\prin},S)}(I(3),0)\\
 \nonumber
 \text{and} \ \ \ \ \ 
CC_{(\widetilde{Q},\widetilde{S})}(\mathcal{N}) & = & x_1^{-1}x_4 +x_1^{-1}x_2^{-1}x_3x_4^{2} +x_2^{-1}x_4+x_3^{-1}+x_1^{-1}x_2x_3^{-1}+x_1^{-1}x_4 \\
\label{eq:example-specializing-princ-specialized}
&=& x_4\left(x_1^{-1}+x_1^{-1}x_2^{-1}x_3x_4+x_2^{-1}\right)+\left(x_3^{-1}+x_1^{-1}x_2x_3^{-1}+x_1^{-1}x_4\right)\\
\nonumber
&=& x_4CC_{(\widetilde{Q},\widetilde{S})}(I(1),0)+CC_{(\widetilde{Q},\widetilde{S})}(I(3),0),
\end{eqnarray}
where we have written $y_1:=x_{3+1}$, $y_2:=x_{3+2}$ and $y_3:={x_3+3}$.
The $\Q$-algebra homomorphism $\varphi:\Q[x_1^{\pm1},x_2^{\pm1},x_{3}^{\pm1},y_{1}^{\pm1},y_2^{\pm1},y_{3}^{\pm1}]\rightarrow\Q[x_1^{\pm1},x_2^{\pm1},x_{3}^{\pm1},x_{4}^{\pm1}]$ from the proof of Theorem \ref{thm:spanning-set-in-princ-coeffs=>spanning-set-in-arbitrary-coeffs} is given by
$$
x_1\mapsto x_1, \ \ \  x_2\mapsto x_2, \ \ \  x_3\mapsto x_3, \ \ \  y_1\mapsto x_4, \ \ \  y_2\mapsto x_4^{-1}, \ \ \  y_3\mapsto 1.
$$
It is obvious that $\varphi(CC_{(Q_{\prin},S)}(I(1),0))=CC_{(\widetilde{Q},\widetilde{S})}(I(1),0)$,  $\varphi(CC_{(Q_{\prin},S)}(I(3),0))=CC_{(\widetilde{Q},\widetilde{S})}(I(3),0)$, and $\varphi(CC_{(Q_{\prin},S)}(\mathcal{N}))=x_4^{-1}CC_{(\widetilde{Q},\widetilde{S})}(\mathcal{N})$. The relation between the expressions \eqref{eq:example-specializing-princ-princ} and \eqref{eq:example-specializing-princ-specialized} used in the proof of Theorem \ref{thm:spanning-set-in-princ-coeffs=>spanning-set-in-arbitrary-coeffs} is
\begin{eqnarray*}
CC_{(\widetilde{Q},\widetilde{S})}(\mathcal{N}) &=&x_4CC_{(\widetilde{Q},\widetilde{S})}(I(1),0)+CC_{(\widetilde{Q},\widetilde{S})}(I(3),0)
\\ &=&
x_4CC_{(\widetilde{Q},\widetilde{S})}(I(1),0)+x_4\varphi(y_2)CC_{(\widetilde{Q},\widetilde{S})}(I(3),0)\\
&=&
x_4\varphi(CC_{(Q_{\prin},S)}(I(1),0))+x_4\varphi(y_2)\varphi(CC_{(Q_{\prin},S)}(I(3),0))\\
&=&
x_4\varphi(CC_{(Q_{\prin},S)}(I(1),0)+y_2CC_{(Q_{\prin},S)}(I(3),0))\\
&=&
x_4\varphi(CC_{(Q_{\prin},S)}(\mathcal{N}))\\
&=&
\frac{\widetilde{\g}_{\mathcal{N}}}{\g_{\mathcal{N}}}\varphi(CC_{(Q_{\prin},S)}(\mathcal{N})).
\end{eqnarray*}

Note that, although injective $\jacobalg{Q}{S}$-modules are not always injective in $\jacobalg{\widetilde{Q}}{\widetilde{S}}$-$\modcat$ (e.g., in the current example, $I(2)$ is a proper $\jacobalg{\widetilde{Q}}{\widetilde{S}}$-submodule of the indecomposable injective $\jacobalg{\widetilde{Q}}{\widetilde{S}}$-module $\widetilde{I}(2)$), in this example $I(1)$ and $I(3)$ remain injective in $\jacobalg{\widetilde{Q}}{\widetilde{S}}$-$\modcat$, and hence
$$
\frac{\mathbf{x}^{\g_{I(1)}}}{\mathbf{x}^{\widetilde{\g}_{I(1)}}}=1=\frac{\mathbf{x}^{\g_{I(3)}}}{\mathbf{x}^{\widetilde{\g}_{I(3)}}},
$$
which is why it seems that only $\frac{\widetilde{\g}_{\mathcal{N}}}{\g_{\mathcal{N}}}$ appears in the relation between the expansions of $CC_{(Q_{\prin},S)}(\mathcal{N})$ and $CC_{(\widetilde{Q},\widetilde{S})}(\mathcal{N})$ in terms of the corresponding CC-functions of $(I(1),0)$ and $(I(3),0)$.
\end{ex}

\section{Irreducible components of representation spaces and their mutations}\label{sec:irreducible-components}

\subsection{Generic $E$-invariant, generic codimension and generically $\tau^-$-reduced irreducible components}\label{subsec:generically-tau--reduced-comps}

\begin{defi} For a vector $(\bfd,\bfv)\in\ZZ_{\geq 0}^n\times\ZZ_{\geq 0}^n$, define
\begin{eqnarray*}
\rep(\Lambda,\bfd) &:=& \{M\in\prod_{a\in Q_1}\Hom_{\C}(\C^{d_{s(a)}},\C^{d_{t(a)}})\suchthat \text{$M$ is annihilated by every element of $\mathcal{J}$}\}\\
\decrep(\Lambda,\bfd,\bfv) &:=& \rep(\Lambda,\bfd)\times\{\bfv\}\\
\decirr(\Lambda,\bfd,\bfv) &:=& \{Z\suchthat \text{$Z$ is an irreducible component of $\decrep(\Lambda,\bfd,\bfv)$}\}\\
\text{Furthermore,} && \\
\decirr(\Lambda) &:=& \bigcup_{(\bfd,\bfv)\in\ZZ_{\geq 0}^n\times\ZZ_{\geq 0}^n}\decirr(\Lambda,\bfd,\bfv).
\end{eqnarray*}
\end{defi}

\begin{defi}\cite[Equations (7.4) and (7.6)]{DWZ2} For decorated representations $\mathcal{M}=(M,\bfv)$ and $\mathcal{N}=(N,\mathbf{w})$ of $\Lambda$ supported in $[n]$, the \emph{$E$-invariant} is defined as
\begin{eqnarray*}
E_{\Lambda}(\mathcal{M},\mathcal{N}) &:=& \dim_{\C}(\Hom_{\Lambda}(M,N))+\langle\underline{\dim}(M),\g_{\mathcal{N}}^{\Lambda}\rangle\\
E_{\Lambda}(\mathcal{M}) &:=& E_{\Lambda}(\mathcal{M},\mathcal{M}).
\end{eqnarray*}
\end{defi}

\begin{defi}\cite[Section 8.1]{GLS3} (See also \cite[Section 5.1]{CLS}) For $Z\in \decirr(\Lambda,\bfd,\bfv)$ we define
\begin{eqnarray*}
c_{\Lambda}(Z) &:=& \min\{\dim(Z)-\dim(\GL_{\bfd}\cdot\mathcal{M})\suchthat\mathcal{M}\in Z\}.
\end{eqnarray*}
\end{defi}

The following proposition is well known. See, for instance, \cite{CLS,GLS3,Plamondon-generic}.

\begin{prop}\label{prop:every-irr-Z-has-subset-where-g-vects-CC-map-E-inv-and-codim-are-constant} For every $(\bfd,\bfv)\in\ZZ_{\geq 0}^n\times\ZZ_{\geq 0}^n$, the functions
\begin{center}
\begin{tabular}{cccc}
$\decrep(\Lambda,\bfd,\bfv) \rightarrow \ZZ^{n}$, &
$\decrep(\Lambda,\bfd,\bfv) \rightarrow \ZZ^{n+m}$, &
$\decrep(\Lambda,\bfd,\bfv) \rightarrow \ZZ$, &
$\decrep(\Lambda,\bfd,\bfv) \rightarrow \calF$\\
$\mathcal{M}\mapsto\g_{\mathcal{M}}^{\Lambda}$, & 
$\mathcal{M}\mapsto\widetilde{\g}_{\mathcal{M}}^{\widetilde{\Lambda}}$, & 
$\mathcal{M}\mapsto E_{\Lambda}(\mathcal{M})$, & 
$\mathcal{M}\mapsto CC_{\widetilde{\Lambda}}(\mathcal{M})$,
\end{tabular}
\end{center}
are constructible, and the function
\begin{center}
\begin{tabular}{cc}
$\decrep(\Lambda,\bfd,\bfv) \rightarrow \ZZ$, & $\mathcal{M}\mapsto -\dim(\GL_{\bfd}\cdot\mathcal{M})$,
\end{tabular}
\end{center}
is upper semicontinuous. Consequently, each
$Z\in \decirr(\Lambda,\bfd,\bfv)$ has a dense open subset $U\subseteq Z$ such that
\begin{enumerate}
\item\label{item:every-irr-Z-has-subset-where-g-vects-CC-map-and-E-inv-are-constant} the values of $\g_{-}^{\Lambda}$ $\widetilde{\g}_{-}^{\widetilde{\Lambda}}$, $E_{\Lambda}(-)$ and $CC_{\widetilde{\Lambda}}(-)$ are constant on $U$;
\item $c_{\Lambda}(Z)=\dim(Z)-\dim(\GL_{\bfd}\cdot\mathcal{M})$ for every $\mathcal{M}\in U$.
\end{enumerate}
\end{prop}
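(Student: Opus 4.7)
The plan is to establish two things in order: first, that each of the five assignments listed is either constructible or upper semicontinuous as a function on $\decrep(\Lambda,\bfd,\bfv)$; and second, to deduce the existence of the claimed dense open subset $U$ from standard generic-behavior principles for such functions on an irreducible variety.

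For constructibility of the $\g$- and extended $\g$-vector assignments, I would fix $i$ and argue that for any fixed finite-dimensional $\Lambda$-module $X$ the functions $M\mapsto\dim_\C\Hom_\Lambda(X,M)$ and $M\mapsto\dim_\C\Ext^1_\Lambda(X,M)$ are upper semicontinuous on $\rep(\Lambda,\bfd)$; indeed, choosing a minimal projective presentation $P_1\to P_0\to X\to 0$, one obtains families of $\C$-linear maps whose matrix entries depend polynomially on $M$, and dimensions of kernels and cokernels of such families are well-known to be upper semicontinuous. Taking $X=S(i)$ (regarded over $\Lambda$ or $\widetilde{\Lambda}$ as appropriate) and forming the $\ZZ$-linear combination in Definition~\ref{def:restricted-and-extended-g-vectors} yields a difference of upper semicontinuous integer-valued functions, hence a constructible $\ZZ^n$- or $\ZZ^{n+m}$-valued function. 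The $E$-invariant is then constructible by the same argument applied to $M\mapsto\dim_\C\Hom_\Lambda(M,M)$ (use a free presentation of $\Lambda$ and fibrewise kernels), combined with constructibility of $\mathcal{M}\mapsto\g_\mathcal{M}^\Lambda$ together with the fixed pairing $\langle\bfd,-\rangle$. For constructibility of $\mathcal{M}\mapsto CC_{\widetilde{\Lambda}}(\mathcal{M})$ I would use that each coefficient $\chi(\Gr_\e(M))$ is a constructible function of $M$, proved in the standard way by realizing the quiver Grassmannians in a family as closed subvarieties of the product $\rep(\Lambda,\bfd)\times\prod_i\Gr(e_i,d_i)$ and invoking the constructibility of Euler characteristics in families of complex algebraic varieties; combining this with the constructibility of the extended $\g$-vector gives that $CC_{\widetilde{\Lambda}}(-)$ takes only finitely many values on $\decrep(\Lambda,\bfd,\bfv)$, each on a constructible stratum, and hence is constructible as a map into $\calF$.

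Upper semicontinuity of $\mathcal{M}\mapsto -\dim(\GL_{\bfd}\cdot\mathcal{M})$ I would obtain from the identity
\[
\dim(\GL_{\bfd}\cdot\mathcal{M}) = \dim\GL_{\bfd} - \dim\End_\Lambda(M),
\]
using that the stabilizer of $\mathcal{M}$ in $\GL_\bfd$ is the open subset $\Aut_\Lambda(M)\subseteq\End_\Lambda(M)$ of automorphisms and therefore has the same dimension as $\End_\Lambda(M)$. Since $M\mapsto\dim_\C\End_\Lambda(M)$ is upper semicontinuous (again, by the presentation-of-$M$ argument applied to the defining relations of $\End_\Lambda(M)$ inside $\prod_i\operatorname{Mat}_{d_i\times d_i}(\C)$), the orbit dimension is lower semicontinuous and its negative is upper semicontinuous.

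To finish, given any irreducible component $Z\in\decirr(\Lambda,\bfd,\bfv)$, every constructible $\ZZ$- or $\calF$-valued function on $Z$ is constant on a dense open subset (stratify $Z$ into finitely many constructible pieces and take the unique piece whose closure is $Z$); intersecting four such dense opens, one for $\g_{-}^\Lambda$, $\widetilde{\g}_{-}^{\widetilde{\Lambda}}$, $E_\Lambda(-)$ and $CC_{\widetilde{\Lambda}}(-)$, produces a dense open subset of $Z$ on which all four are constant, proving (1). For (2) one intersects further with the dense open subset of $Z$ on which the integer-valued upper semicontinuous function $\mathcal{M}\mapsto -\dim(\GL_{\bfd}\cdot\mathcal{M})$ attains its minimum (equivalently, where $\dim(\GL_{\bfd}\cdot\mathcal{M})$ attains its maximum on $Z$). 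I expect the main technical point to be the constructibility of $\chi(\Gr_\e(-))$, since it requires the universal quiver Grassmannian construction and constructibility of Euler characteristics in families; all the other constructibility and semicontinuity claims are direct rank arguments. Since this proposition is labelled well known and cited to \cite{CLS,GLS3,Plamondon-generic}, the proof would mostly consist of pointing to, or briefly reproducing, these standard arguments.
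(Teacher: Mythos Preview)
The paper does not actually prove this proposition; it declares it well known and refers the reader to \cite{CLS,GLS3,Plamondon-generic}. Your sketch correctly fills in the standard arguments those references contain: upper semicontinuity of $\dim\Hom$ and $\dim\Ext^1$ via ranks of linear maps depending polynomially on $M$, constructibility of $\chi(\Gr_\e(-))$ via the universal quiver Grassmannian, the orbit-dimension formula $\dim(\GL_\bfd\cdot M)=\dim\GL_\bfd-\dim\End_\Lambda(M)$, and generic constancy of constructible functions on irreducible varieties. There is nothing to correct; if anything, you have supplied more detail than the paper does.
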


In Subsection \ref{subsec:upper-semicontinuity-of-E-invariant} we will strengthen 
Proposition \ref{prop:every-irr-Z-has-subset-where-g-vects-CC-map-E-inv-and-codim-are-constant} by showing that the entries of $\g_{-}^{\Lambda}$ are upper semicontinuous functions, and that, consequently, $E_{\Lambda}(-):\decrep(\Lambda,\bfd,\bfv)\rightarrow \ZZ$ is upper semicontinuous, not only constructible; see Lemma \ref{lemma:entries-of-g-vector-are-upper-semicontinuous} and Theorem \ref{thm:E-invariant-is-upper-semicontinuous}. This will play an essential role in the culmination of Section \ref{subsec:behavior-of-irred-comps-under-mutation}.

\begin{defi} Let $Z\in \decirr(\Lambda)$ be an irreducible component.
\begin{enumerate}
\item The \emph{generic $\g$-vector} $\g_Z$, the \emph{generic extended $\g$-vector} $\widetilde{\g}_Z$, the \emph{generic Caldero-Chapoton function with coefficients $CC_{\widetilde{\Lambda}}(Z)$}, and the \emph{generic $E$-invariant $E_{\Lambda}(Z)$} are the values referred to in the previous proposition;
\item $Z$ is \emph{strongly reduced}, or \emph{generically $\tau^-$-reduced}, if $c_{\Lambda}(Z)=E_{\Lambda}(Z)$.
\end{enumerate}
\end{defi}

\begin{remark}\label{rem:inequality-c-leq-E} According to \cite[Section 8.1]{GLS3} and \cite[Lemma 5.2]{CLS}, for every irreducible component $Z\in \decirr(\Lambda)$ one always has $c_{\Lambda}(Z)\leq E_{\Lambda}(Z)$.
\end{remark}

\begin{thm}\label{thm:different-s.r.i.c.-have-different-g-vectors}\cite{CLS,Plamondon-generic} Take $Z_1,Z_2\in\decirrsr(\Lambda)$. If $Z_1\neq Z_2$, then the generic $\g$-vectors $\g_{Z_1}^{\Lambda}$ and $\g_{Z_2}^{\Lambda}$ are different.
\end{thm}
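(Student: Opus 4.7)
The plan is to establish the injectivity of $Z \mapsto \g_Z^{\Lambda}$ on $\decirrsr(\Lambda)$ by canonically reconstructing $Z$ from $\g_Z^{\Lambda}$. Given $\g \in \ZZ^n$, decompose $\g = \g^+ - \g^-$ with $\g^+, \g^- \in \ZZ_{\geq 0}^n$ of disjoint supports, and choose a truncation index $p$ larger than any relevant module dimension. Inside $\Lambda_p$-$\modcat$, form the projectives $P^\pm := \bigoplus_i P_{\Lambda_p}(i)^{g^\pm_i}$ and the affine space $V(\g) := \Hom_{\Lambda_p}(P^-, P^+)$, carrying an $\Aut(P^-) \times \Aut(P^+)$-action and a natural cokernel map to decorated representations (where the decoration is inherited from $Z$).

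Given $Z \in \decirrsr(\Lambda,\bfd,\bfv)$ with generic point $\mathcal{M} = (M,\bfv)$, the minimal injective copresentation of $M$ in $\Lambda_p$-$\modcat$ (see Lemma~\ref{lemma:restricted-vs-extended-injective-presentations}) together with Definition~\ref{def:restricted-and-extended-g-vectors} shows that the numerical data $-\dim\Hom(S(i),M)+\dim\Ext^1(S(i),M)$ recover $\g_Z^{\Lambda}$ from $\bfv$. Dualizing, or equivalently considering the minimal projective presentation of $\tau_{\Lambda_p}^-(M)$, produces a canonical morphism $f_0 \in V(\g_Z^{\Lambda})$ whose cokernel is $M$ itself. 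Thus $\mathcal{M}$ lies in the image of the cokernel map, and the role of $\bfv$ (carried by a trivial summand in the presentation setup) is recorded separately.

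The key input is the hypothesis $c_{\Lambda}(Z) = E_{\Lambda}(Z)$ together with the homological identity
\[
E_{\Lambda}(\mathcal{M}) = \dim_{\C}\Hom_{\Lambda_p}(\tau_{\Lambda_p}^-(M),M) + \langle \underline{\dim}(M), \bfv \rangle
\]
from \cite[Proposition~3.5]{CLS}. A dimension count identifies the codimension of the $\Aut(P^-) \times \Aut(P^+)$-orbit of $f_0$ in $V(\g_Z^{\Lambda})$ with $E_{\Lambda}(Z)$, hence with $c_{\Lambda}(Z)$, forcing the cokernel map to be dominant onto $Z$. Consequently $Z$ equals the Zariski closure of the $\GL_{\bfd}$-saturation of the image of the cokernel map applied to $V(\g_Z^{\Lambda})$; this latter set depends only on $\g_Z^{\Lambda}$, so $\g_{Z_1}^{\Lambda} = \g_{Z_2}^{\Lambda}$ implies $Z_1 = Z_2$.

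The main obstacle is the infinite-dimensional setting: Plamondon's original proof \cite{Plamondon-generic} relies on generalized cluster categories and Ginzburg dg-algebras under a Jacobi-finiteness assumption not in force here. Instead, the entire construction must be carried out inside the truncations $\Lambda_p$ and verified to stabilize as $p$ grows, following the truncation techniques of \cite{CLS}. The other delicate point is the central dimensional identity between the orbit codimension in $V(\g_Z^{\Lambda})$ and $E_{\Lambda}(Z)$, which must be established using only elementary representation-theoretic tools (computation of tangent spaces to orbits and of kernels/cokernels of morphisms between projectives).
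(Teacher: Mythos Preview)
The paper does not contain a proof of this theorem. It is stated as a citation of results from \cite{CLS} and \cite{Plamondon-generic}, with no argument given; the surrounding text in the introduction explains that Plamondon proved bijectivity of $Z\mapsto\g_Z^{\Lambda}$ for finite-dimensional $\Lambda$, and that \cite{CLS} extended injectivity to arbitrary quotients of complete path algebras via truncation techniques. So there is nothing in the paper to compare your proposal against.

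That said, your sketch is broadly the right picture of how those cited proofs go, but a few points deserve tightening. First, the $\g$-vector convention in this paper is \emph{injective} (Definition~\ref{def:restricted-and-extended-g-vectors}): $g_i=-\dim\Hom(S(i),M)+\dim\Ext^1(S(i),M)+v_i$ records the multiplicities in a minimal injective copresentation of $M$, not a projective presentation. Your parametrizing space should therefore be $\Hom_{\Lambda_p}(I^+,I^-)$ with $I^\pm=\bigoplus_i I_{\Lambda_p}(i)^{g_i^\pm}$, and the relevant map is $f\mapsto\Ker f$, not a cokernel of a map between projectives. The phrase ``dualizing, or equivalently considering the minimal projective presentation of $\tau_{\Lambda_p}^-(M)$'' papers over this mismatch; if you want to work with projectives and cokernels \`a la Plamondon, you must pass to $\Lambda^{\operatorname{op}}$ and track how the $\g$-vector transforms, which is extra bookkeeping. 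Second, the identification of orbit codimension in $V(\g)$ with $E_{\Lambda}(Z)$ is the genuine heart of the argument and is not just a ``dimension count'': in \cite{CLS} it is established via the homological interpretation of the $E$-invariant together with a careful stabilization argument in $p$. Your proposal acknowledges this as ``delicate'' but gives no indication of how to carry it out; that is where most of the work lies.
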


\begin{defi} The \emph{set of generically $\tau^{-}$-reduced Caldero-Chapoton functions}, or simply \emph{set of generic Caldero-Chapoton functions}, of $\Lambda$ with respect to $\widetilde{\Lambda}$,~is
\begin{eqnarray*}
\mathcal{B}_{\widetilde{\Lambda}}(\Lambda)&:=&\{CC_{\widetilde{\Lambda}}(Z)\suchthat Z\in \decirrsr(\Lambda)\},\\
\text{where} \ \ \ \decirrsr(\Lambda)&:=&\{Z\suchthat Z\in \decirr(\Lambda,\bfd,\bfv) \ \text{for some} \ (\bfd,\bfv)\in\ZZ_{\geq0}^n\times\ZZ_{\geq0}^n\\
&& \phantom{\{Z\suchthat} \text{and $Z$ is generically $\tau^-$-reduced}\}.
\end{eqnarray*}
\end{defi}

\subsection{Upper semicontinuity of the $\mathbf{g}$-vector and of the $E$-invariant}
\label{subsec:upper-semicontinuity-of-E-invariant}

This subsection is devoted to strengthening Proposition \ref{prop:every-irr-Z-has-subset-where-g-vects-CC-map-E-inv-and-codim-are-constant}.

\begin{lemma}\label{lemma:entries-of-g-vector-are-upper-semicontinuous} For each decorated dimension vector $(\bfd,\bfv)\in\ZZ_{\geq0}^{n}\times\ZZ_{\geq0}^n$ and each vertex $k$ of $Q$, the function $\widetilde{g}_{-,k}^{\widetilde{\Lambda}}:\decrep(\Lambda,\bfd,\bfv)\rightarrow \ZZ$, $\mathcal{M}\mapsto \widetilde{g}_{\mathcal{M},k}$, that assigns to each decorated representation $\mathcal{M}=(M,\bfv)$ the $k^{\operatorname{th}}$ entry of its extended $\g$-vector $\widetilde{\g}_{\mathcal{M}}^{\widetilde{\Lambda}}$, is upper semicontinuous.
\end{lemma}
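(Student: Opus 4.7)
The plan is to rewrite $\widetilde{g}_{\mathcal{M},k}$ as a constant (in $M$) minus the rank of a linear map whose matrix depends polynomially on the coordinates of $M$, and then invoke the elementary fact that the rank of a matrix of regular functions on a variety is lower semicontinuous. Since $\bfv$ is fixed on $\decrep(\Lambda,\bfd,\bfv)$, it suffices to show that $M \mapsto -\dim\Hom_{\widetilde{\Lambda}}(S(k),M) + \dim\Ext^1_{\widetilde{\Lambda}}(S(k),M)$ is upper semicontinuous on $\rep(\Lambda,\bfd)$.

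The first step is to replace $\widetilde{\Lambda}$ by a finite-dimensional truncation. I would choose $p$ large enough that every $M \in \rep(\Lambda,\bfd)$ is a $\widetilde{\Lambda}_p := \RA{\widetilde{Q}}/(\widetilde{\mathcal{J}}+\widetilde{\mathfrak{m}}^p)$-module, and so that the canonical maps induce isomorphisms $\Hom_{\widetilde{\Lambda}}(S(k),M) \cong \Hom_{\widetilde{\Lambda}_p}(S(k),M)$ and $\Ext^1_{\widetilde{\Lambda}}(S(k),M) \cong \Ext^1_{\widetilde{\Lambda}_p}(S(k),M)$, in the spirit of \cite[Lemma~3.4]{CLS}. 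This reduces the problem to the finite-dimensional algebra $\widetilde{\Lambda}_p$.

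Next I would take the first three terms of a minimal projective resolution of $S(k)$ in $\widetilde{\Lambda}_p$-$\modcat$,
\[
P_2 \xrightarrow{\partial_2} P_1 \xrightarrow{\partial_1} P_0 \to S(k) \to 0,
\qquad P_i = \bigoplus_{j} \widetilde{\Lambda}_p e_j^{r_{i,j}},
\]
and apply $\Hom_{\widetilde{\Lambda}_p}(-,M)$. A short diagram chase on the resulting four-term complex yields
\[
-\dim\Hom(S(k),M) + \dim\Ext^1(S(k),M) \;=\; \dim\Hom(P_1,M) - \dim\Hom(P_0,M) - \rank(\partial_2^{*}),
\]
where $\partial_2^{*}\colon \Hom(P_1,M) \to \Hom(P_2,M)$ is induced by $\partial_2$. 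Using the canonical identifications $\Hom(P_i,M) \cong \bigoplus_j (e_j M)^{r_{i,j}}$, the numbers $\dim\Hom(P_i,M) = \sum_j r_{i,j} d_j$ depend only on $\bfd$ (with $d_j=0$ for $j>n$, since $M$ is supported on $[n]$), so they are constant on $\rep(\Lambda,\bfd)$. Consequently the upper semicontinuity of $\widetilde{g}_{-,k}^{\widetilde{\Lambda}}$ reduces to the lower semicontinuity of $M \mapsto \rank(\partial_2^{*})$.

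For the last step, under the canonical identifications the matrix of $\partial_2^{*}$ is obtained by letting a fixed matrix of elements of $\widetilde{\Lambda}_p$ act on $M$; the matrix entries are therefore polynomial in the coordinates $M_a$ ($a\in Q_1$), and the subset $\{M : \rank(\partial_2^{*}) \geq r\}$ is open for each $r$ as the non-vanishing locus of some $r \times r$ minor. This gives the desired lower semicontinuity of the rank and hence the upper semicontinuity of $\widetilde{g}_{-,k}^{\widetilde{\Lambda}}$. The only slightly delicate point is to ensure a uniform choice of truncation $p$ works across all of $\rep(\Lambda,\bfd)$ and correctly computes $\Hom$ and $\Ext^1$ over $\widetilde{\Lambda}$; this is routine and follows the pattern already established in \cite{CLS}.
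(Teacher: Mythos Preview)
Your proof is correct and follows essentially the same route as the paper: truncate to $\widetilde{\Lambda}_p$, take a projective resolution of $S(k)$, and rewrite $\widetilde{g}_{\mathcal{M},k}$ as a constant plus $\dim\Ker(\partial_2^{*})$ (equivalently, a constant minus $\rank(\partial_2^{*})$). The only cosmetic difference is that the paper phrases the final step as upper semicontinuity of $\dim\Ker(\partial_2^M)$ via \cite[Lemma~4.2]{CB-S}, whereas you phrase it as lower semicontinuity of $\rank(\partial_2^{*})$ via minors; these are the same statement.
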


\begin{proof} Let $p$ be an integer greater than the sum of the entries of $\bfd$, and let $\widetilde{\Lambda}_p$ be the $p^{\operatorname{th}}$ truncation of $\widetilde{\Lambda}$. Then $\dim_{\C}(\widetilde{\Lambda}_p)<\infty$ and $\decrep(\Lambda,\bfd,\bfv)=\decrep(\Lambda_p,\bfd,\bfv)$ by \cite[Lemma 2.2]{CLS}. Fix a vertex $k$ of $Q$, and let $\varepsilon:P_0\rightarrow S(k)$ be a projective cover in $\widetilde{\Lambda}_p$-$\modcat$ of the $k^{\operatorname{th}}$ simple $S(k)$. Thus, $P_0$ is precisely the $k^{\operatorname{th}}$ indecomposable projective $\widetilde{\Lambda}_p$-module. Complete $\varepsilon$ to a projective resolution of $S(k)$ in $\widetilde{\Lambda}_p$-$\modcat$:
$$
\xymatrix{
P_{\bullet}: &\cdots \ar[r]^{\partial_3} & P_2 \ar[r]^{\partial_2} & P_1\ar[r]^{\partial_1} & P_0 \ar[r]^{\varepsilon} & S(k) \ar[r]& 0.
}
$$

Using the cochain complex
$$
\xymatrix{
0 \ar[r] & \Hom_{\widetilde{\Lambda}_p}(P_0,M) \ar[r]^{\partial_1^M} & \Hom_{\widetilde{\Lambda}_p}(P_1,M) \ar[r]^{\partial_2^M} & \Hom_{\widetilde{\Lambda}_p}(P_2,M) \ar[r]^{\partial_3^M} & \cdots\phantom{\Hom(P_2,M)}   
}
$$
to compute the groups $\Ext^i_{\widetilde{\Lambda}_p}(S(k),M)$, where $\partial_i^M(f):=f\circ \partial_i$ for $f\in \Hom_{\widetilde{\Lambda}_p}(P_{i-1},M)$, and recalling that $\dim_{\C}(\Ext^1_{\widetilde{\Lambda}}(S(k),M))=\dim_{\C}(\Ext^1_{\widetilde{\Lambda}_p}(S(k),M))$ by \cite[Lemma 2.2]{CLS}, for $\mathcal{M}=(M,\bfv)\in\decrep(\Lambda_p,\bfd,\bfv)$ we see that
\begin{eqnarray*}
\widetilde{g}_{\mathcal{M},k}^{\widetilde{\Lambda}}&=& -\dim_{\C}(\Hom_{\widetilde{\Lambda}}(S(k),M))+\dim_{\C}(\Ext^1_{\widetilde{\Lambda}}(S(k),M))+v_k\\
&=& -\dim_{\C}(\Hom_{\widetilde{\Lambda}_p}(S(k),M))+\dim_{\C}(\Ext^1_{\widetilde{\Lambda}_p}(S(k),M))+v_k\\
&=& -\dim_{\C}(\Ker(\partial_1^M)) + \dim_{\C}(\Ker(\partial_2^M)) - \dim_{\C}(\Image(\partial_1^M)) + v_k\\
&=& -\dim_{\C}(\Hom_{\widetilde{\Lambda}_p}(P_0,M)) + \dim_{\C}(\Ker(\partial_2^M)) + v_k \\
&=& -d_k+v_k + \dim_{\C}(\Ker(\partial_2^M)).
\end{eqnarray*}
The lemma will thus follow once the upper semicontinuity of the function $\decrep(\Lambda_p,\bfd,\bfv)\rightarrow \ZZ$ given by $(M,\bfv)\mapsto \dim_{\C}(\Ker(\partial_2^M))$ is established.

Write $\underline{\dim}(P_1)=(\delta_j^{P_1})_{j\in \widetilde{Q}_0}$ and $\underline{\dim}(P_2)=(\delta_j^{P_2})_{j\in \widetilde{Q}_0}$, so that $\Hom_{\widetilde{\Lambda}_p}(P_1,M)\subseteq\prod_{j\in \widetilde{Q}_0}\C^{d_j\times\delta_j^{P_1}}$, $\Hom_{\widetilde{\Lambda}_p}(P_2,M)\subseteq\prod_{j\in \widetilde{Q}_0}\C^{d_j\times\delta_j^{P_2}}$ and $\partial_2=(\partial_{2,j})_{j\in\widetilde{Q}_0}\in\prod_{j\in \widetilde{Q}_0}\C^{\delta_j^{P_1}\times \delta_j^{P_2}}$. We see that for all $(M,\bfv)\in\decrep(\Lambda_p,\bfd,\bfv)$ and all $f=(f_j)_{j\in\widetilde{Q}_0}\in \Hom_{\widetilde{\Lambda}_p}(P_{1},M)$, the value $\partial_2^M(f):=f\circ \partial_2=(f_j\partial_{2,j})_{j\in\widetilde{Q}_0}$, where $f_j\partial_{2,j}$ is the plain product of matrices. It is fairly easy to verify that
\begin{eqnarray*}
K &:=& \{((M,\bfv),f)\in \decrep(\Lambda_p,\bfd,\bfv)\times \prod_{j\in \widetilde{Q}_0}\C^{d_j\times\delta_j^{P_1}}\suchthat f\in \Hom_{\widetilde{\Lambda}_p}(P_{1},M) 
\ \text{and}\\
&&\phantom{\{((M,\bfv),f)\in \decrep(\Lambda_p,\bfd,\bfv)\times \prod_{j\in \widetilde{Q}_0}\C^{d_j\times\delta_j^{P_1}}\suchthat}
f_j\partial_{2,j}=0 \ \text{for all} \ j\in\widetilde{Q}_0\}
\end{eqnarray*}
is a closed subset of $\decrep(\Lambda_p,\bfd,\bfv)\times \prod_{j\in \widetilde{Q}_0}\C^{d_j\times\delta_j^{P_1}}$. For each $(M,\bfv)\in \decrep(\Lambda_p,\bfd,\bfv)$, we have $\Ker(\partial_2^M))=\{f\in \prod_{j\in \widetilde{Q}_0}\C^{d_j\times\delta_j^{P_1}}\suchthat ((M,\bfv),f)\in K\}$. Applying \cite[Lemma 4.2]{CB-S} we obtain the upper semicontinuity of the function $\decrep(\Lambda_p,\bfd,\bfv)\rightarrow \ZZ$, $(M,\bfv)\mapsto \dim_{\C}(\Ker(\partial_2^M))$. Lemma \ref{lemma:entries-of-g-vector-are-upper-semicontinuous} is proved.
\end{proof}

{
\begin{remark}\label{rem:projective-g-vectors-upper-semicontinuity} The notion of (extended) $\g$-vectors we are using in this paper is related to minimal injective presentations. There is another notion of (extended) $\g$-vectors, related to minimal projective presentations. The entries of the (extended) \emph{projective $\g$-vector} are upper semicontinuous functions too. The proof can be given directly or as a consequence of Lemma \ref{lemma:entries-of-g-vector-are-upper-semicontinuous}, using the (not completely trivial) fact that the projective $\g$-vector of $\mathcal{M}=(M,\bfv)\in\decrep(\Lambda,\bfd,\bfv)$ coincides with the $\g$-vector $g_{\mathcal{M}^{\star}}^{\Lambda^{\operatorname{op}}}$ of the dual $\mathcal{M}^{\star}:=(\Hom_{\C}(M,\C),\bfv)\in\decrep(\Lambda^{\operatorname{op}},\bfd,\bfv)$, and the fact that matrix transposition defines an isomorphism of affine varieties $\decrep(\Lambda,\bfd,\bfv)\rightarrow\decrep(\Lambda^{\operatorname{op}},\bfd,\bfv)$. 
\end{remark}
}

We now prove that the $E$-invariant gives upper semicontinuous functions.

\begin{thm}\label{thm:E-invariant-is-upper-semicontinuous}
For all decorated dimension vectors $(\bfd,\bfv), (\bfd',\bfv')\in\ZZ_{\geq0}^{n}\times\ZZ_{\geq0}^n$,
the functions
\begin{align*}
\decrep(\Lambda,\bfd,\bfv) &\to \Z 
&\text{and}&&
\decrep(\Lambda,\bfd,\bfv) \times \decrep(\Lambda,\bfd',\bfv')&\to \Z 
\\
\mathcal{M} &\mapsto E_{\Lambda}(\mathcal{M},\mathcal{M})
&&&
(\mathcal{M},\mathcal{N}) &\mapsto E_{\Lambda}(\mathcal{M},\mathcal{N})
\end{align*}
are upper semicontinuous.
In particular, if $A$ is a finite-dimensional $\C$-algebra, then for all dimension vectors $\bfd,\bfd'\in\ZZ_{\geq0}^{n}$ the functions
\begin{align*}
\rep(A,\bfd) &\to \Z 
&\text{and}&&
\rep(A,\bfd) \times \rep(A,\bfd')&\to \Z 
\\
M &\mapsto \dim\Hom_A(\tau^-_A(M),M) 
&&&
(M,N) &\mapsto  \dim \Hom_A(\tau^-_A(N),M)
\end{align*}
as well as the functions
\begin{align*}
\rep(A,\bfd) &\to \Z 
&\text{and}&&
\rep(A,\bfd) \times \rep(A,\bfd')&\to \Z 
\\
M &\mapsto \dim\Hom_A(M,\tau_A(M)) 
&&&
(M,N) &\mapsto  \dim \Hom_A(N,\tau_A(M))
\end{align*}
are upper semicontinuous.
\end{thm}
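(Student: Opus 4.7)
The plan is to reduce both claims to Lemma~\ref{lemma:entries-of-g-vector-are-upper-semicontinuous} and the classical upper semicontinuity of the $\Hom$-dimension. By definition,
\begin{equation*}
E_\Lambda(\mathcal{M},\mathcal{N})\;=\;\dim_\C\Hom_\Lambda(M,N)\;+\;\sum_{i=1}^n d_i\,g_{\mathcal{N},i}^{\Lambda},
\end{equation*}
where $\bfd=(d_1,\ldots,d_n)$ is the (constant and non-negative) dimension vector on $\decrep(\Lambda,\bfd,\bfv)$. The joint function $(\mathcal{M},\mathcal{N})\mapsto\dim_\C\Hom_\Lambda(M,N)$ is upper semicontinuous on $\decrep(\Lambda,\bfd,\bfv)\times\decrep(\Lambda,\bfd',\bfv')$, because $\Hom_\Lambda(M,N)$ is the kernel of a linear map whose matrix depends polynomially (in fact bilinearly) on $(M,N)$; a direct invocation of \cite[Lemma~4.2]{CB-S}, exactly as in the proof of Lemma~\ref{lemma:entries-of-g-vector-are-upper-semicontinuous}, does the job. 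By that lemma (taking $\widetilde{\Lambda}=\Lambda$ with $m=0$), each entry $g_{\mathcal{N},i}^{\Lambda}$ is upper semicontinuous in $\mathcal{N}$, hence also in $(\mathcal{M},\mathcal{N})$ by pullback along the projection. Since the $d_i$ are non-negative, $\sum_i d_i g_{\mathcal{N},i}^{\Lambda}$ is upper semicontinuous, and therefore so is $E_\Lambda$. The single-argument case $\mathcal{M}\mapsto E_\Lambda(\mathcal{M},\mathcal{M})$ then follows by restricting along the closed diagonal embedding $\decrep(\Lambda,\bfd,\bfv)\hookrightarrow\decrep(\Lambda,\bfd,\bfv)^{\times 2}$.

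For the ``in particular'' clause with $\tau^-$: since $A$ is finite-dimensional, for $p$ large enough one has $A_p=A$, and then \cite[Proposition~3.5]{CLS} yields
\begin{equation*}
E_A\bigl((M,\mathbf{0}),(N,\mathbf{0})\bigr)\;=\;\dim_\C\Hom_A(\tau^-_A(N),M).
\end{equation*}
Through the canonical identification $\rep(A,\bfd)\cong\decrep(A,\bfd,\mathbf{0})$, the upper semicontinuity proved in the first paragraph transfers immediately to $(M,N)\mapsto\dim_\C\Hom_A(\tau^-_A(N),M)$ and, by restriction to the diagonal, to $M\mapsto\dim_\C\Hom_A(\tau^-_A(M),M)$.

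To pass from $\tau^-$ to $\tau$, I would invoke the standard $\C$-linear duality $D=\Hom_\C(-,\C)\colon A\text{-}\modcat\to A^{\operatorname{op}}\text{-}\modcat$, which restricts to a regular isomorphism of affine varieties $\rep(A,\bfd)\xrightarrow{\sim}\rep(A^{\operatorname{op}},\bfd)$ (given by matrix transposition) and satisfies the Auslander-Reiten natural isomorphism $D\tau_A M\cong\tau^-_{A^{\operatorname{op}}}(DM)$. Consequently,
\begin{equation*}
\dim_\C\Hom_A(N,\tau_A M)\;=\;\dim_\C\Hom_{A^{\operatorname{op}}}\bigl(\tau^-_{A^{\operatorname{op}}}(DM),\,DN\bigr),
\end{equation*}
and the $\tau$-statements for $A$ are obtained by pulling the $\tau^-$-statements for $A^{\operatorname{op}}$ (proved in the previous paragraph) back along this isomorphism of varieties. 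The main obstacle has already been surmounted in Lemma~\ref{lemma:entries-of-g-vector-are-upper-semicontinuous}; what remains here is simply the assembly with the classical $\Hom$-upper-semicontinuity and the bookkeeping of the $\tau/\tau^-$-duality on $A^{\operatorname{op}}$.
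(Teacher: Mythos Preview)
Your proof is correct and follows essentially the same route as the paper: decompose $E_\Lambda$ as $\dim\Hom$ plus a non-negative combination of the $\g$-vector entries, invoke Lemma~\ref{lemma:entries-of-g-vector-are-upper-semicontinuous} and the classical upper semicontinuity of $\Hom$, and then use \cite[Proposition~3.5]{CLS} for the $\tau^-$ interpretation. The only packaging difference is in the passage from $\tau^-$ to $\tau$: the paper introduces the ``projective'' $E$-invariant $E_A^{\operatorname{proj}}$ and the projective $\g$-vector (Remark~\ref{rem:projective-g-vectors-upper-semicontinuity}) to express $\dim\Hom_A(N,\tau_A M)$ as a sum of upper semicontinuous pieces, whereas you reduce directly to the $\tau^-$ statement for $A^{\operatorname{op}}$ via the Auslander--Reiten identity $D\tau_A\cong\tau^-_{A^{\operatorname{op}}}D$ and the transposition isomorphism of varieties---which is precisely the mechanism underlying the paper's Remark~\ref{rem:projective-g-vectors-upper-semicontinuity}, just invoked one step earlier.
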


\begin{proof} The functions
\begin{center}
\begin{tabular}{rl}
$\decrep(\Lambda,\bfd,\bfv)\rightarrow\ZZ$, & $(M,\bfv)\mapsto\dim_{\C}(\Hom_{\jacobalg{Q}{S}}(M,M))$,\\
 $\decrep(\Lambda,\bfd,\bfv) \times \decrep(\Lambda,\bfd',\bfv')\rightarrow\ZZ$, & $((M,\bfv),(N,\bfv'))\mapsto\dim_{\C}(\Hom_{\jacobalg{Q}{S}}(M,N))$,
 \end{tabular}
\end{center}
are upper semicontinuous by \cite[Lemma 4.3]{CB-S}. Since $\bfd$ is a tuple of non-negative integers, this and Lemma \ref{lemma:entries-of-g-vector-are-upper-semicontinuous} imply that the functions
$$
E_{\Lambda}(-)=\dim_{\C}(\Hom_{\Lambda}(-,-))+\langle\bfd,\g_{-}^{\Lambda}\rangle:\decrep(\Lambda,\bfd,\bfv)\rightarrow \ZZ
$$
 and 
$$
E_{\Lambda}(-,\bullet)=\dim_{\C}(\Hom_{\Lambda}(-,\bullet))+\langle\bfd,\g_{\bullet}^{\Lambda}\rangle:\decrep(\Lambda,\bfd,\bfv)\times\decrep(\Lambda,\bfd',\bfv')\rightarrow \ZZ,
$$
being sums of upper semicontinuous functions, are themselves upper semicontinuous.

Suppose $A$ is a finite-dimensional $\C$-algebra. The upper semicontinuity of the functions 
$M \mapsto \dim_{\C}(\Hom_A(\tau^-_A(M),M) )$, and
$(M,N) \mapsto  \dim_{\C} (\Hom_A(\tau^-_A(N),M))$, follows by combining the already established upper semicontinuity of $E_{\Lambda}(-)$ and $E_{\Lambda}(-,\bullet)$ with \cite[Proposition 3.5]{CLS}.

Following Derksen-Weyman-Zelevinsky \cite[first paragraph of Section 10]{DWZ2},
define
$$
E_{A}^{\operatorname{proj}}((M,\bfv),(N,\bfv')):=E_{A^{\operatorname{op}}}((N^{\star},\bfv'),(M^{\star},\bfv)),
$$
where $M^{\star}:=\Hom_{\C}(M,\C)=(M_a^{\operatorname{t}})_{a\in Q_1}\in \rep(A^{\operatorname{op}},\bfd)\subseteq\prod_{a\in Q_1}\C^{d_{s(a)}\times d_{t(a)}}$ (we use the notation $M_a^{\operatorname{t}}$ for the transpose of the matrix $M_a$). Just like \cite[Equation (10.1)]{DWZ2}, one easily sees that
\begin{equation}\label{eq:E^proj-easy-equality}
E_{A}^{\operatorname{proj}}((M,\bfv),(N,\bfv'))=\dim_{\C}(\Hom_{A}(M,N))+\langle g_{\mathcal{M}^{\star}}^{A^{\operatorname{op}}},\underline{\dim}N\rangle,
\end{equation}
where $\mathcal{M}^{\star}=(M^{\star},\bfv)$. Using the results from \cite[Section 10]{DWZ2} (when $A$ is Jacobian) and dualizing the results from \cite[Section 3]{CLS} (for $A$ not necessarily Jacobian), one proves that
\begin{equation}\label{eq:E^proj-hard-equality}
E_{A}^{\operatorname{proj}}((M,\bfv),(N,\bfv'))=\dim_{\C}(\Hom_A(N,\tau_A(M)))+\langle\bfv,\bfd'\rangle.
\end{equation}
Using Lemma \ref{lemma:entries-of-g-vector-are-upper-semicontinuous} (see also Remark \ref{rem:projective-g-vectors-upper-semicontinuity}) and Equations \eqref{eq:E^proj-easy-equality} and \eqref{eq:E^proj-hard-equality}, we see that the functions $\dim_{\C}(\Hom_A(-,\tau_A(-))):\rep(A,\bfd)\rightarrow \Z$ and $\dim_{\C}(\Hom_A(\bullet,\tau_A(-))):\rep(A,\bfd) \times \rep(A,\bfd')\rightarrow \Z$ are sums of upper semicontinuous functions. This finishes the proof of Theorem \ref{thm:E-invariant-is-upper-semicontinuous}.
\end{proof}

\begin{remark}\label{rem:E(Z)<=E(M)-for-M-in-Z} 
%Lemma \ref{lemma:entries-of-g-vector-are-upper-semicontinuous} and 
%Theorem \ref{thm:E-invariant-is-upper-semicontinuous} imply that for any irreducible component $Z$ of $\decirr(\Lambda,\bfd,\bfv)$, every $(M,\bfv)\in Z$ necessarily satisfies $E_{\Lambda}(Z)\leq E_{\Lambda}(M,\bfv)$ and $\widetilde{g}_{Z,k}^{\widetilde{\Lambda}}\leq \widetilde{g}_{(M,\bfv),k}^{\widetilde{\Lambda}}$ for every vertex $k$ of $Q$.
Given a topological space $X$ and an upper semicontinuous function $f:X\rightarrow \ZZ_{\geq 0}$, for every non-empty irreducible subset $Y\subseteq X$ one has the equality
$$
\min\{f(y)\suchthat y\in \overline{Y}\}=\min\{f(y)\suchthat y\in Y\},
$$
where $\overline{Y}$ is the topological closure of $Y$ in $X$. It is through this property of upper semicontinuous functions that Theorem \ref{thm:E-invariant-is-upper-semicontinuous} will play an essential role in Subsection \ref{subsec:behavior-of-irred-comps-under-mutation}, where we will analyze the behavior of the generically $\tau^{-}$-reduced components and their Caldero-Chapoton functions under Derksen-Weyman-Zelevinsky's mutation of decorated representations.
\end{remark}

\subsection{Behavior under mutation}
\label{subsec:behavior-of-irred-comps-under-mutation}

In this subsection we will show that for every loop-free quiver with potential $(\widetilde{Q},\widetilde{S})$, not necessarily non-degenerate, and every mutable vertex $k$ of $\widetilde{Q}$ not incident to any 2-cycle of $\widetilde{Q}$, one has the equality $\mathcal{B}_{(\widetilde{Q},\widetilde{S})}(Q,S)=\mathcal{B}_{\mu_k(\widetilde{Q},\widetilde{S})}(Q,S)$. We will not suppose finite-dimensionality of any of the Jacobian algebras involved. See Remark \ref{rem:some-history-of-generic-CC-functions}:\eqref{item:rem-some-some-history-of-generic-CC-functions-GLS}-\eqref{item:rem-some-some-history-of-generic-CC-functions-Plamondon} and Remark \ref{rem:comparison-to-previous-approaches-intro}:\eqref{item:rem-comparison-to-previous-approaches-intro-GLS-and-Plamodon}-\eqref{item:rem-comparison-to-previous-approaches-intro-Linear-Alg-vs-cluster-categories}.
 
Our techniques will be rather elementary: we will use only basic linear algebra and elementary algebraic geometry of minors.
 
The next two lemmas are well-known. The matrix spaces $\C^{m\times n}$ will always be taken with the Zariski topology.
 
 \begin{lemma}\label{lemma:lower-semicontinuity-of-rank} For any topological space $Z$ and any continuous map $\rho:Z\rightarrow \C^{m\times n}$, the function $Z\rightarrow\Z_{\geq0}$ given by $x\mapsto\dim_{\C}(\Image \rho(x))$ is lower semicontinuous. Consequently, if $Z$ is irreducible, then there exists a dense open subset $U=U_\rho$ of $Z$ with the property that the number $\rank(\rho(Z)):=\dim_{\C}(\Image \rho(x))$ is constant as we let $x$ vary inside $U$. Furthermore, for an arbitrary element $y\in Z$ we have $\rank(\rho(Z))\geq\dim_{\C}(\Image \rho(y))$.
 \end{lemma}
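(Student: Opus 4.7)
The plan is to reduce everything to the well-known fact that matrix rank is a lower semicontinuous function $\C^{m\times n}\to\Z_{\geq0}$, and then transfer this along the continuous map $\rho$.

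First I would observe that for each integer $r\geq 0$ the subset
$$
\{A\in\C^{m\times n}\suchthat \rank(A)\leq r\}\subseteq\C^{m\times n}
$$
is Zariski closed, since it is the vanishing locus of all $(r+1)\times(r+1)$ minors (each such minor is a polynomial in the matrix entries). Equivalently, $\{A\suchthat\rank(A)\geq r\}$ is Zariski open. Pulling back along the continuous map $\rho\df Z\to\C^{m\times n}$, the set
$$
\{x\in Z\suchthat \dim_\C(\Image\rho(x))\geq r\}=\rho^{-1}(\{A\suchthat\rank(A)\geq r\})
$$
is open in $Z$ for every $r\geq 0$, which is exactly the lower semicontinuity statement.

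Next, for the irreducible case, I would let $r^*:=\max\{\dim_\C(\Image\rho(x))\suchthat x\in Z\}$; this maximum exists and is finite because the possible ranks lie in the finite set $\{0,1,\ldots,\min(m,n)\}$. The set $U:=\{x\in Z\suchthat \dim_\C(\Image\rho(x))\geq r^*\}$ is open by the previous step, and nonempty by the choice of $r^*$; since $Z$ is irreducible, every nonempty open subset is dense, so $U$ is dense in $Z$. By maximality of $r^*$, on $U$ the function $x\mapsto\dim_\C(\Image\rho(x))$ is actually constantly equal to $r^*$, and we set $\rank(\rho(Z)):=r^*$. The final assertion that $\rank(\rho(Z))\geq \dim_\C(\Image\rho(y))$ for every $y\in Z$ is then immediate from the definition of $r^*$ as a maximum.

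There is no serious obstacle here: the result is essentially the standard fact that the rank stratification of a matrix space is given by determinantal varieties, combined with the observation that a nonempty open subset of an irreducible space is dense. The only small point worth being careful about is noting that the max in the definition of $r^*$ is attained, which we get for free from the boundedness of rank by $\min(m,n)$.
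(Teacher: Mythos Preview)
Your argument is correct and is exactly the standard proof via the determinantal description of the rank stratification. The paper itself does not prove this lemma: it simply remarks that it is well-known and states it without proof, so there is nothing further to compare.
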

 
Recall that, given $m,n\in\Z_{\geq 0}$, $r\in\{1,\ldots,\min(m,n)\}$, and subsets $I\subseteq[m]$ and $J\subseteq[n]$, both of size $r$, the $r\times r$ \emph{minor} $\Delta_{I,J}:\C^{m\times n}\rightarrow \C$ is the function given by $\Delta(A):=\det(A_{I,J})$ for $A=(a_{ij})_{i\in[m],j\in[n]}\in\C^{m\times n}$, where $A_{I,J}:=(a_{ij})_{i\in I,j\in J}\in\C^{r\times r}$. Recall also that there exist permutation matrices $P_I=P_{m,I}\in\C^{m\times m}$, $P_J=P_{n,J}\in\C^{n\times n}$, not necessarily unique, such that $A_{I,J}=(P_IAP_J^{-1})_{[r],[r]}$ and hence $\Delta_{I,J}(A)=\Delta_{[r],[r]}(P_IAP_J^{-1})$ for all $A\in\C^{m\times n}$.

 \begin{lemma}\label{lemma:linear-algebra-regular-maps} Take $m,n\in\Z_{\geq 0}$, and let $Z$ be an irreducible affine variety and $\rho:Z\rightarrow \C^{m\times n}$ be a morphism of affine varieties. Set $U=U_\rho$ to be any open dense subset of $Z$ as in Lemma \ref{lemma:lower-semicontinuity-of-rank} and $p_{n-r,n}:=\left[\begin{array}{cc}0_{(n-r)\times r} & \myid_{(n-r)\times (n-r)}\end{array}\right]\in\C^{(n-r)\times n}$, where $r:=\rank(\rho(Z))$. Then, for each $r\times r$ minor $\Delta_{I,J}: \C^{m\times n}\rightarrow\C$: 
 \begin{enumerate}
 \item\label{item:lemmalinalg-openset} The set $U_{\rho,\Delta_{I,J}}:=\{x\in U\suchthat \Delta_{I,J}(\rho(x))\neq 0\}$ is open in $Z$.
 \item\label{item:lemmalinalg-basisofkernel} There is a regular function $\varphi_{\rho,\Delta_{I,J}}:U_{\Delta_{I,J}}\rightarrow \C^{n\times (n-r)}$ with the property that for every $x\in U_{\Delta_{I,J}}$, the columns of $\varphi_{\rho,\Delta_{I,J}}(x)$ form a basis of $\Ker(\rho(x))$, the bottom $(n-r)\times (n-r)$ submatrix of $P_J\cdot\varphi_{\rho,\Delta_{I,J}}(x)$ is the identity matrix, and the diagram
$$
\xymatrix{
\C^{n-r} \ar[rrr]^{\varphi_{\rho,\Delta_{I,J}}(x)} \ar[d]_{T_{\rho,\Delta_{I,J}}(x)} & & &\C^{n} \ar[d]^{p_{n-r,n}P_J}\\
\Ker(\rho(x)) \ar[urrr]_{\iota_\rho(x)=\operatorname{incl}} \ar[rrr]_{T_{\rho,\Delta_{I,J}}(x)^{-1}} & & & \C^{n-r}
}
$$
commutes, where $T_{\rho,\Delta_{I,J}}(x):\C^{n-r}\rightarrow\Ker(\rho(x))$ is the tautological $\C$-vector space isomorphism that takes any given $(n-r)$-tuple of scalars and forms with them the corresponding $\C$-linear combination of the $n-r$ columns of the matrix $\varphi_{\rho,\Delta_{I,J}}(x)$. In particular, $(T_{\rho,\Delta_{I,J}}(x))\circ p_{n-r,n}\circ P_J\circ (\iota_{\rho}(x))=\myid_{\Ker(\rho(x))}$ for every $x\in U_{\Delta_{I,J}}$.
\item\label{item:lemmalinalg-quotientmap} There is a regular function $\psi_{\rho,\Delta_{I,J}}:U_{\Delta_{I,J}}\rightarrow \C^{(m-r)\times m}$ with the
property that for each $x\in U_{\Delta_{I,J}}$, the columns of $(\psi_{\rho,\Delta_{I,J}}(x))^{\operatorname{t}}$
form a basis of $\Ker((\rho(x))^{\operatorname{t}})$, the rightmost $(m-r)\times(m-r)$ submatrix of
$\psi_{\rho,\Delta_{I,J}}(x)\cdot P_I^{-1}$ is the identity matrix, and 
$\Ker(\psi_{\rho,\Delta_{I,J}}(x))=\Image(\rho(x))$.
 \end{enumerate}
Furthermore, $U_\rho=\bigcup_{I,J}U_{\Delta_{I,J}}$ (the union, running over all possible ordered pairs $(I,J)$ consisting of $r$-element subsets $I\subseteq[m]$, $J\subseteq[m]$ --there is a total of $\binom{m}{r}\binom{n}{r}$ different such pairs).
 \end{lemma}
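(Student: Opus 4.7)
Part (1) is immediate: $\Delta_{I,J}$ is a polynomial function on $\C^{m\times n}$, so the composition $\Delta_{I,J}\circ \rho\df Z\to \C$ is regular, and its nonvanishing locus is open in $Z$; intersecting with $U$ gives that $U_{\rho,\Delta_{I,J}}$ is open.

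For Part (2), the plan is to put the nonzero $r\times r$ minor in the top-left corner by conjugating by the permutations $P_I$ (on the left) and $P_J^{-1}$ (on the right), and then to write down an explicit basis of the kernel via a Schur-complement / Cramer's rule formula. Writing
$$
P_I\cdot\rho(x)\cdot P_J^{-1}=\begin{pmatrix}A'(x) & B(x)\\ C(x) & D(x)\end{pmatrix},
$$
with $A'(x)$ the top-left $r\times r$ block, the hypothesis $x\in U_{\rho,\Delta_{I,J}}$ says that $A'(x)$ is invertible and $\rank(\rho(x))=r$, which forces $D(x)=C(x)A'(x)^{-1}B(x)$ (because the bottom $m-r$ rows must be linear combinations of the top $r$ rows). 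A direct block computation then shows that the columns of $\bigl(\begin{smallmatrix}-A'(x)^{-1}B(x)\\ \myid_{n-r}\end{smallmatrix}\bigr)$ form a basis of $\Ker(P_I\rho(x)P_J^{-1})$, so setting
$$
\varphi_{\rho,\Delta_{I,J}}(x) := P_J^{-1}\begin{pmatrix}-A'(x)^{-1}B(x)\\ \myid_{n-r}\end{pmatrix}
$$
produces a matrix whose columns form a basis of $\Ker(\rho(x))$. Regularity of $\varphi_{\rho,\Delta_{I,J}}$ on $U_{\rho,\Delta_{I,J}}$ follows from Cramer's rule, which expresses the entries of $A'(x)^{-1}$ as rational functions in the entries of $\rho(x)$ with denominator $\Delta_{I,J}(\rho(x))$. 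The normalization asserted on the bottom $(n-r)\times(n-r)$ block of $P_J\varphi_{\rho,\Delta_{I,J}}(x)$ is automatic from the definition, and the commutativity of the stated diagram reduces to the tautological identity $p_{n-r,n}\cdot P_J\cdot \varphi_{\rho,\Delta_{I,J}}(x)=\myid_{n-r}$.

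For Part (3) the dual construction works: with the same block decomposition, the last $m-r$ rows of $P_I\rho(x)$ are linear combinations of the first $r$ rows with coefficient matrix $C(x)A'(x)^{-1}$. This suggests
$$
\psi_{\rho,\Delta_{I,J}}(x):=\begin{pmatrix}-C(x)A'(x)^{-1} & \myid_{m-r}\end{pmatrix}\cdot P_I,
$$
which is regular on $U_{\rho,\Delta_{I,J}}$ by the same Cramer argument, satisfies $\psi_{\rho,\Delta_{I,J}}(x)\cdot\rho(x)=0$ by a direct block calculation (using $D(x)=C(x)A'(x)^{-1}B(x)$), and has identity rightmost $(m-r)\times(m-r)$ block of $\psi_{\rho,\Delta_{I,J}}(x)\cdot P_I^{-1}$ by construction. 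From $\psi_{\rho,\Delta_{I,J}}(x)\rho(x)=0$ one gets the inclusion $\Image(\rho(x))\subseteq\Ker(\psi_{\rho,\Delta_{I,J}}(x))$; a dimension count $\dim\Ker(\psi_{\rho,\Delta_{I,J}}(x))=m-(m-r)=r=\dim\Image(\rho(x))$ then forces equality, and also shows that the $m-r$ columns of $\psi_{\rho,\Delta_{I,J}}(x)^{\operatorname{t}}$ are linearly independent, hence form a basis of $\Ker(\rho(x)^{\operatorname{t}})$.

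Finally, the cover $U_\rho=\bigcup_{I,J}U_{\Delta_{I,J}}$ is just the elementary observation that a rank-$r$ matrix must admit some nonvanishing $r\times r$ minor, combined with the fact that $\rank(\rho(x))=r$ everywhere on $U_\rho$ (by Lemma~\ref{lemma:lower-semicontinuity-of-rank}). The only real obstacle in carrying out the above is bookkeeping: one must keep straight the action of $P_I$ and $P_J^{-1}$ on the left and right, and verify that the block formulas transport correctly through these permutations. There is no conceptual difficulty beyond Cramer's rule and the Schur-complement characterization of rank.
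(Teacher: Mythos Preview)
Your proof is correct. The paper does not supply a proof of this lemma, introducing it (together with the preceding Lemma~\ref{lemma:lower-semicontinuity-of-rank}) as ``well-known''; your block-decomposition argument via Cramer's rule is exactly the standard verification one would expect, and all the bookkeeping with $P_I$, $P_J$, and the Schur-complement identity $D=CA'^{-1}B$ checks out.
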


Fix a decorated dimension vector $(\bfd,\bfv)\in\ZZ_{\geq 0}^n\times\ZZ_{\geq 0}^n$ and consider the closed subset
\begin{equation}\label{eq:decrep-iso-to-closed-set-of-matrices}
\decrep(\jacobalg{Q}{S},\bfd,\bfv)\subseteq \left(\prod_{a\in Q_1}\Hom_{\C}(\C^{d_{s(a)}},\C^{d_{t(a)}})\right)\times\{\bfv\}\cong\prod_{a\in Q_1}\C^{d_{t(a)}\times d_{s(a)}}
\end{equation}
Fix also $k\in[n]$ and assume that
\begin{equation}\label{eq:no-2-cycles-incident-to-k}
\text{$\widetilde{Q}$ does not have 2-cycles incident to $k$.}
\end{equation}
For each $(M,\bfv)\in\decrep(\jacobalg{Q}{S},\bfd,\bfv)$ we have Derksen-Weyman-Zelevinsky's \emph{$\alpha$-$\beta$-$\gamma$-triangle}
\begin{equation}\label{eq:DWZ-alphabetagamma-triangle}
\xymatrix{
 & \C^{d_k} \ar[dr]^{\beta_k(M)} & \\
 \underset{{\underset{a}{\rightarrow}k}}{\bigoplus}\C^{d_{s(a)}} \ar[ur]^{\alpha_k(M)} & & \underset{{\underset{b}{\leftarrow}k}}{\bigoplus}\C^{d_{t(b)}} \ar[ll]^{\gamma_k(M)},
 }
\end{equation}
where we think of $\alpha_k(M)$ (resp. $\beta_k(M)$) as the $d_k\times(\sum_{{\overset{a}{\rightarrow}k}}d_{s(a)})$ (resp. $(\sum_{{\overset{b}{\leftarrow}k}}d_{t(b)})\times d_k$) complex matrix obtained by putting the matrices $M_a$ with $t(a)=k$ (resp. $M_b$ with $s(b)=k$) next to each other as columns (resp. rows), and $\gamma_k(M)$ is given by putting together the matrices of the $\C$-linear maps $\partial_{ba}(S)(M):\C^{d_{t(b)}}\rightarrow\C^{d_{s(a)}}$, $v\mapsto \partial_{ba}(S)\cdot v$, for $(a,b)\in Q_1\times Q_1$ with $t(a)=k=s(b)$.

Let $\widetilde{\mu}_k(Q,S)=(\widetilde{\mu}_k(Q),\widetilde{\mu}_k(S))$ be the $k^{\operatorname{th}}$ \emph{$QP$-premutation} of $(Q,S)$, cf. \cite[Equations (5.3), (5.8) and (5.9)]{DWZ1}. Thus, $\widetilde{\mu}_k(Q)$ is the quiver obtained from $Q$ by applying only the first two steps of quiver mutation, and
$$
\widetilde{\mu}_k(S):=[S]+\sum_{\overset{a}{\to}k\overset{b}{\to}}b^*[ba]a^*\in\RA{\widetilde{\mu}_k(Q)}.
$$

\begin{thm}\label{thm:premut-of-reps-is-a-densely-defined-reg-map} Fix a decorated dimension vector $(\bfd,\bfv)\in\ZZ^n_{\geq 0}\times\ZZ^{n}_{\geq 0}$ and a vertex $k\in[n]$ satisfying~\eqref{eq:no-2-cycles-incident-to-k}. For each irreducible component $Z$ of the affine variety $\decrep(\jacobalg{Q}{S},\bfd,\bfv)$ there exist a decorated dimension vector $(\bfd',\bfv')\in\ZZ^{n}_{\geq 0}\times\ZZ^{n}_{\geq 0}$, a dense open subset $U\subseteq Z$ and a regular map $\widetilde{\nu}_U:U\rightarrow \decrep(\mathcal{P}(\widetilde{\mu}_k(Q,S)),\bfd',\bfv')$, such that for every $\mathcal{M}=(M,\bfv)\in U$ the decorated representation $\widetilde{\nu}_U(\mathcal{M})$ is isomorphic to the decorated representation $\widetilde{\mu}_k(\mathcal{M})$ of $\widetilde{\mu}_k(Q,S)$ defined by Derksen-Weyman-Zelevinsky in \cite[Equations (10.6)--(10.10)]{DWZ1}). The decorated dimension vector $(\bfd',\bfv')$ is uniquely determined by $k$ and $Z$, i.e., it is independent of the choice of $U$ and $\widetilde{\nu}_U$.
\end{thm}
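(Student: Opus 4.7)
The plan is to realize Derksen-Weyman-Zelevinsky's premutation formulas \cite[Equations (10.6)--(10.10)]{DWZ1} as honest regular maps by making the implicit choices of bases, complements and splittings depend regularly on the point of $Z$. The key tool will be Lemma \ref{lemma:linear-algebra-regular-maps}, which produces regular bases of kernels and cokernels on the open loci where the relevant ranks are attained.

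First, via the embedding \eqref{eq:decrep-iso-to-closed-set-of-matrices}, the assignments $(M,\bfv)\mapsto \alpha_k(M),\beta_k(M),\gamma_k(M)$ of \eqref{eq:DWZ-alphabetagamma-triangle} define regular maps from $Z$ into matrix spaces; so do the composites $\gamma_k\beta_k$, $\alpha_k\gamma_k$ and $\alpha_k\gamma_k\beta_k$. By Lemma \ref{lemma:lower-semicontinuity-of-rank} applied to each of these maps in turn, I obtain a dense open subset $U_0\subseteq Z$ on which all of the ranks in question are simultaneously constant. The DWZ prescription for the decorated dimension vector of $\widetilde{\mu}_k(M,\bfv)$ depends only on $\bfd$, $\bfv$, $k$, and on these generic ranks; hence it is constant on $U_0$, and this common value $(\bfd',\bfv')$ is uniquely determined by $(Z,k)$.

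Next, cover $U_0$ by the open subsets $U_{\Delta_{I,J}}$ produced by Lemma \ref{lemma:linear-algebra-regular-maps}\eqref{item:lemmalinalg-openset} applied simultaneously to $\alpha_k$, $\beta_k$, $\gamma_k$ and to any auxiliary maps whose kernels and images enter the DWZ formulas (for instance, the restriction of $\beta_k$ to $\ker\gamma_k$, which is needed to locate the summand $\ker\gamma_k/\image\beta_k$ inside the premuted space at vertex $k$). Since $Z$ is irreducible, at least one of these open sets, call it $U$, is dense in $Z$. On $U$, parts \eqref{item:lemmalinalg-basisofkernel} and \eqref{item:lemmalinalg-quotientmap} of Lemma \ref{lemma:linear-algebra-regular-maps} furnish regular matrix-valued functions $\varphi$ and $\psi$ that produce bases of the required kernels and of complementary subspaces realizing the required cokernels. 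Passing, if necessary, to a smaller dense open subset, one may moreover arrange that a regularly-varying direct sum decomposition of the middle space $\C^{d_k}$ (and of the source and target of $\alpha_k$ and $\beta_k$) adapted to the $\alpha$-$\beta$-$\gamma$-triangle exists on all of $U$.

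With these regular splittings in hand, each of the formulas \cite[Equations (10.6)--(10.10)]{DWZ1} for $\widetilde{\mu}_k(M,\bfv)$ becomes a polynomial expression in the entries of $\alpha_k(M),\beta_k(M),\gamma_k(M)$ and of the matrix entries of the arrows of $Q$ not incident to $k$, together with the entries of $\varphi$ and $\psi$; this defines a regular map $\widetilde{\nu}_U\colon U\to \prod_{a\in\widetilde{\mu}_k(Q)_1}\Hom_\C(\C^{d'_{s(a)}},\C^{d'_{t(a)}})\times\{\bfv'\}$. By construction $\widetilde{\nu}_U(M,\bfv)$ is, pointwise, one representative of the isomorphism class of DWZ's premutation; in particular it satisfies the relations of $\mathcal{P}(\widetilde{\mu}_k(Q,S))$ and takes values in $\decrep(\mathcal{P}(\widetilde{\mu}_k(Q,S)),\bfd',\bfv')$, as required. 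The uniqueness of $(\bfd',\bfv')$ is forced by irreducibility of $Z$ together with the constancy of the generic ranks. The main technical obstacle is organizing the iterated application of Lemma \ref{lemma:linear-algebra-regular-maps} so that \emph{all} of the bases and complements entering the DWZ formulas vary regularly on a common dense open subset; the flexibility of shrinking $U$ within $U_0$ at each step, combined with irreducibility of $Z$, is what makes this possible.
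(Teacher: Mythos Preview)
Your approach is essentially the same as the paper's: locate a dense open subset on which the relevant ranks are constant, then apply Lemma~\ref{lemma:linear-algebra-regular-maps} iteratively to obtain regularly varying bases of the kernels and cokernels entering DWZ's premutation formulas, and plug these into \cite[Equations (10.6)--(10.10)]{DWZ1}.

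Two small points of caution. First, the composites $\gamma_k\beta_k$, $\alpha_k\gamma_k$ and $\alpha_k\gamma_k\beta_k$ vanish identically by \cite[Lemma 10.6]{DWZ1}, so they carry no information; the composite whose generic rank you actually need is $\beta_k\alpha_k$, since $v'_k$ is expressed through $\nullity(\beta_k\alpha_k)$ (cf.\ \eqref{eq:mutated-decorated-dim-vector}). Second, the phrase ``the restriction of $\beta_k$ to $\ker\gamma_k$'' hides exactly the subtlety the paper works hardest on: this is not a regular map into a fixed matrix space, because the domain varies. The paper resolves this by first applying Lemma~\ref{lemma:linear-algebra-regular-maps}\eqref{item:lemmalinalg-basisofkernel} to $\rho_1=\gamma_k$ to get a regular trivialization $\varphi_1$ of $\ker\gamma_k$, and only then forming a second regular map $\rho_2(M,\bfv)=p_1P_{J_1}\beta_k(M)$ into a fixed matrix space, to which the lemma can be applied again; a parallel two-step construction handles $\ker\alpha_k/\Image\gamma_k$. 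Your final paragraph acknowledges this ``iterated application'', but be aware that it is precisely here, not in the outer scaffolding, that the proof has real content.
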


\begin{proof}
Fix an irreducible component $Z$ of $\decrep(\jacobalg{Q}{S},\bfd,\bfv)$. By Lemma \ref{lemma:lower-semicontinuity-of-rank} and Proposition \ref{prop:every-irr-Z-has-subset-where-g-vects-CC-map-E-inv-and-codim-are-constant}, there exists a dense open subset $V\subseteq Z$ where each of the values
\begin{equation}\label{eq:preparing-mut-of-components-many-constant-numbers-1}
\begin{array}{lcl}
\dim_{\C}(\Image\alpha_k(M)), & \dim_{\C}(\Image\beta_k(M)), & \dim_{\C}(\Image\gamma_k(M)),\\ 
\dim_{\C}(\Image\beta_k(M)\alpha_k(M)), & E_{\jacobalg{Q}{S}}(M,\bfv), & CC_{(\widetilde{Q},\widetilde{S})}(M,\bfv),
\end{array}
\end{equation}
is constant when we let $\mathcal{M}=(M,\bfv)$ vary inside $V$. The numbers 
\begin{center}
\begin{tabular}{ll}
$\dim_{\C}(\Ker\alpha_k(M))$, & $\dim_{\C}(\Ker\beta_k(M))$,\\ $\dim_{\C}(\Ker\gamma_k(M))$, & $\dim_{\C}(\Ker\beta_k(M)\alpha_k(M))$
\end{tabular}
\end{center}
 are constant on $V$ too. We denote all these constants by
\begin{center}
\begin{tabular}{llll}
$\rank(\alpha_k(Z))$, & $\rank(\beta_k(Z)))$, & $\rank(\gamma_k(Z))$, & $\rank(\beta_k(Z)\alpha_k(Z))$,\\
 $\nullity(\alpha_k(Z))$, & $\nullity(\beta_k(Z))$, & $\nullity(\gamma_k(Z))$, & $\nullity(\beta_k(Z)\alpha_k(Z))$,
 \end{tabular}
 \end{center} 
respectively. 

Define a new decorated dimension vector $(\bfd',\bfv')$ by setting
\begin{eqnarray}\label{eq:mutated-decorated-dim-vector}
d_j'&:=&\begin{cases}
\nullity(\gamma_k(Z))-\rank(\beta_k(Z))+\nullity(\alpha_k(Z))+v_k & \text{if $j=k$;}\\
d_j & \text{if $j\neq k$.}
\end{cases} 
\\
\nonumber
v_j'&:=&\begin{cases}
\nullity(\beta_k(Z))-\nullity(\beta_k(Z)\alpha_k(Z))+\nullity(\alpha_k(Z)) & \text{if $j=k$;}\\
v_j & \text{if $j\neq k$.}
\end{cases}
\end{eqnarray}
For any $(M,\bfv)\in V$, all the decorated representations that are right-equivalent to either $\widetilde{\mu}_k(M,\bfv)$ or $\mu_k(M,\bfv)$ (as defined by Derksen-Weyman-Zelevinsky in \cite[Equations (10.6)--(10.10) and (10.22)]{DWZ1}) have $(\bfd',\bfv')$ as decorated dimension vector\footnote{This may fail to be true for all elements of $Z$.}, see \cite[Equation (10.7)]{DWZ1}. In the next paragraphs we will carefully trace the linear algebra that can be applied to choose specific representatives in the right-equivalence classes of $\widetilde{\mu}_k(M,\bfv)$ and $\mu_k(M,\bfv)$.

After fixing the dense open subset $V\subseteq Z$ on which the values \eqref{eq:preparing-mut-of-components-many-constant-numbers-1} are constant, fix the following regular functions as well:\\
\begin{center}
\begin{tabular}{|ll|}\hline
$\rho_1:Z\rightarrow\C^{(\sum_{{\overset{a}{\rightarrow}k}}d_{s(a)})\times(\sum_{{\overset{b}{\leftarrow}k}}d_{t(b)})}$, & $\rho_1(M,\bfv):=\gamma_k(M)$, \\
$\Delta_{I_1,J_1}:\C^{(\sum_{{\overset{a}{\rightarrow}k}}d_{s(a)})\times(\sum_{{\overset{b}{\leftarrow}k}}d_{t(b)})}\rightarrow\C$,& \text{a $\rank(\gamma_k(Z))\times\rank(\gamma_k(Z))$ minor,}\\
\hline
$\rho_2:Z\rightarrow\C^{\nullity(\gamma_k(Z))\times d_k}$, & $\rho_2(M,\bfv):= p_{\nullity(\gamma_k(Z)),(\sum_{{\overset{b}{\leftarrow}k}}d_{t(b)})}P_{J_1}\beta_k(M)$,\\
$\Delta_{I_2,J_2}:\C^{\nullity(\gamma_k(Z))\times d_k}\rightarrow\C$,&\text{a $\rank(\beta_k(Z))\times\rank(\beta_k(Z))$ minor,}\\
\hline
$\rho_3:Z\rightarrow \C^{d_k\times (\sum_{{\overset{a}{\rightarrow}k}}d_{s(a)})}$, & $\rho_3(M,\bfv):=\alpha_k(M)$\\ 
$\Delta_{I_3,J_3}:\C^{d_k\times (\sum_{{\overset{a}{\rightarrow}k}}d_{s(a)})}\rightarrow\C$, &\text{a $\rank(\alpha_k(Z))\times\rank(\alpha_k(Z))$ minor,}\\
\hline
$\rho_4:Z\rightarrow \C^{\nullity(\alpha_k(Z))\times(\sum_{{\overset{a}{\rightarrow}k}}d_{s(a)})}$ & $\rho_4(M,\bfv):= p_{\nullity(\alpha_k),(\sum_{{\overset{a}{\rightarrow}k}}d_{s(a)})}P_{J_3}\gamma_k(M)$,\\ 
$\Delta_{I_4,J_4}:\C^{\nullity(\alpha_k(Z))\times(\sum_{{\overset{a}{\rightarrow}k}}d_{s(a)})}\rightarrow\C$&\text{a $\rank(\gamma_k(Z))\times\rank(\gamma_k(Z))$ minor.}\\
\hline
\end{tabular}
\end{center}

Let
$U=U_{\Delta_{I_1,J_1},\Delta_{I_2,J_2},\Delta_{I_3,J_3},\Delta_{I_4,J_4}}:=\{(M,\bfv)\in V\suchthat
\prod_{\ell=1}^{4}\Delta_{I_{\ell},J_{\ell}}(\rho_{\ell}(M,\bfv))\neq 0\}$, which is open in $V$ (and in $Z$) by Lemma \ref{lemma:linear-algebra-regular-maps}.
Applying that same lemma to $(\rho_1,\Delta_{I_1,J_1})$, $(\rho_2,\Delta_{I_2,J_2})$, $(\rho_3,\Delta_{I_3,J_3})$ and $(\rho_4,\Delta_{I_4,J_4})$ yields regular functions 
\begin{center}
\begin{tabular}{ll}
$\varphi_{1}:U\rightarrow \C^{(\sum_{{\overset{b}{\leftarrow}k}}d_{t(b)})\times \nullity(\gamma_k(Z))}$, & 
$\psi_2:U\rightarrow \C^{(\nullity(\gamma_k(Z))-\rank(\beta_k(Z)))\times \nullity(\gamma_k(Z))}$,\\
$\varphi_3:U\rightarrow \C^{(\sum_{{\overset{a}{\rightarrow}k}}d_{s(a)})\times \nullity(\alpha_k(Z))}$, &
$\psi_4:U\rightarrow\C^{(\nullity(\alpha_k(Z))-\rank(\gamma_k(Z)))\times\nullity(\alpha_k(Z))}$,
\end{tabular}
\end{center}
such that for all $(M,\bfv)\in U$ the following conditions are simultaneously satisfied:

%\begin{enumerate}
%\item 
\begin{condition}
The columns of $\varphi_{1}(M,\bfv)$ form a basis of $\Ker\gamma_k(M)$, the bottom $\nullity(\gamma_k(Z))\times \nullity(\gamma_k(Z))$ submatrix of $P_{J_1}\varphi_{1}(M,\bfv)$ is the identity matrix, and the diagram
$$
\xymatrix{
\C^{\nullity(\gamma_k(Z))} \ar[rr]^{\varphi_{1}(M,\bfv)} \ar[d]_{T_{\rho_1,\Delta_{I_1,J_1}}(M,\bfv)} & &\C^{(\sum_{{\overset{b}{\leftarrow}k}}d_{t(b)})} \ar[d]^{p_{\nullity(\gamma_k(Z)),(\sum_{{\overset{b}{\leftarrow}k}}d_{t(b)})}P_{J_1}}\\
\Ker\gamma_k(M) \ar[urr]_{\iota_{\rho_1}(M,\bfv)} \ar[rr]_{T_{\rho_1,\Delta_{I_1,J_1}}(M)^{-1}} & & \C^{\nullity(\gamma_k(Z))}
}
$$
commutes. Thus, 
writing $T_1(M,\bfv):=T_{\rho_1,\Delta_{I_1,J_1}}(M,\bfv)$ and $p_1:=p_{\nullity(\gamma_k(Z)),(\sum_{{\overset{b}{\leftarrow}k}}d_{t(b)})}$, we have the commutative diagram
$$
\xymatrix{
\Ker(\gamma_k(M)) \ar@/^2pc/[rrrrrr]^{\myid} \ar[rrr]_{\iota_{\rho_1}(M,\bfv)} \ar[d]_{T_1(M,\bfv)^{-1}} &&& \C^{(\sum_{{\overset{b}{\leftarrow}k}}d_{t(b)})} \ar[rrr]_{\text{\tiny{$(T_1(M,\bfv))p_1P_{J_1}$}}} \ar[d]_{\myid} &&&  \Ker(\gamma_k(M)) \ar[d]^{T_1(M,\bfv)^{-1}} \\
\C^{\nullity(\gamma_k(Z))} \ar@/_2pc/[rrrrrr]_{\myid} \ar[rrr]^{\varphi_{1}(M,\bfv)} &&& \C^{(\sum_{{\overset{b}{\leftarrow}k}}d_{t(b)})}
\ar[rrr]^{p_1P_{J_1}} &&& \C^{\nullity(\gamma_k(Z))}
}
$$
\end{condition}

%\item 
\begin{condition} By \cite[Lemma 10.6]{DWZ1}, $\gamma_k(M)\beta_k(M)=0$. Hence, each column of $\beta_k(M)$ can be written in a unique way as a $\C$-linear combination of the columns of $\varphi_{1}(M,\bfv)$. Thus, for every column $\mathbf{b}$ of $\beta_k(M)$, the equation $P_{J_1}\varphi_{1}(M,\bfv)\mathbf{x}=P_{J_1}\mathbf{b}$ has exactly one solution $\mathbf{x}_{\mathbf{b}}\in \C^{\nullity(\gamma_k(Z))}$.
Actually, since the bottom $\nullity(\gamma_k(Z))\times \nullity(\gamma_k(Z))$ submatrix of $P_{J_1}\varphi_{\Delta_1}(M,\bfv)$ is the identity matrix, we have $P_{J_1}\varphi_{\Delta_1}(M,\bfv)p_{\nullity(\gamma_k(Z)),(\sum_{{\overset{b}{\leftarrow}k}}d_{t(b)})}P_{J_1}\beta_k(M)=P_{J_1}\beta_k(M)$.  This implies, in particular, that the rank of $\rho_2(M,\bfv)=p_{\nullity(\gamma_k(Z)),(\sum_{{\overset{b}{\leftarrow}k}}d_{t(b)})}P_{J_1}\beta_k(M)$ is precisely $\rank(\beta_k(Z))$, which means that Lemma \ref{lemma:linear-algebra-regular-maps} can indeed be applied to $(\rho_2,\Delta_{I_2,J_2})$ in order to produce $\psi_2:U\rightarrow \C^{(\nullity(\gamma_k(Z))-\rank(\beta_k(Z)))\times \nullity(\gamma_k(Z))}$.

Now, the rows of $\psi_{2}(M,\bfv)$ form a basis of $\Ker(\rho_2(M,\bfv))^{\operatorname{t}})$, which means that the kernel of $\psi_{2}(M,\bfv)$ is precisely the image of $\rho_2(M,\bfv)$. Moreover, the rightmost $(\nullity(\gamma_k(Z))-\rank(\beta_k(Z)))\times(\nullity(\gamma_k(Z))-\rank(\beta_k(Z)))$ submatrix of $\psi_{2}(M,\bfv)P_{I_2}^{-1}$ is the identity matrix. Thus, we have a commutative diagram of $\C$-vector spaces
$$
\xymatrix{
0 \ar[r]   & \Image\beta_k(M) \ar[rr]^{\operatorname{incl}} \ar[d]_{T_1(M,\bfv)^{-1}|} & & \Ker\gamma_k(M) \ar[rr]^{\operatorname{proj}}   \ar[d]_{T_1(M,\bfv)^{-1}}  & & \Ker\gamma_k(M)/\Image\beta_k(M) \ar[r]  \ar[d]_{\overline{T_1(M,\bfv)^{-1}}} & 0  \\ 
0 \ar[r] & \Image(\rho_2(M,\bfv)) \ar[rr]_{\operatorname{incl}} & & \C^{\nullity(\gamma_k(Z))} \ar[rr]_{\psi_{2}(M,\bfv)}  & & \C^{\nullity(\gamma_k(Z))-\rank(\beta_k(Z))} \ar[r] & 0
}
$$
whose rows are exact, and whose vertical arrows are isomorphisms.
\end{condition}

%\item
\begin{condition} The columns of $\varphi_{3}(M,\bfv)$ form a basis of $\Ker\alpha_k(M)$, the bottom $\nullity(\alpha_k(Z))\times \nullity(\alpha_k(Z))$ submatrix of $P_{J_3}\cdot \varphi_{3}(M,\bfv)$ is the identity matrix, and the diagram
$$
\xymatrix{
\C^{\nullity(\alpha_k(Z))} \ar[rrr]^{\varphi_{3}(M,\bfv)} \ar[d]_{T_{\rho_3,\Delta_{I_3,J_3}}(M,\bfv)} & & &\C^{(\sum_{{\overset{a}{\rightarrow}k}}d_{s(a)})} \ar[d]^{p_{\nullity(\alpha_k(Z)),(\sum_{{\overset{a}{\rightarrow}k}}d_{s(a)})}P_{J_3}} \\
\Ker\alpha_k(M) \ar[urrr]_{\iota_{\rho_3}(M,\bfv)=\operatorname{incl}} \ar[rrr]_{T_{\rho_3,\Delta_{I_3,J_3}}(M,\bfv)^{-1}}  & & & \C^{\nullity(\alpha_k(Z))}
}
$$
commutes. We will write $T_3(M,\bfv):=T_{\rho_3,\Delta_{I_3,J_3}}(M,\bfv)$.
\end{condition}

%\item 
\begin{condition} By \cite[Lemma 10.6]{DWZ1}, $\alpha_k(M)\gamma_k(M)=0$. Hence, each column of $\gamma_k(M)$ can be written in a unique way as a $\C$-linear combination of the columns of $\varphi_{3}(M,\bfv)$. Thus, for every column $\mathbf{c}$ of $\gamma_k(M)$, the equation $P_{J_3}\varphi_{3}(M,\bfv)\mathbf{x}=P_{J_3}\mathbf{c}$ has exactly one solution $\mathbf{x}_{\mathbf{c}}\in \C^{\nullity(\alpha_k(Z))}$.
Actually, since the bottom $\nullity(\alpha_k(Z))\times \nullity(\alpha_k(Z))$ submatrix of $P_{J_3}\varphi_{\Delta_3}(M,\bfv)$ is the identity matrix, we have $P_{J_3}\varphi_{3}(M,\bfv)p_{\nullity(\alpha_k(Z)),(\sum_{{\overset{a}{\rightarrow}k}}d_{s(a)})}P_{J_3}\gamma_k(M)=P_{J_3}\gamma_k(M)$, that is, $\varphi_{3}(M,\bfv)p_{\nullity(\alpha_k(Z)),(\sum_{{\overset{a}{\rightarrow}k}}d_{s(a)})}P_{J_3}\gamma_k(M)=\gamma_k(M)$. This implies, in particular, that the rank of $\rho_4(M,\bfv)=p_{\nullity(\alpha_k),(\sum_{{\overset{a}{\rightarrow}k}}d_{s(a)})}P_{J_3}\gamma_k(M)$ is precisely $\rank(\gamma_k(Z))$, which means that Lemma \ref{lemma:linear-algebra-regular-maps} can indeed be applied to $(\rho_4,\Delta_{I_4,J_4})$ in order to produce $\psi_4:U\rightarrow\C^{(\nullity(\alpha_k(Z))-\rank(\gamma_k(Z)))\times\nullity(\alpha_k(Z))}$.

Now, the rows of $\psi_{4}(M,\bfv)$
form a basis of $\Ker((p_{\nullity(\alpha_k),(\sum_{{\overset{a}{\rightarrow}k}}d_{s(a)})}P_{J_3}\gamma_k(M))^{\operatorname{t}})$, which means that the kernel of $\psi_{4}(M,\bfv)$ is precisely the image of $p_{\nullity(\alpha_k),(\sum_{{\overset{a}{\rightarrow}k}}d_{s(a)})}P_{J_3}\gamma_k(M)$. Thus, we have a commutative diagram of $\C$-vector spaces
$$
\xymatrix{
0 \ar[r]   & \Image\gamma_k(M) \ar[rr]^{\operatorname{incl}} \ar[d]_{T_{3}(M,\bfv)^{-1}|} & & \Ker\alpha_k(M) \ar[rr]^{\operatorname{proj}}   \ar[d]_{T_{3}(M,\bfv)^{-1}}  & & \Ker\alpha_k(M)/\Image\gamma_k(M) \ar[r]  \ar[d]_{\overline{T_{3}(M,\bfv)^{-1}}} & 0  \\ 
0 \ar[r] & \Image(\rho_4(M,\bfv)) \ar[rr]_{\operatorname{incl}} & & \C^{\nullity(\alpha_k(Z))} \ar[rr]_{\psi_{4}(M,\bfv)}  & & \C^{\nullity(\alpha_k(Z))-\rank(\gamma_k(Z))} \ar[r] & 0
}
$$
whose rows are exact, and whose vertical arrows are isomorphisms. Furthermore, since the rightmost $(\nullity(\alpha_k(Z))-\rank(\gamma_k(Z)))\times(\nullity(\alpha_k(Z))-\rank(\gamma_k(Z)))$ submatrix of
$\psi_{4}(M,\bfv)\cdot P_{I_4}^{-1}$ is the identity matrix, the product $\psi_{4}(M,\bfv)\cdot P_{I_4}^{-1}\cdot (p_{\nullity(\alpha_k(Z))-\rank(\gamma_k(Z)),\nullity(\alpha_k(Z))})^{\operatorname{t}}$ is equal to the identity matrix $\myid_{(\nullity(\alpha_k(Z))-\rank(\gamma_k(Z)))\times (\nullity(\alpha_k(Z))-\rank(\gamma_k(Z)))}$, so we also have the following commutative diagram
$$
\xymatrix{
\Ker\alpha_k(M)/\Image\gamma_k(M) \ar@/^2pc/[rrrrrr]_{\myid} \ar[rrr]^{\sigma} \ar[d]_{\overline{T_{3}(M,\bfv)^{-1}}} &&& \Ker\alpha_k(M) \ar[rrr]^{\operatorname{proj}} \ar[d]_{T_{3}(M,\bfv)^{-1}} &&&  \Ker\alpha_k(M)/\Image\gamma_k(M) \ar[d]_{\overline{T_{3}(M,\bfv)^{-1}}} \\
\C^{\nullity(\alpha_k(Z))-\rank(\gamma_k(Z))} \ar@/_2pc/[rrrrrr]^{\myid} \ar[rrr]_{P_{I_4}^{-1}p_4^{\operatorname{t}}} &&& \C^{\nullity(\alpha_k(Z))}
\ar[rrr]_{\psi_{4}(M,\bfv)} &&& \C^{\nullity(\alpha_k(Z))-\rank(\gamma_k(Z))}
}
$$
where $p_4:=p_{\nullity(\alpha_k(Z))-\rank(\gamma_k(Z)),\nullity(\alpha_k(Z))}$ and $\sigma:=T_{3}(M,\bfv)P_{I_4}^{-1}p_4^{\operatorname{t}}\overline{T_{3}(M,\bfv)^{-1}}$.
\end{condition}
%\end{enumerate}

Write $\psi_4(M,\bfv)P_{I_4}^{-1}=\left[\begin{array}{cc}A(M,\bfv) & \myid\end{array}\right]$, with $A(M,\bfv)\in\C^{(\nullity(\alpha_k(Z))-\rank(\gamma_k(Z)))\times\rank(\gamma_k(Z))}$, for each $(M,\bfv)\in U$. Since $\psi_4:U\rightarrow\C^{(\nullity(\alpha_k(Z))-\rank(\gamma_k(Z)))\times\nullity(\alpha_k(Z))}$ is regular, so is the function $\psi_5:U\rightarrow C^{\nullity(\alpha_k(Z))\times\rank(\gamma_k(Z))}$ given by
$$
\psi_5(M,\bfv):= \left[\begin{array}{c}
\myid_{\rank(\gamma_k(Z))\times\rank(\gamma_k(Z))}\\
-A(M,\bfv)
\end{array}\right].
$$

All the previous discussion implies that for every $(M,\bfv)\in U$ we have the commutative diagrams
\begin{equation*}%\label{eq:comm-diag-for-overlinebeta}
\xymatrix@+3ex{
\text{{$\Ker{\gamma_k(M)}/\Image\beta_k(M)\oplus \Image\gamma_k(M) \oplus \Ker\alpha_k(M)/\Image\gamma_k(M)\oplus \C^{v_k}$}}
\ar[dd]^{\text{{$
\overline{\beta}_k(M,\bfv):=\left[\begin{matrix} 0 & \operatorname{incl} & \iota_{\rho_3}(M,\bfv)\circ\sigma & 0\end{matrix}\right]$}}
} 
\ar@/_8ex/[dddd]_
{\text{\tiny$\left[\begin{matrix}
\overline{T_{1}(M,\bfv)^{-1}} & 0 & 0 & 0 \\
0 & (p_5P_{I_4})|\circ(T_3(M,\bfv)^{-1}|) & 0 & 0 \\
0 & 0  & \overline{T_{3}(M,\bfv)^{-1}}  & 0 \\
0  & 0  & 0 & \myid
\end{matrix}
\right]$}}
\\
\\
\underset{{\underset{a}{\rightarrow}k}}{\bigoplus}\C^{d_{s(a)}}  
\\
\\
\text{{$\C^{\nullity(\gamma_k(Z))-\rank(\beta_k(Z))}\oplus \C^{\rank(\gamma_k(Z))}\oplus \C^{\nullity(\alpha_k(Z))-\rank(\gamma_k(Z))}\oplus \C^{v_k}$}} 
\ar[uu]_{\text{{$
\left[\begin{matrix} 
0 & \varphi_3(M,\bfv)P_{I_4}^{-1}\psi_5(M,\bfv) & \varphi_3(M,\bfv)P_{I_4}^{-1}p_4^{\operatorname{t}} & 0 
\end{matrix}\right]
$}}
}
}
\end{equation*}
and
\begin{equation*}%\label{eq:comm-diag-for-overlinealpha}
\xymatrix@+3ex{
\text{{$\Ker{\gamma_k(M)}/\Image\beta_k(M)\oplus \Image\gamma_k(M) \oplus \Ker\alpha_k(M)/\Image\gamma_k(M)\oplus \C^{v_k}$}}
\ar@/^8ex/[dddd]^{\text{\tiny$\left[\begin{matrix}
\overline{T_{1}(M,\bfv)^{-1}} & 0 & 0 & 0 \\
0 & (p_5P_{I_4})|\circ(T_3(M,\bfv)^{-1}|) & 0 & 0 \\
0 & 0  & \overline{T_{3}(M,\bfv)^{-1}}  & 0 \\
0  & 0  & 0 & \myid
\end{matrix}
\right]$}}
\\
\\
\underset{{\underset{b}{\leftarrow}k}}{\bigoplus}\C^{d_{t(b)}} 
\ar[uu]^{\text{{$\overline{\alpha}_k(M,\bfv):=
\left[\begin{matrix}
-\operatorname{proj}\circ T_1(M,\bfv)p_1P_{J_1}\\
-\gamma_k(M,\bfv)\\
0\\
0\end{matrix}\right]
$}}
%\hspace{3cm}
} 
\ar[dd]_{
\text{{$
\left[
\begin{matrix}
-\psi_2(M,\bfv)p_1P_{J_1}\\
-p_5P_{I_4}p_3P_{J_3}\gamma_k(M,\bfv)\\
0\\
0\end{matrix}\right]
$}}
%\hspace{3cm}
} 
\\
\\
%\hspace{2cm}
\text{{$\C^{\nullity(\gamma_k(Z))-\rank(\beta_k(Z))}\oplus \C^{\rank(\gamma_k(Z))}\oplus \C^{\nullity(\alpha_k(Z))-\rank(\gamma_k(Z))}\oplus \C^{v_k}$}} 
}
\end{equation*}
where 
$$
p_5:=
\left[\begin{matrix}
\myid_{\rank(\gamma_k(Z))\times\rank(\gamma_k(Z))} & 0_{\rank(\gamma_k)\times(\nullity(\alpha_k(Z)) - \rank(\gamma_k(Z))}\end{matrix}\right]
\in \C^{\rank(\gamma_k(Z))\times\nullity(\alpha_k(Z))}.
$$ 

Derksen-Weyman-Zelevinsky show in \cite[Proposition 10.7]{DWZ1} that the representation $N=N(M,\bfv)$ of $\widetilde{\mu}_k(Q)$ determined by the data $\overline{\alpha}_k(M,\bfv)$, $\overline{\beta}_k(M,\bfv)$, together with the original action on $M$ of the arrows of $Q$ not incident to $k$, is actually a module over the Jacobian algebra $\jacobalg{\widetilde{\mu}_k(Q)}{\widetilde{\mu}_k(S)}$. Let us write $\widetilde{\mu}_k(M,\bfv):=(N(M,\bfv),\bfv')\in\decrep(\widetilde{\mu}_k(Q),\widetilde{\mu}_k(S),\bfd',\bfv')$ for $(M,\bfv)\in U$. 
The fact that the two diagrams above
commute tells us that we have computed explicitly the matrices that correspond to the $\C$-linear maps conforming $N(M,\bfv)$ for $(M,\bfv)\in U=U_{\Delta_{I_1,J_1},\Delta_{I_2,J_2},\Delta_{I_3,J_3},\Delta_{I_4,J_4}}$.

%\end{proof}

The existence of $(\bfd',\bfv')$, $U$ and $\widetilde{\nu}_U$ asserted in the statement of Theorem \ref{thm:premut-of-reps-is-a-densely-defined-reg-map} follows. The uniquenes of $(\bfd',\bfv')$  is obvious.
\end{proof}

\begin{remark} Notice that in the proof of Theorem \ref{thm:premut-of-reps-is-a-densely-defined-reg-map},
\begin{enumerate}
\item the open subset $V$ of $Z$ is fully covered by the open sets $U_{\Delta_{I_1,J_1},\Delta_{I_2,J_2},\Delta_{I_3,J_3},\Delta_{I_4,J_4}}$ as we let the minors $\Delta_{I_1,J_1}$, $\Delta_{I_2,J_2}$, $\Delta_{I_3,J_3}$ and $\Delta_{I_4,J_4}$ vary;
\item for $U$ and $U'$ arising from different choices of minors, the regular functions $\widetilde{\nu}_U$ and $\widetilde{\nu}_{U'}$ may as well differ on the intersection $U\cap U'$. However, by \cite[Proposition 10.9]{DWZ1} we still have $\widetilde{\nu}_U(\mathcal{M})\cong\widetilde{\nu}_{U'}(\mathcal{M})$ as decorated representations for $\mathcal{M}\in U\cap U'$.
\end{enumerate}
\end{remark}

Let $R$ and $\widetilde{A}$ be vertex and arrow spans of $\widetilde{\mu}_k(Q)$. Thus, $R$ (resp. $\widetilde{A}$) is the $\C$-vector subspace of $\RA{\widetilde{\mu}_k(Q)}$ spanned by $\{e_j\suchthat j$ is a vertex of $\widetilde{\mu}_k(Q)\}$ (resp. by the arrow set of $\widetilde{\mu}_k(Q)$). Fix an $R$-$R$-bimodule automorphism $\widetilde{A}\rightarrow\widetilde{A}$ such that the induced $\C$-algebra automorphism $\Psi:\RA{\widetilde{\mu}_k(Q)}\rightarrow\RA{\widetilde{\mu}_k(Q)}$ (called \emph{change of arrows} in Derksen-Weyman-Zelevinsky's nomenclature) satisfies
$$
\Psi(\widetilde{\mu}_k(S))=\sum_{\ell=1}^{t}\delta_\ell \varepsilon_\ell+W
$$
for some $t\in\Z_{\geq 0}$ (determined by $\widetilde{\mu}_k(S)$), $2t$ distinct arrows $\delta_1,\varepsilon_1,\ldots,\delta_t,\varepsilon_t$ of $\widetilde{\mu}_k(Q)$ such that $\delta_1\varepsilon_1,\ldots,\delta_t\varepsilon_t$ are 2-cycles, and a potential $W\in\RA{\widetilde{\mu}_k(Q)}$ involving only cycles of length at least~3, cf. \cite[Proposition 4.4 and Equation (4.6)]{DWZ1}. Let $\Phi:\RA{\widetilde{\mu}_k(Q)}\rightarrow\RA{\widetilde{\mu}_k(Q)}$ be the $\C$-algebra automorphism \cite[Equation (4.7)]{DWZ1} that, starting with $\Psi(\widetilde{\mu}_k(S))$, Derksen-Weyman-Zelevinsky produce in their proof of \cite[Lemmas 4.7 and 4.8]{DWZ1}. Thus, $\Phi$ is defined through a limit process fully determined by $\Psi(\widetilde{\mu}_k(S))$. This automorphism $\Phi$ acts as the identity on every arrow of $\widetilde{\mu}_k(Q)$ not belonging to the set $\{\delta_1,\varepsilon_1,\ldots,\delta_t,\varepsilon_t\}$, and satisfies
$$
\Phi(\Psi(\widetilde{\mu}_k(S)))=\sum_{\ell=1}^{t}\delta_\ell \varepsilon_\ell+\mu_k(S)
$$
for a potential $\mu_k(S)\in\RA{\widetilde{\mu}_k(Q)}$ involving only cycles of length at least 3, and with the additional property that none of the arrows $\delta_1,\varepsilon_1,\ldots,\delta_t,\varepsilon_t$ appears in any term of $\mu_k(S)$. Letting $Q'$ be the quiver obtained from $\widetilde{\mu}_k(Q)$ by deleting the arrows $\delta_1,\varepsilon_1,\ldots,\delta_t,\varepsilon_t$, Derksen-Weyman-Zelevinsky call the QP $\mu_k(Q,S):=(Q',\mu_k(S))$ the \emph{QP-mutation} of $(Q,S)$ with respect to $k$, cf. \cite[Definition 5.5]{DWZ1}. However, noting that it is perfectly possible that $Q'$ have 2-cycles, they avoid referring to $Q'$ as $\mu_k(Q)$, cf. \cite[the paragraph before Definition 7.2]{DWZ1}.

\begin{remark} The algebra automorphism $\Phi\circ\Psi$ is by no means the only one that puts $\widetilde{\mu}_k(S)$ in standard form. This is one of the reasons why on the next result we cannot claim any sort of uniqueness of $\nu_U$. On the other hand, taking the specific $\Psi$ and $\Phi$ defined above allows us to produce a well-defined map $\nu_U$. That is, lack of uniqueness does not mean impossibility to produce at least one well-defined map.
\end{remark}

\begin{thm}\label{thm:premut-&-mut-of-reps-are-locally-regular} Fix a decorated dimension vector $(\bfd,\bfv)\in\ZZ^n_{\geq 0}\times\ZZ^n_{\geq 0}$ and a vertex $k\in[n]$ satisfying \eqref{eq:no-2-cycles-incident-to-k}. For each irreducible component $Z$ of the affine variety $\decrep(\jacobalg{Q}{S},\bfd,\bfv)$ there exist a decorated dimension vector $(\bfd',\bfv')\in\ZZ^{n}_{\geq 0}\times\ZZ^{n}_{\geq 0}$, a dense open subset $U\subseteq Z$ and a regular map $\nu_U:U\rightarrow \decrep(\mathcal{P}(\mu_k(Q,S)),\bfd',\bfv')$, such that for every $\mathcal{M}=(M,\bfv)\in U$ the decorated representation $\nu_U(\mathcal{M})$ is isomorphic to the decorated representation $\mu_k(\mathcal{M})$ of $\mu_k(Q,S)=(Q',\mu_k(S))$ defined by Derksen-Weyman-Zelevinsky in \cite[Equations (10.6)--(10.10) and (10.22)]{DWZ1}. The decorated dimension vector $(\bfd',\bfv')$ is uniquely determined by $k$ and $Z$, i.e., it is independent of the choice of $U$ and $\nu_U$.
\end{thm}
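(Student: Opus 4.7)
The plan is to extend Theorem~\ref{thm:premut-of-reps-is-a-densely-defined-reg-map} by post-composing the premutation map with the regular operations that model Derksen-Weyman-Zelevinsky's right-equivalence $\Phi\circ\Psi$ and the subsequent splitting-off of the trivial part. First I would invoke Theorem~\ref{thm:premut-of-reps-is-a-densely-defined-reg-map} to obtain a decorated dimension vector $(\bfd',\bfv')$ as in \eqref{eq:mutated-decorated-dim-vector}, a dense open subset $U_0\subseteq Z$, and a regular map $\widetilde{\nu}_{U_0}\colon U_0\to \decrep(\mathcal{P}(\widetilde{\mu}_k(Q,S)),\bfd',\bfv')$ whose value at $\mathcal{M}$ is isomorphic to the premutation $\widetilde{\mu}_k(\mathcal{M})$. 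The target QP $\widetilde{\mu}_k(Q,S)$ still carries the unreduced potential, so further work is required.

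Second, I would translate the change of arrows $\Psi$ and $\Phi$ into regular maps on representation varieties of dimension vector $\bfd'$. Since $\Psi$ is an $R$-$R$-bimodule automorphism of the arrow span of $\widetilde{\mu}_k(Q)$, it acts on matrix data by an invertible linear substitution, giving a biregular self-map of $\decrep(\widetilde{\mu}_k(Q),\bfd',\bfv')$ that identifies the subvarieties cut out by the Jacobian ideals of $\widetilde{\mu}_k(S)$ and $\Psi(\widetilde{\mu}_k(S))=\sum_{\ell=1}^t\delta_\ell\varepsilon_\ell+W$. Although $\Phi$ is defined through an infinite limit process, its action on any representation of bounded dimension factors through a finite truncation $\widetilde{\Lambda}_p$ (as in the arguments of Subsection~\ref{subsec:arbitrary-quotients} and \cite[Lemma 2.2]{CLS}), because every sufficiently long path annihilates representations of dimension vector $\bfd'$. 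Hence $\Phi$ induces a regular morphism onto $\decrep(\mathcal{P}(\widetilde{\mu}_k(Q),\sum_\ell\delta_\ell\varepsilon_\ell+\mu_k(S)),\bfd',\bfv')$ given by explicit polynomial formulas.

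Third, on this last variety I would extract the reduced part using the same minor-based linear-algebra technology as in the proof of Theorem~\ref{thm:premut-of-reps-is-a-densely-defined-reg-map}. The Jacobian relations $\partial_{\delta_\ell}(\sum\delta_m\varepsilon_m+\mu_k(S))=\varepsilon_\ell+\partial_{\delta_\ell}(\mu_k(S))$ and its counterpart for $\varepsilon_\ell$ force the $\delta_\ell$- and $\varepsilon_\ell$-matrices to be determined by polynomial expressions in the other arrows, so by Lemma~\ref{lemma:linear-algebra-regular-maps} one can, on a dense open subset $U\subseteq U_0$ cut out by the non-vanishing of suitable minors, choose polynomial bases that simultaneously split off the trivial summand $\mathcal{P}(Q_{\operatorname{triv}},\sum_\ell\delta_\ell\varepsilon_\ell)$ from every representation in the image. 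The leftover summand lives in $\decrep(\mathcal{P}(\mu_k(Q,S)),\bfd',\bfv')$, and the composite $\nu_U$ is regular. By Derksen-Weyman-Zelevinsky's construction in \cite[Equations (10.6)--(10.10) and (10.22)]{DWZ1} this composite sends $\mathcal{M}$ to a decorated representation isomorphic to $\mu_k(\mathcal{M})$. The uniqueness of $(\bfd',\bfv')$ then follows from the uniqueness in Theorem~\ref{thm:premut-of-reps-is-a-densely-defined-reg-map} together with the fact that right-equivalences and reductions give canonical decorated dimension vectors.

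The main obstacle will be step three, namely implementing the reduction via a regular map. One has to show that on a suitably chosen dense open subset one can coherently pick polynomial sections of the kernel/image filtration simultaneously for all the $\delta_\ell,\varepsilon_\ell$, while ensuring that the resulting decomposition is compatible with the Jacobian relations coming from $\mu_k(S)$. This is a direct but somewhat intricate generalization of the argument already carried out for the $\alpha_k,\beta_k,\gamma_k$ triangle in the proof of Theorem~\ref{thm:premut-of-reps-is-a-densely-defined-reg-map}, and it is the step where most of the bookkeeping with minors, permutation matrices, and tautological bases will concentrate.
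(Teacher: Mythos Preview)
Your proposal would ultimately arrive at the result, but it misses the key simplification that makes the paper's proof almost trivial, and your step three rests on a misreading of Derksen--Weyman--Zelevinsky's reduction.

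The paper observes that the reduced part $(N_{\operatorname{red}},\bfv')$ of a decorated representation $(N,\bfv')$ of $\widetilde{\mu}_k(Q,S)$ is defined on the \emph{same} underlying graded vector space: one simply lets each arrow $\alpha$ of $Q'$ act on $N_{\operatorname{red}}$ via the action of $\Psi^{-1}(\Phi^{-1}(\alpha))$ on $N$ (this is \cite[Definition~10.4]{DWZ1}). Now recall from the discussion preceding the theorem that $\Phi$ acts as the identity on every arrow of $\widetilde{\mu}_k(Q)$ not belonging to $\{\delta_1,\varepsilon_1,\ldots,\delta_t,\varepsilon_t\}$, and the arrows of $Q'$ are precisely those arrows. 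Hence $\Psi^{-1}(\Phi^{-1}(\alpha))=\Psi^{-1}(\alpha)$ for every $\alpha\in Q'_1$. Since $\Psi$ is a change of arrows, $\Psi^{-1}(\alpha)$ is a \emph{linear} combination of arrows of $\widetilde{\mu}_k(Q)$, so the matrix $(N_{\operatorname{red}})_\alpha$ is a fixed linear combination of the matrices $N_a$. The reduction map $(N,\bfv')\mapsto(N_{\operatorname{red}},\bfv')$ is therefore manifestly regular, and composing with $\widetilde{\nu}_{U}$ from Theorem~\ref{thm:premut-of-reps-is-a-densely-defined-reg-map} finishes the proof.

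Your step two, arguing that $\Phi$ induces a regular map via truncation, is unnecessary: $\Phi$ never enters the formula for the reduced part restricted to arrows of $Q'$. Your step three is based on a misunderstanding: DWZ's reduction does \emph{not} split off a direct summand of the representation (which would change the dimension vector), it merely restricts to the subquiver $Q'\subset\widetilde{\mu}_k(Q)$ after a linear change of arrow-coordinates. No minor-based basis selection is required, and the ``main obstacle'' you anticipate does not exist.
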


\begin{proof}
Any decorated representation $(N,\bfv')\in \decrep(\widetilde{\mu}_k(Q),\widetilde{\mu}_k(S),\bfd',\bfv')$ gives rise to a decorated representation $(N_{\operatorname{red}},\bfv')$ of $\mu_k(Q,S)=(Q',\mu_k(S))$ by setting $N_{\operatorname{red}}:=N$ as $\C$-vector space and defining the action of each arrow $\alpha$ of $Q'$ on $N_{\operatorname{red}}$ to be the $\C$-linear map given by the action of $\Psi^{-1}(\Phi^{-1}(\alpha))$ on $N$. Derksen-Weyman-Zelevinsky call $(N_{\operatorname{red}},\bfv')$ the \emph{reduced part} of $(N,\bfv')$, cf. \cite[Definition 10.4]{DWZ1}. Since $\Psi^{-1}(\Phi^{-1}(\alpha))=\Psi^{-1}(\alpha)$ for every arrow $\alpha$ of $Q'$, it is obvious that, for our fixed choice of $\Psi$ and $\Phi$, the function $\decrep(\widetilde{\mu}_k(Q),\widetilde{\mu}_k(S),\bfd',\bfv')\rightarrow\decrep(Q',\mu_k(S),\bfd',\bfv')$ given by $(N,\bfv')\mapsto(N_{\operatorname{red}},\bfv')$ is regular. The result is now a consequence of Theorem \ref{thm:premut-of-reps-is-a-densely-defined-reg-map}.
\end{proof}

\begin{lemma}\label{lemma:nu_U-sends-non-isomorphic-to-non-isomorphic} In the situation of Theorem \ref{thm:premut-&-mut-of-reps-are-locally-regular}, if $(M_1,\bfv),(M_2,\bfv)\in U$ are non-isomorphic, then $\nu_U(M_1,\bfv)$ and $\nu_U(M_2,\bfv)$ are non-isomorphic.
\end{lemma}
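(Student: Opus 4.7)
The plan is to reduce this to the involution property of Derksen-Weyman-Zelevinsky's mutation of decorated representations. By the construction of $\nu_U$ in Theorem \ref{thm:premut-&-mut-of-reps-are-locally-regular}, for every $\mathcal{M} = (M,\bfv)\in U$ we have an isomorphism of decorated representations
$$
\nu_U(\mathcal{M}) \;\cong\; \mu_k(\mathcal{M})
$$
in $\decrep(\jacobalg{\mu_k(Q)}{\mu_k(S)},\bfd',\bfv')$, where $\mu_k(\mathcal{M})$ is the DWZ mutation defined in \cite[Equations (10.6)--(10.10) and (10.22)]{DWZ1}. Hence, arguing by contrapositive, it suffices to prove that if $\mu_k(M_1,\bfv) \cong \mu_k(M_2,\bfv)$ as decorated representations of $\mu_k(Q,S)$, then $(M_1,\bfv) \cong (M_2,\bfv)$ as decorated representations of $(Q,S)$.

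First I would invoke the involutivity of DWZ mutation at the level of decorated representations, \cite[Theorem 5.7]{DWZ1} combined with \cite[Theorem 10.13]{DWZ1}, which states that $\mu_k^{2}(\mathcal{N})$ is right-equivalent (and hence isomorphic as a decorated representation after the canonical identification of QPs) to $\mathcal{N}$ for every decorated representation $\mathcal{N}$, provided $k$ is not incident to an oriented $2$-cycle, which is precisely assumption~\eqref{eq:no-2-cycles-incident-to-k}. This result requires neither Jacobi-finiteness nor non-degeneracy, only the $2$-acyclicity condition at the vertex $k$, so it applies in our setting. Applying $\mu_k$ to both sides of $\mu_k(M_1,\bfv) \cong \mu_k(M_2,\bfv)$ and using this involution, together with the fact that $\mu_k$ of isomorphic decorated representations are isomorphic \cite[Proposition 10.9]{DWZ1}, we obtain $(M_1,\bfv) \cong \mu_k^2(M_1,\bfv) \cong \mu_k^2(M_2,\bfv) \cong (M_2,\bfv)$, as desired.

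The only real obstacle is bookkeeping: one must verify that the identification of $\mu_k^2(Q,S)$ with $(Q,S)$ used in \cite[Theorem 5.7]{DWZ1} is compatible with the transport of decorated representations, so that the chain of isomorphisms above genuinely takes place in $\decrep(\jacobalg{Q}{S},\bfd,\bfv)$. This is standard, and is precisely the content of the compatibility between \cite[Theorem 5.7]{DWZ1} and \cite[Theorem 10.13]{DWZ1}. With that in place, the lemma follows immediately.
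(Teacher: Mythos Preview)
Your proposal is correct and follows essentially the same approach as the paper: both arguments reduce the claim to the involutivity of Derksen--Weyman--Zelevinsky's mutation of decorated representations, together with its well-definedness on isomorphism classes. The paper's proof is simply more terse, citing \cite[Lemma~3.1]{Velasco} and \cite[Proposition~10.15 and the paragraph preceding Corollary~10.14]{DWZ1} as a black box for precisely the involution and compatibility statements that you unpack explicitly via \cite[Theorems~5.7 and~10.13, Proposition~10.9]{DWZ1}.
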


\begin{proof} This is a direct consequence of \cite[Lemma 3.1]{Velasco} and \cite[Proposition 10.15 and the paragraph that precedes Corollary 10.14]{DWZ1}. 
\end{proof}

\begin{thm}\label{thm:mutation-of-strongly-reduced-components} 
Suppose that $k\in[n]$ is a vertex satisfying \eqref{eq:no-2-cycles-incident-to-k}.
\begin{enumerate}
\item 
Take $Z\in\decirr(\mathcal{P}(Q,S))$, and let $(\bfd',\bfv')$, $U$ and $\nu_U:U\rightarrow \decrep(\mathcal{P}(Q,S),\bfd',\bfv')$ be a dimension vector, a dense open subset of $Z$ and a regular map, obtained by applying Theorem \ref{thm:premut-&-mut-of-reps-are-locally-regular} to $Z$. Let
$$
\nu_U'\df \GL_{\bfd'} \times U \to \decrep(\mathcal{P}',\bfd',\bfv')
$$
be the regular function 
defined by $(g,u) \mapsto g \cdot \nu_U(u)$.
If $Z$ is generically $\tau^-$-reduced, then
$$
\mu_k(Z) := \overline{\Image(\nu_U')}
$$
is
a generically $\tau^-$-reduced component of
$\decrep(\mathcal{P}',\bfd',\bfv')$.

\item
The map
$$
\decirrsr(\mathcal{P}) \to \decirrsr(\mathcal{P}') 
$$
defined by $Z \mapsto \mu_k(Z)$
is bijective, and we have $\mu_k(\mu_k(Z)) = Z$ for all 
$Z \in \decirrsr(\mathcal{P}(Q,S))$.
\end{enumerate}
\end{thm}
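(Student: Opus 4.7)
The plan splits into three parts: (a) show that $\mu_k(Z)$ is irreducible and closed in $\decrep(\mathcal{P}',\bfd',\bfv')$ and satisfies $c_{\mathcal{P}'}(\mu_k(Z)) = E_{\mathcal{P}'}(\mu_k(Z))$; (b) establish $\mu_k(\mu_k(Z)) = Z$ via Derksen--Weyman--Zelevinsky's involutivity of mutation on decorated representations; (c) combine these to obtain both the component statement and the bijectivity of $Z\mapsto\mu_k(Z)$.

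For (a), irreducibility and closedness of $\mu_k(Z)$ are immediate from the definition. The central step will be the dimension identity
\[
\dim(\mu_k(Z)) = \dim(Z) - \dim(\GL_{\bfd}\cdot\mathcal{M}) + \dim(\GL_{\bfd'}\cdot\nu_U(\mathcal{M})),
\]
valid for $\mathcal{M}$ generic in $U$, which I plan to prove by a fiber computation for $\nu_U'$: over a generic image point $\mathcal{N} = g_0\cdot\nu_U(\mathcal{M})$, the projection to $U$ of the fiber should equal $U\cap (\GL_{\bfd}\cdot\mathcal{M})$, using Lemma~\ref{lemma:nu_U-sends-non-isomorphic-to-non-isomorphic} (non-isomorphic goes to non-isomorphic) together with the fact that DWZ mutation is compatible with isomorphism, while the remaining direction contributes a coset of $\operatorname{Stab}_{\GL_{\bfd'}}(\mathcal{N})$. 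I would then choose $\mathcal{M}$ to be simultaneously generic in $Z$ and such that $\nu_U(\mathcal{M})$ lies in the dense open of $\mu_k(Z)$ where $E_{\mathcal{P}'}$ and the orbit dimension attain their generic values, which is possible because $\nu_U'$ is dominant and $U$ is irreducible. Combining the dimension identity with generic $\tau^-$-reducedness of $Z$, the upper semicontinuity of $E_{\mathcal{P}'}$ (Theorem~\ref{thm:E-invariant-is-upper-semicontinuous}), and the mutation invariance $E_{\mathcal{P}}(\mathcal{M}) = E_{\mathcal{P}'}(\nu_U(\mathcal{M}))$ of \cite[Section~7]{DWZ2} should then force $c_{\mathcal{P}'}(\mu_k(Z)) = E_{\mathcal{P}'}(\mu_k(Z)) = E_{\mathcal{P}}(Z)$ via a short chain of inequalities.

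For (b) and (c), the plan is to apply Theorem~\ref{thm:premut-&-mut-of-reps-are-locally-regular} to $(\mu_k(Q),\mu_k(S))$ at the vertex $k$ (which is not incident to any $2$-cycle of $\mu_k(Q)$ by standard properties of QP-reduction), producing a mutation operation on irreducible closed subsets of $\decrep(\mathcal{P}')$. Applied to $\mu_k(Z)$: for generic $\mathcal{M}$ as above, the pointwise involutivity $\mu_k^2\cong \operatorname{id}$ of \cite{DWZ1} will give $\mu_k(\nu_U(\mathcal{M}))\cong \mathcal{M}$, so passing to $\GL_{\bfd}$-orbit closures should yield $\mu_k(\mu_k(Z))\supseteq Z$, and the dimension identity applied twice will force equality. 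Finally, if $\mu_k(Z)\subseteq Z'$ for a component $Z'$ of $\decrep(\mathcal{P}',\bfd',\bfv')$, the same argument would give $\mu_k(Z') = Z$, hence $Z' = \mu_k(\mu_k(Z')) = \mu_k(Z)$; this shows $\mu_k(Z)$ is a full component, and bijectivity with $\mu_k^{-1}=\mu_k$ follows at once.

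The hard part will be the fiber computation in~(a): Lemma~\ref{lemma:nu_U-sends-non-isomorphic-to-non-isomorphic} controls only isomorphism classes of individual decorated representations, whereas the identification of fibers must handle whole $\GL$-orbits simultaneously. I expect this to rest on carefully exploiting that $\nu_U(g\cdot\mathcal{M})$ is isomorphic (though in general not equal) to $g'\cdot\nu_U(\mathcal{M})$ for some $g'\in \GL_{\bfd'}$ depending on $g$, an ``approximate $\GL$-equivariance'' arising from the way $\nu_U$ is constructed in the proof of Theorem~\ref{thm:premut-&-mut-of-reps-are-locally-regular}. A subsidiary technicality in the maximality part of~(c) will be ensuring that the dense open of $Z'$ where the inverse mutation regular map is defined meets $\mu_k(Z)$ non-trivially, which reduces to comparing generic ranks of the $\alpha$-$\beta$-$\gamma$ matrices on $Z'$ and on its closed subset $\mu_k(Z)$.
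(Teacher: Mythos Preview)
Your plan for the dimension identity in~(a) is essentially the paper's argument; they organize it via the incidence variety $W=\{(\mathcal{M},\mathcal{N})\in U'\times\decrep(\mathcal{P}',\bfd',\bfv'):\mathcal{N}\cong\nu_U(\mathcal{M})\}$ with its two projections, each of whose fibers is a single $\GL$-orbit. This makes both fiber dimensions transparent and sidesteps the ``approximate $\GL$-equivariance'' issue you flag as the hard part. The chain of (in)equalities closing to $c_{\mathcal{P}'}(\mu_k(Z))=E_{\mathcal{P}'}(\mu_k(Z))=E_{\mathcal{P}}(Z)$ is also shared.

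The genuine gap is your route to componenthood in~(c). The claim ``the same argument would give $\mu_k(Z')=Z$'' needs control over where \emph{generic} points of $Z'$ go under mutation, whereas you only control generic points of the closed subset $\mu_k(Z)\subseteq Z'$. Even granting your ``subsidiary technicality'' (which is not minor: lower semicontinuity of rank gives only an inequality between the generic $\alpha$-$\beta$-$\gamma$ ranks on $Z'$ and on $\mu_k(Z)$), one can indeed push through to $\mu_k(Z')=Z$; but then concluding $Z'=\mu_k(\mu_k(Z'))=\mu_k(Z)$ is circular, since the involutivity $\mu_k(\mu_k(Z'))=Z'$ for an arbitrary component $Z'$ is precisely the kind of statement you are trying to establish, and the two applications of the dimension identity do not by themselves force $\dim Z'=\dim\mu_k(Z)$.

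The paper bypasses this by inserting $\min_{\mu_k(Z)}\dim\Ext^1_{\mathcal{P}'}(-,-)$ into the chain, between $c_{\mathcal{P}'}$ and $E_{\mathcal{P}'}$, via the Auslander--Reiten formula. The resulting equalities then produce a point $\mathcal{N}\in\mu_k(Z)$ with $\dim\Ext^1_{\mathcal{P}'}(\mathcal{N},\mathcal{N})=c_{\mathcal{P}'}(\mu_k(Z))$ and $\dim(\GL_{\bfd'}\cdot\mathcal{N})$ at most the generic orbit dimension on $\mu_k(Z)$. Voigt's lemma at $\mathcal{N}$, applied inside the ambient component $Z'$, gives $\dim Z'\le\dim(\GL_{\bfd'}\cdot\mathcal{N})+\dim\Ext^1_{\mathcal{P}'}(\mathcal{N},\mathcal{N})=\dim\mu_k(Z)$, forcing $Z'=\mu_k(Z)$. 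Once componenthood is secured this way, your argument in~(b) yields part~(2) cleanly.
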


\begin{proof}
Suppose that $Z$ is a generically $\tau^-$-reduced irreducible component of $\decrep(Q,S,\bfd,\bfv)$, take a regular map $\nu_U:U\rightarrow \decrep(Q',\mu_k(S),\bfd',\bfv')$ as in Theorem \ref{thm:premut-&-mut-of-reps-are-locally-regular}, and write $\nu_U(M,\bfv)=(N(M,\bfv),\bfv')$ with $N(M,\bfv)\in \rep(Q',\mu_k(S),\bfd')$ for $(M,\bfv)\in U$. By upper semicontinuity, $U$ has a dense open subset $U'$ such that each of the numbers
\begin{eqnarray*}
e:=\dim_{\C}(\End_{\jacobalg{Q}{S}}(M)) & \text{and} & e':=\dim_{\C}(\End_{\jacobalg{Q'}{\mu_k(S)}}(N(M,\bfv)))
\end{eqnarray*}
is constant when we let $(M,\bfv)$ vary inside $U'$. Set
\begin{eqnarray*}
W&:=&\{((M,\bfv),(N,\bfv'))\in U'\times\decrep(Q',\mu_k(S),\bfd',\bfv')\suchthat \nu_U(M,\bfv) \ \text{and} \ (N,\bfv')\\ 
&& \phantom{ \{ } \text{are isomorphic as decorated representations of} \ (Q',\mu_k(S))\}.
\end{eqnarray*}
Being the image of the morphism $U'\times \GL_{\bfd'}(\C)\rightarrow U'\times\decrep(Q',\mu_k(S),\bfd',\bfv')$ given by the rule $((M,\bfv),g)\mapsto ((M,\bfv),g\cdot \nu_U(M,\bfv))$, the set $W$ is constructible in $U'\times\decrep(Q',\mu_k(S),\bfd',\bfv')$ and irreducible. Let
$$
\xymatrix{ & Z\times\decrep(Q',\mu_k(S),\bfd',\bfv') \ar[dl]_{\pi_1} \ar[dr]^{\pi_2} &  \\
Z' & & \decrep(Q',\mu_k(S),\bfd',\bfv')}
$$
be the projections, and set
\begin{eqnarray*}
p_1:=\pi_1|_{W}:W\rightarrow U' &\text{and}&  p_2:=\pi_2|_{W}:W\rightarrow  \decrep(Q',\mu_k(S),\bfd',\bfv').
\end{eqnarray*}

Set $Y:=p_2(W)\subseteq \decrep(Q',\mu_k(S),\bfd',\bfv')$. Being the image of an irreducible set under a continuous function, $Y$ is irreducible. Thus, $Y$ is contained in an irreducible component $Z'$ of $\decrep(Q',\mu_k(S),\bfd',\bfv')$. We claim that $Z'$ is a generically $\tau^-$ reduced irreducible component of $\decrep(Q',\mu_k(S),\bfd',\bfv')$ and that  $Y$ is dense in $Z'$.

For each $(M,\bfv)\in U'$ the fiber $p_1^{-1}(M,\bfv)\subseteq W$ is canonically isomorphic to the $\GL_{\bfd'}(\C)$-orbit of $\nu_U(M,\bfv)$ inside $\decrep(Q',\mu_k(S),\bfd',\bfv')$. Hence, $p_1^{-1}(M,\bfv)$ is irreducible and 
\begin{equation}\label{eq:mut-of-components-dim-fiber-of-p1}
\dim(p_1^{-1}(M,\bfv))=\dim(\GL_{\bfd'}(\C))-e'.
\end{equation}
Thus, all the fibers of $p_1$ are irreducible of the same dimension $\dim(\GL_{\bfd'}(\C))-e'$.
On the other hand, since $\nu_U:U\rightarrow \decrep(Q',\mu_k(S),\bfd',\bfv')$ sends non-isomorphic decorated representations to non-isomorphic ones (Lemma \ref{lemma:nu_U-sends-non-isomorphic-to-non-isomorphic}), for $((M,\bfv),(N,\bfv'))\in W$ we have
\begin{equation}\label{eq:mut-of-components-fiber-of-p2}
p_2^{-1}(N,\bfv')=(\GL_{\bfd}(\C)\cdot(M,\bfv),(N,\bfv')),
\end{equation}
irreducible of dimension $\dim(\GL_{d}(\C))-e$.

Let $\overline{W}$ be the closure of $W$ in $Z\times\decrep(Q',\mu_k(S),\bfd',\bfv')$. We have a commutative diagram
$$
\xymatrix{
W \ar[d]_{p_1} \ar[r] & \overline{W} \ar[d]^{\pi_1|_{\overline{W}}}\\
U' \ar[r] & Z }
$$
whose horizontal arrows are inclusions. Noticing that $\pi_1|_{\overline{W}}:\overline{W}\rightarrow Z$ is a dominant morphism of affine varieties, from \eqref{eq:mut-of-components-dim-fiber-of-p1} and, say, \cite[Proposition 6.6]{Bump-AG-book}, we deduce that 
\begin{eqnarray}\label{eq:mutation-of-components-equality-first-domination}
\dim(Z)+\dim(\GL_{\bfd'}(\C))-e' & = &\dim(\overline{W}).
\end{eqnarray}
Furthermore, we have $\pi_2(\overline{W})\subseteq \overline{Y} \subseteq Z'$, and $\overline{W}$ certainly dominates $\overline{Y}$ through $\pi_2$. Therefore,
\begin{eqnarray}\label{eq:mutation-of-components-equality-second-domination}
\dim(\overline{W}) & = &\dim(\overline{Y})+\dim(\GL_{\bfd}(\C))-e
\end{eqnarray}
by \eqref{eq:mut-of-components-fiber-of-p2} and \cite[Proposition 6.6]{Bump-AG-book}. Combining \eqref{eq:mutation-of-components-equality-first-domination} and \eqref{eq:mutation-of-components-equality-second-domination}, we see that
\begin{eqnarray}\label{eq:generic-codim-preserved}
\dim(Z)-(\dim(\GL_{\bfd}(\C))-e) & = &  \dim(\overline{Y})-(\dim(\GL_{\bfd'}(\C))-e').
\end{eqnarray}

If we apply Voigt's Lemma and Auslander-Reiten's formulas inside $\overline{Y}$, we obtain
\begin{eqnarray}
\nonumber
c_{\mathcal{P}(\mu_k(Q,S))}(\overline{Y}) &:=& \min\{\dim(\overline{Y})-\dim(\GL_{\bfd'}(\C)\cdot(N,\bfv'))\suchthat(N,\bfv')\in\overline{Y}\}\\
\label{eq:c-leq-e-leq-E-for-closed-set}
& \leq & \min\{\dim_{\C}(\Ext^1_{\mathcal{P}(\mu_k(Q,S))}(N,N))\suchthat (N,\bfv')\in\overline{Y}\}\\
\nonumber
& \leq & \min\{E_{\mathcal{P}(\mu_k(Q,S))}(N,\bfv')\suchthat(N,\bfv')\in\overline{Y}\}.
\end{eqnarray}
By the upper semicontinuity of the $E$-invariant (Theorem \ref{thm:E-invariant-is-upper-semicontinuous}, see also Remark \ref{rem:E(Z)<=E(M)-for-M-in-Z}),
\begin{equation}\label{eq:E-does-not-change-under-topological-closure}
\min\{E_{\mathcal{P}(\mu_k(Q,S))}(N,\bfv')\suchthat(N,\bfv')\in\overline{Y}\}=\min\{E_{\mathcal{P}(\mu_k(Q,S))}(N,\bfv')\suchthat(N,\bfv')\in Y\},
\end{equation}
whereas by the upper semicontinuity of $\dim_{\C}(\Hom_{\mathcal{P}(\mu_k(Q,S))}(-,-))$, the way the number $e'$ was chosen, the upper semicontinuity of $\Ext^1_{\mathcal{P}(\mu_k(Q,S))}(-,-))$ and Remark \ref{rem:E(Z)<=E(M)-for-M-in-Z},
\begin{eqnarray}\label{eq:c-and-ext1(-,-)-do-not-change-under-topological-closure}
c_{\mathcal{P}(\mu_k(Q,S))}(\overline{Y})&=&\dim(\overline{Y})-(\dim(\GL_{\bfd'}(\C))-e')\\
\nonumber
\min\{\dim_{\C}(\Ext^1_{\mathcal{P}(\mu_k(Q,S))}(N,N))\suchthat (N,\bfv')\in\overline{Y}\} &=& \min\{\dim_{\C}(\Ext^1_{\mathcal{P}(\mu_k(Q,S))}(N,N))\suchthat (N,\bfv')\in Y\}
\end{eqnarray}

Combining \eqref{eq:generic-codim-preserved}, \eqref{eq:c-leq-e-leq-E-for-closed-set}, \eqref{eq:E-does-not-change-under-topological-closure} and \eqref{eq:c-and-ext1(-,-)-do-not-change-under-topological-closure}, and recalling that $Z$ is generically $\tau$-reduced, that the $E$-invariant is invariant under mutations of decorated representations by \cite[Theorem 7.1]{DWZ2}, that $U'$ is open and dense in $Z$,  and that the $E$-invariant is upper semicontinuous in $Z$, we deduce
\begin{eqnarray*}
E_{\mathcal{P}(Q,S)}(Z) &=& \dim(Z)-(\dim(\GL_{\bfd}(\C))-e)\\
& = &  \dim(\overline{Y})-(\dim(\GL_{\bfd'}(\C))-e')\\
&=& c_{\mathcal{P}(\mu_k(Q,S))}(\overline{Y})\\
& \leq & \min\{\dim_{\C}(\Ext^1_{\mathcal{P}(\mu_k(Q,S))}(N,N))\suchthat(N,\bfv')\in Y\}\\
&\leq & \min\{E_{\mathcal{P}(\mu_k(Q,S))}(N,\bfv')\suchthat(N,\bfv')\in Y\}\\
&=& \min\{E_{\mathcal{P}(Q,S)}(M,\bfv)\suchthat(M,\bfv)\in U'\}\\
&=& E_{\mathcal{P}(Q,S)}(Z),
\end{eqnarray*}
which means that we have equality all along.

Take a decorated representation $(N,\bfv')\in Y$ such that $\dim(\overline{Y})-(\dim(\GL_{\bfd'}(\C))-e')=\dim_{\C}(\Ext^1_{\mathcal{P}(\mu_k(Q,S))}(N,N))$. Combining this equality with an application of Voigt's Lemma to $(N,\bfv')$ inside $Z'$ rather than inside $\overline{Y}$, we obtain $\dim(Z')\leq \dim(\overline{Y})$. Hence, $Z'=\overline{Y}$.
This proves that $Y$ is dense in $Z'$ and that $Z'$ is generically $\tau^-$-reduced.
\end{proof}

Putting Theorems \ref{thm:premut-&-mut-of-reps-are-locally-regular} and \ref{thm:mutation-of-strongly-reduced-components} together we obtain Theorem \ref{thm:premut-&-mut-of-reps-are-locally-regular-intro}. Corollaries \ref{cor:1.8b} and \ref{cor:generic-stays-generic-after-DWZ-mutation-intro}  are easy consequences. In the sake of clarity we provide some details. 

Let $\widetilde{\Lambda}$ and $\Lambda$ be $\C$-algebras as in Subsection \ref{subsec:assumptions}. To define the Caldero-Chapoton functions of the decorated representations of $\Lambda$ (supported in $[n]$), one first fixes the underlying field $\calF$ of rational functions in $n+m$ indeterminates with coefficients in $\Q$, and a tuple $\mathbf{x}=(x_1,\ldots,x_n,x_{n+1},\ldots,x_{n+m})$ of elements of $\calF$ algebraically independent over $\Q$, then defines $CC_{\widetilde{\Lambda}}(\mathcal{M})$ as a Laurent polynomial in $\mathbf{x}$ for all $\mathcal{M}\in\decrep(\Lambda)$. A different choice of initial algebraically independent tuple is of course allowed, and yields an isomorphic realization of the Caldero-Chapoton algebra $\calCC_{\widetilde{\Lambda}}(\Lambda)$.

Suppose now that $\widetilde{\Lambda}=\mathcal{P}(\widetilde{Q},\widetilde{S})$ and $\Lambda=\mathcal{P}(Q,S)$ for quivers with potential $(\widetilde{Q},\widetilde{S})$ and $(Q,S)$ again as in Subsection \ref{subsec:assumptions}. Fix a vertex $k\in[n]$, suppose that $\widetilde{Q}$ does not have 2-cycles incident to $k$, and write $\widetilde{\Lambda}':=\mathcal{P}(\mu_k(\widetilde{Q},\widetilde{S}))$ and $\Lambda':=\mathcal{P}(\mu_k(Q,S))$. For the Caldero-Chapoton algebra $\calCC_{\widetilde{\Lambda}'}(\Lambda')$, instead of taking the same tuple $\mathbf{x}$ we took for $\calCC_{\widetilde{\Lambda}}(\Lambda)$, we take the tuple $\mu_k^{\widetilde{Q}}(\mathbf{x}):=(x_1,\ldots,x_{k-1},u_k,x_{k+1},\ldots,x_n,x_{n+1},\ldots,x_{n+m})$, where
\begin{eqnarray}\label{eq:exchange-relation-1}
u_k&:=&\frac{\prod_{j\to k}x_j+\prod_{k\to i}x_i}{x_k},
\end{eqnarray}
with the products in the numerator running over arrows of $\widetilde{Q}$ (and not only over arrows of $Q$). Thus, while $\calCC_{\widetilde{\Lambda}}(\Lambda)$ is defined as an $R$-subalgebra of $R[\mathbf{x}^{\pm1}]:=R[x_1^{\pm1},\ldots,x_{k-1}^{\pm1},x_k^{\pm1},x_{k+1}^{\pm1},\ldots,x_n^{\pm1}]$, the ring $\calCC_{\widetilde{\Lambda}'}(\Lambda')$ is defined as an $R$-subalgebra of $R[\mu_k^{\widetilde{Q}}(\mathbf{x})^{\pm1}]:=R[x_1^{\pm1},\ldots,x_{k-1}^{\pm1},u_k^{\pm1},x_{k+1}^{\pm1}\ldots,x_n^{\pm1}]$. The relation between $\calCC_{\widetilde{\Lambda}}(\Lambda)$ and $\calCC_{\widetilde{\Lambda}'}(\Lambda')$ is not trivial. In particular, it is not obvious from the definitions whether or not $\calCC_{\widetilde{\Lambda}}(\Lambda)=\calCC_{\widetilde{\Lambda}'}(\Lambda')$ as subsets of $\calF$. The equality is actually a consequence of a very deep theorem of Derksen-Weyman-Zelevinsky that we describe briefly in the next paragraph.

Let $\mathcal{M}$ be a decorated representation of $\Lambda$ supported in $[n]$, and let $\mu_k(\mathcal{M})$ be the mutation defined by Derksen-Weyman-Zelevinsky in \cite[Equations (10.6)--(10.10) and (10.22)]{DWZ1}, which is a decorated representation of $\Lambda'$ also supported in $[n]$. It follows from Derksen-Weyman-Zelevinsky's \cite[Lemma 5.2]{DWZ2} that
\begin{equation}\label{eq:DWZ's-key-lemma}
\text{$CC_{\widetilde{\Lambda}}(\mathcal{M})$ and $CC_{\widetilde{\Lambda}'}(\mu_k(\mathcal{M}))$ are the same element of $\calF$.}
\end{equation}

Corollaries \ref{cor:1.8b} and \ref{cor:generic-stays-generic-after-DWZ-mutation-intro} follow easily from the following more general result.

\begin{coro}\label{coro:generic-stays-generic-after-DWZ-mutation} Let $(\widetilde{Q},\widetilde{S})$ and $(Q,S)$ be quivers with potential as in Subsection \ref{subsec:assumptions}. Fix a vertex $k\in[n]$, suppose that $\widetilde{Q}$ does not have 2-cycles incident to $k$, and write $\widetilde{\Lambda}:=\mathcal{P}(\widetilde{Q},\widetilde{S})$, $\Lambda:=\mathcal{P}(Q,S)$, $\widetilde{\Lambda}':=\mathcal{P}(\mu_k(\widetilde{Q},\widetilde{S}))$ and $\Lambda':=\mathcal{P}(\mu_k(Q,S))$. For any generically $\tau^-$-reduced irreducible component $Z\in\decirrsr(\Lambda)$ and any $\mathcal{M}\in Z$ such that
\begin{eqnarray*}
CC_{\widetilde{\Lambda}}(\mathcal{M})&=&CC_{\widetilde{\Lambda}}(Z)\in R[\mathbf{x}^{\pm1}],\\
\text{we have} \hspace{1cm} CC_{\widetilde{\Lambda}'}(\mu_k(\mathcal{M}))&=&CC_{\widetilde{\Lambda}'}(\mu_k(Z))\in R[\mu_k^{\widetilde{Q}}(\mathbf{x})^{\pm1}].
\end{eqnarray*}
Hence $CC_{\widetilde{\Lambda}}(Z)$ and $CC_{\widetilde{\Lambda}'}(\mu_k(Z))$ are the same element of the rational function field $\calF$. Therefore,
\begin{eqnarray*}
\mathcal{B}_{(\widetilde{Q},\widetilde{S})}(Q,S) &=& \mathcal{B}_{\mu_k(\widetilde{Q},\widetilde{S})}(\mu_k(Q,S)).
\end{eqnarray*}
\end{coro}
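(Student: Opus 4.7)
The plan is to reduce the corollary to a density argument using the ingredients already assembled. By Theorems \ref{thm:premut-&-mut-of-reps-are-locally-regular} and \ref{thm:mutation-of-strongly-reduced-components}, applied to the generically $\tau^{-}$-reduced component $Z\in\decirrsr(\Lambda)$, there exist a decorated dimension vector $(\bfd',\bfv')$, a dense open $U\subseteq Z$ and a regular map $\nu_{U}\colon U\to\decrep(\Lambda',\bfd',\bfv')$ such that $\nu_{U}(\mathcal{N})\cong\mu_{k}(\mathcal{N})$ for every $\mathcal{N}\in U$, and such that $\mu_{k}(Z)=\overline{\Image(\nu_{U}')}$ is a generically $\tau^{-}$-reduced component of $\decrep(\Lambda',\bfd',\bfv')$, where $\nu_{U}'(g,\mathcal{N}):=g\cdot\nu_{U}(\mathcal{N})$.

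Next I would combine the density of three open sets inside $Z$. By Proposition \ref{prop:every-irr-Z-has-subset-where-g-vects-CC-map-E-inv-and-codim-are-constant}, there is a dense open $W\subseteq Z$ on which $CC_{\widetilde{\Lambda}}(-)$ takes the constant value $CC_{\widetilde{\Lambda}}(Z)$, and likewise a dense open $V\subseteq\mu_{k}(Z)$ on which $CC_{\widetilde{\Lambda}'}(-)$ takes the constant value $CC_{\widetilde{\Lambda}'}(\mu_{k}(Z))$. Since $\nu_{U}'(\GL_{\bfd'}\times(U\cap W))$ is a constructible subset of $\mu_{k}(Z)$ whose Zariski closure is $\mu_{k}(Z)$ itself, it contains a non-empty open subset of $\mu_{k}(Z)$, and therefore meets $V$. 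Pick $\mathcal{N}'\in V\cap\nu_{U}'(\GL_{\bfd'}\times(U\cap W))$, written as $\mathcal{N}'=g\cdot\nu_{U}(\mathcal{N})$ with $\mathcal{N}\in U\cap W$. Since the Caldero-Chapoton function is $\GL_{\bfd'}$-invariant and is preserved by isomorphism,
$$
CC_{\widetilde{\Lambda}'}(\mathcal{N}')=CC_{\widetilde{\Lambda}'}(\nu_{U}(\mathcal{N}))=CC_{\widetilde{\Lambda}'}(\mu_{k}(\mathcal{N})).
$$

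Now I would invoke the key identity \eqref{eq:DWZ's-key-lemma} of Derksen-Weyman-Zelevinsky twice, both for $\mathcal{N}$ and for the original $\mathcal{M}$, viewed inside the common rational function field $\calF$. On one hand, $\mathcal{N}'\in V$ gives $CC_{\widetilde{\Lambda}'}(\mu_{k}(Z))=CC_{\widetilde{\Lambda}'}(\mathcal{N}')=CC_{\widetilde{\Lambda}'}(\mu_{k}(\mathcal{N}))=CC_{\widetilde{\Lambda}}(\mathcal{N})$ in $\calF$; on the other hand, $\mathcal{N}\in W$ and the hypothesis on $\mathcal{M}$ give $CC_{\widetilde{\Lambda}}(\mathcal{N})=CC_{\widetilde{\Lambda}}(Z)=CC_{\widetilde{\Lambda}}(\mathcal{M})=CC_{\widetilde{\Lambda}'}(\mu_{k}(\mathcal{M}))$ in $\calF$. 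Stringing these equalities together yields $CC_{\widetilde{\Lambda}'}(\mu_{k}(Z))=CC_{\widetilde{\Lambda}'}(\mu_{k}(\mathcal{M}))$, which is the first assertion. The final equality $\mathcal{B}_{(\widetilde{Q},\widetilde{S})}(Q,S)=\mathcal{B}_{\mu_{k}(\widetilde{Q},\widetilde{S})}(\mu_{k}(Q,S))$ then follows immediately because, by Theorem \ref{thm:mutation-of-strongly-reduced-components}, $Z\mapsto\mu_{k}(Z)$ is a bijection $\decirrsr(\Lambda)\to\decirrsr(\Lambda')$ and each pair $(Z,\mu_{k}(Z))$ has been shown to contribute the same element of $\calF$ to the two sets.

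The main potential obstacle is purely bookkeeping: one needs the intersection $V\cap\nu_{U}'(\GL_{\bfd'}\times(U\cap W))$ to be non-empty, which hinges on the fact that $\Image(\nu_{U}')$ is constructible and dense in $\mu_{k}(Z)$ (hence contains a dense open subset by Chevalley), so it meets the dense open $V$. Everything else is just a careful translation through the DWZ identification of $CC_{\widetilde{\Lambda}}$ and $CC_{\widetilde{\Lambda}'}$ as elements of the common field $\calF$, and the $\GL$-invariance and isomorphism-invariance of CC-functions. No new representation-theoretic input is required beyond the theorems proved earlier in the paper.
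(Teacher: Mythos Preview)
Your proposal is correct and follows essentially the same approach as the paper's own proof: both pick an auxiliary point $\mathcal{N}$ (the paper calls it $\mathcal{L}$) in the locus where the CC-function is generic on $Z$ and whose mutation lands in the locus where the CC-function is generic on $\mu_k(Z)$, then apply the Derksen--Weyman--Zelevinsky identity \eqref{eq:DWZ's-key-lemma} to both $\mathcal{N}$ and $\mathcal{M}$. The only cosmetic difference is that the paper pulls $V$ back to $\GL_{\bfd'}\times U$ and intersects there, whereas you push $U\cap W$ forward and intersect in $\mu_k(Z)$; note that your appeal to Chevalley is slightly more than needed, since density of $\nu_U'(\GL_{\bfd'}\times(U\cap W))$ in $\mu_k(Z)$ already guarantees it meets the non-empty open $V$.
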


\begin{proof}
Take a generically $\tau^-$-reduced irreducible component $Z\in\decirrsr(\Lambda)$ and any $\mathcal{M}\in Z$ such that $CC_{\widetilde{\Lambda}}(\mathcal{M})=CC_{\widetilde{\Lambda}}(Z)$. Let $(\bfd',\bfv')$, $U$, $\nu_U$ and $\nu_U'$ be as in the statement of Theorem \ref{thm:mutation-of-strongly-reduced-components}, so that $\mu_k(Z)=\overline{\Image(\nu_U')}$. Let $V$ be a non-empty open subset of $\mu_k(Z)$ such that $CC_{\widetilde{\Lambda}'}(\mathcal{N})=CC_{\widetilde{\Lambda}'}(\mu_k(Z))$ for every $\mathcal{N}\in V$. Then $V\cap\Image(\nu_U')\neq\varnothing$, so $\nu_U'^{-1}(V)$ is a non-empty open subset of  $\GL_{d'}(\C)\times U$. Hence, there exists $(g,\mathcal{L})\in \nu_U'^{-1}(V)$ such that $CC_{\widetilde{\Lambda}}(\mathcal{L})=CC_{\widetilde{\Lambda}}(Z)$. Then:
\begin{eqnarray*}
CC_{\widetilde{\Lambda}'}(\mu_k(\mathcal{L})) & = & CC_{\widetilde{\Lambda}'}(\nu_k(\mathcal{L}))\\
&=& CC_{\widetilde{\Lambda}'}(g\cdot\nu_k(\mathcal{L}))\\
&=&CC_{\widetilde{\Lambda}'}(\mu_k(Z)).
\end{eqnarray*}
Since $CC_{\widetilde{\Lambda}}(\mathcal{M})=CC_{\widetilde{\Lambda}}(Z)=CC_{\widetilde{\Lambda}}(\mathcal{L})$, the result follows by applying \eqref{eq:DWZ's-key-lemma} to $\mathcal{M}$ and to $\mathcal{L}$.
\end{proof}

As said in Subsection \ref{subsec:irred-comps}, the following corollary can be obtained from a direct combination of Theorems \ref{thm:criterion-for-lin-ind-of-CC-functions}, \ref{thm:different-s.r.i.c.-have-different-g-vectors} and Corollary~\ref{coro:generic-stays-generic-after-DWZ-mutation}.

\begin{coro}\label{coro:lin-ind-of-generic-basis-in-general} If $(\widetilde{Q},\widetilde{S})$ is non-degenerate and at least one matrix $B'$ in the mutation-equivalence class of $B$ satisfies $\Ker(B')\cap\ZZ_{\geq 0}^n$, then
the upper cluster algebra with geometric coefficients $\mathcal{A}(\widetilde{B})$ has $\mathcal{B}_{(\widetilde{Q},\widetilde{S})}(Q,S)$ as an $R$-linearly independent subset. 
\end{coro}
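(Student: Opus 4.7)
The strategy is to use the mutation invariance established in Corollary \ref{coro:generic-stays-generic-after-DWZ-mutation} to reduce the problem to a situation where the linear independence criterion of Theorem \ref{thm:criterion-for-lin-ind-of-CC-functions} applies directly, and then transport the conclusion back through the same sequence of mutations.

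\medskip

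\emph{Step 1: Reduction to a favorable seed.} By hypothesis there exists a finite sequence $(k_1,\ldots,k_\ell)$ of elements of $[n]$ such that the skew-symmetric matrix $B':=\mu_{k_\ell}\cdots\mu_{k_1}(B)$ satisfies $\Ker(B')\cap\ZZ^n_{\geq 0}=0$. Since $(\widetilde{Q},\widetilde{S})$ is non-degenerate, the iterated QP-mutation
$$
(\widetilde{Q}',\widetilde{S}'):=\mu_{k_\ell}\cdots\mu_{k_1}(\widetilde{Q},\widetilde{S})
$$
is defined at every step, and its restriction $(Q',S')$ to $[n]$ is again non-degenerate by \cite[Corollary 22]{LF1}. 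Write $\widetilde{\Lambda}':=\jacobalg{\widetilde{Q}'}{\widetilde{S}'}$ and $\Lambda':=\jacobalg{Q'}{S'}$.

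\medskip

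\emph{Step 2: Mutation invariance of the generic basis set.} Applying Corollary \ref{coro:generic-stays-generic-after-DWZ-mutation} successively at the vertices $k_1,k_2,\ldots,k_\ell$ (each of which, because of non-degeneracy, is not incident to any $2$-cycle of the relevant intermediate quiver), one obtains the chain of equalities
$$
\mathcal{B}_{(\widetilde{Q},\widetilde{S})}(Q,S)=\mathcal{B}_{\mu_{k_1}(\widetilde{Q},\widetilde{S})}(\mu_{k_1}(Q,S))=\cdots=\mathcal{B}_{(\widetilde{Q}',\widetilde{S}')}(Q',S')
$$
inside the common ambient field $\calF$. Moreover, the same corollary implies that every element of $\mathcal{B}_{(\widetilde{Q}',\widetilde{S}')}(Q',S')$ lies in the Laurent polynomial ring $R[\mathbf{x}'^{\pm1}]$ associated to the seed obtained after the sequence of mutations, hence in the upper cluster algebra $\mathcal{U}(\widetilde{B})\supseteq\mathcal{A}(\widetilde{B})$.

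\medskip

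\emph{Step 3: Apply the linear independence criterion at the favorable seed.} The set
$$
\mathscr{M}':=\{\mathcal{M}\in U_Z\suchthat Z\in\decirrsr(\Lambda')\},
$$
where, for each $Z$, $U_Z\subseteq Z$ is a dense open subset on which the $\g$-vector $\g_{-}^{\Lambda'}$ and the Caldero-Chapoton function $CC_{\widetilde{\Lambda}'}(-)$ are constant (pick one representative per component), indexes $\mathcal{B}_{(\widetilde{Q}',\widetilde{S}')}(Q',S')$. By Theorem \ref{thm:different-s.r.i.c.-have-different-g-vectors}, the map $Z\mapsto \g_{Z}^{\Lambda'}$ separates distinct components of $\decirrsr(\Lambda')$, so any two distinct elements of $\mathscr{M}'$ have distinct $\g$-vectors. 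Since $\Ker(B')\cap\ZZ^n_{\geq 0}=0$ by construction, Theorem \ref{thm:criterion-for-lin-ind-of-CC-functions} applied to $(\widetilde{Q}',\widetilde{S}')$ yields that $CC_{\widetilde{\Lambda}'}(\mathscr{M}')=\mathcal{B}_{(\widetilde{Q}',\widetilde{S}')}(Q',S')$ is $R$-linearly independent in $R[\mathbf{x}'^{\pm1}]\subseteq\calF$.

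\medskip

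\emph{Step 4: Transport back.} Combining Steps 2 and 3, the set $\mathcal{B}_{(\widetilde{Q},\widetilde{S})}(Q,S)$ coincides with $\mathcal{B}_{(\widetilde{Q}',\widetilde{S}')}(Q',S')$ as a subset of $\calF$, and is therefore $R$-linearly independent in $\calF$; in particular it is $R$-linearly independent inside $\mathcal{A}(\widetilde{B})\subseteq\mathcal{U}(\widetilde{B})\subseteq\calF$.

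\medskip

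The only delicate point in this plan is the bookkeeping in Step 2: one must verify that at each intermediate stage the hypotheses of Corollary \ref{coro:generic-stays-generic-after-DWZ-mutation} remain in force (non-degeneracy propagates, and the absence of $2$-cycles at the mutation vertex is automatic from non-degeneracy), so that the chain of equalities of generic basis sets closes up properly. The rest is a formal combination of the three cited results.
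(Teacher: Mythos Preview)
Your proposal is correct and follows essentially the same approach as the paper, which simply says the corollary is ``a direct combination of Theorems \ref{thm:criterion-for-lin-ind-of-CC-functions}, \ref{thm:different-s.r.i.c.-have-different-g-vectors} and Corollary~\ref{coro:generic-stays-generic-after-DWZ-mutation}.'' You have spelled out precisely this combination: mutate to a seed where the kernel condition holds (using non-degeneracy to guarantee the absence of $2$-cycles at each stage), invoke mutation invariance of the set of generic CC-functions, and then apply the linear-independence criterion together with the injectivity of $Z\mapsto\g_Z^{\Lambda'}$ on generically $\tau^-$-reduced components.
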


\subsection{Cluster monomials as CC-functions with coefficients of $E$-rigid components}\label{subsec:cluster-variables}

This section has two aims. One is to illustrate the necessity to assume that $(\widetilde{Q},\widetilde{S})$, and not only $(Q,S)$, be non-degenerate in order to be able to express cluster variables as Caldero-Chapoton functions. The second aim is to recall how cluster monomials can be written as Caldero-Chapoton functions of generically $\tau^-$-reduced components of representation spaces of quivers with potential.

Suppose we are given quivers with potential $(\widetilde{Q},\widetilde{S})$ and $(Q,S)$ as in Subsection \ref{subsec:assumptions}, so that $(Q,S)$ is the restriction of $(\widetilde{Q},\widetilde{S})$ and $(Q,S)$ to the vertex subset $[n]$ of $\widetilde{Q}_0=[n+m]$. Thus, in particular, $Q$ is the union of the isolated vertices $n+1,\ldots,n+m$ and the full subquiver of $\widetilde{Q}$ determined by $[n]$. It is perfectly possible that $(\widetilde{Q},\widetilde{S})$ be degenerate and $(Q,S)$ be non-degenerate. Would this affect whether we hit or miss the cluster variables of the cluster algebra with coefficients of $\widetilde{Q}$ when we apply the Caldero-Chapoton map? The following two examples are meant to illustrate what happens.

\begin{ex} Let $\widetilde{Q}$ and $Q$ be the quivers
$$
\xymatrix{1 \ar[dr]_{\gamma} && 2 \ar[ll]_{\alpha}\\
& 3\ar[ur]_{\beta}}
\ \ \ \ \ 
\xymatrix{1  && 2 \ar[ll]_{\alpha}\\
& 3}
$$
That is, we are taking $n=2$, $m=1$, $n+m=3$. Let $\widetilde{S}:=\alpha\beta\gamma$ and $S:=\widehat{\varphi}(\widetilde{S})=0$.  Both $(\widetilde{Q},\widetilde{S})$ and $(Q,S)$ certainly are non-degenerate.
The decorated representation
$$
\xymatrix{\mathbb{C}  && \C \ar[ll]_{\myid}\\
& 0}
\ \ \ \ \ 
\xymatrix{0  && 0\\
& 0}
$$
of $(Q,S)$ is supported in $[2]$. A straightforward computation yields
\begin{eqnarray*}
\widetilde{\g}_{\mathcal{M}} &=& (0,-1,0)\\
F_{\mathcal{M}} &=& 1+X_1+X_1X_2.
\end{eqnarray*}
Since $\widetilde{B}=\left[\begin{array}{rr}0 & 1\\ -1 & 0\\ 1 & -1 \end{array}\right]$, we have
\begin{eqnarray*}
CC_{(\widetilde{Q},\alpha\beta\gamma)}(\mathcal{M}) &=& x_1^{-1}(1+x_2^{-1}x_3+x_1x_2^{-1}),
\end{eqnarray*}
which is a cluster variable in the cluster algebra with coefficients $\mathcal{A}(\widetilde{B})$ that has the initial seed $(\widetilde{B},(x_1,x_2,x_3))$ with $x_3$ frozen.
\end{ex}

\begin{ex} This example is meant to illustrate that in order to hit cluster variables with the CC-formula, the assumption that $(\widetilde{Q},\widetilde{S})$, and not only $(Q,S)$, be a non-degenerate QP, is essential. Let $\widetilde{Q}$ and $Q$ be the exact same quivers from the preceding example, and let $\mathcal{M}$ be the same decorated representation as before. The potential $0$ on $\widetilde{Q}$ certainly has the property that its image under $\widehat{\varphi}$ is a non-degenerate potential on $Q$. A straightforward computation yields
\begin{eqnarray*}
\widetilde{\g}_{\mathcal{M}} &=& (0,-1,1)\\
F_{\mathcal{M}} &=& 1+X_1+X_1X_2.
\end{eqnarray*}
Since $\widetilde{B}=\left[\begin{array}{rr}0 & 1\\ -1 & 0\\ 1 & -1 \end{array}\right]$, we have
\begin{eqnarray*}
CC_{(\widetilde{Q},0)}(\mathcal{M}) &=& x_1^{-1}x_3(1+x_2^{-1}x_3+x_1x_2^{-1}),
\end{eqnarray*}
which is not a cluster variable in the cluster algebra with coefficients $\mathcal{A}(\widetilde{B})$ that has the initial seed $(\widetilde{B},(x_1,x_2,x_3))$ with $x_3$ frozen.
\end{ex}

On the positive side, when $(\widetilde{Q},\widetilde{S})$ is non-degenerate, everything is fine: all cluster variables and cluster monomials can be obtained through the CC-formula. This is a deep theorem of Derksen-Weyman-Zelevinsky \cite[Theorem 5.1]{DWZ2}, whose proof in turn uses another deep theorem, Fomin-Zelevinsky's \emph{separation of additions} \cite[Corollary 6.3]{FZ4}.  

\begin{thm}\cite[Theorem 5.1]{DWZ2}\label{thm:DWZ's-thm-cluster-monomials-are-CC-functions} Let $\mathcal{M}$ be a decorated representation of $(Q,S)$ supported in $[n]$. Under the assumption that $(\widetilde{Q},\widetilde{S})$ be non-degenerate, if $\mathcal{M}$ is mutation-equivalent to a negative representation (under a sequence of QP-mutations involving only vertices from $[n]$), then $CC_{(\widetilde{Q},\widetilde{S})}(\mathcal{M})$ is a cluster monomial in the cluster algebra with coefficients $\mathcal{A}(\widetilde{B})$ that has  $(\widetilde{B},(x_1,\ldots,x_n,x_{n+1},\ldots,x_{n+m}))$ as its initial seed with $x_{n+1},\ldots,x_{n+m}$ frozen. All cluster monomials in $\mathcal{A}(\widetilde{B})$ arise in this way.
\end{thm}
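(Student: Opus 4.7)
The plan is to follow Derksen--Weyman--Zelevinsky's strategy, leveraging two tools already secured in the paper: the identity \eqref{eq:DWZ's-key-lemma}, which asserts that $CC_{\widetilde{\Lambda}}(\mathcal{M})$ and $CC_{\widetilde{\Lambda}'}(\mu_k(\mathcal{M}))$ coincide as elements of $\calF$ whenever $(\widetilde{Q},\widetilde{S})$ is non-degenerate and $k\in[n]$ is free of incident 2-cycles; and the fact, recorded just after Subsection \ref{subsec:assumptions}, that non-degeneracy propagates under the whole sequence of mutations, so that \eqref{eq:DWZ's-key-lemma} remains available at every step.

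First I would handle the base case. For a negative decorated representation $\mathcal{N}=(0,\bfv)$ supported in $[n]$, the module part is zero, hence $F_{\mathcal{N}}=1$ and the extended $\g$-vector reduces to $\widetilde{\g}_{\mathcal{N}}^{\widetilde{\Lambda}}=\bfv$; thus
$$
CC_{(\widetilde{Q},\widetilde{S})}(\mathcal{N})=\mathbf{x}^{\bfv}=x_1^{v_1}\cdots x_n^{v_n},
$$
a monomial in the initial cluster variables, which is a cluster monomial of $\mathcal{A}(\widetilde{B})$. Applied to any QP in the mutation class of $(\widetilde{Q},\widetilde{S})$, this shows that every negative decorated representation of a mutation-equivalent QP has, as its CC-function, a monomial in the corresponding cluster. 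Now suppose $\mathcal{M}$ is mutation-equivalent to a negative representation $\mathcal{N}$ via a sequence $\mu_{k_\ell}\cdots\mu_{k_1}$ with $k_i\in[n]$. Iterating \eqref{eq:DWZ's-key-lemma} gives
$$
CC_{(\widetilde{Q},\widetilde{S})}(\mathcal{M})=CC_{\mu_{k_\ell}\cdots\mu_{k_1}(\widetilde{Q},\widetilde{S})}(\mathcal{N})
$$
in $\calF$. Reading the right-hand side inside the cluster $\mu_{k_\ell}\cdots\mu_{k_1}(\mathbf{x})$, the base case applied to the mutated QP identifies it with a monomial in that cluster, i.e.\ a cluster monomial of $\mathcal{A}(\widetilde{B})$.

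For the converse, any cluster monomial of $\mathcal{A}(\widetilde{B})$ is a product $\prod_{j=1}^{n}u_j^{v_j}$ in some cluster $\mathbf{u}=\mu_{k_\ell}\cdots\mu_{k_1}(\mathbf{x})$ obtained from the initial seed by a finite mutation sequence. By the base case applied to $\mu_{k_\ell}\cdots\mu_{k_1}(\widetilde{Q},\widetilde{S})$, this monomial is $CC_{\mu_{k_\ell}\cdots\mu_{k_1}(\widetilde{Q},\widetilde{S})}(0,\bfv)$. Using the involutivity of the DWZ mutation of decorated representations at each non-degenerate step, define $\mathcal{M}:=\mu_{k_1}\cdots\mu_{k_\ell}(0,\bfv)$; then $\mathcal{M}$ is by construction mutation-equivalent to a negative representation, and \eqref{eq:DWZ's-key-lemma} (iterated) yields $CC_{(\widetilde{Q},\widetilde{S})}(\mathcal{M})=\prod_j u_j^{v_j}$, as required.

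The main obstacle is not the inductive bookkeeping above but the availability of the single-step identity \eqref{eq:DWZ's-key-lemma} itself. This ultimately rests on two non-trivial inputs encoded in the excerpt: (i) the mutation rule for decorated representations (Theorems \ref{thm:premut-of-reps-is-a-densely-defined-reg-map}--\ref{thm:premut-&-mut-of-reps-are-locally-regular}) must transform $\g$-vectors and $F$-polynomials in a way compatible with the cluster-algebra exchange relation \eqref{eq:exchange-relation-1}; and (ii) the passage from $F$-polynomials and $\g$-vectors to cluster monomials requires Fomin--Zelevinsky's separation of additions, cf.\ \cite[Corollary 6.3]{FZ4}, which is precisely the tool DWZ use in their proof of Lemma~5.2. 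Once this identity is available at each mutation step---which is guaranteed by the standing non-degeneracy hypothesis on $(\widetilde{Q},\widetilde{S})$---both implications of the theorem reduce to the elementary induction on the length $\ell$ of the mutation sequence sketched above.
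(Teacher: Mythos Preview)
Your outline is essentially correct and matches the Derksen--Weyman--Zelevinsky argument that the paper invokes. Note, however, that the paper does not supply its own proof of this statement: it is stated as a citation of \cite[Theorem~5.1]{DWZ2}, with the remark that the proof relies on Fomin--Zelevinsky's separation of additions \cite[Corollary~6.3]{FZ4}. So there is no ``paper's own proof'' to compare against beyond that attribution; your sketch simply unpacks the DWZ strategy using the ingredients the paper has already recorded (the identity \eqref{eq:DWZ's-key-lemma} and the persistence of non-degeneracy under mutation).

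One minor caution on your phrasing: you write that \eqref{eq:DWZ's-key-lemma} is ``already secured in the paper'', but the paper does not prove it independently---it deduces it from \cite[Lemma~5.2]{DWZ2}. Likewise, your appeal to Theorems~\ref{thm:premut-of-reps-is-a-densely-defined-reg-map}--\ref{thm:premut-&-mut-of-reps-are-locally-regular} in item (i) of your final paragraph is not quite apt: those results concern regularity of the mutation as a map on representation varieties, whereas what \eqref{eq:DWZ's-key-lemma} needs is the DWZ recursion for $F$-polynomials and $\g$-vectors under a single mutation of a fixed decorated representation. The substance of the argument is unaffected, but the dependency structure is that both Theorem~\ref{thm:DWZ's-thm-cluster-monomials-are-CC-functions} and \eqref{eq:DWZ's-key-lemma} rest directly on \cite{DWZ2}, not on the paper's own geometric results.
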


Suppose that $\mathbf{u}=(u_1,\ldots,u_n,x_{n+1},\ldots,x_{n+m})$ is a cluster in the cluster algebra with coefficients $\mathcal{A}(\widetilde{B})$. Let $\mathcal{M}_1=(M_1,\bfv_1),\ldots,\mathcal{M}_n=(M_n,\bfv_n)$ be decorated representations of $(Q,S)$ supported in $[n]$ such that $u_j=CC_{(\widetilde{Q},\widetilde{S})}(\mathcal{M}_j)$ for $j\in[n]$, and set $\bfd_j:=\underline{\dim}(M_j)$ for each $j\in[n]$. The following corollary is a straightforward consequence of Theorem \ref{thm:DWZ's-thm-cluster-monomials-are-CC-functions}.

\begin{coro} Suppose that $(\widetilde{Q},\widetilde{S})$ is a non-degenerate quiver with potential. For any choice of non-negative integers $a_1,\ldots,a_n$,
\begin{enumerate}
\item the Zariski closure $Z_j$ of the $\GL_{\bfd_{j}}(\C)$-orbit of $\mathcal{M}_j$ in $\decrep(\mathcal{P}(Q,S),\bfd_j,\bfv_j)$ is a generically $\tau^-$-reduced irreducible component;
\item the Zariski closure $Z$ of the $\GL_{\sum_{j\in[n]}a_j\bfd_j}(\C)$-orbit of $\mathcal{M}:=(M_1^{a_1}\oplus\cdots\oplus M_n^{a_n},\sum_{j\in[n]}\bfv_j)$ in $\decrep(\mathcal{P}(Q,S),\sum_{j\in[n]}a_j\bfd_j,\sum_{j\in[n]}\bfv_j)$ is a generically $\tau^-$-reduced irreducible component;
\item $Z=\overline{Z_1^{a_1}\oplus\cdots\oplus Z_n^{a_j}}$;
\item $u_1^{a_1}\cdots u_n^{a_n}=CC_{\widetilde{\Lambda}}(Z)$.
\end{enumerate}
\end{coro}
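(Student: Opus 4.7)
The strategy is to derive everything from Derksen--Weyman--Zelevinsky's Theorem \ref{thm:DWZ's-thm-cluster-monomials-are-CC-functions} together with the mutation-invariance of the $E$-invariant \cite[Theorem 7.1]{DWZ2} and its upper semicontinuity (Theorem \ref{thm:E-invariant-is-upper-semicontinuous}). Since $u_1,\ldots,u_n$ are cluster variables belonging to the same cluster $\mathbf{u}$ and $(\widetilde{Q},\widetilde{S})$ is non-degenerate, there exists one common sequence of QP-mutations $(k_1,\ldots,k_\ell)$ in $[n]$ sending the initial negative decorated representations of some QP right-equivalent to an iterated QP-mutation of $(\widetilde{Q},\widetilde{S})$ to the tuple $\mathcal{M}_1,\ldots,\mathcal{M}_n$. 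Negative representations satisfy $E=0$ and pairwise $E$-rigidity trivially, because for any negative $\mathcal{N}=(0,\bfv)$ and any $\mathcal{N}'=(N',\bfv')$ one has $\Hom(0,N')=0$ and $\langle 0,\g_{\mathcal{N}'}\rangle=0$. By the invariance of the $E$-invariant under mutation of decorated representations, we conclude
\begin{equation}\label{eq:pairwise-E-rigidity-plan}
E_{\mathcal{P}(Q,S)}(\mathcal{M}_i,\mathcal{M}_j)=0 \qquad \text{for all } i,j\in[n].
\end{equation}

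\textbf{Parts (1) and (2).} Granted \eqref{eq:pairwise-E-rigidity-plan}, each $\mathcal{M}_j$ is $E$-rigid. The bi-additivity of the $E$-invariant with respect to direct sums gives
$$
E_{\mathcal{P}(Q,S)}(\mathcal{M})=\sum_{i,j\in[n]} a_i a_j\, E_{\mathcal{P}(Q,S)}(\mathcal{M}_i,\mathcal{M}_j)=0.
$$
Voigt's Lemma, combined with the inequality $\dim_{\C}\Ext^1_{\mathcal{P}(Q,S)}(L,L)\leq E_{\mathcal{P}(Q,S)}(\mathcal{L})$ (cf.\ Remark \ref{rem:inequality-c-leq-E}), implies that for each decorated representation $\mathcal{L}\in\{\mathcal{M}_1,\ldots,\mathcal{M}_n,\mathcal{M}\}$ the codimension of the $\GL$-orbit of $\mathcal{L}$ at the point $\mathcal{L}$ in its ambient affine variety of decorated representations is bounded above by $E_{\mathcal{P}(Q,S)}(\mathcal{L})=0$. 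Consequently, the $\GL$-orbit of $\mathcal{L}$ is open in its ambient variety, so its Zariski closure is an irreducible component. The upper semicontinuity of $E_{\mathcal{P}(Q,S)}(-)$ (Theorem \ref{thm:E-invariant-is-upper-semicontinuous}) then yields $E_{\mathcal{P}(Q,S)}(\overline{\GL\cdot\mathcal{L}})\leq E_{\mathcal{P}(Q,S)}(\mathcal{L})=0$, and since the generic codimension also equals $0$, the closure is generically $\tau^-$-reduced. Parts (1) and (2) follow.

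\textbf{Part (3).} Both $Z$ and $\overline{Z_1^{a_1}\oplus\cdots\oplus Z_n^{a_n}}$ are irreducible components of the same ambient decorated representation variety: $Z$ by part (2), and the second by an entirely analogous argument applied to the direct-sum map $\prod_{j\in[n]}Z_j^{a_j}\times \GL\rightarrow\decrep(\mathcal{P}(Q,S),\sum_j a_j\bfd_j,\sum_j a_j\bfv_j)$ whose image is irreducible and contains $\mathcal{M}$. Since $\mathcal{M}\in Z\cap\overline{Z_1^{a_1}\oplus\cdots\oplus Z_n^{a_n}}$, the irreducibility of both components and the fact that neither can properly contain the other force the equality.

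\textbf{Part (4).} The Caldero--Chapoton function is constant on $\GL$-orbits, so $CC_{\widetilde{\Lambda}}(Z)=CC_{\widetilde{\Lambda}}(\mathcal{M})$ (in particular, $\mathcal{M}$ lies in the dense open subset of $Z$ on which $CC_{\widetilde{\Lambda}}(-)$ takes the generic value $CC_{\widetilde{\Lambda}}(Z)$, since the orbit of $\mathcal{M}$ is dense in $Z$ by part (2)). The multiplicativity of the $F$-polynomial and the additivity of the extended $\g$-vector under direct sums (see the remark following the definition of the Caldero--Chapoton algebra) yield
$$
CC_{\widetilde{\Lambda}}(\mathcal{M})=\prod_{j\in[n]}CC_{\widetilde{\Lambda}}(\mathcal{M}_j)^{a_j}=\prod_{j\in[n]} u_j^{a_j}.
$$

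\textbf{Main obstacle.} The only subtle point is establishing the joint $E$-rigidity \eqref{eq:pairwise-E-rigidity-plan}, not just the individual rigidity $E(\mathcal{M}_j)=0$: one must argue that the decorated representations $\mathcal{M}_1,\ldots,\mathcal{M}_n$ attached to the cluster variables in a single cluster can be realized simultaneously as images, under one common mutation sequence, of a pairwise-compatible initial collection. This requires unpacking the proof of Theorem \ref{thm:DWZ's-thm-cluster-monomials-are-CC-functions}, where the quintessential tool is Fomin--Zelevinsky's separation of additions \cite[Corollary 6.3]{FZ4}, and then applying the mutation-invariance of the $E$-invariant of pairs $(\mathcal{M}_i,\mathcal{M}_j)$ under the common mutation sequence. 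The other ingredients (Voigt's Lemma, upper semicontinuity of $E$, multiplicativity of $CC$) are by now standard.
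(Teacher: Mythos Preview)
Your proposal is correct and follows precisely the route the paper intends: the paper gives no detailed proof, stating only that the corollary ``is a straightforward consequence of Theorem~\ref{thm:DWZ's-thm-cluster-monomials-are-CC-functions}'', and your argument is exactly the natural unpacking of that claim via the mutation-invariance of the $E$-invariant \cite[Theorem~7.1]{DWZ2}, Voigt's Lemma, and the multiplicativity of the Caldero--Chapoton function. One small remark on Part~(3): rather than arguing that $\overline{Z_1^{a_1}\oplus\cdots\oplus Z_n^{a_n}}$ is itself a component and then invoking that ``neither can properly contain the other'', it is cleaner to note that this closure is $\GL$-stable and contains $\mathcal{M}$, hence contains the (open, dense) orbit of $\mathcal{M}$ and therefore $Z$; the reverse inclusion then follows because $Z$ is already an irreducible component.
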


\section{Applications to surface cluster algebras}

\subsection{Bypasses of a gentle quiver with potential and matrix column identities}\label{sec:formula-for-special-bands}\label{sec:formula-for-bands-of-gentle-QPs}

\begin{defi} Let $\cR$ be a finite set of paths on $Q$. We will say that $(Q,\cR)$ is a \emph{gentle pair} provided the following conditions are simultaneously satisfied:
\begin{itemize}
\item Each vertex of $Q$ is the head of at most two arrows;
\item each vertex of $Q$ is the tail of at most two arrows;
\item
$\cR$ consists of paths of length $2$;
\item
if for some arrow $\alp\in Q_1$ there exist two paths
$\bet_1\alp\neq\bet_2\alp$, then precisely one of these paths belongs
to $\cR$;
\item
if for some arrow $\bet\in Q_1$ there exist two different paths
$\bet\alp_1\neq \bet\alp_2$, then precisely one of these paths belongs
to $\cR$;
\item
$\C\langle Q\rangle/\ebrace{\cR}$ is a finite dimensional algebra, where $\ebrace{\cR}$ is the two-sided ideal of $\C\langle Q\rangle$ generated by $\cR$.
\end{itemize}
\end{defi}

\begin{defi}
The quiver with potential $(Q,S)$ is a \emph{gentle QP} if $Q$ has no loops and
$(Q,\partial S)$ is a gentle pair, where $\partial S:=\{\partial_a(S)\suchthat a\in Q_1\}$.
\end{defi}

Note that if $(Q,S)$ is a gentle QP, then $S$ is necessarily a linear combination of
3-cycles. Moreover, if in $S$ two 3-cycles which share an arrow,
appear with non-zero coefficients, they coincide up to rotation.
Finally, up to a change of arrows we may assume that in this situation all coefficients in
$S$ are $1$.

\begin{defi}
Let $(Q,\cR)$ be a gentle pair and $\pi=\bet_{m-1}\cdots\bet_2\bet_1$ a path on $Q$.
\begin{enumerate}
\item Following Crawley-Boevey--Happel--Ringel \cite{CB-H-R}, we will say that $\pi$ is a \emph{bypass} if $\bet_i\bet_{i-1}\not\in\cR$ for $i=2,3,\ldots, m-1$, and there exists an arrow
$\bet_m\in Q_1\setminus\{\bet_1,\bet_{m-1}\}$ with $s(\bet_m)=s(\bet_1)$ and
$t(\bet_m)=t(\bet_{m-1})$, see the left hand side of Figure \ref{Fig:g_vector_band};
\item we will say that $\pi$ is an \emph{almost bypass} if $\bet_i\bet_{i-1}\not\in\cR$ for
$i=2,3,\ldots, m-1$, and there exists a length-$2$ path $\bet_{m+1}\bet_m$ such that $\bet_{m+1},\bet_m\in Q_1\setminus\{\bet_1,\bet_{m-1}\}$, $\bet_{m+1}\bet_m\not\in\cR$, $s(\bet_m)=s(\bet_1)$,
$t(\bet_{m+1})=t(\bet_{m-1})$, and the only arrows of $Q$ that are incident to $t(\bet_m)=s(\bet_{m+1})$ are precisely $\bet_m$ and $\bet_{m+1}$, see the right hand side of Figure \ref{Fig:g_vector_band}.
\end{enumerate}
The arcs $s(\pi):=s(\bet_1)$ and $t(\pi):=t(\bet_{m-1})$ will be called the \emph{source} and the \emph{sink} of $\pi=\bet_{m-1}\cdots\bet_2\bet_1$.
\end{defi}
        \begin{figure}[!h]
                \caption{Left: The arrows and relations directly connected to the vertices of a bypass of a gentle QP. Right: The arrows and relations directly connected to the vertices of an almost bypass of a gentle QP.}\label{Fig:g_vector_band}
                \centering
                \includegraphics[scale=.65]{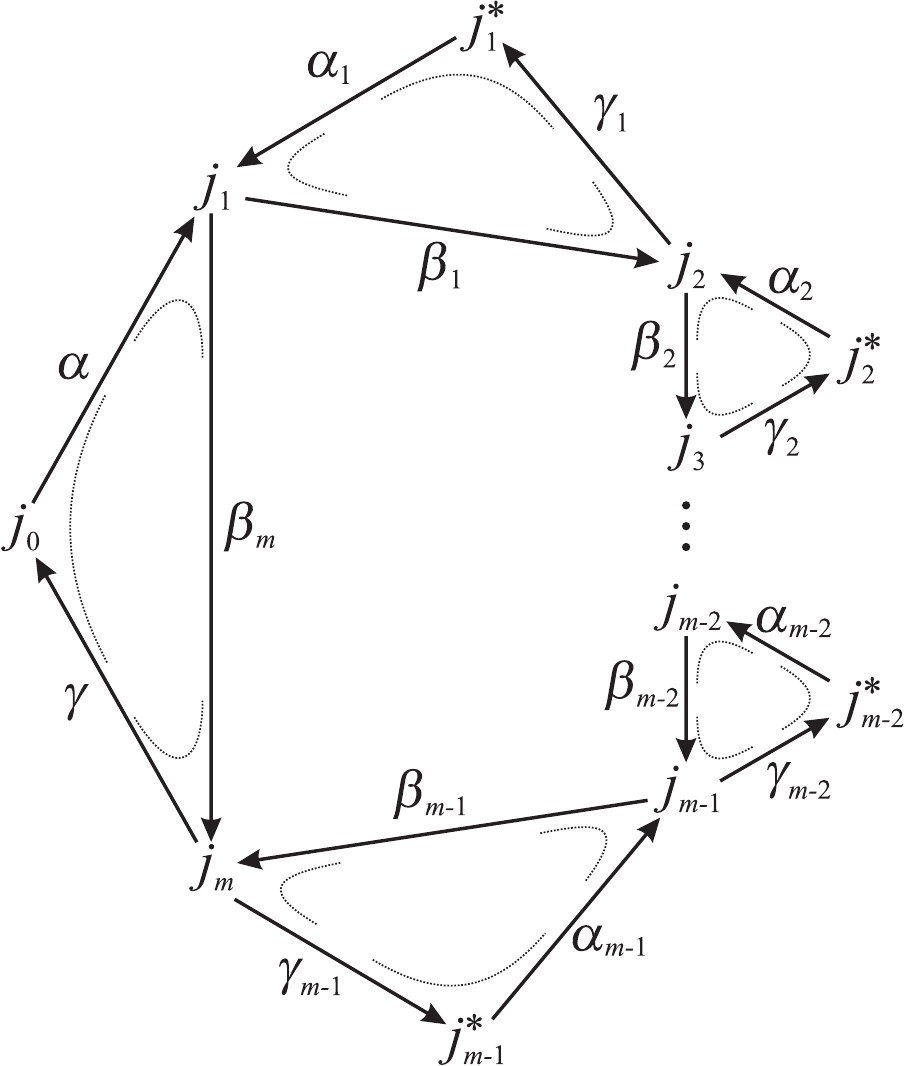}\ \ \ \ \ \ \ \  \includegraphics[scale=.079]{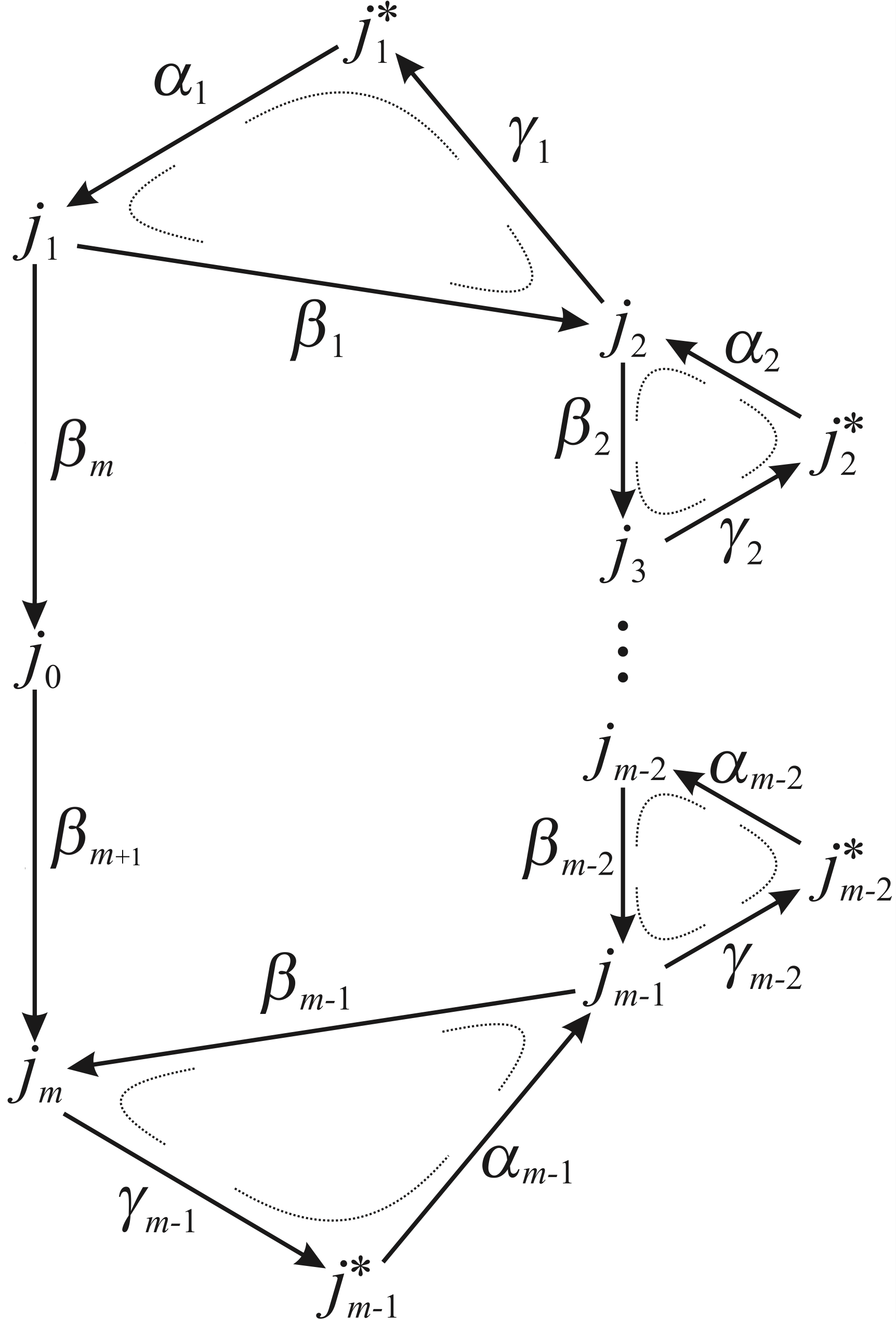}
        \end{figure}

\begin{prop}\label{prop:sum-of-columns}
Suppose $(Q,S)$ is a gentle QP, and $\pi=\bet_{m-1}\cdots\bet_2\bet_1$ either a bypass or an almost bypass.
Set $j_k:=s(\bet_k)$ for $k=1,2,\ldots, m-1$ and $j_m:=t(\bet_{m-1})$, and denote by $\mathbf{b}_{j_k}$ the $j_k^{\operatorname{th}}$ column of the matrix $B=B_{Q}$. Then
\[
\sum_{k=1}^m \mathbf{b}_{j_k} = \begin{cases}2(-\mathbf{e}_{j_1}+\mathbf{e}_{j_m}) & \text{if $\pi$ is special;}\\
(-\mathbf{e}_{j_1}+\mathbf{e}_{j_m}) & \text{if $\pi$ is almost special.}
\end{cases}
\]
where $\mathbf{e}_i$ denotes the $i^{\operatorname{th}}$ standard basis vector.
\end{prop}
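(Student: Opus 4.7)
The plan is to compute the row-$i$ entry of $\sum_{k=1}^m\mathbf{b}_{j_k}$ vertex by vertex, exploiting the formula $b_{ij}=\#\{j\to i\}-\#\{i\to j\}$, which rewrites that row-$i$ entry as
\[
\#\{\text{arrows from some }j_k\text{ to }i\}-\#\{\text{arrows from }i\text{ to some }j_k\}.
\]
I would split the arrows contributing to this count into the \emph{bypass arrows} $\beta_1,\ldots,\beta_{m-1},\beta_m$ (bypass case), or $\beta_1,\ldots,\beta_{m-1},\beta_m,\beta_{m+1}$ (almost bypass case), and the remaining \emph{extra arrows} incident to some $j_k$, and analyze each class separately.

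First I would check that the bypass arrows alone produce the claimed right-hand side. The path arrows $\beta_1,\ldots,\beta_{m-1}$ telescope: at each interior row $j_k$ with $2\leq k\leq m-1$, $\beta_{k-1}$ contributes $+1$ and $\beta_k$ contributes $-1$, canceling; they leave $-1$ at row $j_1$ (from $\beta_1$) and $+1$ at row $j_m$ (from $\beta_{m-1}$), and they do not touch any row outside $\{j_1,\ldots,j_m\}$. In the bypass case, $\beta_m\colon j_1\to j_m$ directly contributes an additional $-\mathbf{e}_{j_1}+\mathbf{e}_{j_m}$, giving the total $2(-\mathbf{e}_{j_1}+\mathbf{e}_{j_m})$. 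In the almost bypass case, $\beta_m\colon j_1\to u$ and $\beta_{m+1}\colon u\to j_m$ contribute $+1$ and $-1$ respectively at row $u$ (which cancel), together with $-\mathbf{e}_{j_1}+\mathbf{e}_{j_m}$ from the endpoints; and by the last clause in the definition, no extra arrow is incident to $u$, so the row $u$ entry receives no further contribution.

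The core of the proof is then to show that the remaining \emph{extra arrows} at the $j_k$'s contribute zero. Here the gentle structure enters. At each internal vertex $j_k$ with $2\leq k\leq m-1$, the hypothesis $\beta_k\beta_{k-1}\notin\partial S$ together with the gentle axioms forces: if $\alpha\colon v\to j_k$ is an extra incoming arrow, then $\beta_k\alpha\in\partial S$, hence (since $S$ is a sum of 3-cycles) there is a unique 3-cycle $\alpha\gamma\beta_k\in S$ with $\gamma\colon j_{k+1}\to v$; and symmetrically, extra outgoing arrows at $j_k$ pair with extra arrows at $j_{k-1}$ via a 3-cycle of $S$ containing $\beta_{k-1}$. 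The contributions of such a pair $(\alpha,\gamma)$ to $\sum_k\mathbf{b}_{j_k}$ cancel: $\alpha$ adds $-1$ at row $v$ of column $j_k$, while $\gamma$ adds $+1$ at row $v$ of column $j_{k+1}$, and when $v\notin\{j_1,\ldots,j_m\}$ these are their only contributions to the sum. At the end vertices $j_1$ and $j_m$, an entirely analogous pairing via 3-cycles of $S$ (using the two outgoing arrows at $j_1$, or the two incoming arrows at $j_m$) produces the corresponding cancellations.

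The main technical obstacle will be the degenerate sub-cases in which the partner vertex $v$ in one of these 3-cycles happens to coincide with some $j_{k'}$: then the pair $(\alpha,\gamma)$ contributes to additional rows of the sum, namely rows $j_{k'}$ or $j_k, j_{k+1}$. In such situations I would close the argument by using the stronger fact that for any 3-cycle $abc\in S$ with vertex set $\{u,v,w\}$, the three arrows $a,b,c$ make $\mathbf{b}_u+\mathbf{b}_v+\mathbf{b}_w$ receive zero net contribution from them, together with a careful local book-keeping (at each $j_k$ there are at most one extra incoming and one extra outgoing arrow, by the gentle bound of two arrows of each type per vertex). Organizing the cancellations vertex by vertex along $\pi$, using that $(Q,\partial S)$ is gentle and $S$ is a sum of 3-cycles, gives the identity.
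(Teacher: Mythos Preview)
Your approach is correct and is essentially the same argument as the paper's, just organized differently. The paper encodes the neighbours of each $j_k$ into sets $J_{k-1},J_{k+1}$ (path neighbours) and $J_k^-,J_k^+$ (the ``extra'' neighbours, those coming from arrows in a relation with $\beta_k$ or $\beta_{k-1}$), writes $B_{i,j_k}=|J_{k+1}\cap\{i\}|+|J_k^+\cap\{i\}|-|J_{k-1}\cap\{i\}|-|J_k^-\cap\{i\}|$, and then telescopes using $J_k^-=J_{k+1}^+$ (this is exactly your $3$-cycle pairing through $\beta_k$) and $J_0=J_{m+1}$ (this is exactly your endpoint pairing through $\beta_m$). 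What their bookkeeping buys is that the telescoping is purely formal and works uniformly, so the ``degenerate sub-cases'' you flag --- where the third vertex of a $3$-cycle happens to be some $j_{k'}$ --- never need separate treatment: the identities $J_k^-=J_{k+1}^+$ and $J_0=J_{m+1}$ are equalities of (at most singleton) \emph{sets of vertices}, and the sum collapses regardless of whether those vertices coincide with other $j_l$'s. Your arrow-by-arrow cancellation gives the same result but, as you yourself anticipate, would require more careful local bookkeeping in those cases.
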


\begin{proof} Suppose that $\pi$ is a bypass. We need the following definitions:
\begin{align*}
J_0 &:= \{ s(\alp)\mid \alp\in Q_1 \text{ and } \bet_1\alp\not\in\partial S\}\\
J_k &:= \{j_k\} \text{ for } k\in\{1,2,\ldots,m\} \\
J_{m+1}
    &:= \{ t(\gam) \mid\gam\in Q_1 \text{ and } \gam\bet_{m-1}\not\in\partial S\}
  \end{align*}
Note, that $J_0=J_{m+1}$, and in case $J_0\neq\varnothing$ we have $J_0=\{j_0\}$
for a unique element $j_0\in Q_0$.
Similarly, we define
\begin{align*}
J_k^- &:=\{ s(\alp)\mid \alp\in Q_1\text{ and } \bet_k\alp\in\partial S\}
\text{ for } k\in\{1,2,\ldots,m-1\},\\
J_k^+ &:=\{ t(\gam)\mid \gam\in Q_1\text{ and } \gam\bet_{k-1}\in\partial S\}
\text{ for } k\in\{2,3,\ldots,m\},
\end{align*}
moreover $J_m^-:=\{j_1\}$ and $J_1^+:=\{j_m\}$.
Note, that $J_k^-=J_{k+1}^+$  for $k\in\{1,2,\ldots,m-1\}$ and in case
$J_k^-\neq\varnothing$ we have
$J_k^-=\{j_k^*\}$ for a unique element $j_k^*\in Q_0$.
The left hand side of Figure \ref{Fig:g_vector_band} illustrates these definitions.

By the definition of the signed adjacency matrix $B$ of $Q$, the $(i,j_k)$-entry of $B$ is 
\[
B_{i,j_k}= \abs{J_{k+1}\cap\{i\}}+\abs{J_k^+\cap\{i\}}-
(\abs{J_{k-1}\cap\{i\}}+\abs{J_k^-\cap\{i\}})
\]
for $k=1,2,\ldots,m$.
Thus, the $i^{\operatorname{th}}$ entry of $\sum_{k=1}^m \mathbf{b}_{j_k}$ is
\begin{align*}
\sum_{k=1}^m B_{i,j_k} &=
\sum_{k=1}^m(\abs{J_{k+1} \cap \{ i \} } - \abs{J_{k-1} \cap \{ i \} })+
\sum_{k=1}^m (\abs{J_k^+ \cap \{ i \} })-  \abs{J_k^- \cap \{ i \} })\\
&= (\abs{J_m\cap\{i\}}-\abs{J_1\cap\{i\}}) +
   (\abs{J_1^+\cap\{i\}} -\abs{J_m^-\cap\{i\}})\\
&= 2(\abs{\{j_m\}\cap\{i\}} - \abs{\{j_1\}\cap\{i\}})
\end{align*}
and Proposition \ref{prop:sum-of-columns} follows for bypasses.

The proof of Proposition \ref{prop:sum-of-columns} for almost bypasses is similar.
\end{proof}

The next lemma follows after a moment's thought.

\begin{lemma} If $\surf$ is an unpunctured surface having exactly one marked point per boundary component, and $\tau$ is a triangulation of $\surf$, then around each boundary component runs a bypass of the quiver $Q(\tau)$.
\end{lemma}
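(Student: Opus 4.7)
The idea is to extract the bypass from the fan of triangles of $\tau$ at the unique marked point $p$ of $C$. Since $\Sigma$ is unpunctured, $\tau$ has no self-folded triangles, so the unique triangle $T$ of $\tau$ adjacent to $C$ has, apart from $C$, two distinct arc sides $\alpha, \alpha'$ ending at $p$. Locally at $p$ the surface is a half-disc bordered by the two sides $C_+, C_-$ of $C$ at $p$; inside this half-disc the half-edges at $p$ of the arcs of $\tau$ form a linear sequence $h_1, \ldots, h_N$ (with $N\geq 2$) from $C_+$ to $C_-$, with $h_1, h_N$ half-edges of $\alpha, \alpha'$ respectively. Writing $a_i$ for the arc corresponding to $h_i$, so that $a_1=\alpha$ and $a_N=\alpha'$ (some intermediate $a_i$ may coincide with $\alpha$ or $\alpha'$ if these are loops), each sector $S_i$ between $h_i$ and $h_{i+1}$ ($1\leq i\leq N-1$) is the $p$-corner of a triangle of $\tau$.

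With the standard convention that the arrows of $Q(\tau)$ traverse each triangle clockwise with respect to the orientation of $\Sigma$, every sector $S_i$ contributes an arrow $\beta_i\colon a_i\to a_{i+1}$ in $Q(\tau)$, all oriented uniformly along the fan by the shared $\Sigma$-orientation. The corner of $T$ opposite $C$ contributes a further arrow $\beta_N\colon\alpha\to\alpha'$, arising from the $3$-cycle $C\to\alpha\to\alpha'\to C$ in $T$. Hence $\pi:=\beta_{N-1}\cdots\beta_1$ is a path in $Q(\tau)$ from $\alpha$ to $\alpha'$ of length $N-1$, and $\beta_N$ is an arrow from $\alpha$ to $\alpha'$.

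To conclude $\pi$ is a bypass, one uses the elementary observation that no triangle of $\tau$ can have corners at two adjacent sectors $S_{i-1}, S_i$: otherwise the triangle would have two distinct corners at the single endpoint of the shared edge $a_i$ marked by the half-edge $h_i$, absurd since each endpoint of an edge contributes exactly one corner of the triangle. From this follow: (a) $\beta_{i-1}$ and $\beta_i$ always come from distinct triangles of $\tau$, so $\beta_i\beta_{i-1}\notin\partial S$ for $2\leq i\leq N-2$, since the gentle relations $\partial S$ are length-two subpaths of single triangle $3$-cycles; (b) when the third vertex of $T$ happens to equal $p$ (the only case in which $\beta_N$ can coincide with a fan arrow $\beta_j$), the same adjacent-corner obstruction forces $j\in\{2,\ldots,N-2\}$, whence $\beta_N\notin\{\beta_1,\beta_{N-1}\}$; while if the third vertex of $T$ differs from $p$ the arrow $\beta_N$ is immediately distinct from every fan arrow. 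The principal technical care lies in the orientation bookkeeping confirming that the $3$-cycle of $T$ directs $\beta_N$ as $\alpha\to\alpha'$ rather than $\alpha'\to\alpha$, in harmony with the direction of $\pi$; this follows from the uniform use of $\Sigma$'s orientation on all triangles simultaneously.
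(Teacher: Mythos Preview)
The paper does not actually give a proof of this lemma; it simply declares that it ``follows after a moment's thought'' and moves on. Your argument supplies exactly that thought, and it is correct: the fan of half-edges at the unique marked point $p$ on $C$ is the right object, the consecutive fan arrows $\beta_{i-1},\beta_i$ cannot satisfy a gentle relation because adjacent sectors $S_{i-1},S_i$ can never be corners of the same triangle (that would force a self-folded triangle along $a_i$, impossible on an unpunctured surface), and the corner of $T$ opposite $C$ supplies the shortcut arrow in the correct direction by the uniform orientation argument you give.

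One cosmetic slip: the range in your relation check should read $2\leq i\leq N-1$, not $2\leq i\leq N-2$. The path $\pi=\beta_{N-1}\cdots\beta_1$ has $N-1$ arrows, so the compositions $\beta_i\beta_{i-1}$ to be checked run over $i=2,\ldots,N-1$; your sector argument of course covers $i=N-1$ just as well as the others, so nothing in the proof actually changes.
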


\begin{ex} In Figure \ref{Fig:ex_special_bands} we can see a triangulation $\tau$ of an unpunctured surface of genus $0$ with exactly $3$ boundary components and exactly one marked point on each boundary component.
        \begin{figure}[!h]
                \caption{In an unpunctured surface having exactly one marked point on each boundary component, a bypass runs around each boundary component.}\label{Fig:ex_special_bands}
                \centering
                \includegraphics[scale=0.03]{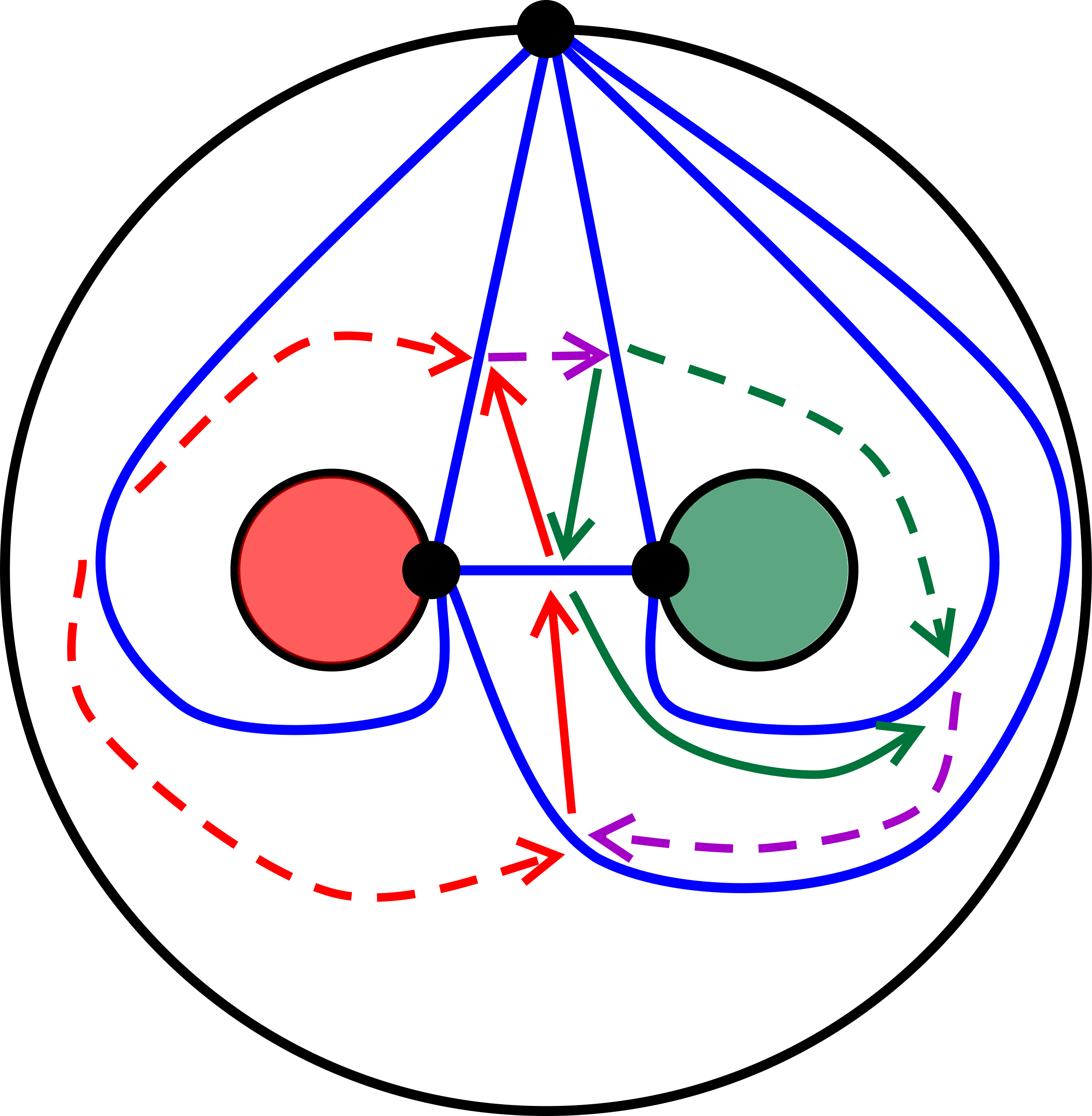}
        \end{figure}
The quiver $Q(\tau)$ has been drawn on the surface. For each boundary component, the bypass that runs around it has been indicated (for the ``red'' boundary component, the band is indicated in red, for the ``green'' boundary component, the band is indicated in green, and for the ``outer'' boundary component, the band is indicated by dashed arrows).
\end{ex}

\subsection{Linear independence of the set of generically $\tau^-$-reduced CC-functions}
\label{sec:lin-ind-for-surfaces}

We will need Fomin-Shapiro-Thurston's characterization of the corank of the signed-adjacency matrix of a triangulation of $\surf$, namely:

\begin{thm}\cite[Theorem 14.3]{FST}\label{thm:corank-of-Btau} For any surface with marked points $\surf$ and any triangulation $\sigma$ of $\surf$, the rank of the skew-symmetric matrix $B(\sigma)$ is equal to $|\sigma|-|\punct|-c_{\operatorname{even}}$, where $c_{\operatorname{even}}$ is the number of boundary components of $\Sigma$ that contain an even number of marked~points.
\end{thm}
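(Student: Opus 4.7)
My plan is to prove the equivalent identity $\operatorname{corank}(B(\sigma)) = |\punct|+c_{\operatorname{even}}$ by exhibiting an explicit basis of $\Ker B(\sigma)$ indexed by the punctures of $\surf$ together with the boundary components of $\Sigma$ carrying an even number of marked points, and then matching dimensions against the complementary upper bound on the corank.

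First, for each puncture $p\in\punct$, I would define $v_p\in\ZZ^{\sigma}$ by letting $v_p(i)$ be the signed number of endpoints of the arc $i$ at $p$ (with the sign reflecting the tag at $p$). The verification $B(\sigma) v_p=0$ is local: for any arc $j\in\sigma$, only the (at most two) triangles of $\sigma$ incident to $j$ contribute to $(B(\sigma) v_p)_j$, and within each such triangle the signed incidences of its three sides with $p$ cancel in pairs. Second, for each boundary component $C\subseteq\partial\Sigma$ carrying $2k$ marked points, I would use the invariance of the rank of a skew-symmetric matrix under matrix mutation to flip $\sigma$ to a triangulation $\sigma'$ such that $(Q(\sigma'),S(\sigma'))$ is gentle in a neighborhood of $C$ and the arcs adjacent to $C$ split, via the $2k$ marked points, into $2k$ directed paths in $Q(\sigma')$ that are alternately bypasses and almost-bypasses in the sense of Section~\ref{sec:formula-for-special-bands}. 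Summing the column identities of Proposition~\ref{prop:sum-of-columns} applied to these $2k$ paths with alternating signs, the $\pm(-\mathbf{e}_{j_1}+\mathbf{e}_{j_m})$ contributions telescope pairwise to zero, producing a nonzero integer kernel vector $v_C'\in\Ker B(\sigma')$ supported on arcs adjacent to $C$; transporting $v_C'$ back along the inverse flip sequence gives the desired $v_C\in\Ker B(\sigma)$. Linear independence of the family $\{v_p\}_{p\in\punct}\cup\{v_C\}_{C\text{ even}}$ follows from disjoint localization of supports after restricting to small neighborhoods of $p$ and of~$C$.

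For the matching upper bound $\operatorname{rank} B(\sigma)\geq|\sigma|-|\punct|-c_{\operatorname{even}}$, I would invoke the mutation invariance of the rank together with the fact that the flip-mutation class of $B(\sigma)$ depends only on the homeomorphism type of $\surf$, reducing the problem to verifying the formula on a single convenient representative triangulation per topological type; for instance, a ``fan'' or ``spiraled'' triangulation makes $B(\sigma)$ sufficiently sparse that its rank can be read off by a block-triangular inspection or a direct Smith-normal-form computation. The main obstacle lies in step two: the gentle-QP framework of Section~\ref{sec:formula-for-special-bands} requires the absence of self-folded triangles, so the preliminary reduction to $\sigma'$ must be performed with care in the presence of punctures, and one must subsequently argue that the integer vector $v_C$ obtained by transporting $v_C'$ back through the flip sequence remains a kernel vector (using that matrix mutation acts linearly on the underlying lattice with integer entries) and is not absorbed into the $\Q$-span of the $v_p$'s.
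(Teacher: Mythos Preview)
The paper does not give its own proof of this statement: Theorem~\ref{thm:corank-of-Btau} is quoted verbatim from \cite[Theorem~14.3]{FST} and used as a black box in the proof of Theorem~\ref{thm:existence-triang-with-nice-matrix}. So there is no proof in the paper to compare your proposal against.

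That said, your sketch has two genuine gaps that would prevent it from being completed as written. First, the ``transport back'' step in your construction of $v_C$ is unsound: matrix mutation $\mu_k$ is only \emph{piecewise} linear on the lattice $\ZZ^n$ (its formula involves $\max(0,\pm b_{ij})$), so there is no well-defined linear map carrying $\Ker B(\sigma')$ into $\Ker B(\sigma)$. Rank is mutation-invariant, but individual kernel vectors are not transported functorially. You would need to construct $v_C$ directly for the given $\sigma$, and the bypass machinery of Section~\ref{sec:formula-for-special-bands} is not available there in the punctured case. Second, the linear-independence claim via ``disjoint localization of supports'' fails as stated: an arc may simultaneously have an endpoint at a puncture $p$ and lie adjacent to a boundary component $C$, so the supports of $v_p$ and $v_C$ overlap; one needs a genuine argument (e.g.\ restricting to a carefully chosen subset of coordinates where the family becomes triangular) rather than a disjointness claim. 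The original proof in \cite{FST} proceeds quite differently, via the structure of the block decomposition of $B(\sigma)$ and an explicit analysis of the contribution of each puzzle piece.
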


The key result that will allow us to show the linear independence of the set of generically $\tau^-$-reduced CC-functions for any choice of geometric coefficients is the following.

\begin{thm}\label{thm:existence-triang-with-nice-matrix} Every surface $\surf$ such that $\partial\Sigma\neq\varnothing$ admits a triangulation $\sigma$ whose associated skew-symmetric matrix $B(\sigma)$ satisfies $\Ker(B(\sigma))\cap\ZZ^n_{\geq 0}$.
\end{thm}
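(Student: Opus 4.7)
My plan is to invoke the chain of sufficient conditions in Remark \ref{rem:the-5-conditions-on-B}: since $\Image(B)\cap\Q_{>0}^n \neq \varnothing$ implies $\Ker(B)\cap\ZZ_{\geq 0}^n = 0$, it is enough to exhibit a triangulation $\sigma$ of $\surf$ for which $\Image(B(\sigma))$ contains a strictly positive vector. The construction will exploit the bypass/almost bypass calculus developed earlier: whenever the relevant portion of $(Q(\sigma),S(\sigma))$ is gentle, Proposition \ref{prop:sum-of-columns} tells us that each bypass $\pi$ yields $B(\sigma)\mathbf{z}_\pi = 2(\mathbf{e}_{t(\pi)}-\mathbf{e}_{s(\pi)})$ with $\mathbf{z}_\pi = \sum_k \mathbf{e}_{j_k}\in\ZZ_{\geq 0}^n$, and each almost bypass $\pi$ yields $\mathbf{e}_{t(\pi)}-\mathbf{e}_{s(\pi)}\in\Image(B(\sigma))$.

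In the unpunctured case, I will choose $\sigma$ so that a bypass or almost bypass of $Q(\sigma)$ runs around every boundary component, as in the pictures of Section \ref{sec:formula-for-bands-of-gentle-QPs}. Then $(Q(\sigma),S(\sigma))$ is a gentle QP, Proposition \ref{prop:sum-of-columns} applies directly, and combining the resulting differences $\mathbf{e}_{t(\pi)}-\mathbf{e}_{s(\pi)}$ (together with the columns of $B(\sigma)$, which are trivially in $\Image(B(\sigma))$) over enough bypasses of $\sigma$, one should be able to build a strictly positive vector in $\Image(B(\sigma))$. The combinatorics of this step reduces to ensuring that the ``bypass graph'' on the arc set (with an edge $s(\pi)\to t(\pi)$ for each bypass or almost bypass $\pi$) has enough connectivity to allow cancellation of negative entries, which is arranged by the choice of $\sigma$.

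In the punctured case, the plan is to triangulate each puncture $p$ locally---for instance by enclosing $p$ in a self-folded triangle, or by a small loop arc bounding a disk containing only $p$---and to triangulate the rest of $\surf$ as in the unpunctured case. By Theorem \ref{thm:corank-of-Btau}, the nullity of $B(\sigma)$ equals $|\punct|+c_{\operatorname{even}}$, and the ``unpunctured part'' of $\sigma$ continues to contribute bypasses in the above sense. One must then verify that the rows of $B(\sigma)$ indexed by puncture-local arcs admit a positive linear combination with the existing data, yielding a strictly positive vector in $\Image(B(\sigma))$.

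The main obstacle I anticipate is that Proposition \ref{prop:sum-of-columns} is stated for gentle QPs globally, whereas the Labardini potential around a puncture typically contains cycles of length greater than $3$. Either the proof of Proposition \ref{prop:sum-of-columns} must be adapted to apply on ``locally gentle'' subregions of $Q(\sigma)$ (the proof is essentially a column-entry bookkeeping argument that should localize), or a direct verification near each puncture is needed to ensure the puncture-local subconfiguration of $B(\sigma)$ does not obstruct the positivity argument. Small exceptional surfaces---such as a disk with one puncture and few marked boundary points---may still require individual inspection.
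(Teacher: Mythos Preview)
Your ingredients match the paper's—bypasses, Proposition \ref{prop:sum-of-columns}, self-folded triangles around punctures—but you target a different sufficient condition from Remark \ref{rem:the-5-conditions-on-B}: you aim for condition \eqref{item:Im(B)capQ>0n-is-not-empty} (a strictly positive vector in $\Image(B(\sigma))$), whereas the paper verifies condition \eqref{item:B-satisfies-columns-condition} (exhibiting $\operatorname{corank}(B(\sigma))$ specific columns as $\Q_{\geq 0}$-combinations of the remaining ones). The paper's route is fully constructive: start from a triangulation $\tau_0$ of the surface with exactly one marked point per boundary component (so that a bypass runs around every component), then add the remaining boundary marked points via the local pictures of Figure \ref{Fig:picking_points_on_component} to obtain $\tau_1$, and finally insert each puncture inside a self-folded triangle to obtain $\sigma$. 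One then writes down, for each boundary component with an even number of marked points and for each puncture, a single arc whose column of $B(\sigma)$ is a $\Q_{\geq 0}$-combination of the others (equations \eqref{eq:positive-combination}, \eqref{eq:positive-combination-sigma-kappa'}, \eqref{eq:positive-combination-sigma-kappa}, together with the observation that the two arcs of each self-folded triangle have identical columns); Proposition \ref{prop:sum-of-columns} supplies the $\mathbf{e}_{t(\pi)}-\mathbf{e}_{s(\pi)}$ term in each identity. The number of such arcs is exactly $|\punct|+c_{\operatorname{even}}$, which matches the corank by Theorem \ref{thm:corank-of-Btau}. The gentleness-near-punctures obstruction you anticipate is handled by placement: the punctures are inserted away from all the bypasses $\pi_{\kappa}$, so the adjacencies along each $\pi_\kappa$ in $\sigma$ coincide with those in the gentle $\tau_1$, and Proposition \ref{prop:sum-of-columns} still applies verbatim to those columns.

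Your sketch has a genuine gap at the step ``combining the resulting differences \ldots\ one should be able to build a strictly positive vector in $\Image(B(\sigma))$''. Knowing that each bypass contributes $\mathbf{e}_{t(\pi)}-\mathbf{e}_{s(\pi)}\in\Image(B(\sigma))$ and that the columns of $B(\sigma)$ lie in $\Image(B(\sigma))$ does not by itself produce a vector in $\Q_{>0}^n$; that is precisely the content of condition \eqref{item:Im(B)capQ>0n-is-not-empty}, and your ``bypass graph connectivity'' heuristic is not an argument. The paper's condition-\eqref{item:B-satisfies-columns-condition} approach sidesteps this global positivity question entirely: it only requires the local column identities above, and the comparison with Fomin--Shapiro--Thurston's corank formula finishes the proof.
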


\begin{proof} It suffices to show that there exists a triangulation $\sigma$ of $\surf$ such that $B(\sigma)$ satisfies condition \eqref{item:B-satisfies-columns-condition} of the first item of Remark \ref{rem:the-5-conditions-on-B}. We shall prove this constructively in two steps: first for unpunctured surfaces, then for punctured ones. 

Consider an unpunctured surface $(\Sigma,\marked_0)$ with exactly one marked point on each boundary component of $\Sigma$. Let $\tau_0$ be any triangulation of $(\Sigma,\marked_0)$ and $\kappa$ a boundary component of $\Sigma$. Since there is exactly one point from $\marked_0$, say $m_\kappa$, lying on $\kappa$, the entire $\kappa$ is a side of a unique triangle $\triangle_\kappa$ of $\tau_0$. The other two sides of $\triangle_\kappa$ are then arcs incident to $m_\kappa$, see Figure  \ref{Fig:gent_triang_one_pt_per_comp}.
% GENTLE TRIANGULATION, EXACTLY ONE MARKED POINT PER BOUNDARY COMPONENT
        \begin{figure}[!h]
                \caption{}\label{Fig:gent_triang_one_pt_per_comp}
                \centering
                \includegraphics[scale=.25]{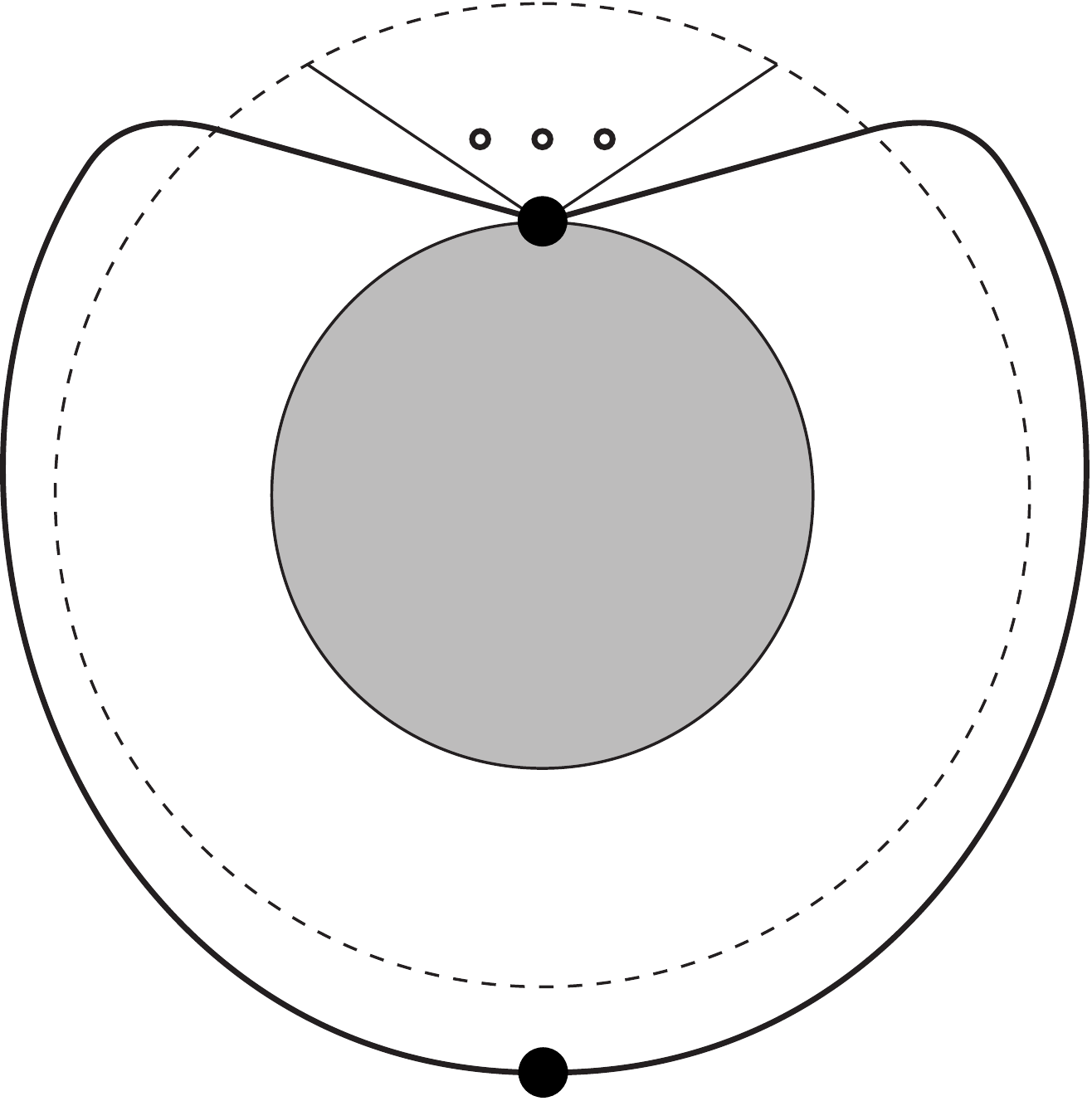}
        \end{figure}

For any positive integer $c_\kappa$ pick $c_\kappa-1$ distinct points $m_{\kappa,2},\ldots,m_{\kappa,c_\kappa}$ on $\kappa\setminus\{m_\kappa\}$, ordered along $\kappa$ as shown in Figure \ref{Fig:picking_points_on_component} on the left.
% GENTLE TRIANGULATION, CORANK COLUMNS=POS.LIN.COMB.OF RANK COLUMNS
        \begin{figure}[!h]
                \caption{}\label{Fig:picking_points_on_component}
                \centering
                \includegraphics[scale=.15]{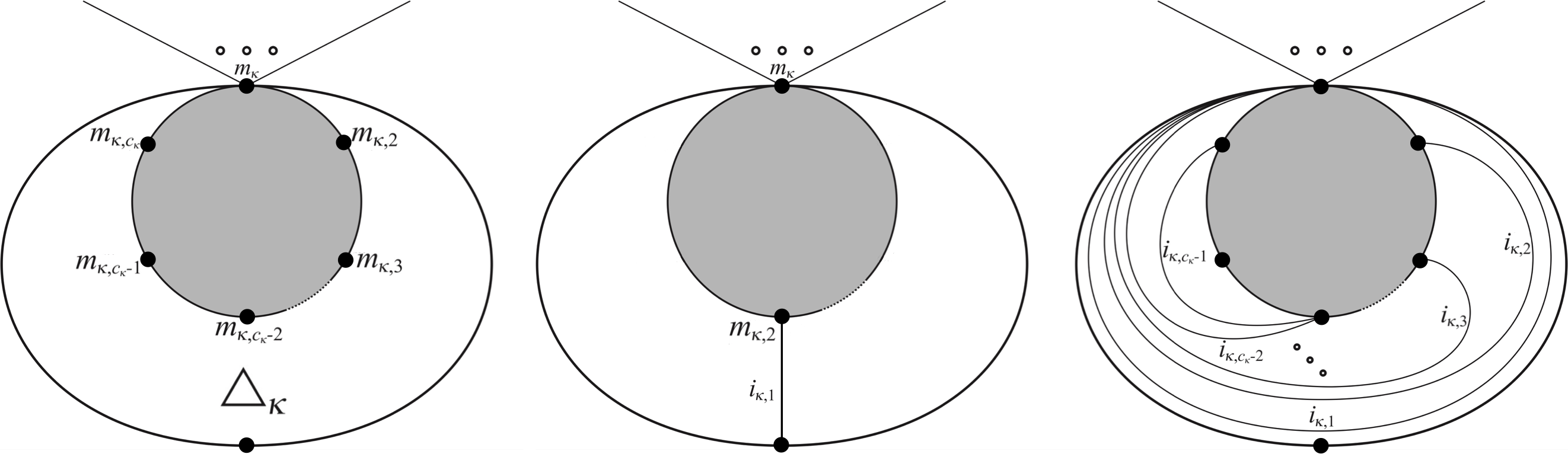}
        \end{figure}
Then draw the $c_\kappa-1$ arcs $i_{\kappa,1},i_{\kappa,2},\ldots,i_{\kappa,c_\kappa-1}$ contained in $\triangle_\kappa$ that are shown in that same Figure on the center (if $c_\kappa=2$) and on the right (if $c_\kappa>2$). See also Example \ref{ex:good-triangs-for-annuli} below; note that if $c_\kappa\geq 4$, then all but one of these arcs emanate from $m_\kappa$. After doing this for each boundary component $\kappa$ of $\Sigma$, we obtain a triangulation $\tau_1$ of an unpunctured surface $(\Sigma,\marked_1)$, where $\marked_1$ is a set containing exactly $c_\kappa$ points from each boundary component $\kappa$ of $\Sigma$.

Notice that $\tau_0\subseteq\tau_1$ 
and that for every boundary component $\kappa$ such that $c_\kappa\geq 4$, the bypass $\pi_\kappa$ on $Q(\tau_0)$ that runs around $\kappa$ is also a bypass on $Q(\tau_1)$. When $c_\kappa=2$, $\pi_\kappa$ is an almost bypass on $Q(\tau_1)$.

Suppose $c_\kappa$ is an even integer. A straightforward computation shows that
\begin{equation}\label{eq:positive-combination}
\mathbf{b}(\tau_1)_{i_{\kappa,c_{\kappa}-1}}=\sum_{\ell=1}^{\frac{c_{\kappa}-2}{2}}\mathbf{b}(\tau_1)_{i_{\kappa,2\ell-1}}
+\mathbf{e}_{t(\pi_{\kappa})}-\mathbf{e}_{s(\pi_\kappa)},
\end{equation}
where $\mathbf{e}_{t(\pi_{\kappa})}$ and $\mathbf{e}_{s(\pi_\kappa)}$ are standard basis vectors corresponding to the arcs $t(\pi_{\kappa})$ and $s(\pi_\kappa)$ of $\tau_1$. Since $\pi_\kappa$ is either a bypass or an almost bypass, we can apply Proposition \ref{prop:sum-of-columns} and deduce that $\mathbf{e}_{t(\pi_{\kappa})}-\mathbf{e}_{s(\pi_\kappa)}$ is a $\mathbb{Q}_{\geq 0}$-linear combination of the columns of $B(\tau_1)$ that correspond to arcs in $\tau_0\subseteq \tau_1\setminus\{i_{\kappa',c_{\kappa'}-1}\suchthat c_{\kappa'}$ is even$\}$. Since that cardinality of $\tau_1\setminus\{i_{\kappa',c_{\kappa'}-1}\suchthat c_{\kappa'}$ is even$\}$ is precisely $\rank(B(\tau_1))$ by Theorem~\ref{thm:corank-of-Btau}, this proves Theorem \ref{thm:existence-triang-with-nice-matrix} for unpunctured surfaces.

To move to the punctured case, we take a boundary component $\kappa$ as follows. If every boundary component has an odd number of marked points from $\marked_1$, we take $\kappa$ arbitrarily. Otherwise, if at least one boundary component has an even number of marked points from $\marked_1$, then we take it to satisfy $c_\kappa\in 2\mathbb{Z}$.

If $c_\kappa\geq 2$, then we set $\tau_2=f_{i_{\kappa,c_{\kappa}-1}}(\tau_1)=(\tau_1\setminus\{i_{\kappa,c_{\kappa}-1}\})\cup\{i_{\kappa,c_{\kappa}-1}'\}$, which is another triangulation of the unpunctured surface $(\Sigma,\marked_1)$. But if $c_\kappa=1$, we set $\tau_2=\tau_1$. Notice that, in any case, $\tau_0\subseteq\tau_2$.

Let $q$ be any positive integer. If $q=1$ we pick a puncture $p_1$ and draw three arcs $j_{1,1},j_{1,2},j_{1,3}$ on $(\Sigma,\marked_1\cup\{p_1\})$ as shown in Figure \ref{Fig:picking_punctures_one}, so that the three arcs drawn, together with the arcs in $\tau_2$, form a triangulation $\sigma$ that has a self-folded triangle based at $m_{\kappa,c_{\kappa}}$ and enclosing $p_1$.
 \begin{figure}[!h]
                \caption{Triangulation $\sigma$ of the once-punctured surface $(\Sigma,\marked_1\cup\{p_1\})$.}\label{Fig:picking_punctures_one}
                \centering
                \includegraphics[scale=.13]{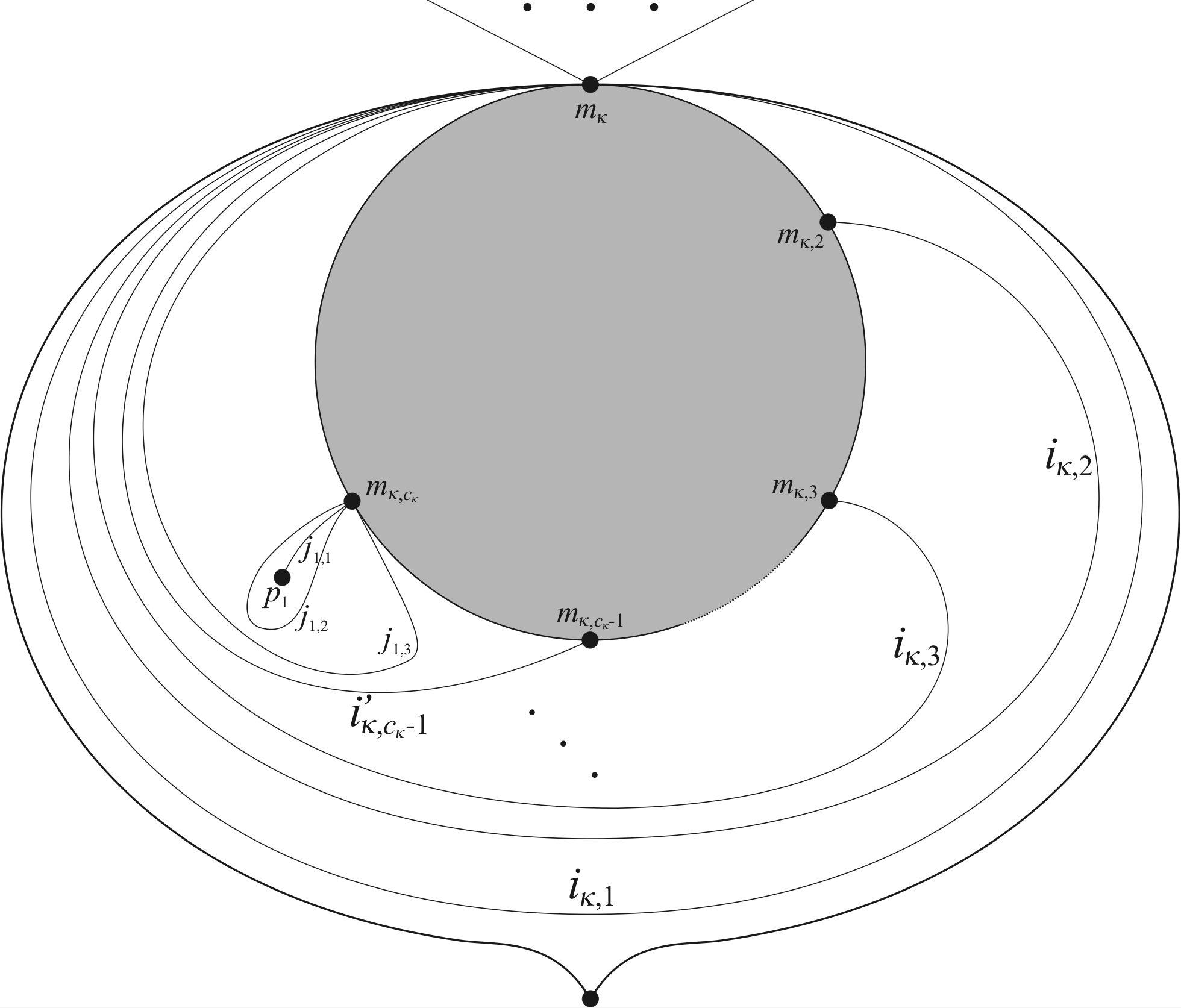}
        \end{figure}
Otherwise, if $q>1$ we pick $q$ distinct punctures $p_1,\ldots,p_q$, and draw $3q$ arcs $j_{1,1},j_{1,2},j_{1,3},\ldots,j_{q,1},j_{q,2},j_{q,3}$ on $(\Sigma,\marked_1\cup\{p_1,\ldots,p_q\})$ as shown in Figure \ref{Fig:picking_punctures_many}, so that the $3q$ arcs drawn, together with the arcs in $\tau_2$, form a triangulation $\sigma$ that has $q-1$ self-folded triangles based at $m_{\kappa}$ and respectively enclosing $p_1,\ldots,p_{q-1}$, and a self-folded triangle based at $m_{\kappa,c_{\kappa}}$ and enclosing $p_q$.
\begin{figure}[!h]
                \caption{Triangulation $\sigma$ of the $q$-times-punctured surface $(\Sigma,\marked_1\cup\{p_1,\ldots,p_q\})$ for $q>1$.}\label{Fig:picking_punctures_many}
                \centering
                \includegraphics[scale=.17]{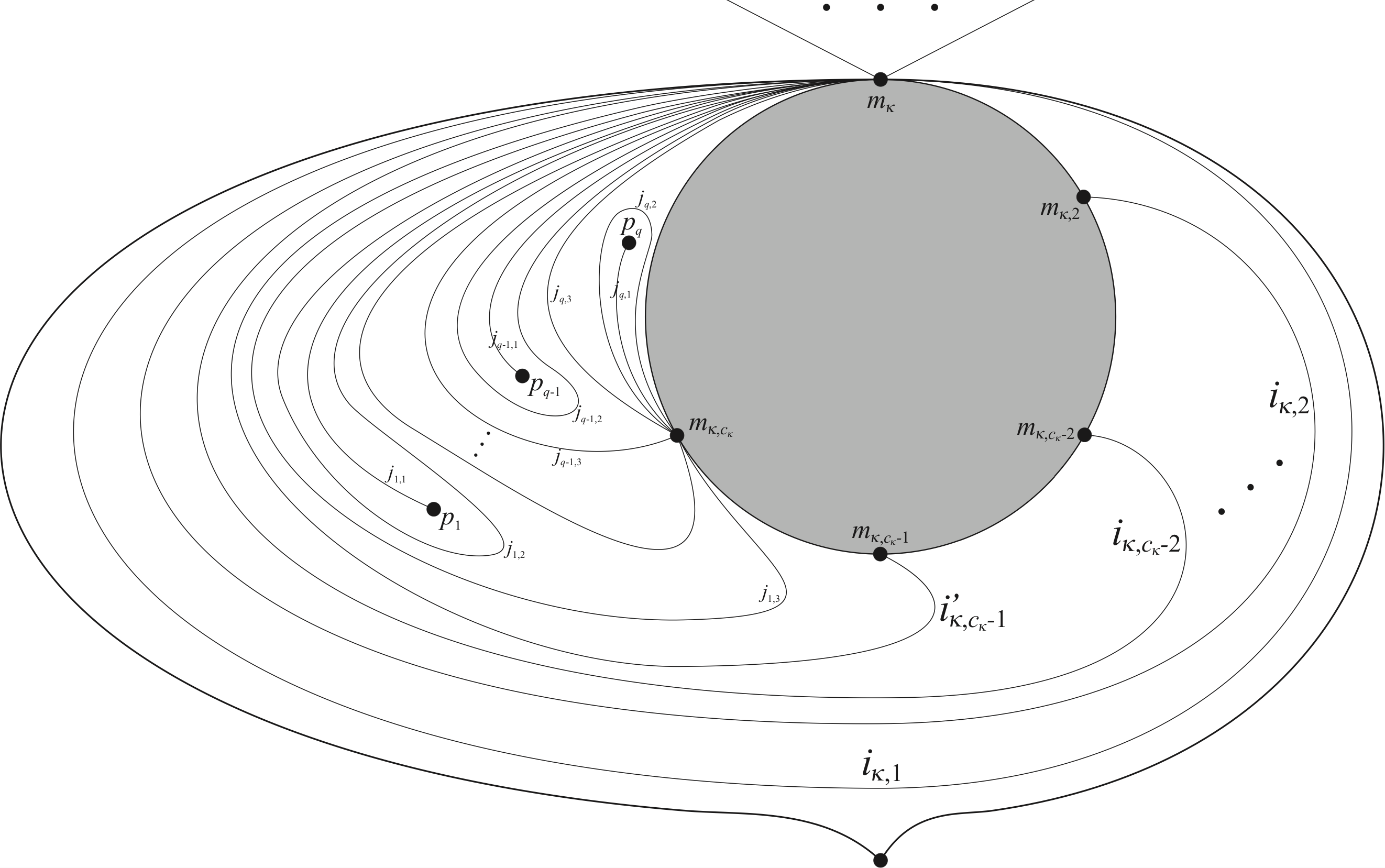}
        \end{figure}

In any case, regardless of whether $q=1$ or not, the triangulation $\tau_0$ of $(\Sigma,\marked_0)$ is contained in the triangulation $\sigma$ of $(\Sigma,\marked_1\cup\{p_1,\ldots,p_q\})$, and $Q(\tau_0)$ is a full subquiver of $Q(\sigma)$. In particular, for every boundary component $\kappa'$, all arcs appearing in the bypass $\pi_{\kappa'}$ of $Q(\tau_0)$ that runs around $\kappa'$ belong to $\sigma$. For $\kappa'\neq\kappa$, if $c_{\kappa'}$ is even, direct computation shows that
\begin{equation}\label{eq:positive-combination-sigma-kappa'}
\mathbf{b}(\sigma)_{i_{\kappa',c_{\kappa'}-1}}=\sum_{\ell=1}^{\frac{c_{\kappa'}-2}{2}}\mathbf{b}(\sigma)_{i_{\kappa',2\ell-1}}
+\mathbf{e}_{t(\pi_{\kappa'})}-\mathbf{e}_{s(\pi_{\kappa'})},
\end{equation}
whereas if $c_{\kappa}$ is even, then
\begin{equation}\label{eq:positive-combination-sigma-kappa}
\mathbf{b}(\sigma)_{j_{q,1}}=\left(\sum_{t=1}^{q-1}\mathbf{b}(\sigma)_{j_{t,1}}\right)+
\left(\sum_{\ell=1}^{\frac{c_{\kappa}-2}{2}}\mathbf{b}(\sigma)_{i_{\kappa,2\ell-1}}\right)+
\mathbf{b}(\sigma)_{i'_{\kappa,c_\kappa-1}}+
\mathbf{e}_{t(\pi_\kappa)}-\mathbf{e}_{s(\pi_\kappa)}.
\end{equation}
Note that, regardless of whether $\kappa'=\kappa$ or not, the adjacencies between arcs of $\sigma$ and arcs in the bypass $\pi_{\kappa'}$ are precisely the adjacencies between arcs of $\tau_1$ and arcs in the bypass $\pi_{\kappa'}$. This means that in \eqref{eq:positive-combination-sigma-kappa'} and \eqref{eq:positive-combination-sigma-kappa}, $\mathbf{e}_{t(\pi_{\kappa'})}-\mathbf{e}_{s(\pi_{\kappa'})}$ and $\mathbf{e}_{t(\pi_\kappa)}-\mathbf{e}_{s(\pi_\kappa)}$ are still expressible as $\mathbb{Q}_{\geq0}$-linear combinations of the columns of $B(\sigma)$ that correspond to the arcs appearing in $\pi_{\kappa'}$ and $\pi_\kappa$.

Notice also that for all $l=1,\ldots,q$, the columns $\mathbf{b}(\sigma)_{j_{l,1}}$ and $\mathbf{b}(\sigma)_{j_{l,2}}$ of $B(\sigma)$ coincide.

Summarizing, if $c_{\kappa}$ is even, then the columns of $B(\sigma)$ that correspond to arcs in the set $S=\{j_{l,2}\suchthat l=1,\ldots,q-1\}\cup\{j_{q,1},j_{q,2}\}\cup\{i_{\kappa',c_{\kappa'}-1}\suchthat \kappa'\neq\kappa \ \text{and} \ c_{\kappa'}\in2\mathbb{Z}\}$ can be expressed as $\mathbb{Q}_{\geq 0}$-linear combinations of the columns of $B(\sigma)$ in the complement of $S$. And if $c_{\kappa}$ is odd, then the columns of $B(\sigma)$ that correspond to arcs in the set $S=\{j_{l,2}\suchthat l=1,\ldots,q\}\cup\{i_{\kappa',c_{\kappa'}-1}\suchthat \kappa'\neq\kappa \ \text{and} \ c_{\kappa'}\in2\mathbb{Z}\}$ can be expressed as $\mathbb{Q}_{\geq 0}$-linear combinations of the columns of $B(\sigma)$ in the complement $S$. Since the cardinality of $S$ is precisely the corank of $B(\sigma)$, Theorem \ref{thm:existence-triang-with-nice-matrix} is proved.
\end{proof}

\begin{coro}\label{coro:lin-ind-for-surfs-arbitrary-coeffs} If $Q=Q(\tau)$ for some tagged triangulation $\tau$ of a surface $\surf$ with non-empty boundary, and $(\widetilde{Q},\widetilde{S})$ is non-degenerate, then $\calB_{(\widetilde{Q},\widetilde{S})}(Q,S)$ is linearly independent over $R$.
\end{coro}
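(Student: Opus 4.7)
The plan is to combine Theorem \ref{thm:existence-triang-with-nice-matrix} with Corollary \ref{coro:lin-ind-of-generic-basis-in-general}; the corollary follows immediately once one observes that the specific triangulation produced in Theorem \ref{thm:existence-triang-with-nice-matrix} is mutation-equivalent to the given $\tau$.

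More concretely, I would first invoke Theorem \ref{thm:existence-triang-with-nice-matrix} to produce a (tagged) triangulation $\sigma$ of the bordered surface $\surf$ whose signed-adjacency matrix $B(\sigma)$ satisfies $\Ker(B(\sigma))\cap\ZZ^n_{\geq 0}=0$. Next I would appeal to the classical result of Fomin-Shapiro-Thurston \cite{FST} asserting that every pair of tagged triangulations of $\surf$ is connected by a finite sequence of flips of tagged arcs, and that under each such flip the signed-adjacency matrix transforms via the corresponding matrix mutation. In particular, $B(\tau)=B$ and $B(\sigma)$ lie in the same mutation class, so $B(\sigma)$ is a matrix $B'$ in the mutation-equivalence class of $B$ satisfying $\Ker(B')\cap\ZZ^n_{\geq 0}=0$.

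At this point the hypotheses of Corollary \ref{coro:lin-ind-of-generic-basis-in-general} are verified: by assumption $(\widetilde{Q},\widetilde{S})$ is non-degenerate, and there exists a matrix in the mutation class of $B$ whose non-negative integer kernel is trivial. Applying that corollary directly yields that $\calB_{(\widetilde{Q},\widetilde{S})}(Q,S)$ is $R$-linearly independent inside the upper cluster algebra $\upper(\widetilde{B})$, which is the statement to be proved.

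There is essentially no obstacle here, since all the heavy lifting has already been done: the combinatorial construction of a triangulation satisfying the kernel condition was carried out in the proof of Theorem \ref{thm:existence-triang-with-nice-matrix} using Proposition \ref{prop:sum-of-columns} on bypasses of gentle quivers with potential, while the mutation-invariance $\calB_{(\widetilde{Q},\widetilde{S})}(Q,S)=\calB_{\mu_{k_\ell}\cdots\mu_{k_1}(\widetilde{Q},\widetilde{S})}(\mu_{k_\ell}\cdots\mu_{k_1}(Q,S))$ (Corollary \ref{cor:generic-stays-generic-after-DWZ-mutation-intro}), combined with Theorem \ref{thm:criterion-for-lin-ind-of-CC-functions} and the injectivity of $\g^{\Lambda}_-\colon\decirrsr(\Lambda)\to\ZZ^n$ from \cite{CLS,Plamondon-generic}, gives Corollary \ref{coro:lin-ind-of-generic-basis-in-general}. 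The only mild subtlety worth flagging is the need to cite \cite{FST} for the claim that tagged flips realize matrix mutations between the two triangulations $\tau$ and $\sigma$, since otherwise one could not transport the kernel condition from $B(\sigma)$ to a statement about the mutation class of $B(\tau)$.
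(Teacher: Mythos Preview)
Your proposal is correct and follows essentially the same approach as the paper's own proof, which reads: ``This is a direct combination of Corollary \ref{coro:lin-ind-of-generic-basis-in-general} and \ref{thm:existence-triang-with-nice-matrix} since any two tagged triangulations of $\surf$ can be joined by a sequence of flips and flip is compatible with Fomin-Zelevinsky's matrix mutation, see \cite{FST}.'' Your version simply unpacks the ingredients of Corollary \ref{coro:lin-ind-of-generic-basis-in-general} a bit more explicitly, but the structure of the argument is identical.
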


\begin{proof} This is a direct combination of Corollary \ref{coro:lin-ind-of-generic-basis-in-general} and \ref{thm:existence-triang-with-nice-matrix} since any two tagged triangulations of $\surf$ can be joined by a sequence of flips and flip is compatible with Fomin-Zelevinsky's matrix mutation, see \cite{FST}.
\end{proof}

\begin{ex}\label{ex:good-triangs-for-annuli} In Figure \ref{Fig:good_triangs_annuli} we can see triangulations of four different annuli.
        \begin{figure}[!h]
                \caption{}\label{Fig:good_triangs_annuli}
                %\centering
                \includegraphics[scale=.125]{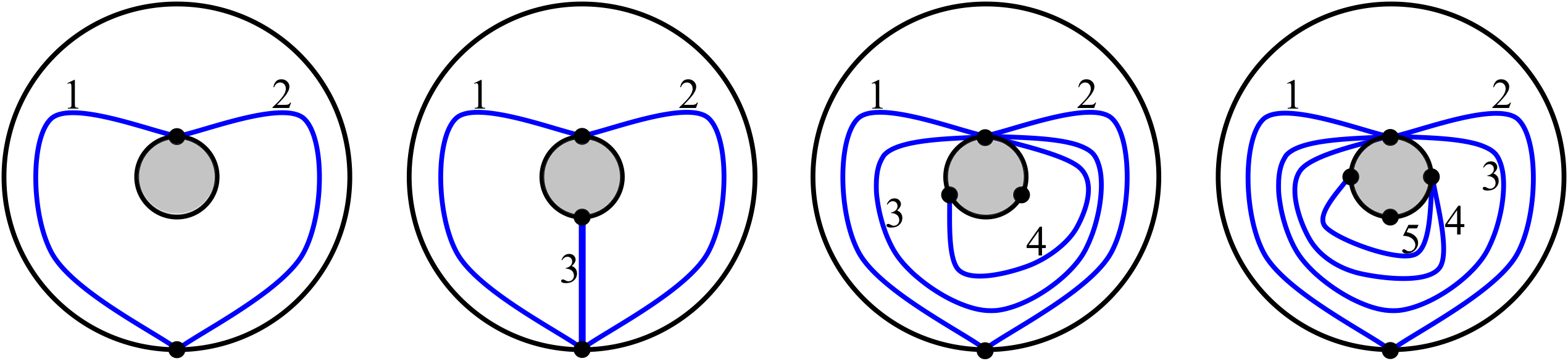}
        \end{figure}
The matrices of these triangulations are
$$
\left[\begin{array}{rr}0 & 2\\ -2 & 0\end{array}\right],
\left[\begin{array}{rrr}0 & 1 & 1 \\ -1 & 0 & -1\\ -1 & 1 & 0\end{array}\right],
\left[\begin{array}{rrrr}0 & 2 & -1 & 0\\ -2 & 0 & 1 & 0\\ 1 & -1 & 0 & 1\\ 0 & 0 & -1 & 0\end{array}\right],
\left[\begin{array}{rrrrr}0 & 2 & -1 & 0 & 0\\ -2 & 0 & 1 & 0 & 0\\ 1 & -1 & 0 &-1 & 0\\ 0 & 0 & 1 & 0 & 1\\ 0 & 0 & 0 & -1 & 0\end{array}\right].
$$
The first and the third are of course full-rank matrices. In the second matrix, equation \eqref{eq:positive-combination} becomes
$$
\mathbf{b}(\tau_1)_3 = \left[\begin{array}{r}1\\ -1\\ 0\end{array}\right]=\left[\begin{array}{r}1\\ 0\\ 0\end{array}\right]-\left[\begin{array}{r}0\\ 1\\ 0\end{array}\right]=\mathbf{e}_1-\mathbf{e}_2,
$$
while Proposition \ref{prop:sum-of-columns} takes the form $\mathbf{e}_1-\mathbf{e}_2=\mathbf{b}(\tau_1)_1+\mathbf{b}(\tau_1)_2$. Thus, $\mathbf{b}(\tau_1)_3=\mathbf{b}(\tau_1)_1+\mathbf{b}(\tau_1)_2$, as the reader could have checked directly in the $3\times 3$ matrix. In the fourth matrix, equation \eqref{eq:positive-combination} becomes
$$
\mathbf{b}(\tau_1)_5 = \left[\begin{array}{r}0\\ 0\\ 0 \\ 1 \\ 0\end{array}\right] = \left[\begin{array}{r}-1\\ 1\\ 0 \\ 1 \\ 0\end{array}\right] + \left[\begin{array}{r}1\\ 0\\ 0 \\ 0 \\ 0\end{array}\right]-\left[\begin{array}{r}0\\ 1\\ 0 \\ 0 \\ 0\end{array}\right]=\mathbf{b}(\tau_1)_3+\mathbf{e}_1-\mathbf{e}_2,
$$
while Proposition \ref{prop:sum-of-columns} takes the form $\mathbf{e}_1-\mathbf{e}_2=\frac{1}{2}\mathbf{b}(\tau_1)_1+\frac{1}{2}\mathbf{b}(\tau_1)_2$. Thus, $\mathbf{b}(\tau_1)_5=\mathbf{b}(\tau_1)_3+\frac{1}{2}\mathbf{b}(\tau_1)_1+\frac{1}{2}\mathbf{b}(\tau_1)_2$.
\end{ex}

\subsection{Spanning the Caldero-Chapoton algebra with coefficients}\label{sec:spanning-the-CC-algebra}

The following result has been proved recently by Fan Qin:

\begin{thm}\label{thm:Qin's-theorem}\cite{Qin} If the rectangular matrix $\widetilde{B}\in\ZZ^{(n+m)\times n}$ corresponding to $\widetilde{Q}$ has full rank, and the mutable part $Q$ of $\widetilde{Q}$ admits a reddening mutation sequence, then for any non-degenerate potential $\widetilde{S}\in\RA{\widetilde{Q}}$, $\calB_{(\widetilde{Q},\widetilde{S})}(Q,S)$ is a basis of the upper cluster algebra $\upper(\widetilde{B})$ over the ground ring $R=\Q[x_{n+1}^{\pm1},\ldots,x_{n+m}^{\pm1}]$.
\end{thm}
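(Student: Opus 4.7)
The plan is to verify linear independence and spanning separately, treating Qin's deep machinery as the main external input for the latter.

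For linear independence, I would first note that the full-rank assumption on $\widetilde{B}\in\ZZ^{(n+m)\times n}$ guarantees that $\widetilde{B}(\ZZ_{\geq 0}^n)\cap(-\widetilde{B}(\ZZ_{\geq 0}^n))=\{0\}$, so the rule $\mathbf{a}\preceq_{\widetilde{B}}\mathbf{b}\Leftrightarrow\mathbf{a}-\mathbf{b}\in\widetilde{B}(\ZZ_{\geq 0}^n)$ defines a partial order on $\ZZ^{n+m}$, exactly analogous to Lemma \ref{lemma:partial-order-on-Z^n} but at the extended level. Combining Proposition \ref{prop:restricted-g-vector-vs-extended-g-vector} with Theorem \ref{thm:different-s.r.i.c.-have-different-g-vectors}, distinct components $Z,Z'\in\decirrsr(\jacobalg{Q}{S})$ carry distinct extended $\g$-vectors $\widetilde{\g}_Z^{\widetilde{\Lambda}}\neq\widetilde{\g}_{Z'}^{\widetilde{\Lambda}}$. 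Then the $\preceq_{\widetilde{B}}$-maximality argument used in the proof of Theorem \ref{thm:criterion-for-lin-ind-of-CC-functions}, now applied at the extended level, isolates in any hypothetical $R$-linear relation among the $CC_{(\widetilde{Q},\widetilde{S})}(Z)$ a single Laurent monomial $\mathbf{x}^{\widetilde{\g}_Z}$ that appears in only one summand, forcing the corresponding coefficient to vanish and inductively killing the whole relation.

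For the containment $\calB_{(\widetilde{Q},\widetilde{S})}(Q,S)\subseteq\upper(\widetilde{B})$, Corollary \ref{cor:generic-stays-generic-after-DWZ-mutation-intro} provides mutation invariance: for every reachable seed and every $Z\in\decirrsr(\jacobalg{Q}{S})$, the function $CC_{(\widetilde{Q},\widetilde{S})}(Z)$ equals $CC_{\mu_k(\widetilde{Q},\widetilde{S})}(\mu_k(Z))$, which is manifestly a Laurent polynomial in the mutated cluster. Hence every generic CC-function is a Laurent polynomial in every cluster, placing it in the upper cluster algebra.

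Spanning is the genuinely hard part, and the plan is to invoke Qin's construction of the common triangular basis of $\upper(\widetilde{B})$. The hypothesis that $Q$ admits a reddening mutation sequence is precisely the injective-reachability condition on $(Q,S)$ needed in Qin's framework; under the additional full-rank assumption on $\widetilde{B}$, Qin proves that $\upper(\widetilde{B})$ admits a unique basis of $\preceq_{\widetilde{B}}$-pointed, bar-invariant Laurent polynomials parametrized by $\widetilde{\g}$-vectors. To identify this basis with $\calB_{(\widetilde{Q},\widetilde{S})}(Q,S)$, I would argue: (i) each $CC_{(\widetilde{Q},\widetilde{S})}(Z)$ is pointed with leading term $\mathbf{x}^{\widetilde{\g}_Z}$, as already used in the linear-independence step; (ii) by Plamondon's bijection $\decirrsr(\jacobalg{Q}{S})\to\ZZ^n$ combined with the full-rank lift to $\ZZ^{n+m}$, the assignment $Z\mapsto\widetilde{\g}_Z$ reaches every class needed to index Qin's basis; and (iii) the uniqueness part of Qin's theorem forces the pointed elements to agree up to unit scalars with the generic CC-functions, so the two bases coincide. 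The main obstacle is clearly this invocation of Qin's common triangular basis theorem, whose proof depends on substantial categorical input (cluster categories, bar involutions, quantum deformations via specialization) that lies well outside the elementary toolkit of the present paper; within the paper's framework the remaining work is mainly the dictionary check, whose ingredients (mutation invariance, $\g$-vector parametrization, and the partial order) are already available from the earlier sections.
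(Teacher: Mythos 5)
The paper offers no proof of this statement: it is quoted verbatim from Fan Qin's work \cite{Qin} as a black box in Subsection 6.3, where it is then combined with Mills' result (Theorem \ref{thm:existence-of-reddening-sequence}) and Lemma \ref{lemma:simultaneously-nondeg-potential} to derive Corollary \ref{coro:gen-basis-is-basis-of-CC-alg-for-surfaces}. So there is no in-paper argument to compare against. Your outline is, in spirit, a reasonable description of the logical skeleton of Qin's theorem, and you correctly identify that the mathematical weight sits in Qin's common triangular basis machinery, which the present paper deliberately treats as external input. If you intend your write-up to serve as an actual proof of this theorem, it cannot do so: you acknowledge that step (iii) and the spanning part depend on Qin's theorem itself, so the argument is circular as a stand-alone proof, though it is fine as a reader's gloss of what the cited result involves.

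There is also a genuine gap in the one part you do argue in detail, the linear independence step. You propose to lift the partial order to $\ZZ^{n+m}$ via $\mathbf{a}\preceq_{\widetilde{B}}\mathbf{b}\Leftrightarrow\mathbf{a}-\mathbf{b}\in\widetilde{B}(\ZZ_{\geq 0}^n)$ and pick off the $\preceq_{\widetilde{B}}$-maximal exponent $\widetilde{\g}_Z$ in an $R$-linear relation. This does not work over $R$. In a relation $\sum_Z\lambda_Z\,CC_{(\widetilde{Q},\widetilde{S})}(Z)=0$ with $\lambda_Z\in R=\Q[x_{n+1}^{\pm1},\ldots,x_{n+m}^{\pm1}]$, the coefficients $\lambda_Z$ introduce arbitrary shifts in the last $m$ exponent coordinates, so the exponents occurring in $\lambda_Z\,CC_{(\widetilde{Q},\widetilde{S})}(Z)$ have the form $\mathbf{w}+\widetilde{B}\mathbf{e}+\widetilde{\g}_Z$ with $\mathbf{w}\in\{0\}^n\times\ZZ^m$ uncontrolled. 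Such shifts are not $\preceq_{\widetilde{B}}$-comparable, so there is no way to isolate a single $\preceq_{\widetilde{B}}$-maximal exponent from the whole relation. This is precisely why the proof of Theorem \ref{thm:criterion-for-lin-ind-of-CC-functions} projects to the mutable variables $x_1,\ldots,x_n$ and works with $\preceq_B$ on $\ZZ^n$, which requires $\Ker(B)\cap\ZZ^n_{\geq 0}=0$ for the square top block $B$. That condition is \emph{not} implied by $\widetilde{B}$ having full rank; for instance, attaching a full-rank coefficient block to a mutable block $B$ with $\Ker(B)\cap\ZZ^n_{\geq 0}\neq 0$ produces a counterexample to your implication. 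Qin's own argument for linear independence therefore cannot be a direct $\preceq_{\widetilde{B}}$-leading-term comparison of the kind you describe; it goes through his notion of pointed elements and the dominance order in his framework, which is part of the same black box you invoke for the spanning step.
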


We will need the following slightly technical result.

\begin{lemma}\label{lemma:simultaneously-nondeg-potential} If $\widetilde{Q}$ is 2-acyclic, then there exists a non-degenerate potential on $\widetilde{Q}$ whose restriction to $Q$ turns out to be a non-degenerate potential for $Q_{\prin}$ as well.
\end{lemma}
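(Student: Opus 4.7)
The plan is to adapt Derksen-Weyman-Zelevinsky's Baire-category argument to simultaneously ensure non-degeneracy for $\widetilde{Q}$ and $Q_{\prin}$. First I would verify that $Q_{\prin}$ is $2$-acyclic: its full subquiver on $[n]$ coincides with that of $\widetilde{Q}$ and is therefore $2$-acyclic, while each new vertex $j'$ is a sink (the only arrow incident to it is $j\to j'$), so no cycle of $Q_{\prin}$ can pass through any $j'$. Consequently, every potential on $Q_{\prin}$ is automatically supported on cycles entirely contained in the mutable part, and the spaces of potentials on $Q_{\prin}$, on $Q$, and on the full subquiver of $\widetilde{Q}$ spanned by $[n]$ are all canonically identified.

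Next, I would consider the continuous $\C$-linear surjection $\widehat{\rho}\df\RA{\widetilde{Q}}\to\RA{Q}$ and its induced map between spaces of potentials. Under the identification from the previous paragraph, this becomes a surjection from the space of potentials on $\widetilde{Q}$ onto the space of potentials on $Q_{\prin}$, obtained by discarding all terms involving arrows outside $Q_1$. Restricted to cyclic elements of length $\leq p$, this yields a surjective $\C$-linear map between finite-dimensional affine spaces, so preimages of non-empty Zariski-open subsets are non-empty Zariski-open.

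For every finite sequence $\mathbf{k}=(k_1,\ldots,k_\ell)$ of elements of $[n]$, let $U_{\mathbf{k}}$ denote the set of potentials $\widetilde{S}$ on $\widetilde{Q}$ such that the quiver underlying $\mu_{k_\ell}\cdots\mu_{k_1}(\widetilde{Q},\widetilde{S})$ is $2$-acyclic after reduction; let $V_{\mathbf{k}}$ be the analogous set of potentials on $Q_{\prin}$. In each sufficiently large truncation, both $U_{\mathbf{k}}$ and $V_{\mathbf{k}}$ are Zariski-open, and by Derksen-Weyman-Zelevinsky's existence theorem for non-degenerate potentials applied to $\widetilde{Q}$ and to $Q_{\prin}$ (both of which are $2$-acyclic), they are non-empty and hence dense in their irreducible finite-dimensional ambient spaces. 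It follows that $U_{\mathbf{k}}$ and the pullback $\widehat{\rho}^{-1}(V_{\mathbf{k}})$ are Zariski-dense open subsets of each truncation of the space of potentials on $\widetilde{Q}$.

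Finally, there are only countably many such sequences $\mathbf{k}$, so the standard Baire-type argument of \cite{DWZ1} (valid because $\C$ is uncountable) produces a potential
\[
\widetilde{S}\in\bigcap_{\mathbf{k}}\bigl(U_{\mathbf{k}}\cap\widehat{\rho}^{-1}(V_{\mathbf{k}})\bigr),
\]
which is non-degenerate on $\widetilde{Q}$ and whose restriction $\widehat{\rho}(\widetilde{S})$ is non-degenerate on $Q_{\prin}$. The main technical point is the verification that preimages under $\widehat{\rho}$ of Zariski-dense open subsets remain Zariski-dense open; this reduces to the elementary fact that a surjective $\C$-linear map between finite-dimensional affine spaces pulls non-empty Zariski-open subsets back to non-empty Zariski-open subsets, which are automatically dense in the irreducible ambient space.
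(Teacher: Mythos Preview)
Your proof is correct but takes a genuinely different route from the paper's. The paper constructs a single auxiliary quiver $\widetilde{Q}'$ obtained from $\widetilde{Q}$ by adjoining vertices $1',\ldots,n'$ and one arrow $j\to j'$ for each $j\in[n]$; since the new vertices are sinks, $\widetilde{Q}'$ is still $2$-acyclic, every potential on $\widetilde{Q}'$ is already supported on $\widetilde{Q}$, and both $\widetilde{Q}$ and $Q_{\prin}$ arise as restrictions of $\widetilde{Q}'$ to the vertex subsets $[n+m]$ and $[n]\cup\{1',\ldots,n'\}$ respectively. One then applies Derksen--Weyman--Zelevinsky's existence theorem \emph{once} to $\widetilde{Q}'$ and invokes the fact (\cite[Corollary~22]{LF1}, already used in Subsection~\ref{subsec:assumptions}) that restriction of a non-degenerate QP to any vertex subset is again non-degenerate. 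This gives a two-line argument from results already in hand. Your approach instead re-opens the Baire-category machinery of \cite{DWZ1}, intersecting the countably many open conditions for $\widetilde{Q}$ with the pullbacks under $\widehat{\rho}$ of those for $Q_{\prin}$; this is more self-contained (it does not appeal to the restriction lemma) but requires re-establishing openness and density at each truncation level. The paper's trick of passing to a common overquiver packages all of that into a single invocation of existing theorems and makes transparent \emph{why} one potential can serve both purposes: both quivers sit inside a common $2$-acyclic quiver.
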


\begin{proof} Let $\widetilde{Q}'$ be the quiver obtained from $\widetilde{Q}$ by adding $n$ vertices $1',2',\ldots,n'$ and by adding exactly one arrow $j\rightarrow j'$ for each $j\in[n]$. By Derksen-Weyman-Zelevinsky's celebrated theorem on the existence of non-degenerate potentials, \cite[Corollary 7.4]{DWZ1}, $\widetilde{Q}'$ admits a non-degenerate potential. Since restriction of QPs preserves non-degeneracy by \cite[Corollary 22]{LF1}, this potential does the job.
\end{proof}

Mills \cite{Mills1} has shown that all quivers associated to triangulations of surfaces with non-empty boundary admit maximal green mutation sequences, see also \cite[Section 4.9]{Keller-green-seqs}. This clearly implies the following:

\begin{thm}\label{thm:existence-of-reddening-sequence}\cite{Mills1} Let $\surf$ be a possibly-punctured surface with non-empty boundary. For any triangulation $\tau$ of $\surf$, the quiver $Q(\tau)$ admits a reddening mutation sequence.
\end{thm}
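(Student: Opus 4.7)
The plan is to deduce this as a direct corollary of Mills' theorem on the existence of maximal green sequences for surface quivers, combined with the well-known fact that every maximal green sequence is in particular a reddening sequence. First I would recall the framed quiver $\widehat{Q(\tau)}$ obtained by adjoining, for each mutable vertex $i$ of $Q(\tau)$, a frozen vertex $i'$ together with a single arrow $i\to i'$. A mutation sequence $(k_1,\ldots,k_\ell)$ applied to $\widehat{Q(\tau)}$ is \emph{green} at each step if the vertex being mutated has only incoming arrows from frozen vertices, and the sequence is \emph{maximal green} if after the final mutation every mutable vertex has only incoming arrows from frozen vertices; such a sequence is automatically \emph{reddening} in the sense that the resulting framed quiver is isomorphic, as a quiver on the union of mutable and frozen vertex sets, to the framed quiver obtained from $\widehat{Q(\tau)}$ by reversing all arrows between a mutable and a frozen vertex. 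Thus the existence of a maximal green mutation sequence for $Q(\tau)$ automatically implies the existence of a reddening mutation sequence.

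Second I would invoke Mills' result \cite{Mills1} directly: for any triangulation $\tau$ of a surface $\surf$ with non-empty boundary, the quiver $Q(\tau)$ admits a maximal green mutation sequence. By the previous paragraph, this yields a reddening sequence for $Q(\tau)$ and the theorem follows. As a sanity check, one can also observe that the existence of a reddening sequence depends only on the mutation-equivalence class of the quiver (this is essentially \cite[Proposition 2.10]{Keller-green-seqs}), and any two triangulations of $\surf$ are related by a finite sequence of flips, each corresponding to a quiver mutation at a mutable vertex; hence it suffices to exhibit the reddening sequence for a single conveniently chosen triangulation, which is precisely what Mills does by an inductive argument on the genus, the number of boundary components, and the number of marked points. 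I do not expect any obstacle beyond carefully matching the conventions for green versus reddening sequences used in \cite{Mills1,Keller-green-seqs}, which is purely bookkeeping.
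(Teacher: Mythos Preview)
Your proposal is correct and matches the paper's approach exactly: the paper simply states that Mills \cite{Mills1} proved the existence of maximal green sequences for quivers of triangulations of surfaces with non-empty boundary, and notes that this ``clearly implies'' the existence of reddening sequences. Your additional remarks on the mutation-invariance of the reddening property are accurate but go beyond what the paper records for this cited result.
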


\begin{coro}\label{coro:gen-basis-is-basis-of-CC-alg-for-surfaces}  If $Q=Q(\tau)$ for some tagged triangulation $\tau$ of a surface $\surf$ with non-empty boundary, then there exists a non-degenerate potential $\widetilde{S}\in\RA{\widetilde{Q}}$ such that $\calB_{(\widetilde{Q},\widetilde{S})}(Q,S)$ is a basis of the Caldero-Chapoton algebra $\calCC_{(\widetilde{Q},\widetilde{S})}(Q,S)$ over the ground ring $R$.
\end{coro}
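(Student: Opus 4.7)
Since $\widetilde{Q}=Q(\tau)$ is $2$-acyclic (a standard property of quivers arising from tagged triangulations of surfaces with non-empty boundary), Lemma \ref{lemma:simultaneously-nondeg-potential} yields a non-degenerate potential $\widetilde{S}\in\RA{\widetilde{Q}}$ whose restriction $S=\widehat{\rho}(\widetilde{S})$, viewed as a potential on $Q_{\prin}$, is also non-degenerate. With $\widetilde{S}$ chosen this way, the $R$-linear independence of $\calB_{(\widetilde{Q},\widetilde{S})}(Q,S)$ is immediate from Corollary \ref{coro:lin-ind-for-surfs-arbitrary-coeffs}, so the entire remaining task is to prove the spanning property.

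I would first handle the principal-coefficient case. The matrix $\widetilde{B}_{\prin}=\left[\begin{smallmatrix}B\\ \myid_{n\times n}\end{smallmatrix}\right]$ has full rank $n$; Mills' Theorem \ref{thm:existence-of-reddening-sequence} gives a reddening mutation sequence for $Q=Q(\tau)$; and $(Q_{\prin},S)$ is non-degenerate by construction. Qin's Theorem \ref{thm:Qin's-theorem} therefore applies and tells us that $\calB_{(Q_{\prin},S)}(Q,S)$ is a basis of the upper cluster algebra $\upper(\widetilde{B}_{\prin})$ over $R_{\prin}$. On the other hand, iterating the Derksen-Weyman-Zelevinsky mutation invariance of CC-functions of decorated representations (encapsulated at the level of components in Corollary \ref{cor:generic-stays-generic-after-DWZ-mutation-intro}, and valid for arbitrary decorated representations by DWZ's key identity \eqref{eq:DWZ's-key-lemma}) shows that $CC_{(Q_{\prin},S)}(\mathcal{M})$ is a Laurent polynomial in every cluster of $\upper(\widetilde{B}_{\prin})$ for every decorated representation $\mathcal{M}$ of $\jacobalg{Q}{S}$. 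Hence we have the inclusions
$$
\calB_{(Q_{\prin},S)}(Q,S)\subseteq \calCC_{(Q_{\prin},S)}(Q,S)\subseteq \upper(\widetilde{B}_{\prin}),
$$
and since the outermost algebra already has $\calB_{(Q_{\prin},S)}(Q,S)$ as a basis, the same set is automatically a basis of the intermediate subalgebra $\calCC_{(Q_{\prin},S)}(Q,S)$ over $R_{\prin}$; in particular, it spans it.

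Finally I would transfer the spanning from principal to arbitrary coefficients. For each $Z\in\decirrsr(\jacobalg{Q}{S})$ pick a decorated representation $\mathcal{M}_Z$ in the intersection of the two dense open subsets of $Z$ on which the constructible functions $\mathcal{M}\mapsto CC_{(Q_{\prin},S)}(\mathcal{M})$ and $\mathcal{M}\mapsto CC_{(\widetilde{Q},\widetilde{S})}(\mathcal{M})$ take their generic values $CC_{(Q_{\prin},S)}(Z)$ and $CC_{(\widetilde{Q},\widetilde{S})}(Z)$ respectively, and let $\mathscr{M}:=\{\mathcal{M}_Z\suchthat Z\in\decirrsr(\jacobalg{Q}{S})\}$. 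Then $CC_{(Q_{\prin},S)}(\mathscr{M})=\calB_{(Q_{\prin},S)}(Q,S)$ spans $\calCC_{(Q_{\prin},S)}(Q,S)$ over $R_{\prin}$ by the previous paragraph, so Theorem \ref{thm:spanning-set-in-princ-coeffs=>spanning-set-in-arbitrary-coeffs-intro} yields that $CC_{(\widetilde{Q},\widetilde{S})}(\mathscr{M})=\calB_{(\widetilde{Q},\widetilde{S})}(Q,S)$ spans $\calCC_{(\widetilde{Q},\widetilde{S})}(Q,S)$ over $R$, completing the proof.

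At this stage there is no serious obstacle: all of the heavy lifting has been done in the preceding sections of the paper, and the argument is essentially a matter of assembling (i) the linear independence from Section \ref{sec:lin-ind-for-surfaces}, (ii) Qin's basis theorem together with Mills' reddening sequences in the principal-coefficient setting, and (iii) the principal-to-arbitrary transfer Theorem \ref{thm:spanning-set-in-princ-coeffs=>spanning-set-in-arbitrary-coeffs-intro}, while using Lemma \ref{lemma:simultaneously-nondeg-potential} to simultaneously secure the non-degeneracy hypotheses of $(\widetilde{Q},\widetilde{S})$ and $(Q_{\prin},S)$.
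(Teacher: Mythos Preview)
Your argument is essentially the paper's own proof: Lemma \ref{lemma:simultaneously-nondeg-potential} for the simultaneous non-degeneracy, Corollary \ref{coro:lin-ind-for-surfs-arbitrary-coeffs} for linear independence, Mills plus Qin for spanning in principal coefficients, and Theorem \ref{thm:spanning-set-in-princ-coeffs=>spanning-set-in-arbitrary-coeffs} to transfer to arbitrary coefficients. One small slip: you write ``$\widetilde{Q}=Q(\tau)$'' where the hypothesis is $Q=Q(\tau)$; the $2$-acyclicity of $\widetilde{Q}$ (needed for Lemma \ref{lemma:simultaneously-nondeg-potential}) is a standing implicit assumption rather than a consequence of $Q=Q(\tau)$, but otherwise your reasoning and the paper's coincide.
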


\begin{proof} By Lemma \ref{lemma:simultaneously-nondeg-potential}, there exists a potential $\widetilde{S}\in\RA{\widetilde{Q}}$ such that, setting $(Q,S):=(\widetilde{Q}|_{[n]},\widetilde{S}|_{[n]})$, both $(\widetilde{Q},\widetilde{S})$ and $(Q_{\operatorname{prin}},S)$ are non-degenerate%\footnote{\Daniel{Note that UNIQUENESS}}.

By Corollary \ref{coro:lin-ind-for-surfs-arbitrary-coeffs}, $\calB_{(\widetilde{Q},\widetilde{S})}(Q,S)$ is linearly independent over $R$.

By Theorem \ref{thm:existence-of-reddening-sequence}, the quiver $Q$ admits a reddening sequence. Theorem \ref{thm:Qin's-theorem} then implies that $\calB_{(Q_{\prin},S)}(Q,S)$ spans the upper cluster algebra $\upper(B_{\prin})$ over the ground ring $R_{\prin}$. Since the $R_{\prin}$-linear combinations of elements of $\calB_{(Q_{\prin},S)}(Q,S)$ belong to the Caldero-Chapoton algebra $\calCC_{(Q_{\prin},S)}(Q,S)$, this implies that $\calB_{(Q_{\prin},S)}(Q,S)$ spans $\calCC_{(Q_{\prin},S)}(Q,S)$ over $R_{\prin}$. Applying Theorem \ref{thm:spanning-set-in-princ-coeffs=>spanning-set-in-arbitrary-coeffs}, we deduce that $\calB_{(\widetilde{Q},\widetilde{S})}(Q,S)$ spans $\calCC_{(\widetilde{Q},\widetilde{S})}(Q,S)$ over the ground ring $R$.
\end{proof}

\subsection{$\calB_{(\widetilde{Q},\widetilde{S})}(Q,S)$ is a basis of the upper cluster algebra with arbitrary geometric coefficients}\label{sec:B-is-a-basis-of-upper-cluster-algebra}

\begin{thm}\label{thm:A=U-for-surfs-with-non-empty-boundary}\cite{Muller1,Muller2} If $B=B(\tau)$ for some tagged triangulation  $\tau$ of a surface with non-empty boundary $\surf$ with at least two marked points on the boundary, then $\mathcal{A}(\widetilde{B})=\upper(\widetilde{B})$ (see Definition \ref{def:precise-def-of-cluster-alg-and-upper-cluster-alg}).
\end{thm}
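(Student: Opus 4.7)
The plan is to prove Muller's theorem in two modular ingredients: first, establish the abstract principle that \emph{locally acyclic cluster algebras satisfy $\mathcal{A}=\upper$}; second, exhibit the surface cluster algebras of the stated type as locally acyclic. A cluster algebra $\mathcal{A}(\widetilde{B})$ is called \emph{locally acyclic} if there exists a finite collection of cluster variables $x_{\gamma_1},\ldots,x_{\gamma_k}$ (possibly drawn from distinct seeds) such that (i) $(x_{\gamma_1},\ldots,x_{\gamma_k})=\mathcal{A}(\widetilde{B})$ as ideals, and (ii) for each $i$, the localization $\mathcal{A}(\widetilde{B})[x_{\gamma_i}^{-1}]$ is acyclic, i.e.\ admits a seed whose mutable quiver has no oriented cycles.

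For the abstract principle: the inclusion $\mathcal{A}(\widetilde{B})\subseteq\upper(\widetilde{B})$ is compatible with localization at any cluster variable. By the classical Berenstein--Fomin--Zelevinsky theorem, for an acyclic cluster algebra one has $\mathcal{A}=\upper$, so the localized inclusions $\mathcal{A}(\widetilde{B})[x_{\gamma_i}^{-1}]\hookrightarrow\upper(\widetilde{B})[x_{\gamma_i}^{-1}]$ are equalities. The ideal-covering condition (i) means that the open sets $D(x_{\gamma_i})\subseteq\Spec\mathcal{A}(\widetilde{B})$ cover the spectrum, so an element of $\upper(\widetilde{B})$ that lies in $\mathcal{A}(\widetilde{B})[x_{\gamma_i}^{-1}]$ for every $i$ must already lie in $\mathcal{A}(\widetilde{B})$; this standard ``sheaf-style'' gluing closes the argument. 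Iteration of this step permits (ii) to be weakened to the recursive requirement that each $\mathcal{A}(\widetilde{B})[x_{\gamma_i}^{-1}]$ be itself locally acyclic, with acyclic cluster algebras as the base of the recursion.

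For the geometric ingredient: given a tagged triangulation $\tau$ of $\surf$ with at least two marked points on $\partial\Sigma$, I would construct the $\gamma_i$ by the \emph{cutting-along-arcs} operation of Fomin--Shapiro--Thurston. If $\gamma$ is an arc with both endpoints on $\partial\Sigma$ at distinct marked points, then cutting $\Sigma$ along $\gamma$ produces a possibly-disconnected surface $\Sigma/\gamma$ of strictly smaller reduced complexity, and the cluster algebra $\mathcal{A}(B(\tau_{\Sigma/\gamma}))$ (with $\gamma$ now a boundary segment, hence a frozen direction corresponding to inverting $x_\gamma$) is canonically identified with $\mathcal{A}(\widetilde{B})[x_\gamma^{-1}]$. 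Choose a finite family $\gamma_1,\ldots,\gamma_k$ of such boundary-to-boundary arcs so that iteratively cutting along them decomposes $\surf$ into a disjoint union of polygons, whose cluster algebras are of type $A$ and thus acyclic; this is precisely where the hypothesis of at least two boundary marked points is indispensable, for otherwise no non-loop boundary-to-boundary arc exists and no complexity-reducing cut is available. The ideal-covering condition (i) then has to be verified directly by showing $1\in(x_{\gamma_1},\ldots,x_{\gamma_k})$, which follows from the mutation/exchange relations at the triangles of $\tau$ together with the ability to express any fixed initial cluster variable as a polynomial combination of the $x_{\gamma_i}$ and variables invertible in the relevant localization.

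The main obstacle is the last step: verifying that the chosen finite family of boundary-arc cuts produces an actual open cover rather than merely a collection of localizations whose union misses a closed stratum. Concretely, one must choose the arcs $\gamma_i$ so that every triangle of a reference triangulation meets at least one $\gamma_i$, and then exploit the fact that a primitive exchange polynomial for an internal arc lying in a triangle flanked by $\gamma_i$ can, after inversion, be used to manufacture a unit combination of the $x_{\gamma_i}$. Propagating this from the polygonal base case back up the cutting recursion, while simultaneously ensuring that at every stage the remaining pieces still admit two boundary marked points (so the recursion does not terminate prematurely at a surface where $\mathcal{A}\subsetneq\upper$, such as the once-punctured torus), is the delicate combinatorial heart of the argument.
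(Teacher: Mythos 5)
Your proposal reconstructs exactly the two-step strategy behind Muller's theorem---the abstract principle that locally acyclic cluster algebras satisfy $\mathcal{A}=\upper$, and the geometric verification that these surface cluster algebras are locally acyclic---which is precisely the route the paper follows. Note, however, that the paper does not re-prove the statement: its ``proof'' is the citation \cite{Muller1,Muller2}, with the subsequent remark spelling out which of Muller's results are used (Theorem 10.6 and Lemmas 5.1, 10.3 of \cite{Muller1} for local acyclicity of the coefficient-free $\mathcal{A}(B(\tau))$, Proposition 5.8 of \cite{Muller1} for lifting local acyclicity to arbitrary geometric coefficients $\widetilde{B}$, and the main result of \cite{Muller2} for the implication locally acyclic $\Rightarrow \mathcal{A}=\upper$). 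So in the sense that matters here, you are taking ``essentially the same approach.''

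That said, if one reads your text as an attempted self-contained argument rather than a summary of Muller, two points are too vague to stand on their own. First, your verification of the ideal-covering condition ``by showing $1 \in (x_{\gamma_1},\ldots,x_{\gamma_k})$ from the exchange relations'' skips the actual mechanism: Muller proves covering via his \emph{covering pair} criterion (a pair of cluster variables $(x_i,x_j)$ in one seed with no bi-infinite path through both already generates the unit ideal), which gives a recursion in which only two localizations at a time are produced, each a cluster localization in his precise sense. Your formulation, inverting a whole family of boundary-to-boundary arcs at once and then trying to manufacture a unit directly, is not how the argument actually proceeds, and it is not clear it can be made to close without, in effect, rediscovering the covering-pair lemma. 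Second, the identification of $\mathcal{A}(\widetilde{B})[x_\gamma^{-1}]$ with the cluster algebra of the cut surface is stated as ``canonical'' but is a nontrivial step (Muller's Lemma 10.3), and more importantly your sketch is carried out with coefficient-free surface data in mind while the theorem is for arbitrary $\widetilde{B}$; you would still need the analogue of \cite[Proposition~5.8]{Muller1} that local acyclicity of the mutable part transfers to every geometric-coefficient extension. Neither issue is a \emph{wrong} turn---they are exactly the points Muller handles---but the paper deliberately outsources them to \cite{Muller1,Muller2}, so you should do the same rather than present them as routine.
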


\begin{remark}\begin{enumerate}
\item In fact, Muller shows in~\cite[Theorem~10.6]{Muller1} that for $\tau$ as in Theorem \ref{thm:A=U-for-surfs-with-non-empty-boundary}, the coefficient-free cluster algebra
$\mathcal{A}(B(\tau))$ is locally acyclic with the help of his Banff algorithm
in the form of~\cite[Lemma~5.1]{Muller1}, as stated in~\cite[Lemma 10.3]{Muller1}.
In~\cite[Proposition~5.8]{Muller1} it is pointed out that in this situation $\mathcal{A}(\widetilde{B})$
is also locally acyclic. Finally, the main result of~\cite{Muller2} states that
a locally acyclic cluster algebra $\mathcal{A}(\widetilde{B})$ coincides with its
upper cluster algebra $\mathcal{U}(\widetilde{B})$.

\item In~\cite[Proposition~12]{CaLeSc} it is claimed, that in Theorem \ref{thm:A=U-for-surfs-with-non-empty-boundary} the
hypothesis that $\mathbb{M}$  contain at least two points in the boundary of
$\Sigma$ is not necessary as long as $\partial\Sigma\neq\varnothing$. Unfortunately,
we are not able to follow the (short) argument of the authors.
\end{enumerate}
\end{remark}

Combining Corollary \ref{coro:gen-basis-is-basis-of-CC-alg-for-surfaces} with Theorem \ref{thm:A=U-for-surfs-with-non-empty-boundary}, we obtain our fifth main result:

\begin{thm} For any rectangular matrix $\widetilde{B}\in\ZZ^{(n+m)\times n}$, whose top $n\times n$ submatrix is of the form $B(\tau)$ for some triangulation $\tau$ of a possibly punctured surface with at least two marked points on the boundary, $\calB_{(\widetilde{Q},\widetilde{S})}(Q,S)$ is a basis of the (upper) cluster algebra with coefficients $\mathcal{A}(\widetilde{B})=\upper(\widetilde{B})$ over the ground ring $R$.
\end{thm}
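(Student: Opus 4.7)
The plan is to combine the three preceding results (Corollary \ref{coro:gen-basis-is-basis-of-CC-alg-for-surfaces}, Theorem \ref{thm:A=U-for-surfs-with-non-empty-boundary} of Muller, and Derksen-Weyman-Zelevinsky's description of cluster monomials as CC-functions) into a sandwich argument showing
$$
\mathcal{A}(\widetilde{B}) \;\subseteq\; \calCC_{(\widetilde{Q},\widetilde{S})}(Q,S) \;\subseteq\; \upper(\widetilde{B}) \;=\; \mathcal{A}(\widetilde{B}).
$$
Once this chain of inclusions is established, the theorem follows by invoking Corollary \ref{coro:gen-basis-is-basis-of-CC-alg-for-surfaces}, which supplies a non-degenerate potential $\widetilde{S}\in\RA{\widetilde{Q}}$ such that $\calB_{(\widetilde{Q},\widetilde{S})}(Q,S)$ is already an $R$-basis of $\calCC_{(\widetilde{Q},\widetilde{S})}(Q,S)$.

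First, I would fix the non-degenerate potential $\widetilde{S}$ provided by Corollary \ref{coro:gen-basis-is-basis-of-CC-alg-for-surfaces}. By that corollary, $\calB_{(\widetilde{Q},\widetilde{S})}(Q,S)$ is an $R$-basis of $\calCC_{(\widetilde{Q},\widetilde{S})}(Q,S)$. Muller's Theorem \ref{thm:A=U-for-surfs-with-non-empty-boundary} gives the equality $\mathcal{A}(\widetilde{B})=\upper(\widetilde{B})$, since the hypothesis of at least two marked points on the boundary is exactly what Muller's local-acyclicity argument requires.

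Next, I would show the inclusion $\calCC_{(\widetilde{Q},\widetilde{S})}(Q,S)\subseteq\upper(\widetilde{B})$. Each generator $CC_{(\widetilde{Q},\widetilde{S})}(\mathcal{M})$ is by definition a Laurent polynomial in $x_1,\ldots,x_n$ over $R$, so it belongs to $R[x_1^{\pm1},\ldots,x_n^{\pm1}]$, which is the localization associated to the initial seed. For an arbitrary cluster $\mathbf{u}=(u_1,\ldots,u_n,x_{n+1},\ldots,x_{n+m})\in\mathcal{S}$ obtained by a finite mutation sequence $(k_1,\ldots,k_\ell)$, I would use Corollary \ref{cor:1.8b} (mutation invariance of generic CC-functions) to rewrite $CC_{(\widetilde{Q},\widetilde{S})}(\mathcal{M})$ as the CC-function of a (suitably mutated) decorated representation of the mutated Jacobian algebra, evaluated in the cluster $\mathbf{u}$. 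This places the function in $R[u_1^{\pm1},\ldots,u_n^{\pm1}]$. Taking the intersection over all clusters gives membership in $\upper(\widetilde{B})$.

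For the inclusion $\mathcal{A}(\widetilde{B})\subseteq\calCC_{(\widetilde{Q},\widetilde{S})}(Q,S)$, I would invoke Theorem \ref{thm:DWZ's-thm-cluster-monomials-are-CC-functions} of Derksen-Weyman-Zelevinsky (Subsection \ref{subsec:cluster-variables}), which, in the presence of the non-degenerate $(\widetilde{Q},\widetilde{S})$, realizes every cluster monomial in $\mathcal{A}(\widetilde{B})$ as $CC_{(\widetilde{Q},\widetilde{S})}(\mathcal{M})$ for some decorated representation $\mathcal{M}$ that is mutation-equivalent to a negative representation. In particular, cluster monomials lie in $\calCC_{(\widetilde{Q},\widetilde{S})}(Q,S)$, and since the cluster monomials generate $\mathcal{A}(\widetilde{B})$ as an $R$-module in the standard sense, this yields the inclusion.

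Chaining these together we conclude
$$
\mathcal{A}(\widetilde{B})\subseteq\calCC_{(\widetilde{Q},\widetilde{S})}(Q,S)\subseteq\upper(\widetilde{B})=\mathcal{A}(\widetilde{B}),
$$
so all three coincide; combined with Corollary \ref{coro:gen-basis-is-basis-of-CC-alg-for-surfaces}, this gives the theorem. The main obstacle in this plan is purely bookkeeping: one must make sure that the single non-degenerate potential $\widetilde{S}$ chosen at the start of the argument (via Lemma \ref{lemma:simultaneously-nondeg-potential}) simultaneously witnesses the spanning property through principal coefficients (Theorem \ref{thm:spanning-set-in-princ-coeffs=>spanning-set-in-arbitrary-coeffs}), the mutation-invariance used to land in $\upper(\widetilde{B})$ (Corollary \ref{cor:1.8b}), and the DWZ realization of cluster monomials which itself hinges on the non-degeneracy of the extended QP. Since each of these three steps has been shown to hold under non-degeneracy of $(\widetilde{Q},\widetilde{S})$ alone, the argument closes without new input.
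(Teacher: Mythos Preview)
Your argument is correct and follows the same route the paper takes: the one-line proof in the paper (``Combining Corollary~\ref{coro:gen-basis-is-basis-of-CC-alg-for-surfaces} with Theorem~\ref{thm:A=U-for-surfs-with-non-empty-boundary}'') is precisely the sandwich $\mathcal{A}(\widetilde{B})\subseteq\calCC_{(\widetilde{Q},\widetilde{S})}(Q,S)\subseteq\upper(\widetilde{B})=\mathcal{A}(\widetilde{B})$ that you spell out, with the two inclusions already recorded earlier in the paper (Remark~\ref{rem:comparison-to-previous-approaches-intro}(1) and the discussion around~\eqref{eq:DWZ's-key-lemma}).

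Two small corrections of phrasing. First, for $\calCC_{(\widetilde{Q},\widetilde{S})}(Q,S)\subseteq\upper(\widetilde{B})$ you should cite \eqref{eq:DWZ's-key-lemma} (Derksen--Weyman--Zelevinsky's key lemma applied to an arbitrary decorated representation) rather than Corollary~\ref{cor:1.8b}, which concerns only generic values on $\tau^-$-reduced components. Second, cluster monomials do \emph{not} generate $\mathcal{A}(\widetilde{B})$ as an $R$-module in general; the correct statement is that cluster variables generate $\mathcal{A}(\widetilde{B})$ as an $R$-\emph{algebra}, and since $\calCC_{(\widetilde{Q},\widetilde{S})}(Q,S)$ is itself an $R$-algebra containing all cluster variables (by Theorem~\ref{thm:DWZ's-thm-cluster-monomials-are-CC-functions}), the inclusion $\mathcal{A}(\widetilde{B})\subseteq\calCC_{(\widetilde{Q},\widetilde{S})}(Q,S)$ follows.
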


\begin{defi}
The set $\calB_{(\widetilde{Q},\widetilde{S})}$ is called \emph{generic basis} provided it is a basis of the corresponding (upper) cluster algebra with coefficients.
\end{defi}

We close the paper with a recent result of the authors. It follows easily from \cite{GLFS} that for any triangulation $\tau$ of an unpunctured surface $\surf$ different from the torus with exactly one marked point, the quiver $Q(\tau)_{\prin}$ admits exactly one non-degenerate potential up to right-equivalence. Denoting this potential by $S(\tau)$, it is obvious that $S(\tau)$ belongs to $\RA{Q(\tau)}$. It follows, again by \cite{GLFS}, that $(Q(\tau),S(\tau))$ is right-equivalent to the gentle quiver with potential associated in \cite{ABCP,LF1} to the triangulation $\tau$ of the unpunctured surface $\surf$.

\begin{thm}\cite[Theorem 11.9]{GLFS-schemes} Suppose that $\tau$ is a triangulation of an unpunctured surface $\surf$ with at least two marked points. In the coefficient-free surface cluster algebra $\mathcal{A}(B(\tau))$, and in the cluster algebra $\mathcal{A}(B(\tau)_{\prin})$ that has principal coefficients at $\tau$, the generic bases $\calB_{(Q(\tau),S(\tau))}(Q(\tau),S(\tau))$ and $\calB_{(Q(\tau)_{\prin},S(\tau))}(Q(\tau),S(\tau))$ coincide with Musiker-Schiffler-Williams' bangle bases \cite{MSWbases} (coefficient-free and with principal coefficients, respectively).
\end{thm}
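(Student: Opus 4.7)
The plan is to interpret everything on the gentle algebra $\jacobalg{Q(\tau)}{S(\tau)}$, since by \cite{GLFS} this is the gentle Jacobian algebra of the triangulation-QP of Assem--Br\"ustle--Charbonneau-Jodoin--Plamondon and Labardini \cite{ABCP,LF1}. The representation theory of gentle algebras is controlled by strings and bands: indecomposables are string modules $M(w)$ for strings $w$ and band modules $M(w,\lambda,\ell)$ for bands $w$, parameters $\lambda\in\C^*$, and $\ell\in\Z_{\geq 1}$. Under the dictionary of \cite{ABCP,LF1,MSW}, the arcs of $\surf$ distinct from those in $\tau$ correspond bijectively to a class of strings on $Q(\tau)$, while the essential closed loops on $\Sigma$ (not homotopic to a boundary component) correspond to bands. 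The bangles of Musiker--Schiffler--Williams are multisets of pairwise compatible arcs and closed loops with no closed loop appearing more than once. First I would associate to each bangle $\mathcal{B}=(\arc_1,\ldots,\arc_r;\,\ell_1,\ldots,\ell_s)$ the decorated representation
\[
\mathcal{M}(\mathcal{B})\ :=\ \bigl(M(w(\arc_1))\oplus\cdots\oplus M(w(\arc_r))\oplus M(w(\ell_1),1,1)\oplus\cdots\oplus M(w(\ell_s),1,1),\ \mathbf{v}(\mathcal{B})\bigr),
\]
where the decoration vector encodes the arcs of $\tau$ that appear in $\mathcal{B}$.

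Next I would verify the CC-identity $CC_{(\widetilde{Q},\widetilde{S})}(\mathcal{M}(\mathcal{B}))=x_{\mathcal{B}}$, where $x_{\mathcal{B}}$ is the bangle element in MSW's snake/band formula. For arcs this follows from Musiker--Schiffler--Williams' snake graph expansion, reformulated in terms of $F$-polynomials and $\g$-vectors of string modules (done in \cite{MSW,LF1} and already in the literature for both the coefficient-free and principal-coefficient cases, using the triangulation $\tau$ as the initial cluster). For a single closed loop, MSW's band formula is exactly the generic CC-function of $M(w(\ell),1,1)$, whose Euler characteristic of the Grassmannian of submodules computes the bangle formula. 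Since CC-functions are multiplicative under direct sums, the formula extends to bangles. This step is essentially bookkeeping once the correspondence is in place.

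The central step is to show that the Zariski closure of $\GL_{\mathbf{d}(\mathcal{B})}\cdot\mathcal{M}(\mathcal{B})$ is a generically $\tau^-$-reduced irreducible component $Z(\mathcal{B})$, and that every element of $\decirrsr(\jacobalg{Q(\tau)}{S(\tau)})$ arises this way. For irreducibility and non-emptiness of the orbit closure one uses that for a band $w$ and fixed $\ell$, the affine scheme of representations with the relevant dimension vector has the family $\{M(w,\lambda,\ell)\mid\lambda\in\C^*\}$ as an irreducible dense subset. For generic $\tau^-$-reducedness one computes the $E$-invariant explicitly using Theorem \ref{thm:E-invariant-is-upper-semicontinuous} and the homological description in \cite[Proposition 3.5]{CLS}; compatibility of arcs/loops in the sense of MSW translates into the vanishing of $\Hom(\tau^-(\mathcal{M}(\mathcal{B})),\mathcal{M}(\mathcal{B}))$ summand by summand, using that Auslander--Reiten translation for gentle algebras acts combinatorially on strings and bands. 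Then one combines the already established injectivity of $Z\mapsto\g_Z$ on $\decirrsr$ with the fact that the $\g$-vectors $\{\g_{\mathcal{M}(\mathcal{B})}\}_{\mathcal{B}}$ cover $\Z^n$: indeed, the map $\mathcal{B}\mapsto\g_{\mathcal{M}(\mathcal{B})}$ is injective, and by Theorems \ref{thm:criterion-for-lin-ind-of-CC-functions} and \ref{thm:existence-triang-with-nice-matrix} together with Plamondon's/GHKK's parametrization, the image exhausts all $\g$-vectors of $\decirrsr$ on the nose.

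The main obstacle, and the place where real work is required, is the classification of generically $\tau^-$-reduced irreducible components of the affine schemes $\decrep(\jacobalg{Q(\tau)}{S(\tau)},\mathbf{d},\mathbf{v})$ and the identification of each with a unique bangle. In the unpunctured case this rests on the explicit description of representation schemes of gentle algebras via string/band combinatorics: one must prove that every such component has a dense open subset consisting of direct sums of strings and a single copy per band with $\ell=1$, since larger $\ell$ or repeated bands yield non-$\tau^-$-reduced orbits. This last assertion -- that repeated or higher-order bands violate generic $\tau^-$-reducedness -- is the technical heart of the argument and is carried out by a detailed, case-by-case analysis of $\Hom$-spaces between string and band modules along the lines already used in \cite{GLS3,CLS}, together with the upper semicontinuity established in this paper.
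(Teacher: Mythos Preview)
The paper does not prove this theorem: it is stated at the very end as ``a recent result of the authors'' and attributed to \cite[Theorem~11.9]{GLFS-schemes}; no argument is given here. So there is no in-paper proof to compare your proposal against.

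That said, your outline is broadly in the spirit of what is carried out in \cite{GLFS-schemes}: one classifies the generically $\tau^-$-reduced components for the gentle Jacobian algebra in terms of string/band data, matches these with laminations (hence bangles), and identifies the generic CC-functions with the MSW bangle elements. A few points in your sketch are more delicate than you indicate. First, the identification of MSW's band-graph Laurent polynomial with $CC$ of a quasi-simple band module is not ``essentially bookkeeping''; one really has to compute the quiver Grassmannians of band modules and match Euler characteristics with MSW's combinatorics. Second, your exhaustion argument (``the $\g$-vectors of bangles cover $\Z^n$'') leans on Plamondon's bijection $\decirrsr(\Lambda)\to\Z^n$, but to conclude you still need to show that \emph{every} generically $\tau^-$-reduced component is the closure of orbits of some $\mathcal{M}(\mathcal{B})$, not merely that the bangle $\g$-vectors are pairwise distinct; this is precisely the component classification you flag as the ``main obstacle'', and it is the real content of \cite{GLFS-schemes}. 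Third, the assertion that compatibility of arcs and loops translates into vanishing of $\Hom(\tau^-(-),-)$ summand by summand is correct for unpunctured surfaces but requires the explicit AR-combinatorics for gentle algebras and a case analysis; it is not a consequence of upper semicontinuity alone.
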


\section*{Acknowledgements}

We owe thanks to Giovanni Cerulli Irelli, Bernhard Keller, Greg Muller, Gregg Musiker, Pierre-Guy Plamondon, Fan~Qin and Jon Wilson for helpful conversations. We also thank Tomoyuki Arakawa, Osamu Iyama, Atsuo Kuniba, Hiraku Nakajima, Tomoki Nakanishi and Masato Okado, organizers of the 3-week program \emph{Cluster algebras 2019}, (RIMS, Kyoto University, Japan), for the invitation to present the results in this paper during the second week of the program.

CG and DLF thank the Mathematical Institute of the University of Bonn for two weeks of hospitality in January 2020. DLF thanks Giovanni Cerulli Irelli and Sapienza-Universit\`{a} di Roma's Dipartimento SBAI for their hospitality during July 2019. JS thanks the Instituto de Matem\'aticas  of the UNAM (Mexico City) for two weeks of hospitality in September 2019. Parts of this work
were done during these~visits.

CG was supported by CONACyT's grant 239255. DLF gratefully acknowledges the support he received from  a \emph{C\'{a}tedra Marcos Moshinsky} and the grants CONACyT-238754 and PAPIIT-IN112519.
JS was partially funded by the Deutsche Forschungsgemeinschaft
(DFG, German Research Foundation) under Germany's Excellence Strategy - GZ 2047/1, Projekt-ID 390685813.

%%%%%%%%%%%%%%%%%%%%%

\end{document}